\documentclass[11pt,leqno]{article}
\usepackage{amsmath, amscd, amsthm, amssymb, graphics, xypic, mathrsfs,setspace,fancyhdr}
\usepackage[pagebackref=true]{hyperref}
\hypersetup{backref}

\setlength{\textwidth}{6.0in}             
\setlength{\textheight}{8.25in}
\setlength{\topmargin}{-0.125in}
\setlength{\oddsidemargin}{0.25in}
\setlength{\evensidemargin}{0.25in}

\newcommand{\setof}[1]{\{ #1 \}}
\newcommand{\tensor}{\otimes}

\newcommand{\Spec}{\operatorname{Spec}}
\newcommand{\isomto}{{\stackrel{\sim}{\;\longrightarrow\;}}}
\newcommand{\isomt}{{\stackrel{{\scriptscriptstyle{\sim}}}{\;\rightarrow\;}}}
\newcommand{\smallsim}{{\scriptscriptstyle{\sim}}}

\renewcommand{\O}{{\mathcal O}}
\newcommand{\real}{{\mathbb R}}
\newcommand{\cplx}{{\mathbb C}}
\newcommand{\Q}{{\mathbb Q}}
\newcommand{\Z}{{\mathbb Z}}
\newcommand{\aone}{{\mathbb A}^1}
\newcommand{\pone}{{\mathbb P}^1}
\newcommand{\Stilde}{{\tilde{{\mathcal S}}}}

\newcommand{\ga}{{{\mathbb G}_{a}}}
\newcommand{\gm}{{{\mathbb G}_{m}}}
\newcommand{\et}{\text{\'et}}
\newcommand{\ho}[1]{{\mathcal H}({#1})}
\newcommand{\hop}[1]{{\mathcal H}_{\bullet}({#1})}

\newcommand{\Shv}{{\mathcal Shv}}
\newcommand{\Sm}{{\mathcal Sm}}
\newcommand{\Spc}{{\mathcal Spc}}
\newcommand{\K}{{\underline{\mathbf K}}}

\newcommand{\hsnis}{{\mathcal H}_s^{Nis}(k)}
\newcommand{\hspnis}{{\mathcal H}_{s,\bullet}^{Nis}(k)}
\newcommand{\hspet}{{\mathcal H}_{s,\bullet}^{\et}(k)}
\newcommand{\hsptau}{{\mathcal H}_{s,\bullet}^{\tau}(k)}
\newcommand{\hset}{{\mathcal H}_s^{\et}(k)}
\newcommand{\hstau}{{\mathcal H}_s^{\tau}(k)}
\newcommand{\het}[1]{{\mathcal H}^{\et}(#1)}
\newcommand{\hpet}[1]{{\mathcal H}^{\et}_{\bullet}(#1)}

\newcommand{\F}{{\mathcal F}}
\newcommand{\fkset}{{\mathcal F}_k-{\mathcal Set}}
\newcommand{\fkrset}{{\mathcal F}_k^r-{\mathcal Set}}

\newcommand{\simpnis}{{\Delta}^{\circ}\Shv_{Nis}({\mathcal Sm}_k)}
\newcommand{\simpet}{{\Delta}^{\circ}\Shv_{\text{\'et}}({\mathcal Sm}_k)}

\newcounter{intro}
\setcounter{intro}{1}

\theoremstyle{plain}
\newtheorem{thm}{Theorem}[subsection]

\newtheorem{lem}[thm]{Lemma}
\newtheorem{cor}[thm]{Corollary}
\newtheorem{prop}[thm]{Proposition}

\newtheorem{problem}[thm]{Problem}

\newtheorem{conj}[thm]{Conjecture}
\newtheorem*{thm*}{Theorem}
\newtheorem*{problem*}{Problem}

\newtheorem{thmintro}{Theorem}
\newtheorem{problemintro}[thmintro]{Problem}
\newtheorem{defnintro}[thmintro]{Definition}
\newtheorem{scholiumintro}[thmintro]{Scholium}

\theoremstyle{definition}
\newtheorem{defn}[thm]{Definition}

\newtheorem{notation}[thm]{Notation}

\theoremstyle{remark}
\newtheorem{rem}[thm]{Remark}
\newtheorem{remintro}[thmintro]{Remark}
\newtheorem{extension}[thm]{Extension}
\newtheorem{ex}[thm]{Example}

\numberwithin{equation}{subsection}

\begin{document}
\pagestyle{fancy}
\renewcommand{\sectionmark}[1]{\markright{\thesection\ #1}}
\fancyhead{}
\fancyhead[LO,R]{\bfseries\footnotesize\thepage}
\fancyhead[LE]{\bfseries\footnotesize\rightmark}
\fancyhead[RO]{\bfseries\footnotesize\rightmark}
\chead[]{}
\cfoot[]{}
\setlength{\headheight}{1cm}

\author{\begin{small}Aravind Asok\thanks{Aravind Asok was partially supported by National Science Foundation Award DMS-0900813.}\end{small} \\ \begin{footnotesize}Department of Mathematics\end{footnotesize} \\ \begin{footnotesize}University of Southern California\end{footnotesize} \\ \begin{footnotesize}Los Angeles, CA 90089-2532 \end{footnotesize} \\ \begin{footnotesize}\url{asok@usc.edu}\end{footnotesize}\\
\and \begin{small}Fabien Morel\end{small} \\ \begin{footnotesize}Mathematisches Institut\end{footnotesize} \\ \begin{footnotesize}Ludwig Maximilians Universit\"at\end{footnotesize} \\ \begin{footnotesize}Theresienstrasse 39, D-80 333 M\"unchen\end{footnotesize} \\
\begin{footnotesize}\url{morel@mathematik.uni-muenchen.de}\end{footnotesize}}

\title{\vskip 20pt {\bf {Smooth varieties up to $\aone$-homotopy \\ and algebraic $h$-cobordisms}}\vskip 20pt}
\maketitle

\begin{abstract}
We start to study the problem of classifying smooth proper varieties over a field $k$ from the standpoint of $\aone$-homotopy theory.  Motivated by the topological theory of surgery, we discuss the problem of classifying up to isomorphism all smooth proper varieties having a specified $\aone$-homotopy type.  Arithmetic considerations involving the sheaf of $\aone$-connected components lead us to introduce several different notions of connectedness in $\aone$-homotopy theory.  We provide concrete links between these notions, connectedness of points by chains of affine lines, and various rationality properties of algebraic varieties (e.g., rational connectedness).

We introduce the notion of an $\aone$-$h$-cobordism, an algebro-geometric analog of the topological notion of $h$-cobordism, and use it as a tool to produce non-trivial $\aone$-weak equivalences of smooth proper varieties.  Also, we give explicit computations of refined $\aone$-homotopy invariants, such as the $\aone$-fundamental sheaf of groups, for some $\aone$-connected varieties.  We observe that the $\aone$-fundamental sheaf of groups plays a central yet mysterious role in the structure of $\aone$-$h$-cobordisms.  As a consequence of these observations, we completely solve the classification problem for rational smooth proper surfaces over an algebraically closed field: while there exist arbitrary dimensional moduli of such surfaces, there are only countably many $\aone$-homotopy types, each uniquely determined by the isomorphism class of its $\aone$-fundamental sheaf of groups.
\end{abstract}
\newpage

\begin{footnotesize}
\tableofcontents
\end{footnotesize}

\section{Classification problems in algebraic geometry}
\label{s:introduction}
In this paper, drawing its inspiration from geometric topology, we investigate the problem of classifying smooth proper algebraic varieties over a field using the techniques of $\aone$-homotopy theory.  In geometric topology, one can, without loss of generality, restrict the classification problem by considering {\em connected} manifolds, and in this setting classification can be performed most effectively for highly connected spaces ({\em cf.} \cite{Wallhighlyconnected}).  Similarly, we restrict our algebro-geometric classification problem by imposing appropriate connectedness hypotheses; these restrictions are {\em highly} non-trivial.  Indeed, the problem of even defining an appropriate analog of connectedness is subtle, especially as we require the notion to have a close relationship with $\aone$-homotopy theory.   We consider notions called $\aone$-connectedness, \'etale $\aone$-connectedness, and weak $\aone$-connectedness; each notion is motivated by homotopic and arithmetic considerations.  Contrary to the situation in geometric topology, we will see that one cannot often impose ``higher $\aone$-connectedness" hypotheses for such varieties because strictly positive dimensional smooth proper $\aone$-connected varieties {\em always} have non-trivial $\aone$-fundamental group (see Propositions \ref{prop:nontrivialfundamentalgroup} and \ref{prop:etaleaonefundamentalgroupnontrivial}).

Next, we link $\aone$-connectedness to the birational geometry of algebraic varieties.  Over perfect fields $k$, we show separably rationally connected smooth proper varieties introduced by Koll\'ar-Miyaoka-Mori are weakly $\aone$ connected (see Definition \ref{defn:aoneconnected} and Corollary \ref{cor:separablyrationallyconnected}), and, if $k$ furthermore has characteristic $0$, retract $k$-rational smooth proper varieties are $\aone$-connected (see Theorem \ref{thm:stablyrational}).  Over any field, Theorem \ref{thm:propercharacterization} provides a geometric characterization of $\aone$-connectedness for smooth proper varieties, and its subsequent corollaries provide connections with Manin's notion of $R$-equivalence.  These results provide new {\em homotopic} techniques, and produce a host of new invariants, to study ``nearly rational" varieties (see Propositions \ref{prop:brauergroup} and \ref{prop:strongfactor}).

Finally, we outline a general program for studying the classification problem and provide supporting evidence.  We give a detailed study of the classification in low dimensions (see Theorems \ref{thm:surfacesversion1} and \ref{thm:refinedsurfaces}).  We emphasize the r\^ole played by the $\aone$-fundamental (sheaf of) group(s) of a smooth proper variety and provide a number of detailed computations (see Propositions \ref{prop:hirzebruchfundamental} and \ref{prop:fundamentalgroupblowup}).  We reformulate these low dimensional results as a low-dimensional solution to the $\aone$-surgery problem (\ref{problem:aonesurgery}), which suggests that the isomorphism classes of smooth varieties having a given $\aone$-homotopy type have additional structure.

\subsubsection*{Connectivity restrictions in $\aone$-homotopy: geometry vs. arithmetic}
Throughout this paper, the word {\em manifold} will mean compact without boundary smooth manifold.  Classically, topologists considered two fundamental classification problems: (i) classify $n$-dimensional manifolds up to diffeomorphism, and (ii) classify $n$-dimensional manifolds up to homotopy equivalence.  Problem (i) refines Problem (ii), and the study of both problems breaks down along dimensional lines.  Extremely explicit classifications exist in dimensions $1$ or $2$, and Thurston's geometrization program provides a classification in dimension $3$. While group theoretic decision problems prevent algorithmic solutions to either problem in dimension $\geq 4$, the celebrated Browder-Novikov-Sullivan-Wall theory of surgery ({\em cf.} \cite[\S1]{Ranicki}) shows that, in dimensions $\geq 5$, the problem of identifying diffeomorphism classes of manifolds having a fixed homotopy type can be effectively reduced to computations in homotopy theory.

The problem of classifying smooth algebraic varieties over a field $k$ up to isomorphism is formally analogous to Problem (i).  To state an analog of Problem (ii), one must choose an appropriate notion of ``homotopy equivalence" and consider the corresponding homotopy category.   We use the $\aone$- (or motivic) homotopy theory developed by the second author and V. Voevodsky in \cite{MV}.  The prefix $\aone$- draws attention to the fact that the affine line in $\aone$-homotopy theory plays the same r\^ole as the unit interval in ordinary homotopy theory.  The resulting {\em $\aone$-homotopy category of smooth varieties over $k$} is denoted $\ho{k}$, and isomorphisms in $\ho{k}$ are called {\em $\aone$-weak equivalences}.  One source of examples of $\aone$-weak equivalences is obtained by formally replacing the unit interval by the affine line in the classical definition of a homotopy equivalence.  Another source of examples is provided by the \u Cech simplicial resolution associated with a Nisnevich covering of a smooth scheme (see Example \ref{ex:cechobjects} for more details).  One of the difficulties of the theory stems from the fact that general $\aone$-weak equivalences are obtained from these two classes by a complicated formal procedure (see, e.g., \cite[Proposition 8.1]{Dugger} for development of this point of view).

{\em Compact and without boundary manifolds} are akin to algebraic varieties {\em smooth and proper} over a field $k$.  Collecting these observations, we suggest a natural analog of Problem (ii) in the context of algebraic varieties over a field $k$.

\begin{problemintro}
\label{problem:classification}
Classify smooth proper $k$-varieties up to $\aone$-weak equivalence.
\end{problemintro}

In order that two manifolds be homotopy equivalent, they must at least have the same number of connected components.  One usually studies connected manifolds before investigating the disconnected case.  A manifold $M$ is path (or chain) connected if every pair of points lies in the image of a (chain of) map(s) from the unit interval, or equivalently if the set $\pi_0(M)$ has exactly one element.  In $\aone$-homotopy theory one attaches to a smooth variety $X$ a (Nisnevich) {\em sheaf} $\pi_0^{\aone}(X)$ of $\aone$-connected components (see Definition \ref{defn:sheavesofaoneconnectedcomponents}); \S \ref{s:connectedness} contains an elaborate analysis of this and closely related definitions.  A variety is {\em $\aone$-connected} if it has the same sheaf of connected components as a point, i.e., the spectrum of the base field, and $\aone$-disconnected otherwise.  For varieties that are $\aone$-disconnected, the classification problem can look drastically different than it does for varieties that are (close to) $\aone$-connected (see, e.g., Proposition \ref{prop:curves}).  For arithmetic reasons, we introduce a variant of $\aone$-connectedness that we call \'etale $\aone$-connectedness.

Transposing topological intuition, one might na\"ively imagine that a smooth variety $X$ is $\aone$-connected if any pair of $k$-points is contained in the image of a morphism from the affine line.  We now discuss some precise results underlying this intuition.  We introduce two different notions of ``path" connectivity, $\aone$-chain connectedness and \'etale $\aone$-chain connectedness, depending on whether one requires such path connectedness properties for all separable field extensions of $k$ or just separably closed field extensions of $k$ (see Definition \ref{defn:chainconnected} for more precise statements).  The following results connect the geometric ideas just mentioned to the homotopy theoretic definition alluded to in the previous paragraph.  In particular, we provide a geometric characterization of $\aone$-connectedness for smooth proper schemes over a field.

\begin{thmintro}[see Theorem \ref{thm:propercharacterization} and Corollary \ref{cor:chainconverse}]
If $X$ is a smooth proper scheme over a field $k$, $X$ is $\aone$-connected if and only if it is $\aone$-chain connected (see \textup{Definition \ref{defn:chainconnected}}).
\end{thmintro}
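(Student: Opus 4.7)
The plan is to prove the two implications separately. The direction ``$\aone$-chain connected implies $\aone$-connected'' is formal: the $\aone$-localization functor inverts morphisms from $\aone$, so if two $L$-points are connected by an $\aone_L$-chain for $L/k$ separable, they coincide in $\pi_0^{\aone}(X)(\Spec L)$. Combined with the fact that Nisnevich points of smooth $k$-schemes are spectra of Henselian local rings with separable residue fields, this is enough to deduce that $\pi_0^{\aone}(X)\to\Spec k$ is an isomorphism of Nisnevich sheaves.

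The content lies in the converse. My strategy is to establish, for smooth proper $X$ and every separable extension $L/k$, an identification
\[
\pi_0^{\aone}(X)(\Spec L)\;=\;X(L)\big/\!\sim_{\aone},
\]
where $\sim_{\aone}$ is the equivalence relation generated by $\aone_L$-chains. Granting this, the hypothesis $\pi_0^{\aone}(X)=\Spec k$ immediately forces $X(L)/\!\sim_{\aone}$ to be a singleton for every such $L$, which is precisely the definition of $\aone$-chain connectedness.

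To establish the identification, I would compare $\pi_0^{\aone}(X)$ with the naive presheaf $U\mapsto X(U)/\!\sim_{\aone}$ and check that the canonical map induces a bijection on $L$-points for $L/k$ separable. Surjectivity is the softer half: any section of $\pi_0^{\aone}(X)$ over $\Spec L$ is detected on a Nisnevich cover, and Nisnevich covers of a field split, so every class lifts to an honest $L$-point of $X$. Injectivity is the crux. One models $\pi_0^{\aone}$ via a singular-type construction whose higher simplices are maps $\Delta^n\times U\to X$; two $L$-points become equal in the resulting colimit precisely when they are connected by a tower of such simplices, and one must collapse any such tower to an honest $\aone$-chain landing in $X$. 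This is where smoothness and properness enter decisively: the valuative criterion lets one extend rational maps from smooth curves into $X$ to everywhere-defined morphisms, so auxiliary homotopies in the localization can be rectified to genuine morphisms $\aone_L\to X$.

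The principal obstacle is controlling the transfinite iteration hidden in the $\aone$-localization. What is needed is a recipe which, starting from an abstract identification in $\pi_0^{\aone}(X)(\Spec L)$, produces a concrete chain of rational curves in $X$ joining the two given $L$-points. I expect to carry this out by combining the valuative criterion with a normal form for $\aone$-homotopies into a smooth proper target — essentially moving any offending data out of $X$ and into Nisnevich-trivial parameter spaces, where it can be discarded. This analysis of the localization, more than any single geometric input, is likely to occupy the bulk of the proof.
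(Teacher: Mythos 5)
Your proposal correctly isolates what needs to be shown (a bijection $X(L)/\!\sim\;\to\pi_0^{\aone}(X)(L)$ for finitely generated separable $L/k$) and correctly disposes of the easy direction and of surjectivity. The gap is in your plan for injectivity. You write that one ``models $\pi_0^{\aone}$ via a singular-type construction whose higher simplices are maps $\Delta^n\times U\to X$'' and then tries to collapse such data to an honest $\aone$-chain. But the Suslin--Voevodsky singular construction $Sing_*^{\aone}(X)$ computes $\pi_0^{ch}(X)$, not $\pi_0^{\aone}(X)$; the two agree only if $Sing_*^{\aone}(X)$ is already $\aone$-local, and proving that is open in general (this is exactly Conjecture \ref{conj:epimorphism}). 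An identification in $\pi_0^{\aone}(X)(L)$ need not be witnessed by any tower of maps $\Delta^n\times U\to X$ — it can involve Nisnevich-local and simplicial data produced by the transfinitely iterated fibrant/$\aone$-replacement — so there is no finite list of ``simplices landing in $X$'' to collapse. Your proposal to ``combine the valuative criterion with a normal form for $\aone$-homotopies into a smooth proper target'' gestures at the difficulty but supplies no such normal form, and none is available; the transfinite iteration is precisely what you cannot unwind by hand.

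The paper sidesteps this by never analyzing $L_{\aone}$ directly. It instead constructs an explicit \emph{receptor}: the sheaf $\pi_0^{b\aone}(X)$, whose $L$-sections are by definition $X(L)/\!\sim$, extended to all of $\Sm_k$ via an equivalence of categories (Theorem \ref{thm:equivalenceofcategories}) between $\aone$-invariant birational sheaves and functors on fields equipped with specialization maps. Verifying that the assignment $L\mapsto X(L)/\!\sim$ carries this structure and satisfies the required axioms (\textbf{A1})--(\textbf{A4}) is where properness and the valuative criterion genuinely enter, via Lipman's resolution of indeterminacy on two-dimensional regular schemes (Lemma \ref{lem:lipmanindeterminacy}) and the fact that fibers of proper birational morphisms between regular surfaces are $\aone$-chain connected (Lemma \ref{lem:chainconnectedfibers}). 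Once $\pi_0^{b\aone}(X)$ is built, it is $\aone$-local because it is birational and $\aone$-invariant, so the universal property of $\aone$-localization \emph{automatically} gives a factorization $\pi_0^{ch}(X)\to\pi_0^{\aone}(X)\to\pi_0^{b\aone}(X)$ whose composite is a bijection on $L$-sections by construction. Injectivity of the first arrow drops out formally. This universal-property trick — build an $\aone$-local target instead of unwinding the localization — is the missing idea in your proposal, and it is what carries the load that you assign to a nonexistent normal form.
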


Section \ref{s:unramified} is devoted to developing techniques for proving a more general result that implies this one.  The following result provides a link between important notions of birational geometry and the aforementioned connectivity properties.

\begin{thmintro}[see Theorem \ref{thm:stablyrational} and Corollary \ref{cor:separablyrationallyconnected}]
Suppose $k$ is a perfect field.
\begin{itemize}
\item If $X$ is a separably rationally connected smooth proper variety, then $X$ is weakly $\aone$-connected.
\item If furthermore $k$ has characteristic $0$, and $X$ is a retract $k$-rational variety, then $X$ is $\aone$-connected.
\end{itemize}
A $k$-variety $X$ is retract $k$-rational if, e.g., it is $k$-rational, stably $k$-rational, or factor $k$-rational (see \textup{Definition \ref{defn:rationalitynotions}}).
\end{thmintro}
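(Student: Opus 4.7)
The plan is to treat the two bullets separately: the first reduces to the chain-connectedness criterion of Theorem \ref{thm:propercharacterization} applied after base change, and the second exploits the birational invariance of $\pi_0^{\aone}$ available in characteristic zero via resolution of singularities.

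For the first bullet, I read \emph{weakly $\aone$-connected} as asking for triviality of $\pi_0^{\aone}(X)(L)$ for every separably closed extension $L$ of $k$. Fix such an $L$; since $L$ is perfect, Theorem \ref{thm:propercharacterization} applies to $X_L$, so it is enough to verify that $X_L$ is $\aone$-chain connected. By the Koll\'ar-Miyaoka-Mori theory, a separably rationally connected variety over an algebraically (equivalently, separably) closed field has the property that any finite set of closed points is contained in the image of a single very free morphism $\mathbb{P}^1 \to X_L$. Given two $L$-points $x_0, x_1$, pick such a very free rational curve passing through $x_0$, $x_1$, and an auxiliary third point $x_\infty$; restricting to affine lines $\mathbb{A}^1 \subset \mathbb{P}^1$ avoiding the other preimages produces a two-step chain of $\mathbb{A}^1$-morphisms connecting $x_0$ to $x_1$ through $x_\infty$. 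This gives $\aone$-chain connectedness of $X_L$, hence $\aone$-connectedness of $X_L$ by Theorem \ref{thm:propercharacterization}, and therefore the desired triviality of $\pi_0^{\aone}(X)(L)$.

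For the second bullet, the characteristic-zero hypothesis lets me use resolution of singularities and the invariance of $\pi_0^{\aone}$ under blow-ups with smooth centers to deduce that $\pi_0^{\aone}$ is a birational invariant on the category of smooth proper $k$-varieties. Starting from the retract rationality data---open $U \subset X$, open $V \subset \mathbb{A}^N$, and morphisms $U \to V \to U$ composing to the inclusion---I resolve the induced rational maps $X \dashrightarrow \mathbb{P}^N$ and $\mathbb{P}^N \dashrightarrow X$ on smooth projective models. Applying $\pi_0^{\aone}$ to the resulting diagram and invoking birational invariance to identify the resolved models with $X$ and $\mathbb{P}^N$ respectively, I exhibit $\pi_0^{\aone}(X)$ as a retract of $\pi_0^{\aone}(\mathbb{P}^N)$. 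But $\mathbb{P}^N$ is tautologically $\aone$-chain connected (any two points lie on a line), so by Theorem \ref{thm:propercharacterization} we have $\pi_0^{\aone}(\mathbb{P}^N) = \ast$. A retract of the terminal sheaf is terminal, so $\pi_0^{\aone}(X) = \ast$.

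The main technical obstacle is establishing the birational invariance of $\pi_0^{\aone}$ for smooth proper varieties in characteristic zero, together with the compatibility that lets one resolve rational maps so that they induce well-defined maps on $\pi_0^{\aone}$ respecting the retraction relation. These inputs presumably come from the theory of unramified sheaves developed in \S\ref{s:unramified} combined with Hironaka's resolution of indeterminacies; the subtle point is that the retraction lives only on open subsets, so one needs the birational/unramified viewpoint to even formulate the map at the level of sheaves of $\aone$-connected components. Once these ingredients are assembled, both bullets reduce to tracing through geometric hypotheses already packaged by the definitions of separable rational connectedness and retract rationality.
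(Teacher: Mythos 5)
Your first bullet follows the same geometric idea as the paper but overshoots on the homotopy-theoretic input. To conclude that $X$ is weakly $\aone$-connected it suffices to show $X(L)/\smallsim = \ast$ for each separably closed $L/k$ and then invoke the \emph{easy} direction, namely Corollary \ref{cor:sectionsoverfields} (equivalently Lemma \ref{lem:chainepimorphism}/Proposition \ref{prop:aoneconnected}), which is a direct consequence of the unstable $\aone$-$0$-connectivity theorem. Instead you appeal to Theorem \ref{thm:propercharacterization}, which is the \emph{converse} and the technically deep result of \S\ref{s:unramified}; it is not needed here. Moreover, you base-change to $X_L$ and then claim "therefore the desired triviality of $\pi_0^{\aone}(X)(L)$": this silently identifies $\pi_0^{\aone}(X)(L)$, computed in $\Spc_k$, with $\pi_0^{\aone}(X_L)(L)$, computed in $\Spc_L$, a compatibility that is not established and not needed. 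What you actually extract from the Koll\'ar--Miyaoka--Mori input is $X(L)/\smallsim = \ast$; feed that directly into Corollary \ref{cor:sectionsoverfields} and you avoid both the heavy theorem and the base-change issue. Modulo this, the first bullet is fine and matches the paper (which routes through "strongly rationally connected" via \cite[Theorem IV.3.9]{Kollar} and then applies Corollary \ref{cor:sectionsoverfields}).

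Your second bullet has a genuine gap. The argument hinges on the claim that $\pi_0^{\aone}$, as a \emph{sheaf}, is invariant under blow-ups with smooth center and hence a birational invariant of smooth proper varieties in characteristic $0$. That statement is not available: the paper's Proposition \ref{prop:blowupconnected} gives only a bijection on sections over fields, $X(L)/\smallsim \to Y(L)/\smallsim$, not an isomorphism of Nisnevich sheaves $\pi_0^{\aone}$; the conjecture at the end of \S\ref{s:unramified} that $\pi_0^{\aone}(X) \to \pi_0^{b\aone}(X)$ is an isomorphism indicates the authors themselves do not know sheaf-level birational invariance. Worse, the birational-invariance statement that \emph{is} available, Corollary \ref{cor:birationalinvariance}, is derived from the very theorem you are trying to prove (together with Theorem \ref{thm:propercharacterization}), so invoking it here is circular. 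The paper instead stays entirely at the level of $\aone$-equivalence classes of $L$-points: it resolves the indeterminacy of the retraction data, uses weak factorization \cite[Theorem 0.1.1]{AKMW} and Proposition \ref{prop:blowupconnected} to get that the composite $X'(L)/\smallsim \to Y(L)/\smallsim \to X(L)/\smallsim$ is a bijection that factors through $Y(L)/\smallsim = \ast$ (with $Y$ rational), hence $X(L)/\smallsim = \ast$ for all finitely generated separable $L/k$, i.e.\ $X$ is $\aone$-chain connected; then Proposition \ref{prop:aoneconnected} gives $\aone$-connectedness. Your "retract of the terminal sheaf is terminal" observation would indeed finish matters if you could produce the sheaf-level retraction, but producing it is exactly the missing step.
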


We feel the suggested parallels with geometric topology and the direct links with birational geometry justify the importance of these connectivity notions and our subsequent focus on them.  We refer the reader to Appendix \ref{s:notationalpostscript} for a convenient summary of the relationships between the various notions just mentioned.

One can study disconnected manifolds by separate analysis of each connected component (however, {\em cf.} \cite[pp. 34-35]{Wall} for discussion in the context of classification).  In contrast, in $\aone$-homotopy theory, for any smooth variety $X$ there is a canonical epimorphism $X \to \pi_0^{\aone}(X)$ that is in general highly non-trivial.  While separably rationally connected smooth proper varieties are weakly $\aone$-connected, they need not be $\aone$-connected in general.  Indeed, the difference between the $\aone$-connectedness and weak $\aone$-connectedness encodes subtle arithmetic information; see Example \ref{ex:conicbundles} for a particularly geometric manifestation of this phenomenon.  Expanding on this, we explore cohomological aspects of smooth $\aone$-connected varieties in \S \ref{s:classifyingspaces}.

\subsubsection*{Classification Part I: low dimensional results}
For manifolds having dimension $1$ or $2$ the homotopy classification and the diffeomorphism classification coincide.  Each connected component of a $1$-dimensional manifold is diffeomorphic to the circle $S^1$.  Each connected component of a $2$-dimensional manifold is homotopy equivalent to either $S^2$, the connected sum of $g$ copies of $S^1 \times S^1$ for some integer $g \geq 1$, or the connected sum of $g'$ copies of ${\mathbb{RP}^2}$ for some integer $g' \geq 1$.

Suppose $k$ is an algebraically closed field.  There is a unique up to $\aone$-weak equivalence smooth proper $\aone$-connected $k$-variety of dimension $1$, namely $\pone$ (see Proposition \ref{prop:curves}).  Thus, just as for manifolds, the isomorphism and $\aone$-homotopy classifications of $\aone$-connected smooth proper varieties coincide in dimension $1$.

In dimension $2$, however, we see that the $\aone$-homotopy and isomorphism classifications of $\aone$-connected varieties do {\em not} coincide.  Indeed, smooth proper $k$-rational surfaces are $\aone$-connected, and we expect the converse to be true ({\em cf.} Conjecture \ref{conj:epimorphism}).  In fact, Corollary \ref{cor:aoneconnectednesssurfaces} shows that $k$-rational smooth proper surfaces are {\em exactly} the smooth proper $\aone$-connected surfaces for fields having characteristic $0$.  The $\aone$-homotopy classification of rational smooth proper surfaces, which addresses Problem \ref{problem:classification} in dimension $\leq 2$ and is strikingly similar to its topological counterpart, can be stated in most elementary terms as follows.

\begin{thmintro}[See Theorem \ref{thm:surfaces}]
\label{thm:surfacesversion1}
Any rational smooth proper surface over an algebraically closed field $k$ is $\aone$-weakly equivalent to either $\pone \times \pone$, or a blow-up of some (possibly empty) fixed, finite collection of distinct $k$-points on ${\mathbb P}^2$.
\end{thmintro}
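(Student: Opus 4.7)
The plan is to combine the classical Enriques--Castelnuovo classification of smooth proper rational surfaces with the $\aone$-$h$-cobordism machinery introduced earlier in the paper, using the latter to convert birational modifications between models into $\aone$-weak equivalences between them.

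First I would invoke the Enriques--Castelnuovo theorem: over the algebraically closed field $k$, any smooth proper rational surface $X$ is obtained by a finite sequence of blow-ups at $k$-rational points from a minimal rational surface, and these minimal models are $\mathbb{P}^2$, $\pone \times \pone$, and the Hirzebruch surfaces $\mathbb{F}_n = \mathbb{P}(\O \oplus \O(n))$ for $n \geq 2$. It then suffices to show that every iterated blow-up of each such minimal model is $\aone$-weakly equivalent to either $\pone \times \pone$ or to an iterated blow-up of $\mathbb{P}^2$ at some finite number of distinct $k$-points.

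Second, I would dispose of the non-minimal Hirzebruch and quadric cases using the classical elementary transformation of ruled surfaces: for a point $p \in \mathbb{F}_n$, blowing up $p$ and contracting the strict transform of the fibre through $p$ yields an isomorphism $\operatorname{Bl}_p \mathbb{F}_n \cong \operatorname{Bl}_q \mathbb{F}_{n \pm 1}$, the sign depending on whether $p$ lies on the negative section. Iterating, every blow-up of any $\mathbb{F}_n$ becomes \emph{isomorphic}---not merely $\aone$-weakly equivalent---to an iterated blow-up of $\mathbb{P}^2$, and in particular $\operatorname{Bl}_p(\pone \times \pone) \cong \operatorname{Bl}_{p_1,p_2}\mathbb{P}^2$. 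The residual cases are therefore minimal $\mathbb{F}_n$ with $n \geq 2$ and iterated blow-ups of $\mathbb{P}^2$.

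Third, I would handle the minimal $\mathbb{F}_n$ with $n \geq 2$ via the classical deformation of ruled surfaces, which exhibits $\mathbb{F}_n$ as the degenerate special fibre of a smooth proper family $\mathcal{X} \to \aone$ whose other fibres are all isomorphic to $\mathbb{F}_{n-2}$. Verifying that this family is an $\aone$-$h$-cobordism in the sense set up earlier in the paper yields an $\aone$-weak equivalence $\mathbb{F}_n \sim \mathbb{F}_{n-2}$; induction on $n$ then produces $\mathbb{F}_n \sim \pone \times \pone$ for $n$ even and $\mathbb{F}_n \sim \mathbb{F}_1 \cong \operatorname{Bl}_p \mathbb{P}^2$ for $n$ odd. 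To see that the $\aone$-homotopy type of $\operatorname{Bl}_{p_1,\ldots,p_n}\mathbb{P}^2$ depends only on $n$ I would move the tuple of centres along a morphism $\aone \to U_n$ into the $\aone$-connected configuration space $U_n \subset (\mathbb{P}^2)^n$ of $n$-tuples of distinct points, and apply the same $\aone$-$h$-cobordism principle to the associated universal blow-up family.

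I expect the chief obstacle to be the third step, and specifically the verification of the $\aone$-$h$-cobordism property for the Hirzebruch deformation. The family itself is elementary to write down, but proving that both fibre inclusions are genuine $\aone$-weak equivalences is where the ``central yet mysterious'' role of $\pi_1^{\aone}$ flagged in the abstract must come in: one needs that $\mathbb{F}_n$ and $\mathbb{F}_{n-2}$ are not distinguished by this finer invariant, which should ultimately reflect the expectation that the relevant $\aone$-fundamental sheaf of a Hirzebruch surface depends only on the parity of $n$.
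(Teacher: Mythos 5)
Your proposal is close in spirit to the paper's proof---both combine elementary transformations with $\aone$-$h$-cobordisms coming from deformations of bundles over $\pone$ and from moving blown-up points---but Step~2 contains a genuine error that the paper's structure is precisely designed to avoid. You claim that by iterating elementary transformations, \emph{every} blow-up of any $\mathbb{F}_n$ becomes \emph{isomorphic} to an iterated blow-up of $\mathbb{P}^2$. This is false: the sign in $\operatorname{Bl}_p\mathbb{F}_n\cong\operatorname{Bl}_q\mathbb{F}_{n\pm 1}$ is $-1$ only when $p$ lies on the negative section $C_n$; for $p\notin C_n$ the elementary transformation \emph{increases} $n$, so iteration gets you nowhere. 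Concretely, $\operatorname{Bl}_p\mathbb{F}_2$ with $p\notin C_2$ has only two $(-1)$-curves (the exceptional divisor and the strict transform of the fibre through $p$); contracting them gives $\mathbb{F}_2$ and $\mathbb{F}_3$ respectively, so this surface admits no birational morphism to $\mathbb{P}^2$ at all. No amount of contracting and re-blowing-up will make it an iterated blow-up of $\mathbb{P}^2$ up to isomorphism.

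The fix, as in the paper's Lemma~\ref{lem:decreasea}, is that the moving-points $\aone$-$h$-cobordism mechanism of your Step~4 must be invoked \emph{before} the elementary transformation, and it must be applied over $\mathbb{F}_a$ (or, more generally, over any smooth proper variety covered by affine spaces), not just over $\mathbb{P}^2$: one first moves the blown-up point onto $C_a$ by an $\aone$-$h$-cobordism (Proposition~\ref{prop:blowups} via Lemma~\ref{lem:movingpoints}), and only then applies the elementary transformation to lower $a$. In other words, your Steps~2 and~4 have to be interleaved, and the moving-point step cannot be confined to $\mathbb{P}^2$. A secondary gap is that your configuration-space argument in Step~4 presupposes the $n$ centres are already distinct; it does not handle iterated blow-ups with infinitely-near points, whereas the paper's Proposition~\ref{prop:movingpointsarbitraryfield} explicitly moves points off exceptional divisors (its Step~2) to reduce to the distinct case.
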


The isomorphism classification of rational smooth proper surfaces is well known: over an algebraically closed field, any such surface is (non-uniquely) isomorphic to an iterated blow-up of points of either ${\mathbb P}^2$, or a Hirzebruch surface ${\mathbb F}_{a} = {\mathbb P}(\O_{\pone} \oplus \O_{\pone}(a))$.  In particular, there can be moduli of isomorphism classes of such varieties, but the $\aone$-homotopy classes are parameterized by a discrete set.  Theorem \ref{thm:surfacesversion1} leads us to search for an explanation of the discrepancy between the isomorphism and $\aone$-homotopy classifications.

\subsubsection*{High dimensions and internal structure of homotopy types via surgery}
Just as the topological fundamental group obstructs enumeration of homotopy types of manifolds, the $\aone$-fundamental (sheaf of) group(s) likely obstructs enumeration of $\aone$-homotopy types of smooth varieties, even if they are $\aone$-connected.  Thus, Problem \ref{problem:classification} is probably insoluble in dimension $\geq 4$, so we seek a further refinement.  Surgery theory asserts, roughly speaking, that one can measure the extent to which the homotopy and diffeomorphism classifications for manifolds of a given dimension differ.  Indeed, one main application of surgery theory is to the solution (in dimensions $\geq 5$) of Problem (iii): determine the set of diffeomorphism classes of manifolds in a given homotopy type.  As Kervaire and Milnor explain in their celebrated work on classification of exotic spheres (see \cite[Theorem 1.1]{KervaireMilnor} and the subsequent remark), arguably still the best introduction to surgery theory, the analysis of Problem (iii) consists of two independent components.

The first component of the investigation is provided by Smale's theory of $h$-cobordisms that relates diffeomorphism to more homotopy theoretic notions (e.g., cobordism theory) via Morse theory.  An {\em $h$-cobordism} $(W,M,M')$ between smooth manifolds $M$ and $M'$ is a cobordism such that the inclusions $M \hookrightarrow W$ and $M' \hookrightarrow W$ are homotopy equivalences.  The fundamental group of $M$ plays a central r\^ole in the theory of $h$-cobordisms.  Smale's celebrated $h$-cobordism theorem showed that an $h$-cobordism between simply connected manifolds of dimension $\geq 5$ is necessarily {\em trivial}, i.e., diffeomorphic to a product of the form $M \times I$.  An $h$-cobordism between non-simply connected manifolds of dimension $\geq 5$ need not be diffeomorphic to a product, but Barden, Mazur and Stallings formulated an appropriate generalization, now called the {\em $s$-cobordism theorem}, giving necessary and sufficient conditions for triviality of such $h$-cobordisms (see, e.g., \cite[Chapter 8]{Ranicki} for discussion of these results in the context of classification).

Given a CW complex $X$, let ${\mathscr S}(X)$ denote the {\em structure set} of $X$, i.e., the set of $h$-cobordism classes of manifolds homotopy equivalent to $X$.  At least in dimensions $\geq 5$, Problem (iii) reduces via the $h$-cobordism (or $s$-cobordism) theorem to determining if the structure set ${\mathscr S}(X)$ is non-empty, and, when it is, providing a description of its elements.  The second component of the analysis of Problem (iii), contained in the beautiful work of Browder, Novikov, Sullivan and Wall, provides a description of ${\mathscr S}(X)$.  In its modern formulation, these authors construct a two-stage obstruction theory fitting ${\mathscr S}(X)$ into an appropriate exact sequence and effectively reducing determination of ${\mathscr S}(X)$ to computations in homotopy theory; we outline this approach at the end of \S \ref{s:fundamentalgroup}.  The literature on surgery theory is famously dense and the standard references are \cite{Browder} and \cite{Wall}; other references, each having different emphasis, include \cite{Ranicki,MadsenMilgram,Rognes}.

Mimicking this development in algebraic geometry, we see the strengths of the $\aone$-homotopy category.  M. Levine and the second author developed a natural algebro-geometric analog of cobordism theorem, called {\em algebraic cobordism} (see \cite{LM}).  In this spirit, we introduce a notion of $h$-cobordism in algebraic geometry (see Definition \ref{defn:aonehcobordism}), motivated by Morse theory, that we call {\em $\aone$-$h$-cobordism}.  An $\aone$-$h$-cobordism between smooth proper varieties $X$ and $X'$ consists of a pair $(W,f)$, with $W$ a smooth variety, and $f: W \to \aone$ a proper, surjective morphism such that $X \cong f^{-1}(0)$, $X' \cong f^{-1}(1)$, and the inclusions $X \hookrightarrow W$ and $X' \hookrightarrow W$ are $\aone$-weak equivalences.  We think of the pair $(W,f)$ as a cobordism together with a choice of ``Morse function."  In analogy with the situation in geometric topology, we suggest the following definition and problem.

\begin{defnintro}
Given a (fibrant, $\aone$-local) space ${\mathcal X}$ (see the conventions), a {\em scheme structure} on (or {\em homotopy smoothing of}) ${\mathcal X}$ is a pair $(X,s)$, consisting of a smooth proper scheme $X$ and an $\aone$-weak equivalence $s: X \to {\mathcal X}$.  The {\em $\aone$-structure set} of ${\mathcal X}$, denoted ${\mathscr S}_{\aone}({\mathcal X})$, is the set of scheme structures on ${\mathcal X}$ subject to the equivalence relation generated by $(X,s) \sim (X',s')$ if there exists a triple $(W,f,H)$ consisting of an $\aone$-$h$-cobordism $(W,f)$ between $X$ and $X'$, and a morphism $H: W \to {\mathcal X}$ that upon composition with the morphism $X \hookrightarrow W$ (resp. $X' \hookrightarrow W'$) gives $f$ (resp. $f'$).   Two scheme structures on a space ${\mathcal X}$ equivalent in ${\mathscr S}_{\aone}({\mathcal X})$ will be called {\em $\aone$-block equivalent}.
\end{defnintro}

\begin{problemintro}[$\aone$-surgery problem]
\label{problem:aonesurgery}
Given an $\aone$-connected space ${\mathcal X}$, determine if the set ${\mathscr S}_{\aone}({\mathcal X})$ is non-empty, and, if it is, determine the set of $\aone$-block equivalence classes.
\end{problemintro}

To show that Problem \ref{problem:aonesurgery} is extremely rich, we give techniques for constructing $\aone$-$h$-cobordisms of smooth proper schemes (see Propositions \ref{prop:bundles} and \ref{prop:blowups}).  The proof of Theorem \ref{thm:surfacesversion1} is obtained by detailed study of $\aone$-$h$-cobordisms between rational smooth proper surfaces.  Extending this discussion, the results of \S \ref{s:classifyingspaces} show that $\aone$-$h$-cobordisms constructed by these means are abundant and lays some technical foundation for a general investigation.

Reflecting on some basic computations highlights central differences between topology and algebraic geometry.  First, $\aone$-$h$-cobordisms between smooth schemes are ``rarely" trivial (i.e., isomorphic to products of the form $X \times \aone$), even when the associated ``Morse function" has no critical values.  Following topological ideas, we look to the $\aone$-fundamental group, investigated in great detail in \cite{MField}, for an explanation of the deviation between $\aone$-block equivalence and isomorphism as schemes.  Second, there are arguably few interesting invariants available to distinguish ``nearly rational" varieties.  For example, given a unirational smooth proper complex variety $X$, Serre famously showed \cite[Proposition 1]{Serre} that the set $X(\cplx)$, viewed as a complex manifold, is simply connected.  In stark contrast to the topological situation, the $\aone$-fundamental group of any strictly positive dimensional rational smooth proper complex variety is {\em always} a highly non-trivial invariant (see Propositions \ref{prop:nontrivialfundamentalgroup} and \ref{prop:etaleaonefundamentalgroupnontrivial} for precise and more general statements)!  Said differently, using Example \ref{ex:aonerigid}, one can see that the only $\aone$-connected and $\aone$-simply connected smooth proper variety over a field is a point.  We record the following result mainly for the sake of amusement; see Proposition \ref{prop:etaleaonehcobordismtheorem} for a precise and more general statement.

\begin{scholiumintro}[$\aone$-$h$-cobordism theorem]
Any $\aone$-$h$-cobordism between smooth proper $\aone$-connected and $\aone$-simply connected varieties over a field is trivial.
\end{scholiumintro}

The $\aone$-fundamental group of $\pone$, determined in \cite{MField}, plays a distinguished r\^ole in $\aone$-homotopy theory, and we review aspects of this computation here.  The $\aone$-fundamental groups of projective spaces, $SL_n$, and smooth proper toric varieties have also been studied (\cite[\S 6]{MField}, and \cite{ADExcision,Wendt}).  One main computational result of this paper is the determination of the $\aone$-fundamental group of various rational smooth proper varieties (see Propositions \ref{prop:hirzebruchfundamental} and \ref{prop:fundamentalgroupblowup}).  Combining the computations of this paper with Theorem \ref{thm:surfacesversion1} provides a solution to Problem \ref{problem:aonesurgery} for rational smooth proper surfaces.

\begin{thmintro}[See Corollary \ref{thm:surfacesfundamentalgroup}]
\label{thm:refinedsurfaces}
Let $k$ be an algebraically closed field.  Two rational smooth proper surfaces are $\aone$-$h$-cobordant if and only if their $\aone$-fundamental groups are isomorphic.  Thus, for any rational smooth proper surface $X$ the set ${\mathscr S}_{\aone}(X)$ consists of a single element.
\end{thmintro}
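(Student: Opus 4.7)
The ``only if'' direction is immediate from the definition: an $\aone$-$h$-cobordism $(W,f)$ between $X$ and $X'$ provides $\aone$-weak equivalences $X \hookrightarrow W \hookleftarrow X'$, inducing isomorphisms $\pi_1^{\aone}(X) \cong \pi_1^{\aone}(W) \cong \pi_1^{\aone}(X')$.

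For the converse, the plan is to combine the $\aone$-homotopy classification of Theorem \ref{thm:surfacesversion1} with the explicit computations of $\aone$-fundamental groups and the constructions of $\aone$-$h$-cobordisms outlined in the introduction. There are three steps: (i) show that the models $\pone\times\pone$, $\mathbb{P}^2$, and the blow-ups of $\mathbb{P}^2$ at $n$ distinct $k$-points (one model for each $n\geq 1$) have pairwise non-isomorphic $\aone$-fundamental sheaves of groups; (ii) show that any rational smooth proper surface is $\aone$-$h$-cobordant, not merely $\aone$-weakly equivalent, to one of these models; (iii) deduce the structure set statement by tracking the extra compatibility with a reference scheme-structure map.

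Step (i) should follow directly from Propositions \ref{prop:hirzebruchfundamental} and \ref{prop:fundamentalgroupblowup}: the abelianization of $\pi_1^{\aone}$ detects the Picard rank, which already separates $\mathbb{P}^2$ and the various $\mathrm{Bl}_n\mathbb{P}^2$ for distinct $n\geq 2$, while a direct comparison of the explicit presentations supplied by those propositions distinguishes the remaining two models $\pone\times\pone$ and $\mathbb{F}_1 = \mathrm{Bl}_1\mathbb{P}^2$ of common Picard rank. For step (ii), I would use the classical fact that every rational smooth proper surface over an algebraically closed field is an iterated blow-up of either $\mathbb{P}^2$ or some Hirzebruch surface $\mathbb{F}_a$. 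Proposition \ref{prop:bundles} provides $\aone$-$h$-cobordisms between different Hirzebruch surfaces, reducing all $\mathbb{F}_a$ to either $\mathbb{F}_0 = \pone\times\pone$ or $\mathbb{F}_1 = \mathrm{Bl}_1\mathbb{P}^2$, and Proposition \ref{prop:blowups} provides $\aone$-$h$-cobordisms between iterated blow-ups of a fixed variety centered at the same number of distinct points (by moving the centers coherently in an $\aone$-family). Concatenating these $\aone$-$h$-cobordisms and invoking transitivity of the generated equivalence relation yields the reduction to a model.

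The main obstacle will be step (ii): given an arbitrary iterated blow-up, one must arrange the centers to move in $\aone$-families while remaining disjoint over every fibre, and simultaneously handle the transitions between blow-ups of $\mathbb{P}^2$ and of Hirzebruch surfaces without leaving the category of smooth proper schemes. Once (ii) is in hand, step (iii) amounts to checking that each $\aone$-$h$-cobordism so constructed can be equipped with a morphism $H\colon W \to \mathcal{X}$ to the chosen $\aone$-homotopy type that restricts to the given scheme-structure maps on the two ends; the constructions of Propositions \ref{prop:bundles} and \ref{prop:blowups} are sufficiently natural to allow this enhancement, yielding that $\mathscr{S}_{\aone}(X)$ consists of a single element for every rational smooth proper surface $X$.
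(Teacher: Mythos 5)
Your proposal is correct and follows essentially the same route as the paper: reduce to the discrete list of models via Theorem \ref{thm:surfaces} (whose proof already produces $\aone$-$h$-cobordisms, not just $\aone$-weak equivalences, via Propositions \ref{prop:bundles} and \ref{prop:blowups}), then separate the models by Picard rank (Proposition \ref{prop:nontrivialfundamentalgroup} gives $\mathrm{Hom}(\pi_1^{\aone}(X),\gm)\cong Pic(X)$) and resolve the rank-$2$ ambiguity between $\pone\times\pone$ and ${\mathbb F}_1$ by Proposition \ref{prop:hirzebruchfundamental}. The one point you pass over a bit quickly is step (iii): singling out ${\mathscr S}_{\aone}(X)$ as one element needs more than equipping each cobordism with \emph{some} map $H\colon W\to{\mathcal X}$ extending the structure map on one end (which exists because $X\hookrightarrow W$ is an $\aone$-acyclic cofibration and ${\mathcal X}$ is $\aone$-fibrant) — one must also reconcile the induced structure map on the far end with the prescribed one — but the paper treats this at the same level of detail, so this is not a gap in your argument relative to the paper's own proof.
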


\begin{remintro}
Combining Corollary \ref{cor:aoneconnectednesssurfaces} with Theorem \ref{thm:refinedsurfaces} provides a solution to the $\aone$-surgery problem for smooth proper $\aone$-connected varieties of dimension $\leq 2$ over algebraically closed fields having characteristic $0$.
\end{remintro}

\begin{remintro}
Theorem \ref{thm:refinedsurfaces} and the discussion preceding it provides the following lesson:  the extent to which the isomorphism and $\aone$-homotopy classifications differ depends on ``minimality properties" of an $\aone$-homotopy type in a sense we will explore in Problem \ref{problem:aoneminimality}.  Indeed, we will see that blowing-up makes the $\aone$-fundamental group more complicated.  At the end of \S \ref{s:fundamentalgroup}, we discuss possible analogs of the $s$-cobordism theorem in $\aone$-homotopy theory and formulate a general approach to the $\aone$-surgery problem.
\end{remintro}

The $\aone$-homotopy type of a smooth proper variety encodes universal cohomological information about the variety, and, in particular, information about Hodge structures on cohomology, \'etale homotopy type, (higher) Chow groups, algebraic K-theory, or Hermitian K-theory.   The eventual goal of this kind of study of smooth proper schemes is to understand the {\em arithmetic building blocks}, or {\em motivic skeleton}, of smooth proper varieties over a field using a surgery-style obstruction theory.  The introduction to each section contains more detailed discussion of the results contained therein.

\subsubsection*{Conventions and notation}
Throughout this paper, $k$ denotes a field.  Henceforth, we use the word {\em scheme} as a synonym for separated scheme having essentially finite type over $k$, i.e., a filtering limit of $k$-schemes having finite type over $k$ with smooth affine bonding morphisms.  The word {\em variety} means integral scheme having finite type over $k$. Using this terminology, let $\Sm_k$ denote the category of smooth schemes having finite type over $k$.  The words map and morphism are used synonymously through this paper, and we denote them by solid arrows.  Rational maps, where they occur, are denoted by dashed arrows.

We let $\Spc_k$ ($\Spc_{k,\bullet}$) stand for the category of {\em (pointed) spaces over $k$}, i.e., the category of (pointed) simplicial {\em Nisnevich} sheaves of sets on $\Sm_k$.  Similarly, $\Spc_k^{\et}$ ($\Spec_{k,\bullet}^{\et}$) will stand for the category of {\em (pointed) \'etale spaces over $k$}, i.e., the category of (pointed) simplicial \'etale sheaves of sets on $\Sm_k$.  The word {\em sheaf} will uniformly mean sheaf in the Nisnevich topology ({\em cf.} \cite[\S 3.1]{MV}), unless otherwise indicated.  We designate schemes by upper case Roman letters (e.g., $X,Y$), spaces by upper case calligraphic letters (e.g., ${\mathcal X},{\mathcal Y}$), and pointed (simplicial) spaces by explicit specification of the base-point.  We write ${\mathcal X}_n$ for the sheaf of $n$-simplices of a space ${\mathcal X}$.  The Yoneda embedding induces a fully-faithful functor ${\mathcal Sm}_k \to \Spc_k$ (resp. $\Spc_k^{\et}$); we systematically abuse notation and write $X$ for the space associated with $X \in {\mathcal Sm}_k$ via this functor.  If $k$ is clear from context, we write $\ast$ interchangeably for the space $\Spec k$ or a base-($k$-)point.  For an extension field $L/k$, we write ${\mathcal X}(L)$ for ${\mathcal X}(\Spec L)$.

Our conventions regarding $\aone$-homotopy theory, together with a quick review of the relevant material, and statements of most of the properties we use are summarized in Appendix \ref{s:homotopyoverview}; see \cite{MV,MIntro} for detailed development of the subject.  We write $\hsnis$ ($\hspnis$) for the {\em (pointed) simplicial homotopy category} and $\hset$ ($\hspet$) for the {\em (pointed) \'etale simplicial homotopy category}.  We write $\ho{k}$ ($\hop{k}$) for the {\em (pointed) $\aone$-homotopy} category, and $\het{k}$ ($\hpet{k}$) for the {\em (pointed) \'etale $\aone$-homotopy category}.

Given two pointed spaces $({\mathcal X},x)$ and $({\mathcal Y},y)$, we write $[{\mathcal X},{\mathcal Y}]_s$ (resp. $[({\mathcal X},x),({\mathcal Y},y)]_s$) for the set of (pointed) simplicial homotopy classes of maps from ${\mathcal X}$ to ${\mathcal Y}$.  Likewise, we write $[{\mathcal X},{\mathcal Y}]_{\aone}$ (resp. $[({\mathcal X},x),({\mathcal Y},y)]_{\aone}$) for the set of (pointed) $\aone$-homotopy classes of maps from ${\mathcal X}$ to ${\mathcal Y}$.  The corresponding objects in the \'etale topology will be decorated with an extra sub or super-script ``$\et$."

Recall also the {\em $\aone$-homotopy groups} of a pointed space $({\mathcal X},x)$, denoted $\pi_i^{\aone}({\mathcal X},x)$, are the sheaves associated with the presheaves $U \mapsto [S^i_s \wedge U_+,({\mathcal X},x)]_{\aone}$; the spheres $S^i_s$ are studied in \cite[\S 3.2.2]{MV}.  Finally, one word of caution is in order: we (almost) never use stable homotopy theoretic considerations in this paper, so a lowercase super- or subscript $s$ is always short for {\em simplicial}.

\subsubsection*{Acknowledgments}
The first author would like to thank Brent Doran for interesting conversations about this and related projects, and Jeff Giansiracusa and Johannes Ebert for many interesting discussions about surgery theory and classification problems.  The second author is very much indebted to his advisor Jean Lannes for his enthusiastic and inspiring explanations of the beautiful theory of surgery in the classical case.  We would like to thank Jean-Louis Colliot-Th\'el\`ene for pointing out a counterexample to a conjecture in an earlier version of this paper, and providing some extremely useful comments and references regarding various notions of ``near rationality" and their intricate interrelations.   We also thank Kei Hagihara for pointing out an error in a previous version of this paper.

\section{Connectedness in $\aone$-homotopy theory}
\label{s:connectedness}
In this section, we discuss several notions of connectedness in $\aone$-homotopy theory, point out some fundamental differences between these notions and the usual notion of (path) connectedness for a topological space, and relate our notions of connectedness to birational geometry of algebraic varieties.  While classification of disconnected manifolds reduces to classification of connected components, the corresponding fact in $\aone$-homotopy theory is more subtle (see Remark \ref{rem:connectedcomponents}, Lemma \ref{lem:aonerigid} and Proposition \ref{prop:curves}).  Definition \ref{defn:chainconnected} and Proposition \ref{prop:aoneconnected} provide a geometric condition that guarantees $\aone$-connectedness of a space.  Theorem \ref{thm:propercharacterization} and Corollary \ref{cor:chainconverse} complement these results by providing a (partial) converse to Proposition \ref{prop:aoneconnected} and give a geometric characterization of (weak) $\aone$-connectedness for smooth proper schemes over a field.

Definition \ref{defn:combinatorial} and Lemma \ref{lem:combinatorialconnected} give large classes of smooth $\aone$-connected varieties.  Theorem \ref{thm:stablyrational} demonstrates, in particular, that retract $k$-rational varieties over a field $k$ having characteristic $0$ are necessarily $\aone$-connected.  While separably rationally connected smooth proper varieties over a perfect field are not $\aone$-connected in general, Theorem \ref{cor:separablyrationallyconnected} shows such varieties are weakly $\aone$-connected.  In particular, separably rationally connected smooth proper $k$-varieties are precisely the weakly $\aone$-connected smooth proper $k$-varieties.  Finally, we note here that Appendix \ref{s:notationalpostscript} provides a summary of the various notions of connectivity and rationality introduced in this section and used in the rest of the paper.

\subsection{$\aone$-connectedness: Definitions}
\begin{defn}
\label{defn:sheavesofsimplicialconnectedcomponents}
Suppose ${\mathcal X} \in \Spc_k$ (resp. $\Spc_k^{\et}$).  The {\em sheaf of (\'etale) simplicial connected components of ${\mathcal X}$}, denoted $\pi_0^{s}({\mathcal X})$ (resp. $\pi_0^{s,\et}({\mathcal X})$), is the (\'etale) sheaf associated with the presheaf $U \mapsto [U,{\mathcal X}]_s$ (resp. $U \mapsto [U,{\mathcal X}]_{s,\et}$) for $U \in \Sm_k$.
\end{defn}

\begin{defn}
\label{defn:sheavesofaoneconnectedcomponents}
Suppose ${\mathcal X} \in \Spc_k$.  The {\em sheaf of $\aone$-connected components of ${\mathcal X}$}, denoted $\pi_0^{\aone}({\mathcal X})$, is the sheaf associated with the presheaf
\[
U \longmapsto [U,{\mathcal X}]_{\aone},
\]
for $U \in {\mathcal Sm}_k$.  Similarly, for ${\mathcal X} \in \Spc^{\et}_k$, the {\em sheaf of \'etale $\aone$-connected components}, denoted $\pi_0^{\aone,\et}({\mathcal X})$, is the \'etale sheaf associated with the presheaf
\[
U \longmapsto [U,{\mathcal X}]_{\aone,\et}
\]
for $U \in {\mathcal Sm}_k$.
\end{defn}

\begin{rem}
Suppose ${\mathcal X} \in \Spc_k$ (resp. $\Spc_k^{\et}$).  If $L_{\aone}({\mathcal X})$ denotes the $\aone$-localization functor (see Definition \ref{defn:aonehomotopycategories}), then one has by definition $\pi_0^{s}(L_{\aone}({\mathcal X})) = \pi_0^{\aone}({\mathcal X})$ (resp. $\pi_0^{s,\et}(L_{\aone}({\mathcal X}) = \pi_0^{\aone,\et}({\mathcal X}))$.
\end{rem}

The final object $\Spec k$ in the category $\Spc_k$ (resp. $\Spc^{\et}_k$) is simplicially fibrant and $\aone$-local (see Example \ref{ex:aonelocalsmoothschemes} for an explanation).  From this fact, one deduces that $\pi_0^{\aone}(\Spec k) = \Spec k$ and that $\pi_0^{\aone,\et}(\Spec k) = \Spec k$.  These two observations allow us to define $\aone$-homotopic notions of connectedness.

\begin{defn}
\label{defn:aoneconnected}
We say that ${\mathcal X} \in \Spc_k$ (resp. $\Spc_k^{\et}$) is (\'etale) {\em $\aone$-connected} if the canonical morphism ${\mathcal X} \to \Spec k$ induces an isomorphism of sheaves $\pi_0^{\aone}({\mathcal X}) \isomt \Spec k$ (resp. isomorphism of \'etale sheaves $\pi_0^{\aone,\et}({\mathcal X}) \isomt \Spec k$).  We say that ${\mathcal X} \in \Spc_k$ is {\em weakly $\aone$-connected} if the map $\pi_0^{\aone}({\mathcal X}) \to \Spec k$ is an isomorphism on sections over separably closed extensions $L/k$.  Similarly, we say that ${\mathcal X}$ in $\Spc_k$ (resp. $\Spc_k^{\et}$) is {\em (\'etale) $\aone$-disconnected} if it is not (\'etale) $\aone$-connected.
\end{defn}

We will see later that, even for varieties over $\cplx$, $\aone$-connectedness and weak $\aone$-connectedness are distinct notions.  From the existence of a fibrant resolution functor (see the discussion just prior to Definition \ref{defn:aonehomotopycategories}), one can deduce the following result.

\begin{cor}[Unstable $\aone$-$0$-connectivity theorem {\cite[\S 2 Corollary 3.22]{MV}}]
\label{cor:unstableaoneconnectivitytheorem}
Suppose ${\mathcal X} \in \Spc_k$ (resp. $\Spc_k^{\et}$).  The canonical map ${\mathcal X} \to Ex_{\aone}({\mathcal X})$ (resp. ${\mathcal X} \to Ex_{\et,\aone}({\mathcal X})$) induces an epimorphism $\pi_0^{s}({\mathcal X}) \to \pi_0^{\aone}({\mathcal X})$ (resp. $\pi_0^{s,\et}({\mathcal X}) \to \pi_0^{\aone,\et}({\mathcal X})$).
\end{cor}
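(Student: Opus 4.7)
The plan is to exploit the concrete construction of the $\aone$-localization functor $L_{\aone}$ (equivalently of $Ex_{\aone}$) as a transfinite iteration of a simpler ``naive'' $\aone$-singular functor, and to check that each step of the iteration induces an epimorphism on $\pi_0^s$. Recall that by the remark immediately preceding the statement one has $\pi_0^s(L_{\aone}({\mathcal X}))=\pi_0^{\aone}({\mathcal X})$, so it suffices to prove that the unit map ${\mathcal X}\to L_{\aone}({\mathcal X})$ induces an epimorphism of Nisnevich sheaves on $\pi_0^s$ (and likewise in the \'etale case).

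First I would recall the functor $\operatorname{Sing}^{\aone}_*$: for a space ${\mathcal Y}$, $\operatorname{Sing}^{\aone}_*({\mathcal Y})$ is the diagonal of the bisimplicial sheaf $U\mapsto {\mathcal Y}(U\times\Delta^{\bullet}_{\aone})$, where $\Delta^{\bullet}_{\aone}$ is the standard algebraic cosimplicial object. The key elementary computation is that $\pi_0^s(\operatorname{Sing}^{\aone}_*({\mathcal Y}))$ is, as a Nisnevich sheaf, the coequalizer of the two maps
\[
{\mathcal Y}(-\times \aone)\;\rightrightarrows\;{\mathcal Y}(-)
\]
induced by the inclusions $\{0\},\{1\}\hookrightarrow \aone$, sheafified. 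In particular the canonical map ${\mathcal Y}\to \operatorname{Sing}^{\aone}_*({\mathcal Y})$ induces a surjection of presheaves (hence an epimorphism of sheaves) on $\pi_0^s$, since $\pi_0^s(\operatorname{Sing}^{\aone}_*({\mathcal Y}))$ is a quotient of $\pi_0^s({\mathcal Y})$.

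Second, I would assemble $L_{\aone}$. Following \cite{MV}, one constructs $Ex_{\aone}$ by alternating simplicial fibrant replacement $Ex^s$ with the functor $\operatorname{Sing}^{\aone}_*$ and taking the transfinite colimit. Simplicial fibrant replacement is a simplicial weak equivalence, so is an isomorphism on $\pi_0^s$; the functor $\operatorname{Sing}^{\aone}_*$ induces an epimorphism on $\pi_0^s$ by the previous step. Since the formation of $\pi_0^s$ as a Nisnevich sheaf commutes with filtered colimits of spaces, and since filtered colimits of epimorphisms of sheaves are epimorphisms, the transfinite composition yields the desired epimorphism $\pi_0^s({\mathcal X})\twoheadrightarrow \pi_0^s(L_{\aone}({\mathcal X}))=\pi_0^{\aone}({\mathcal X})$. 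The \'etale case is identical once one replaces the Nisnevich topology by the \'etale topology throughout, using that \'etale sheafification is again exact and preserves filtered colimits.

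The main obstacle, to my mind, is purely bookkeeping: one must verify that the explicit functorial construction of $Ex_{\aone}$ actually factors as an iteration in which each stage visibly surjects on $\pi_0^s$, and that the transfinite colimit may be interchanged with $\pi_0^s$ (i.e.\ with sheafification of presheaf $\pi_0$). Neither is subtle once one adopts the Morel--Voevodsky presentation, but any alternative model of $L_{\aone}$ (e.g.\ as a left Bousfield localization) requires translating back to a presentation in which epimorphy on $\pi_0^s$ is manifest; the cleanest route is the explicit $\operatorname{Sing}^{\aone}_*$-tower outlined above.
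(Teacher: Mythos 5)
Your proposal is correct and reproduces the Morel--Voevodsky argument (which the paper cites for this corollary without restating a proof): present the $\aone$-localization via an iterated $Sing_*^{\aone}$-tower, check surjectivity on $\pi_0^s$ at each stage, and pass to the (filtered) colimit. The one imprecision is the coequalizer formula, which as a presheaf should read $\pi_0(Sing_*^{\aone}({\mathcal Y})(U))=\operatorname{coeq}\bigl({\mathcal Y}_1(U\times\aone)\rightrightarrows{\mathcal Y}_0(U)\bigr)$ -- the face maps come from the $1$-simplices of the diagonal bisimplicial object, not from ${\mathcal Y}$ viewed as a sheaf of sets -- but this still exhibits $\pi_0^s(Sing_*^{\aone}({\mathcal Y}))$ as a quotient of $\pi_0^s({\mathcal Y})$, so the rest of the argument goes through unchanged.
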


\begin{ex}
 Suppose $X$ is a smooth $\aone$-connected $k$-scheme. Since $\Spec k$ is Henselian local, the map $X(\Spec k) \to \pi_0^{\aone}(X)(\Spec k)$ is surjective, and we conclude that $X$ necessarily has a $k$-rational point.  The corresponding statement for smooth \'etale $\aone$-connected schemes is false, i.e., smooth \'etale $\aone$-connected $k$-schemes need not have a $k$-rational point if $k$ is not separably closed.
\end{ex}

\begin{rem}
\label{rem:connectedcomponents}
In topology, if $M$ is a manifold (recall the conventions of \S \ref{s:introduction}), then we can study $M$ by analyzing each connected component separately, since each such component will again be a manifold.  In $\aone$-homotopy theory, given a morphism $* \to \pi_0^{\aone}(X)$ corresponding to a $k$-rational point $x \in X(k)$, the ``($\aone$-)connected component of $X$ containing the point $x$" is a space that need not be a smooth scheme.
\end{rem}

\subsubsection*{$\aone$-rigidity}
\begin{defn}
\label{defn:aonerigid}
A scheme $X \in {\mathcal Sm}_k$ is called {\em $\aone$-rigid} (see \cite[\S 3 Example 2.4]{MV}) if for every $U \in {\mathcal Sm}_k$, the map
\[
X(U) \longrightarrow X(U \times \aone)
\]
induced by pullback along the projection $U \times \aone \to U$ is a bijection.
\end{defn}

\begin{lem}
\label{lem:aonerigid}
If $X \in {\mathcal Sm}_k$ is $\aone$-rigid, then for any $U \in \Sm_k$ the canonical maps
\[
\begin{split}
X(U) &\longrightarrow [U,X]_{\aone}, \text{ and } \\
X(U) &\longrightarrow [U,X]_{\aone,\et}
\end{split}
\]
are bijections.  Consequently, the canonical map $X \rightarrow \pi_0^{\aone}(X)$ (resp. $X \rightarrow \pi_0^{\aone,\et}(X)$) is an isomorphism of (\'etale) sheaves.
\end{lem}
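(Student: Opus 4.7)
The plan is to show directly that an $\aone$-rigid scheme $X$, viewed as a discrete simplicial sheaf (of sets) on $\Sm_k$, is already $\aone$-fibrant in both the Nisnevich and étale model structures; once this is established, the claimed bijections are formal, and the final assertion about $\pi_0^{\aone}$ follows at once.

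First I would verify that $X$ is simplicially fibrant in both the Nisnevich-local and étale-local simplicial model structures on $\Spc_k$ and $\Spc_k^{\et}$. Since $X$ is a discrete simplicial sheaf (all face and degeneracy maps are identities), simplicial fibrancy reduces to the descent condition for the representable sheaf $U \mapsto \mathrm{Hom}_{\Sm_k}(U,X)$ in the corresponding topology, which holds because a separated $k$-scheme represents a sheaf in the étale topology (hence \textit{a fortiori} in the Nisnevich topology). In particular, for any $U \in \Sm_k$ one has
\[
[U,X]_s \;=\; \mathrm{Hom}_{\Sm_k}(U,X) \;=\; X(U), \qquad [U \times \aone, X]_s \;=\; \mathrm{Hom}_{\Sm_k}(U \times \aone,X),
\]
and likewise for the étale simplicial homotopy classes, since the mapping complex into a discrete sheaf is itself discrete.

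Next, the hypothesis that $X$ is $\aone$-rigid is, by Definition \ref{defn:aonerigid}, precisely the statement that the projection $U \times \aone \to U$ induces a bijection $\mathrm{Hom}(U,X) \to \mathrm{Hom}(U \times \aone,X)$ for every $U \in \Sm_k$. Combined with the previous paragraph, this says that $X$ satisfies the condition characterising $\aone$-local objects (in the sense of Appendix \ref{s:homotopyoverview}): it is simplicially fibrant and the map $\mathrm{Map}(U,X) \to \mathrm{Map}(U \times \aone, X)$ is a weak equivalence for all $U$. Thus the canonical morphism $X \to Ex_{\aone}(X)$ (resp.\ $X \to Ex_{\et,\aone}(X)$) is a simplicial weak equivalence, and so
\[
[U,X]_{\aone} \;=\; [U, Ex_{\aone}(X)]_s \;=\; [U,X]_s \;=\; X(U),
\]
and similarly $[U,X]_{\aone,\et} = X(U)$. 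This proves both bijections.

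Finally, for the consequence about $\pi_0^{\aone}$: by Definition \ref{defn:sheavesofaoneconnectedcomponents}, $\pi_0^{\aone}(X)$ is the Nisnevich sheafification of $U \mapsto [U,X]_{\aone}$. By what was just shown, this presheaf is already the sheaf $U \mapsto X(U)$, and so $X \to \pi_0^{\aone}(X)$ is an isomorphism; the same argument in the étale topology gives $X \isomto \pi_0^{\aone,\et}(X)$. The only subtle point in the whole argument is confirming that the representable sheaf attached to a smooth $k$-scheme is simplicially fibrant in the appropriate topology; once that standard fact is in hand the rest is a direct unpacking of definitions.
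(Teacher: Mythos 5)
Your argument is correct and is essentially the same as the paper's: the paper simply cites Example \ref{ex:aonelocalsmoothschemes} (smooth schemes are simplicially fibrant, so by Lemma \ref{lem:aonelocalequivalentconditions} an $\aone$-rigid smooth scheme is $\aone$-local) and you have just unfolded that example in both topologies. Nothing is missing; the level of detail is greater than the paper's one-line proof, but the route is identical.
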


\begin{proof}
This follows immediately from Example \ref{ex:aonelocalsmoothschemes} because $\aone$-rigid smooth schemes are precisely the $\aone$-local smooth schemes.
\end{proof}

\begin{ex}
\label{ex:aonerigid}
Any $0$-dimensional scheme over a field $k$ is $\aone$-rigid.  Abelian $k$-varieties are $\aone$-rigid, and smooth complex varieties that can be realized as quotients of bounded Hermitian symmetric domains by actions of discrete groups are also $\aone$-rigid.  From the above collection of $\aone$-rigid varieties, one can produce new examples by taking (smooth) subvarieties or taking products.  The unstable $\aone$-homotopy types of $\aone$-rigid smooth schemes are, in a sense, uninteresting: all higher $\aone$-homotopic invariants of such varieties are trivial.  Note however the stable $\aone$-homotopy types of $\aone$-rigid varieties can be extremely subtle; see for example \cite{Roendigs} for a study of the stable $\aone$-homotopy types of curves.
\end{ex}

In the spirit of results to appear later in this section, we mention the following result.

\begin{lem}
A smooth $k$-scheme $X$ is $\aone$-rigid if and only if for every finitely generated separable extension $L/k$ the map
\[
X(L) \longrightarrow X(\aone_L)
\]
induced by the projection $\aone_L \to \Spec L$ is a bijection.
\end{lem}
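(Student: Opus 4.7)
The statement is a biconditional, so I would argue each direction separately. The forward implication is a continuity argument: for any finitely generated separable extension $L/k$, generic smoothness provides a smooth finitely generated $k$-algebra $R$ with $\operatorname{Frac}(R) = L$, so $\Spec L = \varprojlim_{0 \neq f \in R} \Spec R[1/f]$ is a cofiltered limit of objects of $\Sm_k$ with affine transition maps. Since $X$ is of finite type over $k$, its functor of points commutes with such limits, and the map $X(L) \to X(\aone_L)$ is then the filtered colimit of the bijections $X(\Spec R[1/f]) \to X(\Spec R[1/f] \times \aone)$ guaranteed by $\aone$-rigidity, hence itself a bijection.

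For the converse, assume $X(L) \to X(\aone_L)$ is bijective for every finitely generated separable extension $L/k$ and fix $U \in \Sm_k$; the goal is to show $X(U) \to X(U \times \aone)$ is bijective. Injectivity is automatic because the zero section $s : U \to U \times \aone$ splits the projection $p : U \times \aone \to U$. For surjectivity, given $f : U \times \aone \to X$ I would set $g := f \circ s \circ p$ and aim to prove $f = g$. Passing to connected components, I may assume $U$ is integral, so that its function field $K := k(U)$ is a finitely generated separable extension of $k$. Restricting $f$ and $g$ along the generic fiber inclusion $\iota : \aone_K \hookrightarrow U \times \aone$ and invoking the hypothesis for $L = K$ produces, after routine bookkeeping with $\iota$, $s$, $p$, and the generic point $\eta_U : \Spec K \to U$, the equality $f \circ \iota = g \circ \iota$.

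The remaining step, and the one I expect to require the most care, is upgrading ``$f$ and $g$ agree along $\iota$'' to the equality $f = g$ on all of $U \times \aone$. Here I would invoke separatedness of $X$, built into our scheme conventions: the equalizer of $f$ and $g$ is a closed subscheme $E \subseteq U \times \aone$ through which $\iota$ factors. Working on an affine chart $\Spec A \subseteq U$, the morphism $\iota$ corresponds to the canonical injection $A[t] \hookrightarrow K[t]$, whose kernel is zero, so the scheme-theoretic image of $\iota$ equals all of $U \times \aone$. Hence $E = U \times \aone$ and $f = g$. The substantive content of the whole argument is thus the interaction between separatedness of $X$ and scheme-theoretic density of the generic fiber $\aone_K$ in $U \times \aone$; the other steps are routine limit and diagram-chase bookkeeping.
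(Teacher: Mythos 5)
The paper states this lemma without proof, so there is no author argument to compare against; your proof is correct and is surely the intended one. Both directions check out: the forward direction correctly exploits that a smooth finite-type $k$-scheme $X$ commutes with cofiltered limits of affine $k$-schemes (so $X(L) = \colim X(\Spec R[1/f])$ and likewise for $\aone_L$), and the converse correctly reduces surjectivity to the agreement of $f$ and $f\circ s\circ p$ on the generic fiber $\aone_{k(U)}$, then upgrades this to a global equality via separatedness of $X$ and schematic dominance of $\iota$. The only detail you glossed as ``routine bookkeeping''---namely that $f\circ\iota = g\circ\iota$---does require one small extra move: you must use injectivity of $X(K)\to X(\aone_K)$ together with precomposing $f\circ\iota = h\circ p_K$ by the zero section $0_K$ to identify $h$ with $f\circ s\circ \eta_U$; but this does work as you indicate, and the rest of the argument (equalizer closed by separatedness, $A[t]\hookrightarrow K[t]$ injective on each affine chart, hence $E = U\times\aone$) is sound.
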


\subsubsection*{$\aone$-homotopy classification of curves}
Lemma \ref{lem:aonerigid} implies that the classification of smooth $\aone$-rigid schemes up to (\'etale) $\aone$-weak equivalence coincides with the isomorphism classification.  Using the classification of curves over a field $k$, one can show that any (open subscheme of a) curve of genus $g \geq 1$ is $\aone$-rigid, and similarly that (any open subscheme of) $\gm$ is $\aone$-rigid.  Combining these facts with the observation that smooth $\aone$-connected $k$-schemes have a $k$-rational point, we deduce the following result ({\em cf.} \cite[\S 3 Remark 2.5]{MV}).

\begin{prop}[$\aone$-homotopy classification of curves]
\label{prop:curves}
Two smooth proper curves of genus $g \geq 1$ are $\aone$-weakly equivalent if and only if they are isomorphic.  A smooth proper curve is $\aone$-connected if and only if it is isomorphic to $\pone$.
\end{prop}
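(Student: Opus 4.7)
My plan is to run the argument in two independent pieces, using $\aone$-rigidity to handle genus $\geq 1$ curves and using the geometric characterization of $\aone$-connectedness (Theorem \ref{thm:propercharacterization}) together with $\aone$-rigidity to pin down the genus $0$ case.

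For the first assertion, I would invoke the $\aone$-rigidity of smooth proper curves of genus $g \geq 1$ (stated in the paragraph preceding the proposition, and following from the standard fact that a morphism $\aone_L \to X$ to such a curve is constant for every finitely generated separable extension $L/k$). By Lemma \ref{lem:aonerigid}, if $X$ and $Y$ are two such curves, then the maps $X(U)\to [U,X]_{\aone}$ and $Y(U)\to [U,Y]_{\aone}$ are bijections for every $U \in \Sm_k$. Thus an $\aone$-weak equivalence $X \to Y$ (viewed as an isomorphism in $\ho{k}$) gives by Yoneda a unique morphism of sheaves, hence of schemes, $X \to Y$ with two-sided inverse; this morphism is an isomorphism of schemes. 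The converse direction is trivial.

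For the second assertion, suppose $C$ is a smooth proper $\aone$-connected curve. If $C$ had genus $\geq 1$, then by $\aone$-rigidity and Lemma \ref{lem:aonerigid} we would have $\pi_0^{\aone}(C) = C$ as a sheaf, which contradicts $\pi_0^{\aone}(C) = \Spec k$ since $C$ is strictly positive dimensional. Hence $C$ has genus $0$. As noted in the example following Corollary \ref{cor:unstableaoneconnectivitytheorem}, an $\aone$-connected smooth $k$-scheme has a $k$-rational point, and a smooth proper genus $0$ curve with a rational point is isomorphic to $\pone$.

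Conversely, I would verify that $\pone$ itself is $\aone$-connected by appealing to Theorem \ref{thm:propercharacterization}: it suffices to check that $\pone$ is $\aone$-chain connected. Given any finitely generated separable extension $L/k$ and any two $L$-points $x,y \in \pone(L)$, choose a third $L$-point $\infty$ distinct from both (possible after a harmless extension if $L$ is very small, but in any case one can chain through two standard charts if needed); then $\pone_L \setminus \{\infty\} \cong \aone_L$ contains both $x$ and $y$, so the inclusion $\aone_L \hookrightarrow \pone_L$ provides an $\aone$-path joining them. Thus $\pone$ is $\aone$-chain connected, and Theorem \ref{thm:propercharacterization} yields $\aone$-connectedness.

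The only genuinely delicate step is the verification in the first paragraph that an $\aone$-weak equivalence between $\aone$-rigid smooth schemes descends to an actual scheme-theoretic isomorphism, and this is immediate once one notes that Lemma \ref{lem:aonerigid} identifies $X$ and $Y$ with their own $\aone$-localizations; everything else is a direct consequence of results already stated in the excerpt.
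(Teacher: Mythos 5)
Your argument is correct and takes essentially the same route as the paper's sketch: $\aone$-rigidity (via Lemma \ref{lem:aonerigid}) handles the genus $\geq 1$ case and also shows such curves are $\aone$-disconnected, while the rational-point observation plus the genus~$0$ classification pins down $\pone$. The only (cosmetic) deviation is that you appeal to Theorem \ref{thm:propercharacterization} to pass from $\aone$-chain connectedness to $\aone$-connectedness of $\pone$; the easier implication Proposition \ref{prop:aoneconnected} (or Lemma \ref{lem:combinatorialconnected}, since $\pone$ is covered by affine spaces) already gives this and avoids invoking the harder converse-direction theorem whose proof is deferred to \S\ref{s:unramified}. Also, the parenthetical worry that $L$ might be ``very small'' is unnecessary: $\pone(L)$ always has at least three elements, so a third point $\infty$ can always be chosen.
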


\begin{rem}
Generalizing Proposition \ref{prop:curves}, one can show that for arbitrary fields the $\aone$-homotopy classification and isomorphism classification of curves coincide.  One method to do this has been worked out by M. Severitt (see \cite{Severitt}).  Indeed the discussion above reduces the problem to determining when two smooth conics without a rational point are $\aone$-weakly equivalent.  In this case, Severitt uses work of Karpenko \cite{Karpenko} to show that two conics have isomorphic motives if and only if they are isomorphic.  This result can then be ``lifted" to the $\aone$-homotopy category.  It is, however, possible to give a relatively elementary ``geometric" proof of the fact that two smooth proper conics over $k$ are $\aone$-weakly equivalent if and only if they are isomorphic.  We defer this result for considerations involving its length.
\end{rem}

\subsection{$\aone$-chain connectedness}
We now recall some algebro-geometric analogs of path connectedness.  Given $X \in \Sm_k$, $L$ a finitely generated separable extension of $k$, and points $x_0,x_1 \in X(L)$ an {\em elementary $\aone$-equivalence between $x_0$ and $x_1$} is a morphism $f: \aone \to X$ such that $f(0) = x_0$ and $f(1) = x_1$.   We will say that two points $x,x' \in X(L)$ are {\em $\aone$-equivalent} if they are equivalent with respect to the equivalence relation generated by elementary $\aone$-equivalence.

\begin{notation}
\label{notation:aoneequivalenceclassesofpoints}
We write $X(L)/\smallsim$ for the quotient of the set of $L$-rational points for the above equivalence relation and refer to this quotient as the set of {\em $\aone$-equivalence classes of $L$-points}.
\end{notation}

\begin{defn}
\label{defn:chainconnected}
We say that $X \in {\mathcal Sm}_k$ is {\em (weakly) $\aone$-chain connected} if for every finitely generated separable field extension $L/k$ (resp. separably closed field extension) the set of $\aone$-equivalences classes of $L$-points $X(L)/\smallsim$ consists of exactly $1$ element.
\end{defn}

\begin{rem}
The definition above is closely related to Manin's notion $R$-equivalence.  Suppose $k$ is a field and $X \in \Sm_k$.  Recall that two $k$-points in $X$ are called {\em directly $R$-equivalent} if there exists a morphism from an open subscheme of $\pone$ to $X$ whose image contains the given points.  We write $X(k)/R$ for the quotient of $X(k)$ by the equivalence relation generated by $R$-equivalence.  We say that $X$ is {\em separably $R$-trivial} if for every finitely generated separable extension field $L$ of $k$, $X(L)/R = \ast$.  If $X$ is a smooth proper $k$-variety, then $\aone$-chain connectedness of $X$ is equivalent to the notion of {\em separable $R$-triviality} of $X$.
\end{rem}

The {\em algebraic $n$-simplex} is the smooth affine $k$-scheme
\[
\Delta^{n}_{\aone} := \Spec k[x_0,\ldots,x_n]/(\sum_{i=0}^n x_i -1).
\]
Note that $\Delta^n_{\aone}$ is non-canonically isomorphic to ${\mathbb A}^n_k$.  Given $X \in {\mathcal Sm}_k$, let $Sing_*^{\aone}(X)$ (resp. $Sing_*^{\aone,\et}(X)$) denote the Suslin-Voevodsky singular construction of $X$, i.e., the (\'etale) simplicial sheaf defined by
\[
U \mapsto Hom_{\Sm_k}(\Delta^{\bullet}_{\aone} \times U,X);
\]
(see \cite[p. 88 and p. 107]{MV}).  By construction, there is a canonical morphism $X \to Sing_*^{\aone}(X)$ (resp. $X \to Sing_*^{\aone,\et}(X)$) that is an $\aone$-weak equivalence (in the \'etale topology); see \cite[\S 2 Corollary 3.8]{MV} for more details.

\begin{defn}
\label{defn:svsingularconnected}
For $X \in {\mathcal Sm}_k$, set
\[
\pi_0^{ch}(X) := \pi_0^s(Sing_*^{\aone}(X)),
\]
and
\[
\pi_0^{ch,\et}(X) := \pi_0^{s,\et}(Sing_*^{\aone,\et}(X)).
\]
We refer to the sheaf $\pi_0^{ch}(X)$ (resp. the \'etale sheaf $\pi_0^{ch,\et}(X)$) as the {\em sheaf of (\'etale) $\aone$-chain connected components of $X$.}
\end{defn}

\begin{lem}
\label{lem:chainepimorphism}
Suppose $X \in \Sm_k$.  The maps
\[
\begin{split}
\pi_0^{ch}(X) &\longrightarrow \pi_0^{\aone}(X) \\
\pi_0^{ch,\et}(X) &\longrightarrow \pi_0^{\aone,\et}(X)
\end{split}
\]
are epimorphisms.
\end{lem}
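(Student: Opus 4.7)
The plan is to deduce the lemma directly from the unstable $\aone$-$0$-connectivity theorem (Corollary \ref{cor:unstableaoneconnectivitytheorem}) by applying it to the Suslin--Voevodsky singular construction $Sing_*^{\aone}(X)$ in place of $X$ itself. The two assertions for the Nisnevich and \'etale topologies are parallel, so I would treat the Nisnevich case first and then copy the argument verbatim for the \'etale one.

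First I would record the structural input already granted in the excerpt: by \cite[\S 2 Corollary 3.8]{MV}, cited just before Definition \ref{defn:svsingularconnected}, the canonical map $X \to Sing_*^{\aone}(X)$ is an $\aone$-weak equivalence. Consequently, for any space ${\mathcal Y}$ whose $\aone$-localization functor $L_{\aone}$ has been fixed, one has a canonical identification $\pi_0^{\aone}(Sing_*^{\aone}(X)) \cong \pi_0^{\aone}(X)$; the analogous statement holds in the \'etale topology for $Sing_*^{\aone,\et}(X)$.

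Next I would apply Corollary \ref{cor:unstableaoneconnectivitytheorem} not to $X$ but to the space ${\mathcal Y} := Sing_*^{\aone}(X)$. That corollary produces an epimorphism of Nisnevich sheaves
\[
\pi_0^{s}(Sing_*^{\aone}(X)) \twoheadrightarrow \pi_0^{\aone}(Sing_*^{\aone}(X)).
\]
The source of this map is by definition $\pi_0^{ch}(X)$, and the previous paragraph identifies the target with $\pi_0^{\aone}(X)$, yielding the first claimed epimorphism. The second claim is obtained by the same argument, with $Sing_*^{\aone}$ replaced by $Sing_*^{\aone,\et}$ and the Nisnevich topology by the \'etale one, using the \'etale version of the $\aone$-$0$-connectivity theorem (also packaged in Corollary \ref{cor:unstableaoneconnectivitytheorem}).

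In short, the entire content of the lemma is essentially a two-line repackaging of already established facts, and there is no genuine obstacle; the only point that requires attention is the bookkeeping identification of $\pi_0^{\aone}(Sing_*^{\aone}(X))$ with $\pi_0^{\aone}(X)$, which is immediate from the fact that $X \to Sing_*^{\aone}(X)$ is an $\aone$-weak equivalence and hence induces an isomorphism on all $\aone$-homotopy sheaves.
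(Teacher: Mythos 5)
Your argument is exactly the paper's own proof: apply the unstable $\aone$-$0$-connectivity theorem to $Sing_*^{\aone}(X)$ (resp.\ $Sing_*^{\aone,\et}(X)$), identify the source with $\pi_0^{ch}(X)$ by definition, and identify the target with $\pi_0^{\aone}(X)$ via the $\aone$-weak equivalence $X \to Sing_*^{\aone}(X)$. The only difference is that you spell out the bookkeeping the paper leaves implicit.
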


\begin{proof}
Since the canonical map $X \to Sing_*^{\aone}(X)$ (resp. $X \to Sing_*^{\aone,\et}(X)$) is an $\aone$-weak equivalence (in the \'etale topology), the result follows immediately from Corollary \ref{cor:unstableaoneconnectivitytheorem} applied to $Sing_*^{\aone}(X)$ or $Sing_*^{\aone,\et}(X)$.
\end{proof}

\begin{cor}
\label{cor:sectionsoverfields}
Suppose $X$ is a smooth variety over a field $k$.  If $L/k$ is a finitely generated separable extension (or separably closed extension) and $X(L)/\smallsim = *$, then $\pi_0^{\aone}(X)(L) = *$.  Thus, if $X$ is weakly $\aone$-chain connected, it is weakly $\aone$-connected.
\end{cor}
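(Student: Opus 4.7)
The plan is to construct a chain of natural Nisnevich surjections $X \twoheadrightarrow \pi_0^{ch}(X) \twoheadrightarrow \pi_0^{\aone}(X)$, evaluate at $\Spec L$, and verify that the resulting surjection of sets factors through $X(L)/{\sim}$; both assertions of the corollary then follow. For the first surjection, note that for each smooth $U$ the inclusion $X(U) = Sing_*^{\aone}(X)(U)_0 \hookrightarrow Sing_*^{\aone}(X)(U)$ induces the usual simplicial-set surjection from $0$-simplices onto path components, so, since Nisnevich sheafification preserves epimorphisms, $X \twoheadrightarrow \pi_0^{ch}(X)$ is an epimorphism of sheaves; composing with Lemma \ref{lem:chainepimorphism} yields the desired map to $\pi_0^{\aone}(X)$.

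Evaluating at $\Spec L$, which is Henselian local under either hypothesis on $L$, an epimorphism of Nisnevich sheaves becomes a surjection on sections; this yields $X(L) \twoheadrightarrow \pi_0^{\aone}(X)(\Spec L)$. To see that this composite factors through $X(L)/{\sim}$, consider an elementary $\aone$-equivalence $f: \aone_L \to X$ with $f(0) = x_0$ and $f(1) = x_1$. Since $\pi_0^{\aone}(X)$ is $\aone$-invariant (it is the sheafification of $[-,X]_{\aone}$, a presheaf on which the $\aone$-projection is inverted), the map $p^{\ast}: \pi_0^{\aone}(X)(\Spec L) \to \pi_0^{\aone}(X)(\aone_L)$ is an isomorphism; and $0^{\ast}, 1^{\ast}$ are left inverses of $p^{\ast}$, hence both equal its inverse and are equal to one another. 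Pulling back the class of $f$ along $0$ and along $1$ therefore produces a single element of $\pi_0^{\aone}(X)(\Spec L)$, namely the common image of $x_0$ and $x_1$. Passing to the equivalence relation generated by such moves yields the surjection $X(L)/{\sim} \twoheadrightarrow \pi_0^{\aone}(X)(\Spec L)$, which gives the first statement; applying it to every separably closed extension yields the second.

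I expect the main point to verify carefully to be the descent of a Nisnevich sheaf epimorphism to a surjection on $\Spec L$-sections when $L$ is only essentially smooth: under the paper's essentially-finite-type convention $\Spec L$ is a Henselian local essentially smooth $k$-scheme, so its small Nisnevich site is trivial and Nisnevich sections compute stalks. This is standard but deserves explicit mention; everything else amounts to assembling Lemma \ref{lem:chainepimorphism} with the formal $\aone$-invariance of $\pi_0^{\aone}$.
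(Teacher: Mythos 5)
Your overall approach — evaluate the sheaf epimorphism from Lemma \ref{lem:chainepimorphism} at the Henselian local scheme $\Spec L$ and check the resulting surjection of sets factors through $\aone$-equivalence — is the right one, and your careful observation about Henselian locality and why the Nisnevich sections over $\Spec L$ compute stalks is correctly identified as a point worth spelling out.

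However, the step where you write that ``$\pi_0^{\aone}(X)$ is $\aone$-invariant (it is the sheafification of $[-,X]_{\aone}$, a presheaf on which the $\aone$-projection is inverted)'' invokes a principle that is not true in general: Nisnevich sheafification need not preserve $\aone$-invariance of presheaves, and indeed the $\aone$-invariance of the sheaf $\pi_0^{\aone}(X)$ is a genuinely non-trivial assertion closely tied to Conjecture~\ref{conj:epimorphism}. So the claim that $p^*: \pi_0^{\aone}(X)(\Spec L) \to \pi_0^{\aone}(X)(\aone_L)$ is an isomorphism of sheaf sections is unjustified as stated — while $\pi_0^{\aone}(X)(\Spec L) = [\Spec L,X]_{\aone}$ because $\Spec L$ is Henselian local, the scheme $\aone_L$ is not Henselian local, so there is no a priori identification $\pi_0^{\aone}(X)(\aone_L) = [\aone_L,X]_{\aone}$. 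Your conclusion is nevertheless recoverable: the class of $f$ in $\pi_0^{\aone}(X)(\aone_L)$ manifestly lies in the image of the natural map $[\aone_L,X]_{\aone} \to \pi_0^{\aone}(X)(\aone_L)$, and on the presheaf $[-,X]_{\aone}$ the $\aone$-invariance you need is clear, so $0^*$ and $1^*$ do agree on the class of $f$ by naturality. Alternatively — and this is the route the paper's placement of the corollary suggests — one can bypass the invariance discussion entirely by noting that for $\Spec L$ Henselian local, $\pi_0^{ch}(X)(L) = X(L)/\smallsim$ (sheafification does not change sections over a Henselian local scheme), so Lemma~\ref{lem:chainepimorphism} directly yields a surjection $X(L)/\smallsim \twoheadrightarrow \pi_0^{\aone}(X)(L)$, which gives the statement without any appeal to $\aone$-invariance of the target.
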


\subsubsection*{Connectedness and chain connectedness}
An analog of the last statement of Corollary \ref{cor:sectionsoverfields} involving $\aone$-chain connectedness has been studied in several places.

\begin{prop}[{\em cf.} \textup{\cite[Lemma 3.3.6]{MIntro}} and \textup{\cite[Lemma 6.1.3]{MStable}}]
\label{prop:aoneconnected}
If $X \in \Sm_k$ is $\aone$-chain connected, then $X$ is $\aone$-connected.
\end{prop}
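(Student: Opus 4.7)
The goal is to show that the Nisnevich sheaf $\pi_0^{\aone}(X)$ is isomorphic to $\Spec k$. My plan is to reduce the claim to a stalk computation on Henselian local rings, and then use smoothness of $X$ to lift the chain connectedness hypothesis from residue fields to such rings.

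First, I would invoke Lemma \ref{lem:chainepimorphism}, which provides an epimorphism of Nisnevich sheaves $\pi_0^{ch}(X) \twoheadrightarrow \pi_0^{\aone}(X)$. Since the canonical map factors as $\pi_0^{ch}(X) \to \pi_0^{\aone}(X) \to \Spec k$, once we know $\pi_0^{ch}(X) \cong \Spec k$, the composite is an isomorphism and the first map is an epimorphism with source $\Spec k$, which forces $\pi_0^{\aone}(X) \cong \Spec k$ as well. Thus it suffices to prove that $\pi_0^{ch}(X) \cong \Spec k$.

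Second, I would reduce to a stalk computation. Isomorphism of Nisnevich sheaves is checked on Nisnevich stalks, which correspond to Henselian local rings $R$ essentially smooth over $k$ with residue field $L$ a finitely generated separable extension of $k$. Because $X$ is representable (so that $X(R)$ is the filtered colimit of $X(V)$ over Nisnevich neighborhoods $V$ of the closed point) and because quotients of sets by equivalence relations commute with filtered colimits, the stalk of $\pi_0^{ch}(X) = \pi_0^{s}(Sing_*^{\aone}(X))$ at the closed point of $\Spec R$ is identified with $X(R)/\smallsim$, where $\smallsim$ is the equivalence relation generated by morphisms $\aone_R \to X$. Sections of a Nisnevich sheaf over a Henselian local scheme agree with the stalk at the closed point, so the task reduces to proving $X(R)/\smallsim = *$ for every such $R$.

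Third, I would establish a lifting lemma: the reduction map $X(R)/\smallsim \to X(L)/\smallsim$ is a bijection. Surjectivity follows from Hensel's lemma applied to the smooth scheme $X$, which gives surjectivity of $X(R) \to X(L)$. For injectivity, given $s_0, s_1 \in X(R)$ whose reductions $\bar s_0, \bar s_1 \in X(L)$ are joined by an elementary $\aone$-equivalence $\bar f \colon \aone_L \to X$, I would lift $\bar f$ to a morphism $F \colon \aone_R \to X$ satisfying $F(0)=s_0$ and $F(1)=s_1$. Such a lift should exist because the relevant mapping functor of morphisms $\aone \to X$ with prescribed values at $0$ and $1$ is smooth wherever $X$ is, so Hensel's lemma applies. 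Combined with the hypothesis $X(L)/\smallsim = *$, this bijection yields $X(R)/\smallsim = *$, completing the proof.

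The main obstacle is the injectivity portion of the lifting lemma: producing the constrained lift $F$ of $\bar f$ respecting prescribed endpoints. This is a deformation-theoretic question that ultimately rests on the smoothness of $X$, but executing it cleanly may require an indirect argument (for instance, replacing the direct lift of $\bar f$ by a chain of elementary $\aone$-equivalences constructed by successive applications of Hensel's lemma to appropriate smooth loci in $X$), rather than a single invocation of formal smoothness.
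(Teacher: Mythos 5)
The paper itself does not prove this Proposition; it cites two lemmas of Morel (\cite[Lemma 3.3.6]{MIntro}, \cite[Lemma 6.1.3]{MStable}). So the comparison can only be against whether your argument is actually correct, and there is a genuine gap in the injectivity half of your ``lifting lemma.''

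You claim that for a Henselian local essentially smooth $R$ with residue field $L$, the reduction map $X(R)/\smallsim \to X(L)/\smallsim$ is injective, and you propose to get this from Hensel's lemma applied to the mapping functor of arcs $\aone \to X$ with prescribed endpoints. Two problems. First, the statement as you pose it is simply false for general smooth $X$: take $X = \gm$ and $R$ a Henselian DVR with residue field $L$. Since $R[t]$ is a domain, every morphism $\aone_R \to \gm$ is a constant unit, so $\gm(R)/\smallsim = R^*$ and $\gm(L)/\smallsim = L^*$, and $R^* \to L^*$ has the nontrivial kernel $1+\mathfrak{m}$. Your proposed proof of the lemma never invokes the $\aone$-chain-connectedness hypothesis on $X$, so it cannot distinguish $\gm$ from a chain-connected scheme and therefore cannot be right. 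Second, the Hensel/formal-smoothness mechanism you invoke does not apply here: you are lifting along the closed immersion $\aone_L \hookrightarrow \aone_R$, and the pair $(R[t], \mathfrak{m}R[t])$ is \emph{not} Henselian ($\mathfrak{m}R[t]$ is not contained in the Jacobson radical of $R[t]$; e.g.\ for $R$ a DVR with uniformizer $\pi$, the maximal ideal $(\pi t - 1)$ does not contain $\pi$). So smoothness of $X$ gives you lifts along nilpotent thickenings, but not the lift of a full arc over $L$ to an arc over $R$ with prescribed endpoints.

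There is also a strategic concern: reducing the Proposition to showing $\pi_0^{ch}(X) \cong \Spec k$ asks you to control $X(R)/\smallsim$ over \emph{all} Henselian local $R$, which is strictly stronger than the hypothesis (which only governs fields) and is precisely the kind of statement bound up with Conjecture~\ref{conj:epimorphism}, which the authors leave open. The cited lemmas of Morel work differently: they exploit that $\pi_0^{\aone}(X)$ is an $\aone$-invariant Nisnevich sheaf, so that any map to it collapses $\aone$-equivalent $L$-points, and then use a Gersten/Gabber-type comparison between sections over an essentially smooth local ring and over its fraction field (not its residue field), rather than a naive Henselian lifting of chains. Your first two reduction steps (to stalks, and the identification of the stalk of $\pi_0^{ch}(X)$ with $X(R)/\smallsim$) are fine, but the argument as written does not close.
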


\begin{conj}
\label{conj:epimorphism}
The epimorphism $\pi_0^{ch}(X) \to \pi_0^{\aone}(X)$ of \textup{Lemma \ref{lem:chainepimorphism}} is always an isomorphism.  In particular, an object $X \in \Sm_k$ is $\aone$-chain connected if and only if it is $\aone$-connected.
\end{conj}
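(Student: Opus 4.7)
The plan is to prove the conjecture by showing that the Nisnevich sheaf $\pi_0^{ch}(X)$ is already $\aone$-invariant. Granting this, the canonical epimorphism of Lemma \ref{lem:chainepimorphism} must be an isomorphism for the following reason: the Morel--Voevodsky localization functor $L_{\aone}$ can be computed (up to transfinite iteration) by alternately applying $Sing_*^{\aone}$ and Nisnevich fibrant replacement, and on $\pi_0^s$ the latter operation reduces to Nisnevich sheafification. Hence $\pi_0^{\aone}(X)$ is the filtered colimit of the successive Nisnevich-sheafifications of iterations of the ``chain-quotient by elementary $\aone$-equivalences'' construction, and this colimit stabilizes at the first step precisely when $\pi_0^{ch}(X)$ is an $\aone$-invariant Nisnevich sheaf.

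I would proceed in three steps. First, reduce injectivity of $\pi_0^{ch}(X) \to \pi_0^{\aone}(X)$ to the claim that, for every $U \in \Sm_k$, the projection $U \times \aone \to U$ induces a bijection $\pi_0^{ch}(X)(U) \to \pi_0^{ch}(X)(U \times \aone)$; surjectivity is formal from the existence of the zero section, so only injectivity is at stake. Second, verify that $\pi_0^{ch}(X)$ is an \emph{unramified} sheaf in the sense of \cite{MField}, so that it is determined by its values on finitely generated separable field extensions $L/k$ together with specialization and residue data along essentially smooth henselian discrete valuation rings. Third, use unramifiedness to reduce the $\aone$-invariance of Step 1 to the tautology $\pi_0^{ch}(X)(L) = X(L)/\smallsim$: two $L$-points that become chain-equivalent after pullback to $\aone_L$ are already chain-equivalent as $L$-points, since the putative chain of $\aone$-homotopies over $\aone_L$ can simply be restricted at $0 \in \aone_L$.

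The main obstacle will be Step 2. Unramifiedness requires both an injectivity axiom (sections over a smooth $U$ inject into their generic values) and a gluing axiom across codimension-one henselian points. The injectivity axiom amounts to a restriction theorem for $\aone$-chains along birational maps, while the gluing axiom requires extending an $\aone$-chain defined over the generic point of a smooth curve across a codimension-one specialization. For smooth proper $X$ the valuative criterion of properness, together with the machinery of \S\ref{s:unramified} underlying Theorem \ref{thm:propercharacterization}, make such extensions plausible, and I would expect the argument to succeed in the proper case essentially by bootstrapping Theorem \ref{thm:propercharacterization} from points to sections over arbitrary smooth $U$. For non-proper $X$, however, the extension problem across the closed point of a henselian DVR genuinely fails in general: an $\aone$-chain in $X$ over the generic fiber need not extend over the special fiber, and two points in $X(\O^h)$ whose generic fibers are $\aone$-equivalent may fail to be equivalent over the closed point. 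Supplying a structural description of the image of $X(\O^h)$ inside $\pi_0^{ch}(X)(\operatorname{Frac}\O^h)$ that substitutes for the valuative criterion is, to my mind, the principal technical input that this approach still lacks, and this is the core reason the statement is recorded as a conjecture rather than a theorem.
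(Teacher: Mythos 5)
You correctly recognize that this statement is a \emph{conjecture} in the paper, not a theorem: the authors supply only partial evidence -- Theorem \ref{thm:propercharacterization} gives bijectivity on sections over finitely generated separable field extensions when $X$ is proper, and they remark that the full conjecture would follow from $\aone$-locality of $Sing_*^{\aone}(X)$. There is thus no paper proof to compare you against; the relevant questions are whether your sketch is internally sound and whether you have located the real obstruction, and you largely have.

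Your reduction to $\aone$-invariance of the sheaf $\pi_0^{ch}(X)$ is correct and is in fact a cleaner sufficient condition than the one the paper states. The short argument: a simplicially constant sheaf whose underlying sheaf of sets is $\aone$-invariant is $\aone$-local by Lemma \ref{lem:aonelocalequivalentconditions}, so the canonical map $Sing_*^{\aone}(X) \to \pi_0^{ch}(X)$ factors through $L_{\aone}(X)$ up to simplicial homotopy, and applying $\pi_0^s$ gives a retraction splitting the epimorphism of Lemma \ref{lem:chainepimorphism}, which is then an isomorphism. This is tighter than your transfinite-iteration phrasing, which is right in spirit but delicate to justify, since the $\pi_0$ of the $(n+1)$-st stage of the localization tower depends on all the simplices of the $n$-th stage and not just its $\pi_0$. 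Your Steps 2 and 3 then parallel \S\ref{s:unramified}, specifically Theorem \ref{thm:definingthesheaf} and the equivalence of categories of Theorem \ref{thm:equivalenceofcategories}. Two points need correcting, however. You have the formal and non-formal halves of $\aone$-invariance exchanged: pullback $p^*\colon \pi_0^{ch}(X)(U) \to \pi_0^{ch}(X)(U \times \aone)$ is \emph{injective} for purely formal reasons, since restriction at the zero section is a left inverse; it is \emph{surjectivity} -- equivalently, that the cylinder contraction $(u,s,t)\mapsto\alpha(u,st)$ survives Nisnevich sheafification -- that is genuinely at stake over a general $U$, even though it is tautological over fields. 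And you are too optimistic about the proper case: the paper constructs $\pi_0^{b\aone}(X)$ and shows it agrees with $\pi_0^{ch}(X)$ on sections over finitely generated separable extensions, but nowhere shows the comparison map $\pi_0^{ch}(X)\to\pi_0^{b\aone}(X)$ to be an isomorphism of \emph{sheaves}; the ``bootstrap from points to sections over arbitrary smooth $U$'' that you expect to be routine is precisely the open step, and it is open even when $X$ is proper. Your diagnosis that the non-proper case additionally lacks a henselian-lifting substitute for the valuative criterion is a genuine further difficulty, but is not the sole reason the statement is recorded as a conjecture.
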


\begin{rem}
In support of the first statement of this conjecture, we establish Theorem \ref{thm:propercharacterization}, which shows that if $X$ is a proper scheme, then the epimorphism of Lemma \ref{lem:chainepimorphism} is a bijection on sections over finitely generated separable extensions $L$ of $k$.  Corollary \ref{cor:chainconverse} establishes that a smooth proper variety is $\aone$-connected if and only if it is $\aone$-chain connected.  Note that the conjecture above would follow immediately if one could prove that $Sing_*^{\aone}(X)$ is $\aone$-local (we might call an $X$ for which $Sing_*^{\aone}(X)$ is $\aone$-local {\em chain $\aone$-local}); this condition has been verified in a number of cases (see, e.g., \cite[Theorem 7.2]{MField}).
\end{rem}

\begin{defn}
\label{defn:combinatorial}
We will say that an $n$-dimensional smooth $k$-variety $X$ is {\em covered by affine spaces} if $X$ admits an open affine cover by finitely many copies of ${\mathbb A}^n_k$ such that the intersection of any two copies of ${\mathbb A}^n_k$ has a $k$-point (this last condition is superfluous if $k$ is infinite).
\end{defn}

\begin{lem}
\label{lem:combinatorialconnected}
If $X$ is a smooth $k$-variety that is covered by affine spaces, then $X$ is $\aone$-chain connected.
\end{lem}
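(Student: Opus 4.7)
The plan is to fix a finitely generated separable extension $L/k$ and an arbitrary pair of $L$-points $x, y \in X(L)$, and to produce a chain of elementary $\aone$-equivalences linking them. Since the assumed cover $\{U_1,\ldots,U_r\}$ of $X$ by copies of ${\mathbb A}^n_k$ is open and $\Spec L$ is a one-point scheme, any $L$-point factors through at least one chart, so I may assume $x \in U_i(L)$ and $y \in U_j(L)$ for some indices $i,j$.

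The first key step is the observation that any two $L$-points of affine space are elementarily $\aone$-equivalent: given $x_0, x_1 \in {\mathbb A}^n(L) \cong L^n$, the straight-line parametrization $\aone_L \to {\mathbb A}^n_L$ defined by $t \mapsto (1-t)x_0 + t x_1$ sends $0$ to $x_0$ and $1$ to $x_1$. Composed with the open immersion of an affine chart into $X$, this supplies an elementary $\aone$-equivalence inside $X$. Consequently, the restriction of the relation $\smallsim$ to $U_i(L)$ is already trivial, for every $i$.

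The second key step uses the $k$-rationality hypothesis on pairwise intersections. The $k$-point $p_{ij} \in (U_i \cap U_j)(k)$ postulated by Definition \ref{defn:combinatorial} defines, via $k \hookrightarrow L$, an $L$-point lying simultaneously in $U_i(L)$ and in $U_j(L)$. Then $x \sim p_{ij}$ by a line in $U_i$, and $p_{ij} \sim y$ by a line in $U_j$; concatenating yields $x \sim y$ in $X(L)$. As the cover is finite and any two charts can be bridged by a single such common $k$-point, transitivity gives $X(L)/\smallsim = \ast$, which is $\aone$-chain connectedness.

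I do not foresee a serious obstacle: the argument is essentially a direct combinatorial verification. The only point requiring care is insisting that the bridging point $p_{ij}$ be $k$-rational rather than just rational over some large field, so that it remains available as an $L$-point for every separable extension $L/k$ simultaneously. This is precisely the role of the condition in Definition \ref{defn:combinatorial}; as noted there it is automatic for infinite $k$ (any nonempty open of an affine space over an infinite field has a rational point) but must be explicitly hypothesized when $k$ is finite.
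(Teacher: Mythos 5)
The paper states this lemma without giving a proof, so there is no in-text argument to compare against; your proof supplies exactly the verification the authors leave implicit, and it is correct. The straight-line homotopy $t \mapsto (1-t)x_0 + tx_1$ inside each affine chart, together with the bridging $k$-point in each pairwise intersection (available because $X$ is irreducible, so the intersections are nonempty and the hypothesis applies), collapses $X(L)/\smallsim$ to a point for every finitely generated separable $L/k$, which is precisely $\aone$-chain connectedness. Your closing remark about why the bridging point must be $k$-rational rather than merely $L$-rational for some particular $L$ correctly identifies the role of the hypothesis in Definition \ref{defn:combinatorial}, and matches the authors' parenthetical observation that the condition is automatic for infinite $k$.
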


\begin{ex}
\label{ex:coveredbyaffinespaces}
For simplicity assume that $k$ is an algebraically closed field.  Smooth $k$-varieties covered by affine spaces are all rational as algebraic varieties.  However, the collection of such varieties includes all rational smooth proper varieties of dimension $\leq 2$, smooth proper toric varieties (\cite{Fulton}), and generalized flag varieties for connected reductive groups over $k$.  Generalizing both of these examples, recall that a normal variety on which a connected reductive group $G$ acts is said to be {\em spherical} if a Borel subgroup $B \subset G$ acts with a dense orbit.  Using the local structure theory of Brion-Luna-Vust, one can check that any smooth proper spherical variety over an algebraically closed field having characteristic $0$ is covered by affine spaces (see \cite[1.5 Corollaire]{BLV}).  On the other hand, we will see that even over $\cplx$, there are smooth proper varieties that are $\aone$-connected yet not covered by affine spaces (see Example \ref{ex:BCTSSD}).
\end{ex}

\subsection{Near rationality}
Recall that two $k$-varieties $X$ and $Y$ are {\em $k$-birational} or {\em $k$-birationally equivalent} if the function fields $k(X)$ and $k(Y)$ are isomorphic as $k$-algebras.  We now review some ``near rationality" properties for algebraic varieties that appear in the sequel.  The treatment below is necessarily quite abridged, and we refer the reader to \cite{CTSRationalityFields} and \cite{Kollar} for more details.

\begin{defn}
\label{defn:rationalitynotions}
A $k$-variety $X$ is called
\begin{itemize}
\item[i)] {\em $k$-rational} if it is $k$-birational to ${\mathbb P}^n$,
\item[ii)] {\em stably $k$-rational} if there exists an integer $n \geq 0$ such that $X \times {\mathbb P}^n$ is $k$-rational,
\item[iii)] a {\em direct factor of a $k$-rational variety}, or simply {\em factor $k$-rational}, if there exists a $k$-variety $Y$ such that $X \times Y$ is $k$-rational,
\item[iv)] {\em retract $k$-rational} if there exists an open subscheme $U$ of $X$ such that the identity map $U \to U$ factors through an open subscheme $V$ of an affine space (over $k$).
\item[v)] {\em (separably) $k$-unirational} if $k(X)$ is a subfield of a purely transcendental extension of $k$ (separable over $k(X)$), i.e., there exists a (separable) dominant rational map from a projective space to $X$, and finally
\item[vi)] {\em separably rationally connected} if there is a $k$-variety $Y$ and a morphism $u: U = Y \times \pone \to X$ such that the map $u^{(2)}: U \times_Y U \to X \times X$ is dominant and smooth at the generic point.
\end{itemize}
\end{defn}

We will say that a $k$-variety is {\em rational} if it is $\bar{k}$-rational for an algebraic closure $\bar{k}$ of $k$.  Similar conventions could be made for the other definitions, but we will not use these notions in this paper.

\begin{lem}[{\em cf.} {\cite[Proposition 1.4]{CTSRationalityFields}}]
\label{lem:rationalitynotions}
If $X$ is a smooth $k$-variety, each of the first four conditions of \textup{Definition \ref{defn:rationalitynotions}} implies the subsequent one.  If $X$ is separably $k$-unirational then $X$ is separably rationally connected.
\end{lem}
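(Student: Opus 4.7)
The first three implications between the rationality notions (i)--(iv) form a descending chain of successive weakenings, and the strategy is to handle each link in turn. The implication (i)$\Rightarrow$(ii) is immediate by taking $n=0$ in the definition of stable $k$-rationality. For (ii)$\Rightarrow$(iii), if $X\times\mathbb{P}^{n}$ is $k$-rational for some $n\geq 0$, then $Y:=\mathbb{P}^{n}$ witnesses factor $k$-rationality of $X$.

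The substantive step is (iii)$\Rightarrow$(iv). Suppose $X\times Y$ is $k$-rational, so there exist dense open subsets $W\subset X\times Y$ and $V\subset \mathbb{A}^{N}$ together with an isomorphism $\varphi\colon W\isomto V$. The plan is to choose a $k$-rational point $y_{0}\in Y$ lying in the image of the projection $W\to Y$ (which is a dense open of $Y$), and then to form $U:=\{x\in X\mid (x,y_{0})\in W\}$, an open subscheme of $X$. By construction the map $x\mapsto (x,y_{0})$ defines a locally closed immersion $U\hookrightarrow W\isomto V$; shrinking $V$ to the preimage $V'$ of $U$ under the composite $V\isomt W\to X$, the first projection supplies a morphism $V'\to U$ whose composition with $U\hookrightarrow V'$ is the identity. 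To produce $y_{0}$, one uses that $X\times Y$ being $k$-rational forces $Y$ (as a smooth variety dominated by a rational variety) to have $k$-points in every dense open subset provided $k$ is infinite; when $k$ is finite one argues instead by a density argument over a sufficiently large finite extension and descends, or, as in \cite{CTSRationalityFields}, one arranges the factorization directly over the function field by spreading out a generic fiber.

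For (v)$\Rightarrow$(vi), suppose there is a separable dominant rational map $f\colon\mathbb{P}^{n}\dashrightarrow X$. The plan is to exhibit a family of rational curves in $X$ joining two generic points by pulling back the family of lines in $\mathbb{P}^{n}$. Concretely, let $G$ denote the Grassmannian of lines in $\mathbb{P}^{n}$ and $\mathcal{L}\to G$ the universal family; the incidence morphism $\mathcal{L}\to\mathbb{P}^{n}$ is smooth of relative dimension $n-1$. After restricting to a suitable dense open $Y\subset G$ over which $\mathcal{L}|_{Y}$ admits a Zariski-local trivialization $Y\times\mathbb{P}^{1}\isomt \mathcal{L}|_{Y}$ and over which the composite $\mathcal{L}|_{Y}\to\mathbb{P}^{n}\dashrightarrow X$ is defined, set $u\colon U:=Y\times\mathbb{P}^{1}\to X$. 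The squared map $u^{(2)}\colon U\times_{Y}U\to X\times X$ fits into a commutative diagram whose horizontal bottom is $(f\times f)\circ\pi$, where $\pi\colon U\times_{Y}U\to \mathbb{P}^{n}\times\mathbb{P}^{n}$ records the two points on a line; since two generic points in $\mathbb{P}^{n}$ lie on a unique line, $\pi$ is generically smooth and dominant, and by hypothesis $f\times f$ is dominant and smooth at the generic point. Hence the composite $u^{(2)}$ has the required properties.

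The main obstacle is (iii)$\Rightarrow$(iv), where two delicate points enter: first, the inevitable need to extract an honest $k$-rational point from a dense open of $Y$, which is the reason why retract rationality is the appropriate notion at this level rather than something finer; second, the compatibility of shrinkings required to arrange simultaneously that $(x,y_{0})$ lies in $W$ and that the first-projection retraction lands back in the chosen open $U$. Both are handled by the chain of open-subscheme restrictions indicated above, and the whole argument is worked out rigorously in \cite[Proposition 1.4]{CTSRationalityFields}, to which we refer for the remaining details.
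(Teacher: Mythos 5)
Your proof omits the implication (iv)$\Rightarrow$(v), i.e., that a retract $k$-rational smooth variety is separably $k$-unirational, which is one of the four implications the first sentence of the lemma asserts. You pass directly from (iii)$\Rightarrow$(iv) to (v)$\Rightarrow$(vi). The paper dispatches this step as ``clear from the definitions,'' and indeed it is short, but it must be said: from the retract data $U \hookrightarrow V \to U$ with $V$ open in affine space, the dominant morphism $V \to U \subset X$ exhibits $k(X)$ as a subfield of $k(V)$, a purely transcendental extension of $k$, and the existence of the section $U \hookrightarrow V$ forces the morphism $V \to U$ to be smooth along the image of the section (the differential has a right inverse there), hence generically smooth, hence the field extension $k(V)/k(X)$ is separable. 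Without some such sentence, the chain from (iv) to (vi) is broken.

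For the implications you do treat, your argument follows the same route as the paper's. The step (iii)$\Rightarrow$(iv) uses the same construction: pick $y_0 \in Y(k)$, slice to get $U = \{x : (x,y_0) \in W\}$, and shrink the ambient open to produce the retraction. You are right to flag that the existence of $(x_0,y_0) \in W(k)$ needs justification over small finite fields (the paper asserts it without comment), but your sketch of a fix (``density over a large extension and descend, or spread out a generic fiber'') is too vague to count as a resolution; the careful version, as in Colliot-Th\'el\`ene--Sansuc, works at the level of function fields and sidesteps the need for a $k$-point. Your treatment of (v)$\Rightarrow$(vi) via the incidence variety of lines in $\mathbb{P}^n$ is correct and more self-contained than the paper's, which simply cites Koll\'ar; the key observation that $\pi \colon U \times_Y U \to \mathbb{P}^n \times \mathbb{P}^n$ is birational (two distinct points lie on a unique line) is exactly what makes the composite $u^{(2)}$ dominant and generically smooth.
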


\begin{proof}
The first two implications of the statement are clear from the definitions.  For the third implication assume $Y$ is a $k$-variety such that $X \times Y$ is $k$-rational.  Let $U \subset X \times Y$ be a non-empty open subscheme which is isomorphic to an open subscheme of an affine space.  Let $(x_0,y_0) \in U(k)$.  Let $X_1$ be the non-empty open subscheme of $X$ defined by $X_1 \times \setof{y_0} := U \cap (X \times \setof{y_0})$.  The open set $U_1 = U \cap (X_1 \times Y)$ is still isomorphic to an open set of affine space.  The composite map $X_1 \to U_1 \to X_1$, with the first map induced by $x \mapsto (x,y_0)$ and the second map induced by projection onto $X$ provides the necessary retraction.  The fourth implication is clear from the definitions.  For the last statement, see \cite[Example 3.2.6.2]{Kollar}.
\end{proof}

\begin{rem}
The birational geometry of nearly $k$-rational varieties is an incredibly rich subject (see, e.g., \cite{ColliotThelene} or \cite{Kollar}); the comments we now make are intended to give a flavor of results.  Chevalley and Manin introduced and studied a class of varieties they called {\em special} (see \cite[\S 14]{Manin}) that form a slightly more general class than our smooth schemes covered by affine spaces.

The Zariski cancellation problem, sometimes called the birational cancellation problem, asked whether stably $k$-rational varieties are necessarily $k$-rational.  A negative solution to this problem (even over $\cplx$) was provided in the celebrated work \cite{BCTSSD}; see Example \ref{ex:BCTSSD} for more details.

It is known that if $k$ is not algebraically closed, there exist varieties that are factor $k$-rational yet not stably $k$-rational \cite{CTS}; see Example \ref{ex:CTS} for more details.  The notion of retract $k$-rationality was introduced and studied by Saltman ({\em cf}. \cite[Definition 3.1]{Saltman}) in relation to Noether's problem regarding rationality of fields of invariants.

For fields having characteristic $0$, Campana, Koll{\'a}r, Miyaoka, and Mori introduced rational connectedness (see \cite[IV.3]{Kollar}).  Separable rational connectedness is equivalent to rational connectedness for fields having characteristic $0$ but is a more well-behaved notion in positive characteristic (see \cite[IV.3.2]{Kollar}).  At the moment it is not known whether there exist separably rationally connected varieties that are not unirational, though there are many expected counterexamples.
\end{rem}

\begin{ex}
\label{ex:BCTSSD}
If $k$ is a non-algebraically closed perfect field, there exist stably $k$-rational, non-$k$-rational smooth proper surfaces by \cite{BCTSSD}, though see also \cite{ShepherdBarron}.  Such varieties are $\aone$-connected by Corollary \ref{cor:stablyrational}.  More explicitly, over any field $k$ having characteristic unequal to $2$, let $P \in k[x]$ be an irreducible separable polynomial of degree $3$ and discriminant $a$.  Any smooth proper model of the surface $X_a$ given by the affine equation $y^2 - az^2 = P(x)$ has the property that $X_a \times {\mathbb P}^3$ is $k$-birationally equivalent to ${\mathbb P}^3$, though if $a$ is not a square in $k$, then $X_a$ is not $k$-rational (see \cite[Th\'eor\`eme 1 p. 293]{BCTSSD}).  If $k$ is algebraically closed, one can consider the above result for $k(t)$ to obtain threefolds that are stably rational yet non-rational (see \cite[Th\'eor\`eme 1' p. 299]{BCTSSD} for a precise statement).
\end{ex}

\begin{ex}
\label{ex:CTS}
If $k$ is not algebraically closed, there exist examples of smooth proper varieties that are factor $k$-rational yet not stably $k$-rational: see \cite[Proposition 20 C p. 223]{CTS}.  Indeed, one can construct a pair of tori $T$ and $T'$ over $\Q$ such that $T \times T'$ is $k$-rational while neither $T$ nor $T'$ is $k$-rational.  Taking smooth proper models of these tori provides the required example.
\end{ex}

\subsubsection*{Near rationality and $\aone$-connectedness}
We now proceed to link (weak) $\aone$-connectedness with rationality properties of algebraic varieties.  For algebraically closed fields having characteristic $0$, it was initially hoped that $\aone$-connectedness in the sense studied above would be equivalent to separable rational connectedness, however we will see that there are both geometric (Example \ref{ex:conicbundles}) and cohomological reasons this cannot be true (see the beginning of \S \ref{s:classifyingspaces}).

We will say that {\em weak factorization holds over $k$ in dimension $n$} if given any two $k$-birationally equivalent smooth proper varieties $X$ and $X'$ of dimension $n$, there exist a sequence of smooth proper varieties $Z_1,\ldots,Z_n,X_1,\ldots,X_n$ of dimension $n$, and a diagram of the form
\[
X \longleftarrow Z_1 \longrightarrow X_1 \longleftarrow Z_2 \longrightarrow \cdots \longleftarrow Z_{n-1} \longrightarrow X_n \longleftarrow Z_n \longrightarrow X',
\]
where each morphism with source $Z_i$ is a blow-up at a smooth center.

\begin{thm}
\label{thm:stablyrational}
Suppose $k$ is a perfect field, and assume weak factorization holds over $k$ in dimension $n$.
\begin{itemize}
\item[i)] If $X$ and $X'$ are $k$-birationally equivalent smooth proper varieties of dimension $n$, then $X$ is (weakly) $\aone$-chain connected if and only if $X'$ is (weakly) $\aone$-chain connected.
\end{itemize}
Suppose further that $k$ has characteristic $0$.
\begin{itemize}
\item[ii)] If $X$ is a retract $k$-rational smooth proper variety, then $X$ is $\aone$-chain connected and thus $\aone$-connected.
\end{itemize}
\end{thm}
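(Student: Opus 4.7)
The plan is to prove part (i) first by invoking weak factorization and reducing to preservation of $\aone$-chain connectedness under a single blow-up at a smooth center, and then to derive part (ii) by reinterpreting retract $k$-rationality as a retraction on $\aone$-equivalence classes of $L$-points, using the fact that rational maps between smooth proper varieties induce well-defined maps on such classes.

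For part (i), weak factorization reduces to the case of a blow-up $\pi: X' \to X$ of a smooth proper $X$ along a smooth closed subvariety $Z$ of codimension $c$, with exceptional divisor $E = \pi^{-1}(Z)$ a Zariski-locally trivial $\mathbb{P}^{c-1}$-bundle over $Z$. The direction ``$X'$ chain connected implies $X$ chain connected'' is straightforward: given $x_0,x_1 \in X(L)$ for $L$ a finitely generated separable, resp.\ separably closed, extension of $k$, one lifts them to $X'(L)$ using the fact that the fibers of $\pi$ are either points or projective spaces over $L$ (which always have $L$-rational points), chain-connects the lifts in $X'$, and pushes forward by $\pi$. For the reverse direction, I would lift each morphism $f_i: \aone \to X$ appearing in a chain connecting the images of given $x'_0,x'_1 \in X'(L)$: if the image of $f_i$ is not contained in $Z$, the valuative criterion of properness uniquely extends the rational map $\aone \dashrightarrow X'$ to a morphism $\tilde f_i$; if the image lies entirely in $Z$, the pullback of $E \to Z$ along $f_i$ is a projective bundle over $\aone$, which, since vector bundles on $\aone$ are free, admits a section $\tilde f_i: \aone \to E \subset X'$. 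At common endpoints the consecutive lifts either coincide (away from $Z$) or lie in a common fiber $\mathbb{P}^{c-1}_L$, which is $\aone$-chain connected, so one inserts connecting $\aone$'s. An analogous insertion links $x'_0, x'_1$ to the chain endpoints. The argument is uniform in the ``weak'' and ``non-weak'' variants.

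For part (ii), fix open $U \subset X$, open $V \subset \aone^m \subset {\mathbb P}^m$, and morphisms $j: U \to V$, $r: V \to U$ with $r \circ j = \mathrm{id}_U$; these extend to rational maps $\phi: X \dashrightarrow {\mathbb P}^m$ and $\psi: {\mathbb P}^m \dashrightarrow X$ whose composite is the identity rational self-map of $X$. The key construction is that in characteristic $0$ any rational map $f: A \dashrightarrow B$ between smooth proper $k$-varieties induces a canonical map $f_*: A(L)/\smallsim \to B(L)/\smallsim$ on $\aone$-equivalence classes. To produce it I would resolve the indeterminacy of $f$ via Hironaka to obtain a sequence of blow-ups at smooth centers $p: \tilde A \to A$ and a morphism $\tilde f: \tilde A \to B$ agreeing with $f \circ p$ where defined. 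Part (i) supplies a canonical bijection $p_*: \tilde A(L)/\smallsim \isomto A(L)/\smallsim$ (its inverse given by lifting points blow-up-by-blow-up exactly as in the proof of part (i)), and one sets $f_* := \tilde f_* \circ p_*^{-1}$; independence of the choice of resolution, together with the functoriality $(\psi \circ \phi)_* = \psi_* \circ \phi_*$, is obtained by dominating any two resolutions by a common third one and reinvoking part (i).

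Applied to $\phi$ and $\psi$, one obtains a retraction of sets $X(L)/\smallsim \to {\mathbb P}^m(L)/\smallsim \to X(L)/\smallsim$ equal to the identity. Since ${\mathbb P}^m$ is covered by affine spaces, Lemma \ref{lem:combinatorialconnected} gives ${\mathbb P}^m(L)/\smallsim = \ast$, so $X(L)/\smallsim = \ast$; thus $X$ is $\aone$-chain connected, and hence $\aone$-connected by Proposition \ref{prop:aoneconnected}. The chief technical obstacle is the well-definedness and functoriality of the assignment $f \mapsto f_*$ on rational maps, which amounts to saying that the functor $L \mapsto X(L)/\smallsim$ on smooth proper $k$-varieties is a birational invariant in a strong enough sense to be manipulated like an ordinary functor on function fields; verifying this cleanly requires repeated applications of part (i) to resolutions of indeterminacy and to their common refinements.
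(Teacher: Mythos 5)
Your proposal is correct and follows the same two‐stage strategy as the paper: for (i), weak factorization reduces to a single blow‐up at a smooth center, for which one checks that the induced map $X'(L)/\smallsim \to X(L)/\smallsim$ is a bijection; for (ii), retract $k$-rationality gives a factorization of $\operatorname{id}_X$ through a $k$-rational variety, resolution of indeterminacy turns this into morphisms between smooth proper varieties, and (i) is used to transport the triviality of $\aone$-equivalence classes from the rational variety back to $X$. The differences are ones of packaging. For (i), where the paper lifts an elementary $\aone$-equivalence by passing to the generic $L(t)$-point of $\pone_L$ and using properness once, you lift at the level of morphisms $\aone \to X$, splitting into the case where the image avoids $Z$ (valuative criterion) and the case where it lies in $Z$ (triviality of rank-$c$ vector bundles on $\aone$, hence a section of the pulled-back $\mathbb{P}^{c-1}$-bundle); both give the needed bijectivity, and yours makes the case analysis explicit while the paper's generic-point argument handles the two cases at once. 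For (ii), the paper writes down a single specific diagram $X' \to Y \to X$ with $X' \to X$ proper birational, factors it through $Y(L)/\smallsim = \ast$, and is done; you instead build a well-defined pushforward $f_*$ on $\aone$-equivalence classes for \emph{arbitrary} rational maps between smooth proper varieties and prove functoriality, obtaining a categorical retraction. This buys a cleaner and more reusable formalism, but the functoriality $(\psi \circ \phi)_* = \psi_* \circ \phi_*$ requires more care than ``dominate the two resolutions by a common one'': you must resolve $\tilde\phi: \tilde A \to B$ further to land in a resolution of $\psi$, and then use separatedness of $B$ to see that the two induced morphisms to $B$ agree where defined (hence everywhere). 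This works, but it is the one step in your outline that would need to be written out more fully to make the argument airtight; the paper sidesteps it by only invoking the specific diagram it needs.
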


\begin{proof}
For (i) using the assumption that weak factorization holds in dimension $n$, it suffices to check that if $X' \to X$ is a blow-up of a smooth proper variety at a smooth center, then $X'$ is (weakly) $\aone$-chain connected if and only if $X$ is (weakly) $\aone$-chain connected; this is exactly the content of Proposition \ref{prop:blowupconnected} below.

For (ii), we know that there exists an open subscheme $U \subset X$ and an open subscheme $V$ of ${\mathbb A}^m$ such that $id: U \to U$ factors through $V$.  Thus, there are a rational smooth proper variety $Z$, and rational maps $X \dashrightarrow Z \dashrightarrow X$ factoring the identity map.  By resolution of indeterminacy, we can assume that there exists a rational smooth proper variety $Y$ dominating $Z$ and a proper birational morphism $Y \to X$.  Again using resolution of indeterminacy, we can assume there is a smooth proper variety $X'$ and proper birational morphisms $X' \to X$ and $X' \to Y$ such that the morphism $X' \to X$ restricts to the identity on $U$.

Note that \cite[Theorem 0.1.1]{AKMW} establishes weak factorization in the sense above for any field $k$ having characteristic $0$ and any integer $n \geq 0$.  By the result of (i), $X$ is $\aone$-chain connected if and only if $X'$ is $\aone$-chain connected.  For any finitely generated separable extension $L/k$, composition induces maps of $\aone$-equivalence classes of $L$-points
\[
X'(L)/\smallsim \longrightarrow Y(L)/\smallsim \longrightarrow X(L)/\smallsim.
\]
Since $Y$ is $k$-rational and ${\mathbb P}^n$ is $\aone$-chain connected, again using (i), we deduce that $Y(L)/\smallsim$ consists of exactly $1$ element.  Since the composite map is a bijection, it follows that $X(L)/\smallsim$ must also consist of a single element.  Applying Proposition \ref{prop:aoneconnected} finishes the proof.
\end{proof}

Weak factorization in the above sense for surfaces over perfect fields $k$ having arbitrary characteristic is well known (see, e.g., \cite[Theorem II.11 and Appendix A]{Beauville}).  Thus, we have deduced the following result.

\begin{cor}
\label{cor:stablyrational}
If $k$ is a perfect field, any $k$-rational smooth proper surface is ${\mathbb A}^1$-connected.  If $k$ is a field having characteristic $0$, then any stably $k$-rational, or factor $k$-rational smooth proper variety is $\aone$-connected.
\end{cor}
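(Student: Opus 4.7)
The plan is to deduce both statements directly from Theorem~\ref{thm:stablyrational}, using that the key inputs have already been assembled in the preceding material.

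For the first statement, the approach is: (a) establish that $\mathbb{P}^2$ is $\aone$-chain connected, which is immediate from Lemma~\ref{lem:combinatorialconnected} because $\mathbb{P}^2$ is covered by affine spaces in the sense of Definition~\ref{defn:combinatorial} (e.g.\ via the standard affine charts, whose pairwise intersections contain $k$-points when $k$ is infinite; the finite case is also easily checked); (b) invoke the paragraph preceding the corollary, which records that weak factorization in the required sense holds in dimension $2$ over any perfect field, so the hypothesis of Theorem~\ref{thm:stablyrational}(i) is satisfied for $n=2$; (c) given a $k$-rational smooth proper surface $X$, it is $k$-birationally equivalent to $\mathbb{P}^2$, so Theorem~\ref{thm:stablyrational}(i) transfers $\aone$-chain connectedness from $\mathbb{P}^2$ to $X$; (d) conclude via Proposition~\ref{prop:aoneconnected} that $X$ is $\aone$-connected.

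For the second statement, the strategy is simply to reduce to Theorem~\ref{thm:stablyrational}(ii) by invoking the implications recorded in Lemma~\ref{lem:rationalitynotions}: the first four clauses of Definition~\ref{defn:rationalitynotions} satisfy
\[
k\text{-rational} \;\Longrightarrow\; \text{stably } k\text{-rational} \;\Longrightarrow\; \text{factor } k\text{-rational} \;\Longrightarrow\; \text{retract } k\text{-rational}.
\]
Thus a stably $k$-rational or factor $k$-rational smooth proper variety is in particular retract $k$-rational, and Theorem~\ref{thm:stablyrational}(ii) (which applies in characteristic $0$ thanks to \cite{AKMW} providing weak factorization in every dimension) gives $\aone$-connectedness.

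The argument is essentially a packaging of the earlier results, so there is no substantive obstacle; the only point requiring a moment's care is confirming that $\mathbb{P}^2$ meets Definition~\ref{defn:combinatorial} and that weak factorization for surfaces (quoted from \cite{Beauville}) genuinely has the form used in Theorem~\ref{thm:stablyrational}(i), namely a zigzag of blow-ups at smooth (here, closed) points. Both are standard and have already been cited in the excerpt.
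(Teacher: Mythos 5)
Your proposal is correct and takes essentially the same route as the paper: the paper simply observes that weak factorization for surfaces over a perfect field is classical (citing Beauville) and then declares the corollary to follow from Theorem~\ref{thm:stablyrational}, which is precisely the packaging you carry out. The only detail you make more explicit than the paper does is step~(a), that $\mathbb{P}^2$ is covered by affine spaces and hence $\aone$-chain connected; the paper uses this silently inside the proof of Theorem~\ref{thm:stablyrational}(ii), and your verification of the $k$-point condition on chart intersections over finite fields is a fair thing to spell out.
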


\begin{prop}[{\em cf.} {\cite[Proposition 10]{CTS}}]
\label{prop:blowupconnected}
Suppose $f: X \to Y$ is a blow-up of a smooth proper $k$-scheme at a smooth closed subscheme $Z$ of codimension $r+1$.  For any finitely generated separable field extension $L/k$, $f$ induces a map of $\aone$-equivalence classes of $L$-points $X(L)/\smallsim \to Y(L)/\smallsim$ that is a bijection.  Moreover, $Y$ is $\aone$-chain connected if and only if $X$ is $\aone$-chain connected.
\end{prop}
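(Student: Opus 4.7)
The plan is to produce an explicit inverse to $X(L)/\smallsim \to Y(L)/\smallsim$ using the structure of the blow-up, and then to read off the chain-connectedness equivalence. Let $E = f^{-1}(Z) \subset X$ denote the exceptional divisor; then $\pi := f|_E$ realizes $E$ as a Zariski-locally trivial $\mathbb{P}^r$-bundle over $Z$ (indeed $E \cong \mathbb{P}(N^{\vee}_{Z/Y})$), and $f$ restricts to an isomorphism $X \setminus E \isomto Y \setminus Z$. Surjectivity of $X(L) \to Y(L)$ is then straightforward: an $L$-point $y$ of $Y \setminus Z$ lifts uniquely, while if $y \in Z(L)$ the fiber $\pi^{-1}(y) \cong \mathbb{P}^r_L$ has an $L$-point (as projective space over any field does), so $y$ lifts. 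In particular $X(L)/\smallsim \to Y(L)/\smallsim$ is surjective.

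For injectivity, given an elementary $\aone$-equivalence $\phi : \aone_L \to Y$ with $\phi(0)=f(x_0)$ and $\phi(1)=f(x_1)$, I would split into two cases. \textbf{Case A:} $\phi(\aone_L) \not\subset Z$. Then $\phi^{-1}(Z) \subset \aone_L$ consists of finitely many closed points; away from them $\phi$ lifts uniquely through $X \setminus E$, and the valuative criterion applied to the proper morphism $f$ at each missing local ring (a DVR, since $\aone_L$ is a smooth curve) yields a unique extension $\tilde\phi : \aone_L \to X$ with $f\tilde\phi = \phi$. At an endpoint $\epsilon \in \{0,1\}$, if $f(x_\epsilon) \notin Z$ then $\tilde\phi(\epsilon) = x_\epsilon$; otherwise $\tilde\phi(\epsilon)$ and $x_\epsilon$ both lie in $\pi^{-1}(f(x_\epsilon)) \cong \mathbb{P}^r_L$ and are joined by a line, giving $x_\epsilon \sim \tilde\phi(\epsilon)$. \textbf{Case B:} $\phi(\aone_L) \subset Z$. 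Pull back $E \to Z$ along $\phi$ to obtain the $\mathbb{P}^r$-bundle $\mathbb{P}(\phi^* N^{\vee}_{Z/Y}) \to \aone_L$. Because $L[t]$ is a PID, every vector bundle on $\aone_L$ is free, so $\phi^* E \cong \aone_L \times \mathbb{P}^r_L$. The points $x_0,x_1 \in E(L)$ sit over $0$ and $1$; choosing any $p \in \mathbb{P}^r_L(L)$, the chain $x_0 \leadsto (0,p) \leadsto (1,p) \leadsto x_1$ (line in the fiber, the section $\aone_L \times \{p\}$, another line in the fiber) exhibits $x_0 \sim x_1$ in $X(L)$.

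Finally, since the bijection holds for every finitely generated separable $L/k$, and since the surjectivity step also shows $X(L) \neq \emptyset \Leftrightarrow Y(L) \neq \emptyset$, the sets $X(L)/\smallsim$ and $Y(L)/\smallsim$ are singletons together, which is precisely the equivalence of $\aone$-chain connectedness. The main technical obstacle is Case B: one must lift the elementary $\aone$-equivalence from $Z$ into $E$ without losing the prescribed endpoints, and this hinges on the triviality of $\phi^* E$, i.e.\ on vector bundles over the affine line over a field being free. Case A, by contrast, reduces cleanly to the valuative criterion of properness and the $\aone$-chain connectedness of $\mathbb{P}^r_L$.
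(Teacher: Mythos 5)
Your proof is correct, and in the end it chains through the fibers in the same spirit as the paper, but your case split in the injectivity step is unnecessary and introduces an input the paper's argument avoids. The paper lifts an elementary $\aone$-equivalence $h\colon \pone_L \to Y$ (one first extends $\aone_L \to Y$ to $\pone_L \to Y$ using properness of $Y$) in a single step: the $L(t)$-point of $Y$ given by the generic point of $\pone_L$ always lifts to an $L(t)$-point of $X$, because the fiber of $f$ over \emph{any} $L(t)$-point of $Y$ is either $\Spec L(t)$ or $\mathbb{P}^r_{L(t)}$, and in either case has an $L(t)$-point --- this is the same surjectivity observation used for the first half of the proposition, applied to the extension field $L(t)$. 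Properness of $X$ then extends this $L(t)$-point to a morphism $\pone_L \to X$ lifting $h$, and one chains the endpoints of this lift to $x$ and $x'$ inside the fibers (again $\ast$ or $\mathbb{P}^r_L$, hence $\aone$-chain connected), exactly as in your Case A. Crucially, this works verbatim whether $h$ lands in $Z$ or not, so your Case B --- which invokes triviality of vector bundles on $\aone_L$ (equivalently that $L[t]$ is a PID) to trivialize the pulled-back $\mathbb{P}^r$-bundle --- is not needed. Your Case B argument is valid and self-contained, but it is strictly more machinery than the uniform lift-and-extend argument, which sidesteps the question of what the lift looks like over the whole of $\aone_L$ and only uses it at the generic point.
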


\begin{proof}
The schematic fibers of $f$ are either projective spaces of dimension $r$ or of dimension $0$ so $X(L) \to Y(L)$ is surjective for any finitely generated extension $L/k$.  It follows that $f$ induces a surjective function $X(L)/\smallsim \to Y(L)/\smallsim$.

Since $X$ and $Y$ are both proper, to show this function $X(L)/\smallsim \to Y(L)/\smallsim$ is injective, it suffices to prove the following fact.  Given $y, y' \in Y(L)$, a morphism $h: \pone_L \to Y$ joining $y$ and $y'$, and lifts $x,x' \in X(L)$ such that $f(x) = y$ and $f(x') = y'$, the points $x$ and $x'$ are $\aone$-equivalent $L$-points.  By the surjectivity statement of the previous paragraph, the $L(t)$-point of $Y$ determined by $h$ lifts to an $L(t)$ point of $X$.  Since $X$ is proper, this $L(t)$ point determines a morphism $\pone_L \to X$ lifting $h$.
\end{proof}

If $k$ is a field, and $k^s$ is a separable closure of $k$, recall that a $k$-variety $X$ is {\em strongly rationally connected} if any $k^s$-point can be joined to a generic $k^s$-point by a {\em proper} rational curve ({\em cf.} \cite[Definition 14]{HassettTschinkel}).  The next result follows from the definitions together with Corollary \ref{cor:sectionsoverfields}, and the fact that strong rational connectedness for a smooth proper $k$-variety is invariant under separably closed extensions (see \cite{Kollar} Theorem IV.3.9).

\begin{thm}
\label{thm:stronglyrationallyconnected}
Suppose $k$ is a perfect field.  If $X$ is a strongly rationally connected smooth proper $k$-variety, then $X$ is weakly $\aone$-connected.
\end{thm}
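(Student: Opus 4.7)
By Corollary \ref{cor:sectionsoverfields}, it suffices to prove that $X$ is weakly $\aone$-chain connected, i.e., that $X(L)/\smallsim$ is a singleton for every separably closed extension $L/k$. The plan is to extract this directly from the definition of strong rational connectedness, together with the elementary observation that a proper rational curve defined over $L$ and passing through two $L$-points yields an elementary $\aone$-equivalence between them: simply precompose with the $L$-automorphism of $\pone_L$ sending the preimages of the two points to $0$ and $1$, and restrict to $\aone_L \subset \pone_L$.

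Fix a separably closed extension $L/k$. Since $k$ is perfect, $L \supset \bar{k}$, and as strong rational connectedness is a geometric property, the base change $X_L$ is strongly rationally connected over $L$. Fix a dense open $X^\circ \subset X_L$ witnessing this, and fix a base point $x_0 \in X^\circ(L)$, which exists because $L$ is separably closed and $X^\circ$ is a nonempty smooth $L$-variety. For each $x \in X(L)$, the hypothesis supplies a morphism $c_x : \pone_L \to X_L$ whose image contains $x$ and meets $X^\circ$. Choosing $L$-points $t_0, t_1 \in \pone_L(L)$ with $c_x(t_0) = x$ and $c_x(t_1) \in X^\circ(L)$ and applying the reparametrization above, we obtain an elementary $\aone$-equivalence from $x$ to some point of $X^\circ(L)$. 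Applying the construction again within $X^\circ$ shows that any two points of $X^\circ(L)$ are $\aone$-equivalent; hence by transitivity every $L$-point of $X$ is $\aone$-equivalent to the base point $x_0$, so $X(L)/\smallsim = *$, and Corollary \ref{cor:sectionsoverfields} finishes the argument.

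The main, and rather modest, obstacle is reconciling the Hassett--Tschinkel formulation of strong rational connectedness---which only asserts the existence of \emph{some} proper rational curve through $x$ meeting a fixed dense open $X^\circ$, and not a prescribed curve joining $x$ directly to a chosen base point $x_0$---with the desire to connect an arbitrary pair of $L$-points by a single $\aone$-equivalence. The $\aone$-equivalence relation absorbs this slack: it is enough to route every $L$-point through $X^\circ(L)$ and to know that $X^\circ(L)$ is internally $\aone$-equivalent, which follows from the same hypothesis applied to points of $X^\circ$. Consequently the proof reduces, as the paper indicates, to an immediate unwinding of the definitions.
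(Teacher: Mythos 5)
Your overall route is the same as the paper's (which elides everything): invoke Corollary \ref{cor:sectionsoverfields} to reduce to showing $X(L)/\smallsim = *$ for every separably closed $L/k$, and then unwind the definition of strong rational connectedness, with the elementary observation that a proper rational curve through two $L$-points reparametrizes to an elementary $\aone$-equivalence.

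There is, however, a soft spot in the sentence ``Applying the construction again within $X^\circ$ shows that any two points of $X^\circ(L)$ are $\aone$-equivalent.'' Under your stated reading of the hypothesis---for each $x$ a single curve through $x$ whose image merely \emph{meets} $X^\circ$---this does not follow. Applying that to $y_1,y_2\in X^\circ(L)$ only produces $y_1\sim z_1$ and $y_2\sim z_2$ for uncontrolled $z_1,z_2\in X^\circ(L)$, which is vacuous and never links $y_1$ to $y_2$. The content of the Hassett--Tschinkel condition that actually powers the argument is stronger: for each $x$ there is a dense open $U_x\subseteq X_L$ such that $x$ can be joined by a proper $L$-rational curve to \emph{every} $L$-point of $U_x$. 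With this, the argument is cleaner and does not need the base point $x_0$ or any claim about ``internal'' connectedness of $X^\circ$: given $x_1,x_2\in X(L)$, the intersection $U_{x_1}\cap U_{x_2}$ is a nonempty open of $X_L$, and since $L$ is separably closed (hence infinite and $X_L$ is smooth) it has an $L$-point $u$; then $x_1\sim u\sim x_2$. The remaining points of your writeup (the $\pone\to\aone$ reparametrization, using $k$ perfect so that $L\supseteq\bar k$, and the remark that the strong rational connectedness of $X_L$ is what one needs) are fine and match the intended ``follows immediately from the definitions.''
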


\begin{cor}
\label{cor:separablyrationallyconnected}
If $k$ is a perfect field and $X$ is a separably rationally connected smooth proper $k$-variety, then $X$ is weakly $\aone$-connected.
\end{cor}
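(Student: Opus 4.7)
My plan is to reduce the claim to a question about $\aone$-chain connectedness and then invoke a classical result on rational curves. By Corollary \ref{cor:sectionsoverfields}, it suffices to prove that for every separably closed field extension $L/k$, the set $X(L)/\smallsim$ consists of a single element, so I will work with a fixed such $L$ and show that any two $L$-rational points $x_0,x_1\in X(L)$ are $\aone$-equivalent.

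First I would observe that separable rational connectedness descends well under field extensions: if $u\colon Y\times_k\pone\to X$ witnesses the property over $k$, then the base change $u_L\colon Y_L\times_L\pone_L\to X_L$ has its associated double evaluation map $u_L^{(2)}$ still dominant and smooth at the generic point, since these properties are preserved under flat base change. Hence $X_L$ is separably rationally connected over $L$. Next I would invoke the Koll\'ar--Miyaoka--Mori theorem (see \cite{Kollar}, especially IV.3.9) which asserts that over a separably closed field, any finite collection of rational points of a smooth proper separably rationally connected variety lies on the image of a very free rational curve; applied to our pair, there is a morphism $g\colon\pone_L\to X_L$ together with distinct $L$-points $p,q\in\pone_L(L)$ with $g(p)=x_0$ and $g(q)=x_1$.

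Finally, composing $g$ with an $L$-automorphism of $\pone_L$ sending $p\mapsto 0$ and $q\mapsto 1$ and then restricting to $\aone_L=\pone_L\smallsetminus\{\infty\}$ produces an elementary $\aone$-equivalence between $x_0$ and $x_1$. Therefore $X(L)/\smallsim=\ast$, and Corollary \ref{cor:sectionsoverfields} yields $\pi_0^{\aone}(X)(L)=\ast$ as required. The main subtlety in the argument is the appeal to KMM over a merely separably closed rather than algebraically closed $L$: one has to check that the moduli scheme of pointed morphisms $\pone_L\to X_L$ through the prescribed points possesses an $L$-point. This is guaranteed by the very-free part of the KMM statement, which produces a geometrically smooth point of this moduli scheme over $L$ and hence an actual $L$-point by separable closedness of $L$; the perfectness of $k$ in the hypothesis ensures that the base change arguments and the application of KMM to $X_L$ go through without pathologies.
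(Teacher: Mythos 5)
Your proof is correct and follows essentially the same route as the paper: reduce via Corollary \ref{cor:sectionsoverfields} to weak $\aone$-chain connectedness, i.e. to showing $X(L)/\smallsim=\ast$ over separably closed extensions $L/k$, and then invoke Koll\'ar IV.3.9 to produce a rational curve joining two given $L$-points. The only packaging difference is that the paper filters this through the Hassett--Tschinkel notion of strong rational connectedness (Theorem \ref{thm:stronglyrationallyconnected}), whereas you apply Koll\'ar IV.3.9 directly to $X_L$ after noting that separable rational connectedness is preserved under base change; you are also more explicit than the paper about the one genuine subtlety, namely why the moduli scheme of joining curves possesses an $L$-rational point when $L$ is only separably closed rather than algebraically closed (a smooth point over a separably closed base has rational points near it), which is precisely the point the very-free hypothesis and the perfectness of $k$ are guarding.
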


\begin{rem}
As was pointed out to us by Colliot-Th\'el\`ene determining whether $\aone$-chain connectedness in the sense of Definition \ref{defn:chainconnected} is equivalent to retract $k$-rationality is an open problem.
\end{rem}

\subsection{Comparison results}
\subsubsection*{Comparison of $\aone$- and \'etale $\aone$-connectedness}
Given ${\mathcal X} \in \Spc^{\et}_k$ we now provide a comparison of $\aone$-connectedness and \'etale $\aone$-connectedness.  We use the comparison of topologies functoriality developed in Appendix \ref{s:homotopyoverview}.  Following the notation there, we let $\alpha: ({\mathcal Sm}_k)_{\et} \to ({\mathcal Sm}_k)_{Nis}$ denote the comparison of sites map.

By Lemma \ref{lem:aonefunctoriality}, the derived functor ${\mathbf R}\alpha_*$ sends \'etale $\aone$-local objects to $\aone$-local objects.  Adjointness of pullback and pushforward provides, for any $U \in {\mathcal Sm}_k$, a canonical bijection:
\[
Hom_{\het{k}}(U,{\mathcal X}) \isomto [U,{\bf R}\alpha_* {\mathcal X}]_{\aone}.
\]
Thus, for any $U \in \Sm_k$, the unit of adjunction provides morphism:
\[
[U,{\mathcal X}]_{\aone} \longrightarrow Hom_{\het{k}}(U,{\mathcal X}).
\]
Write $a_{\et}\pi_0^{\aone}({\mathcal X})$ for the \'etale sheafification of the presheaf on the left hand side.  Sheafifying both sides for the \'etale topology, we obtain a morphism
\begin{equation}
\label{eqn:etalecomparison}
a_{\et}\pi_0^{\aone}({\mathcal X}) \longrightarrow \pi_0^{\aone,\et}({\mathcal X}).
\end{equation}

\begin{lem}
\label{lem:etalecomparison}
The morphism $a_{\et}\pi_0^{\aone}({\mathcal X}) \to \pi_0^{\aone,\et}({\mathcal X})$ of \textup{Equation \ref{eqn:etalecomparison}} is an epimorphism of \'etale sheaves.  Thus, if the space underlying an object ${\mathcal X} \in \Spc_k^{\et}$ is $\aone$-connected it is also \'etale $\aone$-connected.
\end{lem}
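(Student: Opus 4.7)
The strategy is to fit the map of Equation~\ref{eqn:etalecomparison} into a commutative square of \'etale sheaves whose other three edges are already known to be epimorphisms, and then conclude from the trivial observation that if a composite of two maps is epi then so is the second factor. The key input is that we have the \'etale analogue of Corollary~\ref{cor:unstableaoneconnectivitytheorem} available for free.

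By naturality of $\aone$-localization with respect to change of topology, I would build the commutative diagram of \'etale sheaves
\[
\begin{CD}
a_{\et}\pi_0^s({\mathcal X}) @>>> a_{\et}\pi_0^{\aone}({\mathcal X}) \\
@VVV @VVV \\
\pi_0^{s,\et}({\mathcal X}) @>>> \pi_0^{\aone,\et}({\mathcal X}),
\end{CD}
\]
where horizontal arrows are induced by the units of the (Nisnevich, resp. \'etale) $\aone$-localization and vertical arrows are the change-of-topology comparison; the right-hand vertical is, by construction, the map of Equation~\ref{eqn:etalecomparison}. The top horizontal arrow is the \'etale sheafification of the Nisnevich epimorphism $\pi_0^s({\mathcal X}) \twoheadrightarrow \pi_0^{\aone}({\mathcal X})$ of Corollary~\ref{cor:unstableaoneconnectivitytheorem}, hence is still an epimorphism since $a_{\et}$ is exact. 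The bottom horizontal arrow is the \'etale version of Corollary~\ref{cor:unstableaoneconnectivitytheorem}, hence an epi of \'etale sheaves. The left vertical arrow is an epimorphism (in fact an isomorphism), since both source and target are \'etale sheafifications of the presheaves $U \mapsto [U,{\mathcal X}]_s$ and $U \mapsto [U,{\mathcal X}]_{s,\et}$, and the presheaf map between them, being the canonical localization of simplicial homotopy categories at \'etale weak equivalences, becomes an isomorphism on \'etale stalks.

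Granted these three epimorphisms, the common composite $a_{\et}\pi_0^s({\mathcal X}) \to \pi_0^{\aone,\et}({\mathcal X})$ going around either side of the square is an epimorphism of \'etale sheaves. Since this composite equals the top arrow (epi) followed by the right vertical, the right vertical must itself be an epimorphism. For the second statement: if ${\mathcal X}$ is $\aone$-connected then $\pi_0^{\aone}({\mathcal X}) = \Spec k$, so $a_{\et}\pi_0^{\aone}({\mathcal X}) = \Spec k$ (the terminal sheaf is already an \'etale sheaf), and the epimorphism just established forces $\pi_0^{\aone,\et}({\mathcal X}) = \Spec k$, which is exactly \'etale $\aone$-connectedness.

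The main obstacle is the verification that the left vertical arrow is an epimorphism, as this requires unwinding how simplicial homotopy classes transform under the change of topology. I expect this to follow from a standard stalkwise comparison (at strictly Henselian local rings the Nisnevich and \'etale homotopy-theoretic notions coincide), but one should be careful to invoke the correct functoriality statements from Appendix~\ref{s:homotopyoverview} rather than attempt an ad hoc argument. The remaining steps, once the square is in place, are formal.
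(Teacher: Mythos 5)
Your proposal is correct and uses the same key input as the paper, namely the \'etale form of the unstable $\aone$-$0$-connectivity theorem (Corollary~\ref{cor:unstableaoneconnectivitytheorem}): once one knows that the composite $\pi_0^{s,\et}({\mathcal X}) \to a_{\et}\pi_0^{\aone}({\mathcal X}) \to \pi_0^{\aone,\et}({\mathcal X})$ is an epimorphism, the second factor is forced to be one. The only genuine difference is in packaging: the paper argues at the level of spaces, observing that the map ${\mathcal X} \to {\mathbf R}\alpha_*{\mathcal X}$ factors through $L_{\aone}({\mathcal X})$, and then applies $\pi_0^{s,\et}$ to the resulting composite, where the first map gives the \'etale sheafification of $\pi_0^{\aone}({\mathcal X})$ and the whole composite is an epimorphism on $\pi_0^{s,\et}$ by the \'etale connectivity theorem. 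This phrasing quietly sidesteps the step you flag as the ``main obstacle'' (the left vertical arrow): one never needs to compare $a_{\et}\pi_0^s({\mathcal X})$ with $\pi_0^{s,\et}({\mathcal X})$ as a separate lemma, because one starts directly from $\pi_0^{s,\et}$ of ${\mathcal X}$ applied to the composite map. (Your stalkwise comparison is nonetheless correct, since the \'etale stalks of a space do not see the difference between Nisnevich and \'etale simplicial homotopy classes, so this is not a real gap, just extra work that the paper's formulation avoids.) You also correctly notice that whether the top arrow of your square is an epimorphism is immaterial; only epimorphicity of the composite and commutativity are used.
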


\begin{proof}
We will check that the morphism in question is an epimorphism on stalks.  The morphism ${\mathcal X} \to {\mathbf R}\alpha_*{\mathcal X}$ factors through a morphism ${\mathcal X} \to L_{\aone}({\mathcal X}) \to L_{\aone}({\mathbf R}\alpha_* {\mathcal X})$.  By the unstable $\aone$-$0$-connectivity theorem (\ref{cor:unstableaoneconnectivitytheorem}) in the \'etale topology, the composite map induces an epimorphism of \'etale sheaves of simplicial connected components.
\end{proof}

Using this observation, one can show that any variety that becomes $\aone$-connected over a separable closure is in fact \'etale $\aone$-connected.

\begin{ex}
\label{ex:conicbundles}
Suppose $a_1,\ldots,a_{2m}$ are distinct elements of ${\mathbb R}$, and let $S$ be any smooth compactification of the smooth affine hypersurface in ${\mathbb A}^3$ defined by the equation
\[
x^2 + y^2 = - \prod_{i=1}^{2m} (z - a_i).
\]
Projection onto $z$ determines a morphism from this hypersurface to $\pone$ with conic fibers.  The compactified surface is birationally ruled over $\pone$ and therefore rational over $\cplx$.  In particular $S$ is \'etale $\aone$-connected.  One can show that the manifold $S(\real)$ has $m$ connected components, and the set of $\aone$-equivalences classes of $\real$-points $S(\real)/\smallsim$ coincides with the set $\pi_0(S(\real))$ ({\em cf.} \cite[Corollary 3.4 and Theorem 4.6]{Kollarrealsurface}).  Using the topological realization functor (\cite[\S 3 Lemma 3.6]{MV}) one can show that such $S$ are $\aone$-disconnected.  Thus, even if $X$ becomes $\aone$-connected over a finite extension of a field $k$, it need {\em not} be $\aone$-connected over $k$ itself.  Furthermore, $\pi_0^{\aone}(S)$ is not necessarily a subsheaf of a point even if $\pi_0^{\aone,\et}(S)$ is a point.
\end{ex}

\subsubsection*{Comparing $\aone$-connectedness and $\aone$-chain connectedness}
Suppose $X$ is an arbitrary scheme having finite type over a field $k$.  The Nisnevich topology is subcanonical, i.e., the functor of points of $X$ is a Nisnevich sheaf.  We will abuse notation and write $X$ for both a scheme (possibly singular) and the Nisnevich sheaf on $\Sm_k$ determined by its functor of points.  To provide evidence for Conjecture \ref{conj:epimorphism}, we give the following result whose proof we defer to Section \ref{s:unramified}.

\begin{thm}
\label{thm:propercharacterization}
Suppose $X$ is a proper scheme having finite type over a field $k$.  The canonical epimorphism of \textup{Lemma \ref{lem:chainepimorphism}} induces for every finitely generated separable extension $L/k$ a bijection:
\[
\pi_0^{ch}(X)(L) \longrightarrow \pi_0^{\aone}(X)(L).
\]
\end{thm}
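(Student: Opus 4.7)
The map $\pi_0^{ch}(X)(L)\to\pi_0^{\aone}(X)(L)$ is already surjective by Lemma \ref{lem:chainepimorphism}, so only injectivity remains. Unravelling the definitions, this amounts to showing that if two $L$-points $x_0,x_1\in X(L)$ become identified after applying the $\aone$-localization functor, then they already lie in the same orbit of the equivalence relation generated by elementary $\aone$-equivalences over $L$. The plan is to realize $\pi_0^{ch}(X)$, on sections over finitely generated separable extensions of $k$, as the underlying set-valued functor of a Nisnevich sheaf which is already $\aone$-invariant; once this is achieved, the universal property of $\aone$-localization forces the comparison map to be a bijection on such sections.

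First I would note that since $\Spec L$ is Henselian local, $\pi_0^{ch}(X)(L)$ is computed by the naive quotient $X(L)/{\sim}$ (Nisnevich sheafification introduces no new identifications at this stalk). Define a presheaf $F$ on finitely generated separable extensions $L/k$ by $F(L) := X(L)/{\sim}$. The main technical step, which is the subject of Section \ref{s:unramified}, is to show that for $X$ proper the assignment $L\mapsto F(L)$ extends canonically to an unramified Nisnevich sheaf (in the sense of Morel) on $\Sm_k$. The core input here is a specialization statement: for any smooth $k$-scheme $U$ and codimension-one point $\eta\in U$ with local ring $\mathcal{O}$, fraction field $K$ and residue field $\kappa$, the valuative criterion of properness lifts each $K$-point of $X$ uniquely to an $\mathcal{O}$-point, thereby producing a specialization $X(K)\to X(\kappa)$. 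To see that this descends to $\aone$-equivalence classes, one observes that an elementary $\aone$-homotopy over $K$ is a $K$-morphism $\aone_K\to X$; applying the valuative criterion to the proper morphism $X\times\aone\to\aone$ over $\mathcal{O}$ produces an elementary $\aone$-homotopy over $\kappa$ joining the specializations of the endpoints.

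The second step is to check that this unramified sheaf $F$ is $\aone$-invariant on field sections, i.e., that $F(L)\to F(L(t))$ is a bijection. Surjectivity: an $L(t)$-point of $X$ extends, by properness applied to its closure in $X\times\pone_L$, to a morphism $\pone_L\to X$, which in particular exhibits it as $\aone$-equivalent to the constant $L$-point at $0$. Injectivity: an $\aone$-chain over $L(t)$ joining two constant $L$-points can similarly be spread over a curve open in $\aone_L$ and extended to $\pone_L$ by properness, then specialized at $L$-points to produce an $\aone$-chain over $L$. With $F$ unramified and $\aone$-invariant on field sections, it coincides with $\pi_0^{\aone}(X)$ on such sections, giving the desired bijection.

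The principal obstacle is the first step: establishing that $L\mapsto X(L)/{\sim}$ really extends to an unramified Nisnevich sheaf on $\Sm_k$. While the valuative criterion supplies the individual specialization maps $X(K)\to X(\kappa)$, verifying the full package of Morel's unramified sheaf axioms, including independence of the chosen discrete valuation, compatibility in codimension two, and compatibility with the $\aone$-equivalence relation, requires the systematic technology that Section \ref{s:unramified} is devoted to setting up. The subtlety is amplified by the fact that $X$ is only assumed proper, not smooth proper, so one cannot appeal to moving-style arguments on $X$ itself and must instead argue on the side of the smooth test schemes and auxiliary smooth curves into $X$.
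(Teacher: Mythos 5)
Your high-level strategy is the same as the paper's: realize the functor $L \mapsto X(L)/\smallsim$ as the field-level restriction of an $\aone$-invariant Nisnevich sheaf that receives a map from $\pi_0^{\aone}(X)$, and then conclude via the universal property of $\aone$-localization. The paper packages this via a notion of \emph{birational} sheaves (Definition \ref{defn:birationalpresheaf}), which it notes is a deliberate variant of (not the same as) Morel's unramified sheaves; it then proves an equivalence of categories with a concrete combinatorial category $\fkrset$ (Theorem \ref{thm:equivalenceofcategories}), so that the whole question reduces to checking a short list of axioms on the object $\varpi_0^{b\aone}(X)$.

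However, your sketch of the decisive step is incorrect as stated. You claim that, given an elementary $\aone$-homotopy $H: \aone_K \to X$ over the fraction field $K$ of a DVR $\mathcal{O}$ with residue field $\kappa$, ``applying the valuative criterion to $X\times\aone\to\aone$ over $\mathcal{O}$'' produces an elementary $\aone$-homotopy over $\kappa$. It does not: the valuative criterion applies along codimension-one points, so it extends $H$ to a morphism on the local ring at the generic point of $\aone_\kappa$, but $\aone_{\mathcal{O}}$ is a regular $2$-dimensional scheme, and the best one gets without further work is a rational map $\aone_{\mathcal{O}} \dashrightarrow X$ defined away from finitely many closed points of the special fibre $\aone_\kappa$. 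In particular the points $0,1 \in \aone_\kappa$ need not lie in the domain of definition, and even if they did one must still compare the resulting $\kappa$-points with the genuine specializations $\bar{x}_0,\bar{x}_1$ obtained by applying the valuative criterion to the sections $\Spec\mathcal{O}\to X$. The paper closes exactly this gap via Lemma \ref{lem:lipmanindeterminacy} (Lipman's resolution of indeterminacy for excellent regular surfaces, producing a proper birational model $S_{\alpha}\to\aone_{\mathcal{O}}$ on which the map is everywhere defined) and Lemma \ref{lem:chainconnectedfibers} (Lichtenbaum's strong factorization, guaranteeing that the exceptional fibres of $S_{\alpha}\to\aone_{\mathcal{O}}$ are $\aone$-chain connected), after which a careful comparison of proper transforms of the sections shows the specializations lie in a common $\aone$-chain. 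You do acknowledge in your final paragraph that this step ``requires the systematic technology'' of Section \ref{s:unramified}, but the affirmative claim in the middle of your argument is false as written and is precisely where the substance of the proof lives. A similar resolution-of-indeterminacy argument is also needed for injectivity in the $\aone$-invariance step ({\bf A4}), to handle the case where the open set $U\subset\aone_L$ has no $L$-rational point.
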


\begin{cor}
\label{cor:chainconverse}
If $k$ is a field, and $X \in \Sm_k$ is proper over $k$, then
\begin{itemize}
\item $X$ is $\aone$-connected if and only if $X$ is $\aone$-chain connected, and
\item $X$ is separably rationally connected if and only if $X$ is weakly $\aone$-connected.
\end{itemize}
\end{cor}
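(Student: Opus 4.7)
The strategy is to exploit Theorem~\ref{thm:propercharacterization}, which identifies $\pi_0^{ch}(X)(L)$ with $\pi_0^{\aone}(X)(L)$ on finitely generated separable extensions $L/k$, together with the observation that for such $L$ the Nisnevich sheafification used to define $\pi_0^{ch}(X)$ does not alter sections over the Henselian-local scheme $\Spec L$. By direct inspection of $Sing_*^{\aone}(X)$, one sees that $\pi_0^{ch}(X)(L)=X(L)/\smallsim$ for every field $L$. Combining these two facts yields a natural bijection $X(L)/\smallsim \;\isomto\; \pi_0^{\aone}(X)(L)$ for every finitely generated separable extension $L/k$, which underpins both bullets.

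For the first bullet, the implication ``$\aone$-chain connected $\Rightarrow$ $\aone$-connected'' is exactly Proposition~\ref{prop:aoneconnected}. For the converse, $\aone$-connectedness of $X$ gives $\pi_0^{\aone}(X)(L)=\ast$ for every finitely generated separable $L/k$, and the bijection above then forces $X(L)/\smallsim=\ast$ for all such $L$, which is the definition of $\aone$-chain connectedness.

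For the second bullet, I first extend the bijection to separably closed extensions by a filtered colimit argument: a separably closed extension $L/k$ is the colimit of its finitely generated separable sub-extensions $L_i$, and both $L_i \mapsto X(L_i)/\smallsim$ and $L_i \mapsto \pi_0^{\aone}(X)(L_i)$ commute with this colimit, since sheaves on $\Sm_k$ are evaluated at such points via colimits over finite-type smooth sub-models. Thus ``weakly $\aone$-connected'' is equivalent to ``$X(L)/\smallsim=\ast$ for every separably closed $L/k$''. The implication ``separably rationally connected $\Rightarrow$ weakly $\aone$-connected'' is Corollary~\ref{cor:separablyrationallyconnected}. For the converse, properness of $X$ implies that any morphism $\aone_L\to X$ extends to $\pone_L\to X$, so the vanishing of $X(L)/\smallsim$ for separably closed $L$ says that $X_{\bar{k}}$ is rationally chain connected in the classical sense; invoking \cite[Chap.~IV]{Kollar}, a smooth proper rationally chain connected variety over an algebraically closed field is separably rationally connected, and this property descends to $X$ over $k$.

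The principal obstacle is this last implication, upgrading ``rationally chain connected'' to ``separably rationally connected'' for smooth proper varieties. In characteristic zero this is a standard application of Koll\'ar--Miyaoka--Mori; in positive characteristic one must invoke their more delicate smoothing-of-combs and free-curve arguments. A secondary technical point is to justify carefully the filtered colimit argument, together with the identification $\pi_0^{ch}(X)(L)=X(L)/\smallsim$ for $L$ a field, for which one exploits that Nisnevich sheafification on a Henselian local base preserves sections of (suitably separated) presheaves.
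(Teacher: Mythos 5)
Your handling of the first bullet is correct and matches the intended argument: one direction is Proposition~\ref{prop:aoneconnected}, and the other uses the bijection $X(L)/\smallsim \cong \pi_0^{\aone}(X)(L)$ from Theorem~\ref{thm:propercharacterization} together with the identification $\pi_0^{ch}(X)(L)=X(L)/\smallsim$, which is legitimate since $\Spec L$ is a Nisnevich point of $\Sm_k$. The filtered-colimit extension of the bijection to separably closed extensions, needed for the second bullet, is also sound.

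The second bullet has a genuine gap in the converse direction. You reduce to rational chain connectedness of $X_{\kbar}$ and invoke the implication ``rationally chain connected over an algebraically closed field $\Rightarrow$ separably rationally connected.'' That implication is \emph{false} in positive characteristic, not merely harder to prove as your closing remark suggests. Inseparably unirational smooth projective varieties of general type exist (e.g.\ Shioda's Fermat hypersurfaces in suitable characteristics); they are rationally chain connected over $\kbar$, yet a separably rationally connected variety satisfies $H^0(X,\omega_X^{\otimes m})=0$ for all $m>0$ and so cannot be of general type. No smoothing-of-combs argument can bridge this, because the chains supplied by bare rational chain connectedness need not be free. The reduction to $\kbar$ discards exactly the data you need: the hypothesis gives $X(L)/\smallsim=\ast$ for \emph{every} separably closed $L/k$, in particular for $L$ a separable closure of $k(X\times X)$. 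Joining the two generic $L$-points $\xi_1,\xi_2\in X(L)$ by a chain of maps $\pone_L\to X_L$ and spreading out over a smooth model of the finitely generated \emph{separable} extension $L'$ of $k(X\times X)$ over which the chain is defined produces a family of chains whose evaluation map to $X\times X$ is dominant \emph{and generically smooth}. This separability of the dominating family---invisible after base change to $\kbar$---is precisely what Definition~\ref{defn:rationalitynotions}(vi) demands, and is what allows the smoothing arguments of \cite[IV.3]{Kollar} to conclude.
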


\begin{rem}
This result provides a positive solution to the problem about the structure of the set $[\Spec k,X]_{\aone}$ posed in \cite[p. 386]{MIntro}.
\end{rem}

Let us record some extremely useful consequences of this result.  The first corollary follows immediately by combining Theorem \ref{thm:propercharacterization} with Theorem \ref{thm:stablyrational}.  The second corollary follows from the fact that $\aone$-chain connected varieties are separably rationally connected for fields having characteristic $0$, and separably rationally connected smooth proper surfaces over an algebraically closed field are rational.

\begin{cor}
\label{cor:birationalinvariance}
Suppose $k$ is a field having characteristic $0$.  If $X$ and $X'$ are two $k$-birationally equivalent smooth proper varieties, then $X$ is $\aone$-connected if and only if $X'$ is $\aone$-connected.
\end{cor}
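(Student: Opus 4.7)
The plan is to reduce the statement to the birational invariance of $\aone$-chain connectedness, which is essentially already established in Theorem \ref{thm:stablyrational}(i), and then use Corollary \ref{cor:chainconverse} to translate back to $\aone$-connectedness.

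First I would note that since $X$ and $X'$ are smooth and proper, Corollary \ref{cor:chainconverse} tells us that $\aone$-connectedness and $\aone$-chain connectedness are equivalent properties for each of them. Therefore it suffices to prove that $\aone$-chain connectedness is a $k$-birational invariant among smooth proper $k$-varieties when $k$ has characteristic $0$.

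Next, invoking \cite[Theorem 0.1.1]{AKMW}, weak factorization holds over any field of characteristic $0$ in every dimension $n$, so the hypothesis needed to apply Theorem \ref{thm:stablyrational}(i) is satisfied. By that theorem, a birational equivalence between $X$ and $X'$ can be factored through a zig-zag of smooth proper varieties connected by blow-ups along smooth centers, and at each such blow-up step Proposition \ref{prop:blowupconnected} guarantees a bijection $X(L)/\smallsim \to Y(L)/\smallsim$ on $\aone$-equivalence classes of $L$-points for every finitely generated separable extension $L/k$. Iterating along the zig-zag, one concludes $X$ is $\aone$-chain connected if and only if $X'$ is $\aone$-chain connected.

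Combining these two inputs yields the claim: $X$ is $\aone$-connected iff (by Corollary \ref{cor:chainconverse}) $\aone$-chain connected iff (by the birational invariance just established) $X'$ is $\aone$-chain connected iff (again by Corollary \ref{cor:chainconverse}) $X'$ is $\aone$-connected. There is no real obstacle, since all the serious work — the geometric content of Proposition \ref{prop:blowupconnected}, the hard comparison between homotopic and chain-theoretic $\pi_0$ from Theorem \ref{thm:propercharacterization}, and the weak factorization theorem of \cite{AKMW} — is already available. The one conceptual point worth highlighting is that the argument has to pass through the chain-connected notion: $\aone$-connectedness has no evident behavior under blow-ups directly, whereas $\aone$-chain connectedness does because $\aone$-equivalence classes of $L$-points can be compared explicitly using the fact that fibers of blow-ups at smooth centers are projective spaces.
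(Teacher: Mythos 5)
Your proof is correct and follows essentially the same route the paper intends: the paper states that Corollary \ref{cor:birationalinvariance} ``follows immediately by combining Theorem \ref{thm:propercharacterization} with Theorem \ref{thm:stablyrational},'' which is exactly your reduction through Corollary \ref{cor:chainconverse} (the $\aone$-connected $\Leftrightarrow$ $\aone$-chain connected equivalence for smooth proper varieties) together with Theorem \ref{thm:stablyrational}(i) and the \cite{AKMW} weak factorization theorem in characteristic $0$. Nothing is missing; your explicit unpacking via Proposition \ref{prop:blowupconnected} and the zig-zag is just the inside of the argument the paper compresses.
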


\begin{cor}[{\em cf.} {\cite[Exercise IV.3.3.5]{Kollar}}]
\label{cor:aoneconnectednesssurfaces}
Suppose $k$ is an algebraically closed field having characteristic $0$.  A smooth proper $k$-variety of dimension $\leq 2$ is $\aone$-connected if and only if it is rational.
\end{cor}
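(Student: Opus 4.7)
The plan is to prove the two implications separately, both as essentially formal consequences of results established earlier in the section together with the classical theory of algebraic surfaces.

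For the implication ``rational $\Rightarrow$ $\aone$-connected'', I would simply observe that any $k$-rational smooth proper variety is in particular retract $k$-rational by Lemma \ref{lem:rationalitynotions}. Since $k$ has characteristic $0$, Theorem \ref{thm:stablyrational}(ii) applies and gives $\aone$-chain connectedness, hence $\aone$-connectedness via Proposition \ref{prop:aoneconnected}. (Alternatively, one may cite Corollary \ref{cor:stablyrational} directly.) This direction works in all dimensions and does not use the hypothesis $\dim X \leq 2$.

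For the converse ``$\aone$-connected $\Rightarrow$ rational'' under the dimension hypothesis, I would proceed as follows. An $\aone$-connected space is tautologically weakly $\aone$-connected, so by the second bullet of Corollary \ref{cor:chainconverse}, the smooth proper variety $X$ is separably rationally connected. Because $k$ has characteristic $0$, separable rational connectedness coincides with rational connectedness (\cite[IV.3.2]{Kollar}). In dimension $1$, a rationally connected smooth proper curve over the algebraically closed field $k$ has genus $0$ and is therefore isomorphic to $\pone$, hence $k$-rational. In dimension $2$, the conclusion is a classical theorem (the variant of Castelnuovo's rationality criterion recorded in \cite[Exercise IV.3.3.5]{Kollar}): every rationally connected smooth proper surface over an algebraically closed field of characteristic $0$ is $k$-rational.

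There is no real obstacle here; the whole point is that the heavy lifting has already been done. The delicate step is the appeal to Corollary \ref{cor:chainconverse}, which depends on Theorem \ref{thm:propercharacterization} (proved later by unramified-sheaf techniques in \S\ref{s:unramified}) to convert the purely homotopic hypothesis of $\aone$-connectedness into a geometric separable rational connectedness statement; once that bridge is in place, the corollary reduces to the classification of rationally connected surfaces, which is inputted from birational geometry. The restriction to $\dim X \leq 2$ is used only in this last classical step, and the restriction to characteristic $0$ is used both to identify separable with ordinary rational connectedness and to invoke Theorem \ref{thm:stablyrational}(ii).
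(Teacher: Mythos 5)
Your proof is correct and takes essentially the same route as the paper: both directions hinge on Theorem \ref{thm:stablyrational} (via Corollary \ref{cor:stablyrational}) for ``rational $\Rightarrow$ $\aone$-connected'' and on Corollary \ref{cor:chainconverse} (hence Theorem \ref{thm:propercharacterization}) together with the classical classification of rationally connected surfaces for the converse. The only cosmetic difference is that you pass through the second bullet of Corollary \ref{cor:chainconverse} (weak $\aone$-connectedness $\Leftrightarrow$ separable rational connectedness), whereas the paper's sketch passes through the first bullet ($\aone$-connected $\Leftrightarrow$ $\aone$-chain connected) before reaching the same geometric conclusion.
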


\begin{rem}
As we noted above, if $X$ is a smooth proper $k$-variety, then the equivalence relations given by $R$-equivalence and $\aone$-equivalence of points coincide.  Corollary \ref{cor:chainconverse} then implies, e.g., that $X$ is (weakly) $\aone$-connected if and only if it is separably $R$-trivial (separably rationally connected).
\end{rem}

\begin{cor}
Assume $k$ is a field having characteristic $0$.  Suppose $X \in \Sm_k$ and suppose $j: X \hookrightarrow \bar{X}$ is an open immersion into a smooth proper variety.  For any finitely generated separable extension $L$ of $k$, the image of the map $\pi_0^{\aone}(X)(L) \to \pi_0^{\aone}(\bar{X})(L) = \bar{X}(L)/\smallsim$ coincides with $X(L)/R$.  In particular for any $X \in \Sm_k$, the map $X(L) \to X(L)/R$ factors through the surjective map $X(L) \to \pi_0^{\aone}(X)(L)$.
\end{cor}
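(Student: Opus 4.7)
My plan is to exploit the identification $\pi_0^{\aone}(\bar X)(L) = \bar X(L)/\smallsim = \bar X(L)/R$ for the smooth proper compactification $\bar X$: the first equality is Corollary \ref{cor:chainconverse} and the second uses that $\aone$- and $R$-equivalence of points agree on smooth proper varieties. The first step is to verify that the canonical map $X(L) \to \pi_0^{\aone}(X)(L)$ is surjective. Since $X$ is a Nisnevich sheaf of sets viewed as a discrete simplicial sheaf one has $\pi_0^s(X)(L) = X(L)$, and the epimorphism $\pi_0^s(X) \twoheadrightarrow \pi_0^{\aone}(X)$ from Corollary \ref{cor:unstableaoneconnectivitytheorem} is surjective on sections over the Henselian local scheme $\Spec L$.

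Combining this surjectivity with functoriality of $j$, the image of $\pi_0^{\aone}(X)(L) \to \bar X(L)/R$ coincides with the image of the composition $X(L) \hookrightarrow \bar X(L) \twoheadrightarrow \bar X(L)/R$. Since any direct $R$-equivalence in $X$ composes with $j$ to give a direct $R$-equivalence in $\bar X$, this image is canonically a quotient of $X(L)/R$. The main assertion of the corollary then amounts to the statement that the induced surjection from $X(L)/R$ onto this image is a bijection, equivalently that two $L$-points of $X$ which are $R$-equivalent in $\bar X$ are already $R$-equivalent in $X$.

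The principal obstacle is establishing this last injectivity. Given a chain $x_1 = y_0, y_1, \dots, y_n = x_2$ in $\bar X(L)$ witnessing $R$-equivalence in $\bar X$, with consecutive pairs connected by morphisms $\phi_i \colon \pone_L \to \bar X$ (properness extends rational maps from open subschemes of $\pone_L$), the straightforward idea is to restrict each $\phi_i$ to the open preimage $\phi_i^{-1}(X) \subseteq \pone_L$: whenever both endpoints of $\phi_i$ lie in $X(L)$ this yields a direct $R$-equivalence in $X$. The difficulty is that an intermediate $y_i$ may lie in $\bar X(L) \setminus X(L)$; to bypass such boundary points I would invoke the characteristic $0$ hypothesis (in particular, Hironaka's theorem to arrange $\bar X \setminus X$ as a simple normal crossings divisor) and use a deformation-of-rational-curves argument to replace each offending $y_i$ by a nearby $L$-point $y_i' \in X(L)$, exploiting the density of $\phi_i^{-1}(X)$ in $\pone_L$ (and, if $L$ is finite, a Galois-descent reduction to an infinite extension) to produce a chain entirely in $X(L)$.

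Once the main assertion is established, the \emph{in particular} clause is immediate: if $x_1, x_2 \in X(L)$ have the same image in $\pi_0^{\aone}(X)(L)$ they have the same image in $\bar X(L)/R$, which coincides with $X(L)/R$ by the bijection just proved, so $x_1$ and $x_2$ are $R$-equivalent in $X$. Hironaka's theorem in characteristic $0$ supplies a smooth proper compactification $\bar X$ for any $X \in \Sm_k$, so the factorization holds in full generality.
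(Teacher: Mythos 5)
Your preliminary reductions are all correct: surjectivity of $X(L) \to \pi_0^{\aone}(X)(L)$ via Corollary \ref{cor:unstableaoneconnectivitytheorem} and the Henselian-local nature of $\Spec L$, and the identification $\pi_0^{\aone}(\bar X)(L) = \bar X(L)/\smallsim = \bar X(L)/R$ (though the first equality is Theorem \ref{thm:propercharacterization} rather than Corollary \ref{cor:chainconverse}). You also correctly pin down the crux: the whole statement reduces to showing that two $L$-points of $X$ that are $R$-equivalent in $\bar X$ are already $R$-equivalent in $X$. One small slip: since $\operatorname{char}(k)=0$, $k$ contains $\Q$, so $L$ is always infinite and the parenthetical Galois-descent reduction for finite $L$ never arises.

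The gap is in the treatment of this key injectivity. The ``deformation-of-rational-curves argument'' is not an argument, and in this generality it cannot be made into one. Restricting $\phi_i$ to $\phi_i^{-1}(X)$ works when both endpoints of a link lie in $X(L)$, but if an intermediate vertex $y_i$ lies on $\bar X \setminus X$, then moving $y_i$ to a nearby $L$-point $y_i'$ on $\tilde\phi_i(\pone_L)$ (using density of $\tilde\phi_i^{-1}(X)$) takes $y_i'$ off the adjacent curve $\tilde\phi_{i+1}(\pone_L)$, and repairing the chain would require deforming $\tilde\phi_{i+1}$ so as to pass through $y_i'$. Rational curves on an arbitrary smooth proper $\bar X$ admit no such deformations; one needs the curve to be free, or $\bar X$ to be (separably) rationally connected, neither of which is a hypothesis. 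Putting $\bar X \setminus X$ into SNC form via Hironaka does not change this.

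In fact the injectivity you are trying to establish is false in general, so this route cannot succeed as written. Take $E$ an elliptic curve over $\cplx$ with origin $O$, let $\bar X$ be the blow-up of $\pone \times E$ at $(0,O)$, with exceptional divisor $E_1$ meeting the strict transform $F$ of $\pone \times \{O\}$ at a single point $r$, and set $X = \bar X \setminus \{r\}$. Since $E$ has no non-constant maps from rational curves, every non-constant morphism from an open subscheme of $\pone$ into $\bar X$ has image contained in a fiber of $\bar X \to E$. The fiber over $O$ in $X$ is the disjoint union of the two affine lines $F \setminus \{r\}$ and $E_1 \setminus \{r\}$, so a point of the first and a point of the second are $R$-equivalent in $\bar X$ (chaining through $r$) but not in $X$. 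Thus $X(\cplx)/R \to \bar X(\cplx)/R$ fails to be injective, and either additional hypotheses or a fundamentally different reading of the statement (and proof) are needed.
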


\section{$\aone$-$h$-cobordisms and rational smooth proper surfaces}
\label{s:cobordisms}
In this section we study the notion of $\aone$-$h$-cobordism of smooth schemes mentioned in \S \ref{s:introduction}.  Using this notion, Theorem \ref{thm:surfaces} provides the $\aone$-homotopy classification for rational smooth proper surfaces; the proof is essentially elementary.  Along the way, we prove general results about the $\aone$-homotopy types of iterated blow-ups of points on smooth proper (\'etale) $\aone$-connected varieties (see Lemma \ref{lem:movingpoints}) and classify the total spaces of ${\mathbb P}^n$-bundles over $\pone$ up to $\aone$-weak equivalence (see Proposition \ref{prop:pnbundlesoverpone}).

Suppose $X$ is a smooth $k$-scheme.  Specifying a regular function $f \in \Gamma(X,\O_X)$ is equivalent to specifying a morphism $f: X \to \aone$.  Note that $\aone(k)$ has two canonical elements, which we denote by $0$ and $1$.  In the remainder of this section, we will write $f^{-1}(0)$ and $f^{-1}(1)$, or just $X_0$ and $X_1$ assuming $f$ is understood, for the scheme-theoretic fibers over the points $0$ and $1$.  We will say that a closed point $x \in \aone$ is a {\em regular value} if the scheme theoretic fiber $f^{-1}(x)$ is a smooth scheme, otherwise $x$ will be called a {\em critical value} of $f$.  We begin by defining $\aone$-$h$-cobordisms and studying their general properties.

\subsection{Basic definitions and general properties}
\begin{defn}
\label{defn:aonehcobordism}
Suppose $X \in \Sm_k$, and $f: X \to \aone$ is a proper, surjective morphism.  We will say that $f$ (or the pair $(X,f)$) is an {\em $\aone$-$h$-cobordism} if $0$ and $1$ are regular values of $f$, and the inclusion maps $X_0 \hookrightarrow X$ and $X_1 \hookrightarrow X$ are $\aone$-weak equivalences.
\end{defn}

If $Y$ is a smooth proper $k$-scheme, the projection morphism $p_Y: Y \times \aone \to Y$ is always an $\aone$-$h$-cobordism that we will call the {\em trivial} $\aone$-$h$-cobordism.  Given an $\aone$-$h$-cobordism $f: X \to \aone$, we will say that $X_0$ and $X_1$ are {\em directly $\aone$-$h$-cobordant}.  More generally, we will say that two varieties $X$ and $Y$ are {\em $\aone$-$h$-cobordant} if they are in the same equivalence class for the equivalence relation generated by direct $\aone$-$h$-cobordance.

\begin{rem}
Given an $\aone$-$h$-cobordism $(W,f)$, one can show that $f$ is necessarily a {\em flat} morphism.  Since by assumption $f$ is smooth at $0$ and smoothness is open, it follows that $f$ has at most finitely many critical values.  Thus, we can think of $f$ as providing an extremely special deformation of the fiber over $0$.
\end{rem}

\begin{rem}
Two varieties $X$ and $Y$ that are $\aone$-$h$-cobordant are algebraically cobordant in the sense that they give rise to the same class in the algebraic cobordism ring $\Omega^*(k)$ \cite[Remark 2.4.8, and Definition 2.4.10]{LM}; this observation justifies our choice of terminology.
\end{rem}

\begin{rem}
Suppose $W$ is an $h$-cobordism between smooth manifolds $M$ and $M'$.  In classical topology, one studies $W$ by means of handle decompositions.  By choosing a Morse function $f: W \to \real$, one can decompose $f$ into elementary pieces corresponding to the critical points of $f$.  The handle decomposition theorem shows that an $h$-cobordism admitting a Morse function with no critical values is necessarily trivial.  On the contrary, we will see in Example \ref{ex:rank2bundles} that in algebraic geometry there exist {\em non-trivial} $\aone$-$h$-cobordisms $(W,f)$ where $f$ is a smooth morphism and thus has no critical values!  In fact, all $\aone$-$h$-cobordisms we can construct are of this form.
\end{rem}

\subsubsection*{$\aone$-$h$-cobordant bundles}
\begin{prop}
\label{prop:bundles}
Suppose $X,Y \in {\mathcal Sm}_k$ with $Y$ proper, and $g: Z \to X \times \aone$ is a smooth surjective morphism.  Assume further that the following condition holds.
\begin{itemize}
\item[{\bf (LT)}] There is a Nisnevich cover $u: U \to X$ such that the pullback along $u \times id: U \times \aone \to X \times \aone$ of $g$ is the projection of a product $U \times \aone \times Y \to U \times \aone$.
\end{itemize}
The morphism $f: Z \to \aone$ induced by composing the morphism $g$ with the projection $X \times \aone \to \aone$ is an $\aone$-$h$-cobordism.
\end{prop}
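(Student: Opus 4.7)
The plan is to verify the three requirements of Definition~\ref{defn:aonehcobordism} for the morphism $f: Z \to \aone$: that $f$ is proper and surjective, that $0$ and $1$ are regular values, and that the inclusions $Z_0 \hookrightarrow Z$ and $Z_1 \hookrightarrow Z$ are $\aone$-weak equivalences. Hypothesis (LT) together with properness of $Y$ makes $g$ proper, since properness can be checked after the Nisnevich cover $u \times \mathrm{id}$, which trivializes $g$ to a projection with proper fiber; then $f = \mathrm{pr}_{\aone} \circ g$ is proper (assuming $X$ proper, as is implicit in the context of studying $\aone$-$h$-cobordisms), and surjective because $g$ is. Every closed point of $\aone$ is a regular value of $f$, because both $g$ and the projection $X \times \aone \to \aone$ are smooth, so $f$ is smooth.

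The heart of the argument is showing that $Z_0 \hookrightarrow Z$, and by symmetry $Z_1 \hookrightarrow Z$, is an $\aone$-weak equivalence. I would analyze the Cartesian square
\[
\begin{CD}
Z_0 @>>> Z \\
@VVV @VVgV \\
X @>>> X \times \aone,
\end{CD}
\]
whose bottom arrow is the inclusion $X = X \times \{0\} \hookrightarrow X \times \aone$, a prototypical $\aone$-weak equivalence. Pulling the square back along $u: U \to X$, hypothesis (LT) identifies the right column with the projection $U \times \aone \times Y \to U \times \aone$, and the top row becomes the inclusion $U \times Y \hookrightarrow U \times \aone \times Y$ at $0$. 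The latter map is the product of the $\aone$-weak equivalence $U \hookrightarrow U \times \aone$ with $\mathrm{id}_Y$, hence itself an $\aone$-weak equivalence because the class of $\aone$-weak equivalences is stable under product with a fixed smooth scheme.

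It remains to pass from this Nisnevich-local conclusion back to the global inclusion $Z_0 \hookrightarrow Z$. For this I invoke Nisnevich descent for $\aone$-weak equivalences: writing $U^{\bullet}$ for the \v{C}ech nerve of $u$, both $Z$ and $Z_0$ are homotopy colimits (in the Nisnevich-local model structure on $\Spc_k$) of the simplicial diagrams obtained by pullback to $U^{\bullet}$, and the induced map of simplicial diagrams is levelwise a product of an $\aone$-weak equivalence with $Y$; taking the homotopy colimit yields the desired global $\aone$-weak equivalence. Equivalently, (LT) exhibits $g$ as a Nisnevich-locally trivial fibration with proper fiber, so the Cartesian square above is a homotopy pullback in the $\aone$-local model structure on $\Spc_k$, and then $Z_0 \hookrightarrow Z$ is an $\aone$-weak equivalence because its parallel arrow is. The main obstacle in a fully rigorous proof is precisely this descent step---identifying the strict scheme-theoretic pullback with a homotopy pullback in the Morel--Voevodsky framework---but once that standard fact is in hand the argument is essentially formal.
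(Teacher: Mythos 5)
Your proof is correct and follows essentially the same strategy as the paper's: reduce to $Z_0 \hookrightarrow Z$ via the fiber-swapping automorphism of $\aone$, pass to the \v{C}ech nerve of $u$ where local triviality \textup{\bf (LT)} exhibits the comparison levelwise as $(\text{$\aone$-weak equivalence}) \times Y$, and then descend; the paper packages this via the explicit \v{C}ech simplicial schemes $\breve{C}(u)$, $\breve{C}(u\times id)$ and the Cartesian square citing \cite[\S 2 Lemma 2.15]{MV}, while you phrase it as homotopy colimits of the pulled-back simplicial diagrams, which is the same computation. Two small remarks: your observation that properness of $f$ requires $X$ proper (not stated in the hypotheses) is a genuine gap in the paper's statement that you are right to flag; but your closing ``Equivalently\ldots'' reformulation is \emph{not} a safe shortcut --- a Nisnevich-locally trivial morphism with fiber $Y$ need not be an $\aone$-fibration (the fiber $Y$ is generally not $\aone$-rigid, and even for $GL_n$-torsors this fails, cf.\ the discussion around \cite{ADBundle}), so the strict Cartesian square is not known to be an $\aone$-homotopy pullback; stick with the \v{C}ech-descent argument you gave first.
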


\begin{proof}
Since we can apply an automorphism of $\aone$ that exchanges the fibers over $0$ and $1$, it suffices to check that the inclusion morphism $Z_0 \hookrightarrow Z$ is an $\aone$-weak equivalence.  By assumption, we can choose an open cover $u: U \to X$ such that the pullback of $g$ along $u \times id$ trivializes.  Fix such a trivialization.  Our choice of trivialization determines an isomorphism $Y \times U \times \aone \isomt Z \times_{X \times \aone} (U \times \aone)$.  Also, the pull-back of the morphism $g: Z \to X \times \aone$ by $u \times id$ coincides via this isomorphism with the projection morphism $Y \times U \times \aone \to U \times \aone$.

Consider now the \u Cech simplicial scheme $\breve{C}(u \times id)$ whose $n$-th term is the $(n+1)$-fold fiber product of $U \times \aone$ with itself over $X \times \aone$.  By the discussion of Example \ref{ex:cechobjects}, the augmentation map $\breve{C}(u \times id) \to X \times \aone$ is a simplicial weak equivalence, and thus also an $\aone$-weak equivalence.  Using the chosen trivialization of $g$ along $u \times id$, one constructs an isomorphism from the \u Cech simplicial scheme associated with the Nisnevich covering map $Z \times_{X \times \aone} (U \times \aone) \to Z$ to the product $Y \times \breve{C}(u \times id)$; for the same reason this map is an $\aone$-weak equivalence.  Similarly, one checks that the map $\breve{C}(u) \to X$ is an $\aone$-weak equivalence and, by restriction, one constructs an isomorphism from the \u Cech simplicial scheme associated with the covering morphism $Z_0 \times_X U \to Z_0$ to the product $Y \times \breve{C}(u)$.

The construction above provides a Cartesian square of the form
\[
\xymatrix{
Y \times \breve{C}(u) \ar[r]\ar[d] & Y \times \breve{C}(u \times id) \ar[d]\\
\breve{C}(u) \ar[r]& \breve{C}(u \times id)
}
\]
If the inclusion morphism $\breve{C}(u) \hookrightarrow \breve{C}(u \times id)$ is an $\aone$-weak equivalence, it follows by \cite[\S 2 Lemma 2.15]{MV} that the product map $Y \times \breve{C}(u) \to Y \times \breve{C}(u \times id)$ is also an $\aone$-weak equivalence.  Since the map $X \hookrightarrow X \times \aone$ is an $\aone$-weak equivalence, the results of the previous paragraph allow us to conclude that $\breve{C}(u) \hookrightarrow \breve{C}(u \times id)$ is also an $\aone$-weak equivalence.
\end{proof}

\begin{rem}
Note that the proof of the above result never uses properness of $Y$.  Non-trivial $\aone$-$h$-cobordisms produced by this method will be described in Example \ref{ex:rank2bundles}.
\end{rem}

\subsubsection*{Blowing up a moving point}
\begin{prop}
\label{prop:blowups}
Assume $k$ is an infinite field.  Suppose $X \in {\mathcal Sm}_k$ is proper variety, and assume we have closed embedding $i: \aone \to X$ that factors through an open immersion ${\mathbb A}^n_k \to X$.
Let $\Gamma \subset X \times \aone$ denote the image of $i \times p_{\aone}$.  The projection $X \times \aone \to \aone$ induces a morphism
\[
f: {\sf Bl}_{\Gamma}(X \times \aone) \longrightarrow \aone.
\]
The morphism $f: {\sf Bl}_{\Gamma}(X \times \aone) \to \aone$ is an $\aone$-$h$-cobordism.
\end{prop}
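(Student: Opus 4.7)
The plan is to verify the three conditions of Definition \ref{defn:aonehcobordism}: properness and surjectivity of $f$, regularity of the values $0$ and $1$, and the fact that both fiber inclusions are $\aone$-weak equivalences. Properness is immediate since the blow-up morphism $b : {\sf Bl}_\Gamma(X \times \aone) \to X \times \aone$ is projective and the second projection $X \times \aone \to \aone$ is proper by properness of $X$; surjectivity follows since $X$ is non-empty. The center $\Gamma$ meets each slice $X \times \{t\}$ transversally in the single reduced point $(i(t),t)$, so $f^{-1}(0) \cong {\sf Bl}_{i(0)}(X)$ and $f^{-1}(1) \cong {\sf Bl}_{i(1)}(X)$ are smooth, showing $0$ and $1$ are regular values.

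The core of the argument is the fiber inclusion claim, which I would establish by a Mayer--Vietoris comparison for a carefully chosen Zariski cover. Let $U := {\mathbb A}^n \hookrightarrow X$ denote the open immersion from hypothesis (LL), write $i = \iota \circ j$ for the factorization with $j : \aone \to U$, and let $V := X \setminus i(\aone)$; since $i(\aone) \subset U$, the pair $\{U, V\}$ is a Zariski cover of $X$. Pulling back along $b$ produces a Zariski cover of $W := {\sf Bl}_\Gamma(X \times \aone)$ by the two opens
\[
{\mathcal U} := b^{-1}(U \times \aone) = {\sf Bl}_{\Gamma \cap (U \times \aone)}(U \times \aone), \qquad {\mathcal V} := b^{-1}(V \times \aone) = V \times \aone,
\]
where the second equality uses $\Gamma \cap (V \times \aone) = \emptyset$. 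Using hypothesis (LL) — concretely via the automorphism $\phi(x,t) = (x - j(t), t)$ of ${\mathbb A}^n \times \aone$, which commutes with projection to $\aone$ and carries $\Gamma \cap (U \times \aone)$ to $\{0\} \times \aone$ — and the fact that blow-up commutes with flat base change along the smooth projection ${\mathbb A}^n \times \aone \to {\mathbb A}^n$, one obtains an isomorphism
\[
{\mathcal U} \;\cong\; ({\sf Bl}_0 {\mathbb A}^n) \times \aone
\]
compatible with the projection to $\aone$. The intersection is $\mathcal{U} \cap \mathcal{V} = (U \setminus i(\aone)) \times \aone$, likewise a product projecting to $\aone$.

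With this cover in hand, the inclusion $f^{-1}(0) \hookrightarrow W$ restricts on each piece of the cover to the inclusion of the zero-fiber of a projection to $\aone$: on ${\mathcal V}$ it is $V \hookrightarrow V \times \aone$, on ${\mathcal U}$ it is $({\sf Bl}_0 {\mathbb A}^n) \hookrightarrow ({\sf Bl}_0 {\mathbb A}^n) \times \aone$, and on ${\mathcal U} \cap {\mathcal V}$ it is $(U \setminus i(\aone)) \hookrightarrow (U \setminus i(\aone)) \times \aone$. All three are $\aone$-weak equivalences. Since Nisnevich (in particular Zariski) Mayer--Vietoris squares are homotopy pushouts in the $\aone$-local model structure, the induced map on homotopy pushouts — namely $f^{-1}(0) \hookrightarrow W$ — is itself an $\aone$-weak equivalence. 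The identical argument applied to $\phi(x,t) = (x - j(t), t)$ shifted to the value $1$ handles $f^{-1}(1) \hookrightarrow W$.

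The main obstacle is the bookkeeping in the second paragraph: one must invoke (LL) to trivialize the center of the blow-up as a \emph{product} subscheme of $U \times \aone$ in a way compatible with the map $f$ to $\aone$, and then transport this through the local isomorphism of blow-ups. Once this identification is set up cleanly, the Mayer--Vietoris conclusion is formal. Note that the "covered by affine spaces" hypothesis on $X$ is not directly used in this argument — only properness of $X$ and hypothesis (LL) enter — so the proposition could presumably be stated under weaker assumptions, though the stated form suffices for its applications.
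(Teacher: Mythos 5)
Your approach---cover, trivialize the blow-up locally, conclude by a Mayer--Vietoris/homotopy-pushout comparison---is conceptually the same route the paper takes (the paper invokes a \v Cech resolution over the cover by affine spaces, ``along the same lines as the proof of Proposition~\ref{prop:bundles}''). The local identification of ${\mathcal U}$ with $({\sf Bl}_0{\mathbb A}^n)\times\aone$ via the shear $\phi(x,t)=(x-j(t),t)$ together with flat base change is exactly the right computation, and the reduction of the global statement to the three local fiber inclusions being $\aone$-weak equivalences is a correct use of the fact that Zariski distinguished squares are homotopy pushouts.

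The gap is in the cover itself. You set $V := X\setminus i(\aone)$ and claim $\{U,V\}$ is a Zariski cover; this requires $i(\aone)$ to be closed in $X$. The proposition's phrase ``closed embedding $i:\aone\to X$'' cannot literally hold when $X$ is proper: a closed subscheme of a proper $k$-scheme is proper, and $\aone$ is not proper, so there is no closed copy of $\aone$ inside a positive-dimensional proper $X$. What hypothesis (LL) actually supplies is that the factorization $j:\aone\to{\mathbb A}^n=U$ is a closed immersion into the affine \emph{chart}; the image $i(\aone)$ is then only locally closed in $X$, with nonempty boundary $\overline{i(\aone)}\setminus i(\aone)\subset X\setminus U$ (this is exactly the situation in Step~1 of the proof of Proposition~\ref{prop:movingpointsarbitraryfield}, where $i$ is a line in an affine chart). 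Consequently $V=X\setminus i(\aone)$ is \emph{not} open, and the boundary points cannot be covered by any open avoiding $i(\aone)$ (they lie in its closure), so a two-piece cover of the form $\{U,\, \text{open avoiding }i(\aone)\}$ does not exist.

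This also undercuts your closing remark that the ``covered by affine spaces'' hypothesis is superfluous. The paper's proof passes to the full cover of $X$ by affine spaces and argues \v Cech-theoretically precisely because a two-open trivializing cover is unavailable. (You could instead use the genuine open cover $\{U\times\aone,\ (X\times\aone)\setminus\Gamma\}$ of $X\times\aone$---here $\Gamma\subset U\times\aone$ guarantees this is a cover---and the blow-up is an isomorphism over the second piece; but $(X\times\aone)\setminus\Gamma$ is a moving family of punctured $X$'s that is not visibly a product over $\aone$, and the automorphism $\phi$ from (LL) does not extend from $U\times\aone$ to $X\times\aone$, so the fiber-inclusion claim on that piece needs a separate argument, most naturally via the affine-space cover.)
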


\begin{proof}
The morphism $f$ is proper and surjective by construction.  Let $\Gamma_t$ denote the scheme theoretic fiber of the composite morphism $\Gamma \hookrightarrow X \times \aone \to \aone$ over $t \in \aone(k)$.  We just have to check that ${\sf Bl}_{\Gamma_0} \hookrightarrow {\sf Bl}_{\Gamma}(X \times \aone)$ (resp. ${\sf Bl}_{\Gamma_1} \hookrightarrow {\sf Bl}_{\Gamma}(X \times \aone)$) is an $\aone$-weak equivalence.

Consider the open cover of $X$ by the copy of ${\mathbb A}^n$ through which $i$ factors and $X \setminus i(\aone)$.  Since blowing up is Zariski local, we can reduce to the corresponding statement for affine space along the same lines as the proof of Proposition \ref{prop:bundles}.  Observe that given $i$ as above, the induced map $i \times p_{\aone}: \aone \to {\mathbb A}^n \times \aone$ is isomorphic to $0 \times id$ by means of the automorphism $(v,t) \mapsto (v - i(t),t)$ of ${\mathbb A}^n \times \aone$.  In this case, the blow-up of a line in ${\mathbb A}^n \times \aone$ is isomorphic to the product ${\sf Bl}_0({\mathbb A}^n) \times \aone$ in a manner compatible with the projection to $\aone$.

\end{proof}

\begin{rem}
It seems reasonable to expect that a result like Proposition \ref{prop:blowups} holds more generally, e.g., for any $\aone$-connected $Z$.
\end{rem}

\subsection{The $\aone$-homotopy classification of rational smooth proper surfaces}
The isomorphism classification of rational smooth proper surfaces over an algebraically closed field is well known (for proofs see, e.g, \cite{Beauville}).  Using the strong factorization theorem for surfaces \cite[Theorem II.11]{Beauville}, one can show that any rational smooth proper surface is isomorphic to an iterated blow-up of some finite collection of points on either ${\mathbb P}^2$ or on ${\mathbb F}_a = {\mathbb P}(\O_{\pone} \oplus \O_{\pone}(a))$.

Recall that ${\mathbb F}_a$ contains a unique curve, denoted here $C_a$, corresponding to the inclusion ${\mathbb P}(\O_{\pone}(a)) \hookrightarrow {\mathbb F}_a$ that has self-intersection number $-a$.  As is well-known, the variety ${\mathbb F}_a$ blown up at a point $x \in C_a(k)$ is isomorphic to ${\mathbb F}_{a-1}$ blown up at a point $x' \in {\mathbb F}_{a-1} \setminus C_{a-1}(k)$.  This construction provides the standard example of non-uniqueness of minimal models for ruled surfaces.  Note also that ${\mathbb F}_1$ can be identified with ${\mathbb P}^2$ blown-up at a point.

\begin{thm}
\label{thm:surfaces}
Suppose $k$ is an algebraically closed field.  For each integer $n \geq 1$, fix a finite set $I_n$ of distinct $k$-points on ${\mathbb P}^2$, and let $S_n = {\sf Bl}_{I_n} {\mathbb P}^2$.  Any $k$-rational smooth proper surface is $\aone$-weakly equivalent to either $\pone \times \pone$, ${\mathbb P}^2$ or some $S_i$.
\end{thm}

\begin{proof}
Let us first treat the minimal surfaces.  Any Hirzebruch surface ${\mathbb F}_a$ is $\aone$-weakly equivalent to either ${\mathbb F}_0 = \pone \times \pone$ or ${\mathbb F}_1 = {\sf Bl}_{x}{\mathbb P}^2$ (for some $k$-point $x$ of ${\mathbb P}^2$) by Lemma \ref{lem:mod2}.

For each integer $n > 0$, fix a finite set of $n$ distinct points on ${\mathbb P}^2$.  Lemma \ref{lem:movingpoints} implies that any variety that is an iterated blow-up of ${\mathbb P}^2$ at $n$ points is $\aone$-$h$-cobordant to the blow-up of ${\mathbb P}^2$ at the chosen set of $n$ distinct points.  Thus, assume we have a surface isomorphic to an iterated blow-up of $n$-points on ${\mathbb F}_a$ for some $n > 0$.  By repeated application of Lemma \ref{lem:decreasea} we deduce that such a variety is $\aone$-$h$-cobordant to a variety of the same kind with $a = 1$.   Since ${\mathbb F}_1 = {\sf Bl}_x {\mathbb P}^2$, the variety in question is isomorphic to an iterated blow-up of ${\mathbb P}^2$ at $n+1$-points, and we can apply Lemma \ref{lem:movingpoints} again to deduce that the resulting variety is $\aone$-$h$-cobordant to the blow-up of ${\mathbb P}^2$ at our chosen set of $n+1$ distinct points.
\end{proof}

\begin{rem}
Since separably rationally connected surfaces over an algebraically closed field $k$ are all rational, Theorem \ref{thm:surfaces} conjecturally provides (see Conjecture \ref{conj:epimorphism}) a complete classification of all $\aone$-connected or \'etale $\aone$-connected surfaces over such fields.  By Corollary \ref{cor:aoneconnectednesssurfaces}, the classification of $\aone$-connected smooth proper surfaces is established if $k$ has characteristic $0$.  Our proof will also show that any pair of $\aone$-weakly equivalent smooth proper surfaces are in fact $\aone$-$h$-cobordant by a series of $\aone$-$h$-cobordisms {\em without critical points}.
\end{rem}

\begin{extension}
It seems reasonable to expect a statement similar Theorem \ref{thm:surfaces} to hold more generally.  Indeed, if we modify the statement by replacing the ``finite set of points" by appropriate $0$-dimensional closed subschemes, we hope the corresponding result holds for $k$-rational smooth proper surfaces over an arbitrary field $k$.  Slightly more generally, it seems reasonable to expect a classification result analogous to Theorem \ref{thm:surfacesversion1} for smooth proper $\aone$-connected surfaces over an arbitrary field $k$; see Corollary \ref{thm:surfacesfundamentalgroup}: recall that over non-algebraically closed fields, rationality is not the same as $\aone$-connectedness even for surfaces.
\end{extension}

\begin{lem}
\label{lem:movingpoints}
Let $k$ be an infinite field.  Suppose $X$ is a smooth proper $k$-variety that is covered by affine spaces.  Suppose $Y$ and $Y'$ are each of the form
\[
{\sf Bl}_{y_1}({\sf Bl}_{y_2}(\cdots({\sf Bl}_{y_n}(X))))
\]
for specified collections of $k$-points $y_1,\ldots,y_n$ and $y'_1,\ldots,y'_m$.  Then $Y$ and $Y'$ are $\aone$-weakly equivalent if and only if $n=m$ in which case they are in fact $\aone$-$h$-cobordant.
\end{lem}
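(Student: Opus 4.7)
For the ``only if'' direction, I would use the Picard group as an $\aone$-homotopy invariant. Since $\gm$ is strictly $\aone$-invariant, the classifying space $B\gm$ is $\aone$-local, so $\mathrm{Pic}(Z) = [Z, B\gm]_{\aone}$ depends only on the $\aone$-weak equivalence class of a smooth scheme $Z$. A standard blow-up formula gives $\mathrm{Pic}(\mathsf{Bl}_p Z) \cong \mathrm{Pic}(Z) \oplus \mathbb{Z}[E]$ for the blow-up of a smooth variety of dimension $\ge 2$ at any $k$-point $p$, the extra summand being spanned by the exceptional divisor. Iterating, $\mathrm{rk}\,\mathrm{Pic}(Y) = \mathrm{rk}\,\mathrm{Pic}(X) + n$ and $\mathrm{rk}\,\mathrm{Pic}(Y') = \mathrm{rk}\,\mathrm{Pic}(X) + m$, so $Y \simeq_{\aone} Y'$ forces $n = m$.  (If $\dim X \le 1$ the blow-up at a $k$-point is trivial and the statement is vacuous.)

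\textbf{Base case.} For the ``if'' direction I would induct on $n$. Two preparatory observations drive the argument. First, the class of smooth proper varieties covered by affine spaces is closed under blow-up at a $k$-point, since $\mathsf{Bl}_0 \mathbb{A}^n$ carries its standard open cover by $n$ copies of $\mathbb{A}^n$ (and the intersection condition of Definition \ref{defn:combinatorial} is automatic for $k$ infinite). Second, in a variety covered by affine spaces over an infinite field, any two $k$-points are joined by a chain of $k$-points with each consecutive pair in a common affine chart; inside the chart, the line through the pair can be placed in the form $0 \times \aone \subset \mathbb{A}^{n-1} \times \aone$ by an affine change of coordinates, which is precisely condition \textbf{(LL)}. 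Applying Proposition \ref{prop:blowups} link-by-link along the chain handles the base case $n = 1$.

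\textbf{Inductive step and main obstacle.} For the inductive step set $Z := \mathsf{Bl}_{y_2}(\cdots \mathsf{Bl}_{y_n}(X))$ and $Z' := \mathsf{Bl}_{y'_2}(\cdots \mathsf{Bl}_{y'_n}(X))$; both are smooth proper and covered by affine spaces by the closure property. Applying the base case to $Z$ (respectively $Z'$) lets me replace $\mathsf{Bl}_{y_1}(Z)$ by $\mathsf{Bl}_q(Z)$ (respectively $\mathsf{Bl}_{y'_1}(Z')$ by $\mathsf{Bl}_{q'}(Z')$) for any preferred $k$-points $q \in Z(k)$, $q' \in Z'(k)$. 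The inductive hypothesis produces a finite chain of direct $\aone$-$h$-cobordisms $W_j \to \aone$ linking $Z$ to $Z'$, each (by construction in the proof) of the form $\mathsf{Bl}_{\Gamma_j}(Z_j \times \aone)$ arising from Proposition \ref{prop:blowups}. For each $W_j$ I would choose a section $\sigma_j \colon \aone \to W_j$ of the projection whose values over $0$ and $1$ are the $k$-points to be blown up in the successive fibers (such sections exist because $W_j$ is itself covered by affine spaces and any vector bundle on $\aone$ is trivial), and blow up $W_j$ along $\sigma_j(\aone)$. The main technical obstacle is to verify that this relative blow-up is again an $\aone$-$h$-cobordism between $\mathsf{Bl}_{\sigma_j(0)}(Z_j)$ and $\mathsf{Bl}_{\sigma_j(1)}(Z_{j+1})$; I expect to argue this by a \u Cech-cover local-triviality computation in the spirit of Propositions \ref{prop:bundles} and \ref{prop:blowups}, using that blow-up commutes with Zariski-localization on the base to reduce to the model case of blowing up a coordinate line in $\mathbb{A}^n \times \aone \times \aone$, which is handled by the very proof of Proposition \ref{prop:blowups}. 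Chaining these refined cobordisms connects $\mathsf{Bl}_q(Z)$ to $\mathsf{Bl}_{q'}(Z')$ and completes the induction.
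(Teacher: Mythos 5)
Your necessity argument (Picard rank) and your base case $n=1$ (chains of lines plus Proposition \ref{prop:blowups}) agree with the paper's proof of Proposition \ref{prop:movingpointsarbitraryfield} (its Steps 5 and 1). The inductive step, however, has a genuine gap. You propose to lift each cobordism $W_j$ between the base varieties $Z$, $Z'$ to a cobordism between ${\sf Bl}_q(Z)$ and ${\sf Bl}_{q'}(Z')$ by blowing up $W_j$ along a section $\sigma_j$ of $W_j \to \aone$. But Proposition \ref{prop:blowups} constructs $\aone$-$h$-cobordisms only from blow-ups of translated lines inside a \emph{product} $X \times \aone$ with $X$ covered by affine spaces; the $W_j$ produced in the inductive chain are themselves nontrivial blow-ups ${\sf Bl}_{\Gamma_j}(Z_j \times \aone)$. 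To make the lift work you would have to (i) choose each $\sigma_j$ disjoint from the exceptional divisor of $W_j \to Z_j \times \aone$ with endpoints matching across the chain, and (ii) verify that the relative blow-up ${\sf Bl}_{\sigma_j(\aone)}(W_j) \to \aone$ has the expected scheme-theoretic fibers over $0$, $1$ and still satisfies the fiber-inclusion weak-equivalence condition. You acknowledge (ii) as ``the main technical obstacle,'' but the \u Cech reduction you sketch only treats the local model away from $\Gamma_j$; it does not check the interaction of the two blow-ups, and does not identify the fibers with ${\sf Bl}_{\sigma_j(0)}(Z_j)$ and ${\sf Bl}_{\sigma_j(1)}(Z_{j+1})$.

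The paper avoids this entirely. Its Step 2 shows, by a single application of Proposition \ref{prop:blowups} to the blow-up (which is again covered by affine spaces), that the outermost blown-up point can always be moved off the exceptional divisor of the previous blow-up. Iterating (Step 3), every iterated blow-up of $k$-points is directly $\aone$-$h$-cobordant to ${\sf Bl}_{p_1,\ldots,p_n}(X)$ for $n$ \emph{distinct} $k$-points $p_i \in X(k)$, where all the centers now live on $X$ itself and the blow-ups commute; one then moves the $p_i$ freely, one at a time, by further applications of Proposition \ref{prop:blowups} (Step 4). Every cobordism used is built from a product total space $Z \times \aone$ with $Z$ covered by affine spaces, so no new cobordism-construction lemma is needed. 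You can repair your proposal by replacing the ``lift along sections'' inductive step with this ``move the outer point off the exceptional divisor'' maneuver.
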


\begin{lem}
\label{lem:mod2}
Two Hirzebruch surfaces ${\mathbb F}_a$ and ${\mathbb F}_b$ are $\aone$-weakly equivalent if and only if $a$ and $b$ are congruent mod $2$, in which case they are $\aone$-$h$-cobordant.
\end{lem}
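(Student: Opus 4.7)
For the nontrivial ``if'' direction, by transitivity of $\aone$-$h$-cobordism it is enough to construct, for each integer $a\geq 2$, a direct $\aone$-$h$-cobordism from ${\mathbb F}_a$ to ${\mathbb F}_{a\bmod 2}$; this links all even Hirzebruch surfaces with ${\mathbb F}_0$ and all odd ones with ${\mathbb F}_1$. The cobordism will be the projectivization of a rank-$2$ vector bundle ${\mathcal E}$ on $\pone \times \aone$ whose restriction at $t=0$ is a split extension (projectivizing to ${\mathbb F}_a$) and whose restriction at $t=1$ is a \emph{balanced} bundle (projectivizing to ${\mathbb F}_{a \bmod 2}$).

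Concretely, let $\pi: \pone \times \aone \to \pone$ be the first projection and $t$ the coordinate on $\aone$. For $a \geq 2$ one has $H^1(\pone,\O(-a)) \neq 0$; pick $\xi$ in this group for which the nonsplit extension
\[
0 \to \O(-1) \to E_{\xi} \to \O(a-1) \to 0
\]
on $\pone$ is balanced, i.e.\ $E_{\xi} \cong \O(\lfloor (a-2)/2\rfloor) \oplus \O(\lceil (a-2)/2\rceil)$, so ${\mathbb P}(E_{\xi}) \cong {\mathbb F}_{a\bmod 2}$. The existence of such a $\xi$ over any infinite field is a cohomological computation: one checks that $H^0(E_{\xi}(-m)) = 0$ for $m > \lceil(a-2)/2\rceil$ by verifying that the connecting map (cup product with $\xi$) in the twisted long exact sequence $H^0(\O(a-1-m)) \to H^1(\O(-1-m))$ has maximal rank for a generic $\xi$. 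Now define ${\mathcal E}$ on $\pone\times\aone$ as the extension of $\pi^*\O(a-1)$ by $\pi^*\O(-1)$ whose class is $t\xi \in H^1(\pone\times\aone, \pi^*\O(-a))$. Its restriction to $\pone \times \setof{0}$ splits (yielding ${\mathbb F}_a$), and its restriction to $\pone \times \setof{1}$ equals $E_{\xi}$ (yielding ${\mathbb F}_{a\bmod 2}$). Set $W := {\mathbb P}({\mathcal E})$ and let $f: W \to \pone \times \aone \to \aone$ be the composite with the second projection; then $W$ is smooth and proper over $\aone$ with the desired fibers over $0$ and $1$. To conclude that $f$ is an $\aone$-$h$-cobordism, I apply Proposition \ref{prop:bundles} with $X = Y = \pone$: condition (LT) holds for the standard Zariski cover of $\pone$ by two affine lines $U_i \cong \aone$, since there both $\pi^*\O(-1)$ and $\pi^*\O(a-1)$ are trivial, and the pulled back extension over the affine scheme $U_i \times \aone$ automatically splits (as $H^1(U_i \times \aone, \O) = 0$), so ${\mathbb P}({\mathcal E})|_{U_i \times \aone} \cong U_i \times \aone \times \pone$.

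For the ``only if'' direction, I use that the Chow ring (as a graded ring) is an $\aone$-homotopy invariant of a smooth proper $k$-variety, since motivic cohomology is representable in $\ho{k}$. The ring $\operatorname{CH}^*({\mathbb F}_a)$ is determined by the intersection pairing on $\operatorname{Pic}({\mathbb F}_a) = \operatorname{CH}^1({\mathbb F}_a)$, whose Gram matrix in the basis $(C_a, F)$ (negative section and fiber) is $\bigl(\begin{smallmatrix} -a & 1 \\ 1 & 0 \end{smallmatrix}\bigr)$. Reduced modulo $2$ this is an even unimodular form when $a$ is even but an odd one when $a$ is odd, and these two $\mathbb{Z}$-valued symmetric bilinear forms are not isomorphic; hence ${\mathbb F}_a \simeq_{\aone} {\mathbb F}_b$ forces $a \equiv b \pmod 2$. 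The main technical obstacle is producing the balanced rank-$2$ bundle in the ``if'' direction -- once that concrete cohomological input is in hand, Proposition \ref{prop:bundles} delivers the $\aone$-$h$-cobordism essentially for free.
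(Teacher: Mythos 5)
Your proof is correct and uses the same two ingredients as the paper -- Proposition \ref{prop:bundles} applied to a rank-$2$ bundle on $\pone\times\aone$, and the Chow ring as an $\aone$-homotopy invariant -- but the way you realize each ingredient differs somewhat from the paper, so the comparison is worth spelling out. For the cobordism construction, the paper (Example \ref{ex:rank2bundles}) simply writes down the explicit transition matrix $\bigl(\begin{smallmatrix} t^a & xt \\ 0 & 1 \end{smallmatrix}\bigr)$ on $\gm\times\aone$ and reads off the fibers by elementary row/column operations, giving a step-by-step cobordism from $\mathbb{F}_a$ to $\mathbb{F}_{a-2}$; this works over any base field with no genericity argument. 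You instead package the family as the extension of $\pi^*\O(a-1)$ by $\pi^*\O(-1)$ with class $t\xi$, jumping in one shot to $\mathbb{F}_{a\bmod 2}$, but at the cost of the nontrivial cohomological lemma that a \emph{generic} extension on $\pone$ is balanced -- which you verify by the Hankel-type rank argument on the cup-product map, and which, as you note, requires the field to be infinite. Since the lemma as stated in the paper carries no infiniteness hypothesis (and the transition-matrix proof needs none), your construction is genuinely a little weaker as a proof of the lemma on its own terms, though this is irrelevant for the application to Theorem \ref{thm:surfaces}, which is stated over algebraically closed fields. For the obstruction, the paper proves the more general Proposition \ref{prop:pnbundlesoverpone} by classifying the abstract isomorphism type of the graded ring $\Z[\sigma,\xi]/\langle\sigma^2,\xi^{n+1}+d\xi^n\sigma\rangle$; you specialize to $n=1$ and reformulate the same invariant as the parity (even vs.\ odd) of the rank-$2$ unimodular intersection lattice $\bigl(\begin{smallmatrix}-a & 1 \\ 1 & 0\end{smallmatrix}\bigr)$, which is a cleaner way to see the $\mod 2$ dichotomy in this rank (and is in fact the perspective the paper itself adopts later in Proposition \ref{prop:intersectionpairing}). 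One small point of hygiene: the phrase ``reduced modulo $2$'' is loose -- you mean that the $\Z$-valued form is even precisely when $a$ is even, and even/odd is an invariant of unimodular lattices; the argument you give is fine once read that way.
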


\begin{lem}
\label{lem:decreasea}
Let $k$ be an infinite field.  For any integer $n > 1$, and arbitrary collections of $k$-points $x_1,\ldots,x_n,y_1,\ldots,y_n$, the iterated blow-ups ${\sf Bl}_{x_1}(\cdots({\sf Bl}_{x_n}({\mathbb F}_a))\cdots)$ and ${\sf Bl}_{y_1}(\cdots({\sf Bl}_{y_n}({\mathbb F}_{a-1}))\cdots)$ are $\aone$-$h$-cobordant.
\end{lem}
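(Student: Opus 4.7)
The plan is to reduce to a single combinatorially convenient configuration of blow-up centers via Lemma \ref{lem:movingpoints}, and then invoke the classical Hirzebruch surface identity recalled just before Theorem \ref{thm:surfaces}. The two ingredients in play are Lemma \ref{lem:movingpoints}, which permits one to replace any iterated blow-up of $n$ points on a variety covered by affine spaces by an iterated blow-up at any other $n$-point configuration up to $\aone$-$h$-cobordism, together with the isomorphism ${\sf Bl}_x(\mathbb{F}_a) \cong {\sf Bl}_{x'}(\mathbb{F}_{a-1})$ whenever $x \in C_a(k)$ and $x'$ is a suitable $k$-point of $\mathbb{F}_{a-1} \setminus C_{a-1}$.

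First I will observe that $\mathbb{F}_a$ and $\mathbb{F}_{a-1}$, being smooth proper toric surfaces, are covered by affine spaces in the sense of Definition \ref{defn:combinatorial} (see Example \ref{ex:coveredbyaffinespaces}), so Lemma \ref{lem:movingpoints} applies to both. Since $k$ is infinite and $C_a \cong \pone$, I can choose a $k$-point $x \in C_a(k)$ together with $n-1$ further pairwise distinct $k$-points $x_2', \ldots, x_n'$ of $\mathbb{F}_a \setminus C_a$ disjoint from $x$; having $n > 1$ is what guarantees such a configuration exists. Applying Lemma \ref{lem:movingpoints} to $\mathbb{F}_a$, the given iterated blow-up ${\sf Bl}_{x_1}(\cdots({\sf Bl}_{x_n}(\mathbb{F}_a))\cdots)$ is $\aone$-$h$-cobordant to the iterated blow-up of $\mathbb{F}_a$ at the disjoint collection $\{x, x_2', \ldots, x_n'\}$, which, being supported at $n$ pairwise distinct points of the base, may be performed in any order; in particular I arrange for $x$ to be blown up first.

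Next I will apply the Hirzebruch identity to the outermost blow-up to obtain ${\sf Bl}_x(\mathbb{F}_a) \cong {\sf Bl}_{x'}(\mathbb{F}_{a-1})$ for some $x' \in \mathbb{F}_{a-1} \setminus C_{a-1}$. Under this isomorphism the remaining centers $x_2', \ldots, x_n'$ (which lie off the exceptional divisor, having been chosen off $C_a$) transport to $n-1$ $k$-points of ${\sf Bl}_{x'}(\mathbb{F}_{a-1})$; together with $x'$ they exhibit our intermediate variety as an iterated blow-up of $\mathbb{F}_{a-1}$ at $n$ centers. A second application of Lemma \ref{lem:movingpoints}, this time to $\mathbb{F}_{a-1}$, then shows that this variety is $\aone$-$h$-cobordant to ${\sf Bl}_{y_1}(\cdots({\sf Bl}_{y_n}(\mathbb{F}_{a-1}))\cdots)$, closing the chain.

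The only step requiring care is unpacking the iterated blow-up notation: a priori the $x_i$ may be points of successive blow-ups rather than of $\mathbb{F}_a$ itself, but Lemma \ref{lem:movingpoints} is designed to absorb exactly this generality and reduces everything to $n$ disjoint centers on the base. The main conceptual input therefore resides in Lemma \ref{lem:movingpoints}; the remaining geometric content is the elementary transformation on Hirzebruch surfaces, namely that blowing up a point on the negative section of $\mathbb{F}_a$ and contracting the strict transform of the ruling through that point produces $\mathbb{F}_{a-1}$ equipped with a distinguished point off its negative section. No serious obstacle is anticipated beyond these two inputs.
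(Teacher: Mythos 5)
Your proof is correct and follows essentially the same strategy as the paper's: use Lemma \ref{lem:movingpoints} to move the first blow-up center onto the negative section $C_a$, apply the Hirzebruch non-minimality isomorphism ${\sf Bl}_x(\mathbb{F}_a) \cong {\sf Bl}_{x'}(\mathbb{F}_{a-1})$, and apply Lemma \ref{lem:movingpoints} a second time to reach the target configuration. (Minor remark: the blow-up at $x$, being performed first, corresponds to the \emph{innermost} operator ${\sf Bl}_{x_n}$ in the iterated notation rather than the outermost, but your intent is clear.)
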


\begin{proof}
Using Lemma \ref{lem:movingpoints} and the fact that the (rational) smooth proper surfaces in question are covered by affine spaces ({\em cf.} Example \ref{ex:coveredbyaffinespaces}), we can always assume that $x_n$ lies on $C_a$.  Apply the observation above about non-minimality of Hirzebruch surfaces to identify this iterated blow-up with a corresponding one with ${\mathbb F}_a$ replaced by ${\mathbb F}_{a-1}$.  Another application of Lemma \ref{lem:movingpoints} then allows one to construct an $\aone$-$h$-cobordism from this new iterated blow-up to ${\sf Bl}_{y_1}({\sf Bl}_{y_2}(\cdots({\sf Bl}_{y_n}({\mathbb F}_{a-1}))\cdots))$.
\end{proof}

\subsubsection*{Proof of Lemma \ref{lem:movingpoints}}
Lemma \ref{lem:movingpoints} is an immediate consequence of the following more precise result.

\begin{prop}
\label{prop:movingpointsarbitraryfield}
Assume $k$ is an infinite field, and suppose $X$ is a smooth proper $k$-variety that is covered by affine spaces.  Suppose $f_1: X_1 \to X$ and $f_2: X_2 \to X$ are proper birational morphisms that are composites of blow-ups of $k$-points.  The schemes $X_1$ and $X_2$ are $\aone$-weakly equivalent if and only if $\operatorname{rk} Pic(X_1) = \operatorname{rk} Pic(X_2)$, in which case they are $\aone$-$h$-cobordant.
\end{prop}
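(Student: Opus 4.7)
My plan is to handle the two directions of the equivalence separately. The easier (forward) direction follows from the $\aone$-homotopy invariance of the Picard group: one has an identification $\operatorname{Pic}(X) \cong [X, B\gm]_{\aone}$ because $B\gm$ admits an $\aone$-local Nisnevich fibrant model (see \cite{MV}), hence $\operatorname{Pic}$ --- and in particular its rank --- is invariant under $\aone$-weak equivalences. Since blowing up a smooth $k$-point on a smooth proper variety adds exactly one to $\operatorname{rk} \operatorname{Pic}$ (the class of the exceptional divisor), the Picard rank of $X_i$ is $\operatorname{rk} \operatorname{Pic}(X) + n_i$, where $n_i$ is the number of blow-ups in $f_i$. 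Thus $\operatorname{rk} \operatorname{Pic}(X_1) = \operatorname{rk} \operatorname{Pic}(X_2)$ forces $n_1 = n_2 =: n$.

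For the main (sufficient) direction, the core ingredient is a \emph{key move}: if $Y$ is smooth, proper, and covered by affine spaces and $y_1, y_2 \in Y(k)$, then ${\sf Bl}_{y_1}(Y)$ and ${\sf Bl}_{y_2}(Y)$ are $\aone$-$h$-cobordant. When $y_1, y_2$ share a chart ${\mathbb A}^n \hookrightarrow Y$, the affine-linear embedding $i: \aone \to {\mathbb A}^n$ with $i(0) = y_1, i(1) = y_2$ satisfies condition $\textbf{(LL)}$ of Proposition \ref{prop:blowups}: its graph $\{(i(t), t)\} \subset {\mathbb A}^n \times \aone$ is carried onto $\{0\} \times \aone$ by the automorphism $(x, t) \mapsto (x - i(t), t)$. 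Proposition \ref{prop:blowups} then produces the required $\aone$-$h$-cobordism as $f: {\sf Bl}_\Gamma(Y \times \aone) \to \aone$. For general $y_1, y_2$, we chain such moves through $k$-points lying in pairwise intersections of charts; since $k$ is infinite, every non-empty open in a chart contains $k$-points, so such chains exist. I also use the stability observation that a blow-up of a variety covered by affine spaces at a $k$-point is again covered by affine spaces, because ${\sf Bl}_0 {\mathbb A}^n$ is covered by $n$ standard affine charts.

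The main argument proceeds by induction on the common number $n$. The cases $n = 0$ and $n = 1$ are immediate and the key move respectively. For $n \geq 2$, write $X_1 = {\sf Bl}_{y_1}(X_1')$ with $X_1'$ an $(n-1)$-fold iterated blow-up of $X$; by the stability observation $X_1'$ is covered by affine spaces. The key move applied to $X_1'$ replaces $y_1$ by a $k$-point $z$ lifted from a Zariski open of $X$ disjoint from the images of all blow-up centers of $X_1' \to X$ (possible since $k$ is infinite). Since $z$ is then disjoint from every center in the tower $X_1' \to X$, commutativity of blow-ups at disjoint centers identifies ${\sf Bl}_z(X_1')$ with an $(n-1)$-fold iterated blow-up of ${\sf Bl}_z(X)$. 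The inductive hypothesis applied to the variety ${\sf Bl}_z(X)$ (again covered by affine spaces) identifies this with a blow-up of ${\sf Bl}_z(X)$ at $n-1$ distinct $k$-points, which we may take to lie in $X \setminus \{z\}$; this produces ${\sf Bl}_{z, z_2, \ldots, z_n}(X)$, and further applications of the key move bring these $n$ distinct points to any prescribed configuration in $X(k)$. The same conclusion applies to $X_2$, yielding a common $\aone$-$h$-cobordism target.

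The main obstacle is the treatment of \emph{infinitely near} centers --- iterated blow-ups at points lying on exceptional divisors of earlier ones --- whose combinatorial structure cannot be directly flattened in $X$. The induction circumvents this by always acting on the outermost blow-up, which is freely mobile via the key move, and then transferring the inner structure into ${\sf Bl}_z(X)$ by commutation with a disjoint blow-up, trading depth in $X$ for an enlarged base carrying a tower that is one step shallower. A secondary point is to check that every blow-up in the inductive tower preserves the ``covered by affine spaces'' hypothesis, so that each application of the key move is legitimate at the next step.
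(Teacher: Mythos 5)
Your proof is correct and follows essentially the same outline as the paper's: the forward direction via $\aone$-homotopy invariance of the Picard group (the paper's Step 5), the ``key move'' of Proposition \ref{prop:blowups} applied along chains of lines in affine charts (Step 1), and an inductive normalization of iterated blow-ups to blow-ups at distinct points (Steps 2--4). Your formulation of the induction --- peeling off the outermost center, commuting it past the now-disjoint earlier centers, and transferring the inductive hypothesis to the enlarged base ${\sf Bl}_z(X)$ --- simply makes explicit the ``repeated application'' that the paper's Steps 2 and 3 assert more briefly.
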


\begin{proof}
Observe first that if $X$ is covered by affine spaces, then $X_1$ and $X_2$ are both covered by affine spaces as well.  \newline

\noindent{\em Step 1.} Let us first prove that if $x_1$ and $x_2$ are distinct $k$-points on $X$, then ${\sf Bl}_{x_1}X$ and ${\sf Bl}_{x_2}X$ are $\aone$-$h$-cobordant.  Indeed, we can assume that $\dim X \geq 2$ as otherwise the blow-up of a point is trivial.  Since $X$ is covered by affine spaces, any two points can be connected by chains of lines, so it suffices to prove the statement assuming $x_1$ and $x_2$ both lie on a line.  This is exactly the statement of Proposition \ref{prop:blowups}.\newline

\noindent{\em Step 2.} Suppose $Y$ is covered by affine spaces.  Applying the conclusion of {\em Step 1} to $X = {\sf Bl}_x Y$, we deduce that the blow-up of $X$ at any $k$-point is $\aone$-$h$-cobordant to the blow-up of $X$ at a $k$-point not lying on the exceptional divisor of $X \to Y$.\newline

\noindent{\em Step 3.} Assume $Y$ is covered by affine spaces.  By repeated application of the observation in {\em Step 2} we deduce that given an iterated blow-up of the form ${\sf Bl}_{y_1}({\sf Bl}_{y_2}(\cdots({\sf Bl}_{y_n}(Y))\cdots))$ is $\aone$-$h$-cobordant to the blow-up of distinct points on $Y$.  \newline

\noindent{\em Step 4.} By applying the conclusion of {\em Step 3} to the varieties $X_1$ and $X_2$ we conclude they are both $\aone$-$h$-cobordant to blow-ups of distinct points on $X$.  In fact, since we can choose these points arbitrarily, we deduce that $X_1$ and $X_2$ must in fact be $\aone$-$h$-cobordant.  \newline

\noindent{\em Step 5.} To finish, let us note that the number of points being blown up is an $\aone$-homotopy invariant.  Indeed, the Picard group of a smooth $k$-scheme, and thus its rank, is an $\aone$-homotopy invariant by \cite[\S 4 Proposition 3.8]{MV} and it is well known that blowing up a point on a smooth variety increases the rank of the Picard group.  The result then follows by induction on the number of points.
\end{proof}

\begin{rem}
There is another way to approach the proof of Proposition \ref{prop:movingpointsarbitraryfield}.  If $X$ is a smooth proper $k$-variety that is covered by affine spaces, then one can show that the Fulton-MacPherson compactification (see \cite{FultonMacPherson}) of $n$-points on $X$, often denoted $X[n]$, is also a smooth proper $k$-variety that is covered by affine spaces (as it is an iterated blow-up of $X^{\times n}$ at subschemes covered by affine spaces).  One can then use the construction of $X[n]$ in terms of iterated blow-ups to produce appropriate $\aone$-$h$-cobordisms between iterated blow-ups of points on $X$.
\end{rem}

\subsubsection*{Proof of Lemma \ref{lem:mod2}}
We will deduce Lemma \ref{lem:mod2} from a much more general result regarding ${\mathbb P}^n$-bundles over $\pone$.  We begin with a construction of $\aone$-$h$-cobordisms.

\begin{ex}
\label{ex:rank2bundles}
Any ${\mathbb P}^n$-bundle over $\pone$ is the projectivization of a rank $(n+1)$ vector bundle on $\pone$.  Thus, it suffices for us to study rank $n$ vector bundles on $\pone \times \aone$.  Cover $\pone$ by $\aone_{0} \cong \pone \setminus \infty$ and $\aone_{\infty} = \pone \setminus 0$.  Since all vector bundles on an affine space are trivial ({\em cf.} \cite[Theorem 4]{Quillen}), any rank $(n+1)$ vector bundle on $\aone \times \aone$ is isomorphic to a trivial bundle.  Thus, fix a trivialization of such a bundle over ${\aone}_0 \times \aone$ and ${\aone}_{\infty} \times \aone$.  The intersection of these two open sets is isomorphic to $\gm \times \aone$.  Thus, isomorphism classes of rank $(n+1)$ vector bundles on $\pone \times \aone$ are in bijection with elements of $GL_{n+1}(k[t,t^{-1},x])$ up to change of trivialization, i.e., left multiplication by elements of $GL_{n+1}(k[t^{-1},x])$ and right multiplication by elements of $GL_{n+1}(k[t,x])$; the required cocycle condition is automatically satisfied.

Suppose ${\mathbf a} = (a_1,\ldots,a_{n+1})$, and set
${\mathcal E}({\mathbf a}) = \O_{\pone}(a_1) \oplus \cdots \oplus \O_{\pone}(a_{n+1})$.  The pull-back of ${\mathcal E}({{\mathbf a}})$ to $\pone \times \aone$ has transition function defined by the matrix whose diagonal entries are given by $(t^{-a_1},\ldots,t^{-a_{n+1}})$.  For notational simplicity, consider the rank $2$ case, and consider the transition function defined by
\[
\begin{pmatrix}
t^a & xt \\
0 & 1
\end{pmatrix}.
\]
Over $x = 1$, one can show that this transition function defines the bundle $\O(-a+1) \oplus \O(-1)$.  Over $x = 0$, this transition function defines the bundle $\O(-a) \oplus \O$.  This family of bundles provides an explicit $\aone$-$h$-cobordism between ${\mathbb F}_{a-2}$ and ${\mathbb F}_{a}$.

More generally, set ${\mathbb F}_{{\bf a}} = {\mathbb P}({\mathcal E}({\bf a}))$.  By permuting the elements of ${\bf a}$, we can assume its entries are increasing.  Let ${\bf a}'$ be another increasing sequence of $n+1$ integers.  Using explicit cocycles as above, one can construct $\aone$-$h$-cobordisms between ${\mathbb F}_{{\bf a}}$ and ${\mathbb F}_{{\bf a}'}$ whenever $\sum_i a_i \equiv \sum_i a'_i \mod n+1$ ({\em cf.} \cite[\S 9.4(i) and (iii)]{Ramanathan}).   Note that these $\aone$-$h$-cobordisms do not have critical values yet are not trivial.
\end{ex}

\begin{prop}
\label{prop:pnbundlesoverpone}
Let ${\bf a} = (a_1,\ldots,a_{n+1})$ and ${\bf a}' = (a_1', \ldots, a_{n+1}')$ be a sequences of integers with $a_1 \leq \cdots \leq a_{n+1}$ (and similarly for the entries of ${\bf a}'$).  The varieties ${\mathbb F}_{{\bf a}}$ and ${\mathbb F}_{{\bf a}'}$ are $\aone$-weakly equivalent if and only if $\sum_i a_i \equiv \sum_i a_i' \mod n+1$.
\end{prop}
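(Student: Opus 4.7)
The plan is to prove the two implications separately, leveraging the cocycle constructions and invariants already at hand. For the ``if'' direction I would extend Example~\ref{ex:rank2bundles} to rank $n+1$. For the ``only if'' direction I would extract an $\aone$-invariant of ${\mathbb F}_{\mathbf{a}}$ that detects $\sum a_i$ modulo $n+1$.

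For the ``if'' direction, I would first use the identification $\mathbb{P}(\mathcal{E}) \cong \mathbb{P}(\mathcal{E} \otimes \O_{\pone}(k))$ (which shifts each $a_i$ by $k$ and the total sum by $(n+1)k$) to reduce to the case $\sum a_i = \sum a_i'$. For sequences with equal sum I would connect $\mathbf{a}$ to $\mathbf{a}'$ by a finite chain of ``elementary moves'' $(a_i, a_{i+1}) \mapsto (a_i + 1, a_{i+1} - 1)$; any two equal-sum integer sequences are related by such moves (together with permutations, which do not change $\mathcal{E}$). Each elementary move is realised by a cocycle family on $\pone \times \aone$ modelled on the $2 \times 2$ construction of Example~\ref{ex:rank2bundles}: starting from the diagonal transition $\mathrm{diag}(t^{-a_1},\ldots,t^{-a_{n+1}})$, introduce a single off-diagonal entry $xt$ in position $(i,i+1)$; at $x=0$ this recovers the diagonal cocycle and at $x=1$ it realises the shifted sequence. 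The projectivisation of this rank-$(n+1)$ family over $\pone \times \aone$ satisfies the Nisnevich-local triviality hypothesis of Proposition~\ref{prop:bundles}, hence the induced morphism to $\aone$ is an $\aone$-$h$-cobordism. Composing finitely many such cobordisms (and the trivial cobordisms from twisting) produces the required $\aone$-weak equivalence.

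For the ``only if'' direction, the natural candidate is the Chow ring, which is $\aone$-invariant via its identification with motivic cohomology. Standard projective-bundle computations give
\[
CH^*({\mathbb F}_{\mathbf{a}}) \;\cong\; \mathbb{Z}[h,\xi]/(h^2,\; \xi^{n+1} + c\, h\xi^n),
\]
with $c = \sum a_i$, $h$ the pullback of the hyperplane class from $\pone$, and $\xi = c_1(\O_{\mathbb{P}(\mathcal{E})}(1))$. The line $\mathbb{Z}h \subset CH^1$ is intrinsically characterised as the set of classes whose square vanishes, so $h$ is determined up to sign. The class $\xi^{n+1} \in CH^{n+1} = \mathbb{Z}\cdot[pt]$, taken modulo $(n+1)\cdot[pt]$, is independent of the choice of lift $\xi \mapsto \xi + kh$, and this residue equals $-c \bmod (n+1)$. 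So an $\aone$-weak equivalence forces the residues of $c$ and $c'$ to agree up to sign.

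The main obstacle is sharpening this to the statement of the proposition without the sign ambiguity. Indeed, the substitution $h \mapsto h,\ \xi \mapsto \gamma h - \xi$ descends to a ring isomorphism $CH^*({\mathbb F}_{\mathbf{a}}) \cong CH^*({\mathbb F}_{\mathbf{a}'})$ whenever $c' \equiv -c \pmod{n+1}$, so the Chow ring alone only rules out $\aone$-weak equivalence when $\sum a_i \not\equiv \pm\sum a_i' \pmod{n+1}$. To remove this ambiguity I would pass to a finer $\aone$-invariant, most plausibly the $\aone$-fundamental sheaf of groups $\pi_1^{\aone}({\mathbb F}_{\mathbf{a}})$, whose computation for Hirzebruch surfaces appears later as Proposition~\ref{prop:hirzebruchfundamental}; extending this to the general projectivised split bundle $\mathcal{E}(\mathbf{a})$, I would expect $\pi_1^{\aone}({\mathbb F}_{\mathbf{a}})$ to be an extension involving Milnor--Witt $K$-theoretic data whose class in $\mathbb{Z}/(n+1)$ recovers $\sum a_i$ on the nose (in the spirit of Wendt's toric computations cited in the introduction). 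Pinning down this refined invariant, and thereby eliminating the sign ambiguity, is the delicate step where the $\aone$-homotopy machinery truly does the work that intersection theory alone cannot.
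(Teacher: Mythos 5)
Your ``if'' direction matches the paper's: the cocycle families from Example~\ref{ex:rank2bundles}, fed through Proposition~\ref{prop:bundles}, give $\aone$-$h$-cobordisms realizing exactly the elementary moves you describe, and this is the route the paper takes.

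The ``only if'' direction is where you diverge, and you have put your finger on a genuine subtlety. Both you and the paper reduce to the graded ring $\Z[\sigma,\xi]/\langle\sigma^2,\xi^{n+1}+d\xi^n\sigma\rangle$ with $d=\sum_i a_i$, and the paper then asserts that its abstract isomorphism class determines $d$ modulo $n+1$. You observe, correctly, that for $n\geq 2$ it does not: the upper-triangular substitution $\sigma\mapsto\sigma$, $\xi\mapsto -\xi+\gamma\sigma$ is a graded ring isomorphism precisely when $d'\equiv -d\pmod{n+1}$. Indeed, a short calculation shows that an upper-triangular element of $GL_2(\Z)$ of determinant $\varepsilon\in\{\pm 1\}$ forces $d'\equiv\varepsilon d\pmod{n+1}$; the paper's remark that ``the upper triangular case produces the isomorphisms of the previous paragraph'' tacitly restricts to $\varepsilon=+1$ and does not address the determinant $-1$ triangular matrices. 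So the Chow ring alone pins down $d$ only up to sign modulo $n+1$, and for $n\geq 2$ the paper's ``only if'' proof, as written, has precisely the loophole you identify. (For $n=1$, the case feeding into Lemma~\ref{lem:mod2} and hence Theorem~\ref{thm:surfaces}, the sign is invisible because $d\equiv -d\bmod 2$, so nothing in the surface classification is affected.)

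Where your proposal itself falls short is that you do not supply the refinement. Appealing to $\pi_1^{\aone}({\mathbb F}_{\mathbf a})$ is plausible in light of Propositions~\ref{prop:torustorsor} and~\ref{prop:hirzebruchfundamental}, but you neither carry out the computation nor explain why the relevant extension datum would distinguish $d$ from $-d$. The acid test is $\mathbf a'=-\mathbf a$, i.e.\ $\mathcal E(\mathbf a')\cong\mathcal E(\mathbf a)^{\vee}$: the projectivizations $\mathbb P(\mathcal E)$ and $\mathbb P(\mathcal E^{\vee})$ become diffeomorphic after complex realization, so whatever breaks this symmetry in $\ho{k}$ must be genuinely arithmetic. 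As your write-up stands, the ``only if'' direction is incomplete; but your diagnosis that intersection theory alone cannot close it for $n\geq 2$ is correct, and it is a sharp observation about the paper's own argument.
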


\begin{proof}
Proposition \ref{prop:bundles} or Example \ref{ex:rank2bundles} constructs explicit $\aone$-$h$-cobordisms between ${\mathbb F}_{\bf a}$ and ${\mathbb F}_{{\bf a}'}$ whenever $\sum_i a_i \equiv \sum_i a_i' \mod n+1$.  For the only if part of the statement, we need to write down appropriate $\aone$-homotopy invariants.

Let us observe that the Chow (cohomology) {\em ring} $CH^*({\mathbb F}_{{\bf a}})$ can be explicitly computed as follows.  The Chern polynomial of a rank $n+1$ vector bundle ${\mathcal E}$ over $\pone$ takes the form $\xi^{n+1} + c_1({\mathcal E}) \xi^n$.  If $\sigma$ denotes the hyperplane class on $\pone$, then we can write $c_1({\mathcal E}) = a \sigma$ for some integer $a$.  Let $d = \sum_i a_i$.  These identifications give an isomorphism of graded rings
\[
CH^*({\mathbb F}_{{\bf a}}) \cong \Z[\sigma,\xi]/\langle \sigma^2,\xi^{n+1} + d \xi^n \sigma \rangle,
\]
where $\sigma$ and $\xi$ both have degree $2$.

For any integer $m > 1$, we have the identities $(\xi + \sigma)^m \sigma = \xi^m \sigma$.  The change of variables $\xi' = \xi + \sigma$, shows that the graded rings $\Z[\sigma,\xi]/\langle \sigma^2,\xi^{n+1} + d \xi^n \sigma \rangle$ and $\Z[\sigma,\xi']/\langle \sigma^2,{\xi'}^{n+1} + (d - n - 1) {\xi'}^n \sigma \rangle$ are abstractly isomorphic.  Thus, the Chow ring of ${\mathbb F}_{{\bf a}}$ depends only on the value of $d \mod n+1$.

On the other hand, if $d$ and $d'$ are integers that are not congruent mod $n+1$, we can see by explicit comparison that the resulting graded rings are not abstractly isomorphic.  Any graded ring homomorphism is given by $\xi \mapsto a_{11}\xi + a_{12}\sigma$ and $\sigma \mapsto a_{21}\xi + a_{22}\sigma$.  In order to be invertible, we require that the matrix with coefficients $a_{ij}$ lies in $GL_{2}(\Z)$.  The upper triangular elements of $GL_2(\Z)$ induce the isomorphisms just mentioned, and an explicit computation shows that more general elements do not introduce new isomorphisms.


Next, observe that the motivic cohomology ring is an invariant of the unstable $\aone$-homotopy type (see, for example, \cite[\S 2 Theorem 2.2]{VRed}).  Finally, we use the fact that the motivic cohomology ring $\oplus_i H^{2i,i}(X,\Z)$ coincides with the Chow cohomology ring by \cite[Corollary 2]{VCompare}.  Combining this with the computation of the previous paragraph provides the explicit $\aone$-homotopy invariants we required.
\end{proof}

\begin{proof}[Proof of Lemma \ref{lem:mod2}]
This result is now a special case of Proposition \ref{prop:pnbundlesoverpone}.
\end{proof}

\subsubsection*{Ring structures on cohomology}
Suppose $M$ is an $(n-1)$-connected closed $2n$-dimensional manifold.  The cup product on cohomology equips the $\Z$-module $H^n(M,\Z)$ with the structure of quadratic space.  Since the ring structure on cohomology is a homotopy invariant, it follows that this quadratic form is a homotopy invariant of the manifold.  This quadratic form is a fundamental invariant in the classification of $(n-1)$-connected $2n$-folds ({\em cf.} \cite{Wall}).

Analogously, if $X$ is a smooth variety, one may consider the motivic cohomology ring $H^{i,j}(X,\Z)$.  In the course of the proof of Lemma \ref{prop:pnbundlesoverpone}, and thus Lemma \ref{lem:mod2}, we used the fact that the subring $H^{2n,n}(X,\Z)$ of the motivic cohomology ring, i.e., the Chow ring, was an $\aone$-homotopy invariant.  A similar statement can be made for Lemma \ref{lem:movingpoints}.  In either case, we only used the structure of $H^{2,1}(X,\Z)$ as a quadratic space.  Thus, as a corollary to the proof of Theorem \ref{thm:surfaces}, we can deduce the following result.

\begin{prop}
\label{prop:intersectionpairing}
Two rational smooth proper surfaces are $\aone$-weakly equivalent if and only if the quadratic forms on $H^{2,1}(X,\Z)$ are isomorphic.
\end{prop}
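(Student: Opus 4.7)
The plan is to combine Theorem \ref{thm:surfaces} with an explicit calculation of the intersection form on each of the normal forms appearing in that classification. For the forward direction, an $\aone$-weak equivalence $X \isomto X'$ induces a bigraded ring isomorphism on motivic cohomology. Identifying $H^{2i,i}(-,\Z)$ with the Chow ring (\cite[Corollary 2]{VCompare}) and using that $CH^2(X) \cong \Z$ for a geometrically connected smooth proper surface over an algebraically closed field (via the degree map on $0$-cycles), this isomorphism identifies the intersection pairings on $H^{2,1}(-,\Z)$, up to the sign of the induced automorphism of $H^{4,2} \cong \Z$.

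For the converse, Theorem \ref{thm:surfaces} reduces the problem to distinguishing the normal forms: every rational smooth proper surface is $\aone$-weakly equivalent either to $\pone \times \pone$ or to an iterated blow-up ${\sf Bl}_{\{x_1,\ldots,x_n\}}{\mathbb P}^2$ for some $n \geq 0$. It then suffices to verify that these normal forms have pairwise non-isomorphic intersection forms on $H^{2,1}$, even up to sign reversal. The computation is standard: $\pone \times \pone$ carries the form $\bigl(\begin{smallmatrix} 0 & 1 \\ 1 & 0 \end{smallmatrix}\bigr)$ in the basis of the two rulings, while the $n$-fold blow-up carries the form $\operatorname{diag}(1,-1,\ldots,-1)$ of rank $n+1$ in the basis consisting of the hyperplane class and the exceptional divisors. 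Different values of $n$ give forms of different rank (and one recalls that $\operatorname{rk}\operatorname{Pic}$ is already an $\aone$-homotopy invariant by \cite[\S 4 Proposition 3.8]{MV}), and the only remaining case, $\pone \times \pone$ versus ${\mathbb F}_1 = {\sf Bl}_x {\mathbb P}^2$, is separated by parity: the $\pone \times \pone$-form is even (every self-intersection is a multiple of $2$) while the ${\mathbb F}_1$-form is odd (the hyperplane class has self-intersection $1$).

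The only delicate point, which is minor, is the sign ambiguity in the identification $H^{4,2}(-,\Z) \cong \Z$ under an abstract ring isomorphism: such an isomorphism need not send the class of a point to itself. Since both invariants used to separate the normal forms (rank and parity) are preserved under $Q \mapsto -Q$, this ambiguity is harmless, and the proof goes through.
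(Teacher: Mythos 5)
Your proposal is correct and takes essentially the same route as the paper: reduce to normal forms via Theorem \ref{thm:surfaces}, then observe that the intersection form on $H^{2,1}$ distinguishes those normal forms. The paper phrases this as a ``corollary to the proof'' — it notes that the lemmas underlying Theorem \ref{thm:surfaces} (Lemma \ref{lem:mod2} via Proposition \ref{prop:pnbundlesoverpone}, and Lemma \ref{lem:movingpoints}) only invoke the Chow ring, and for a surface the Chow ring is exactly the quadratic space structure on $H^{2,1}$ — and leaves the rest implicit. You fill in exactly what the paper compresses: the explicit forms $\bigl(\begin{smallmatrix}0&1\\1&0\end{smallmatrix}\bigr)$ on $\pone\times\pone$ and $\operatorname{diag}(1,-1,\ldots,-1)$ on $n$-point blow-ups of $\mathbb{P}^2$, and the fact that rank and parity separate them. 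That makes your version more self-contained; the two arguments are otherwise the same. Your observation about the sign ambiguity on $H^{4,2}$ is a legitimate subtlety that the paper glosses over, and your resolution (rank and parity are negation-invariant, so the ambiguity cannot conflate distinct normal forms) is sound. One could alternatively pin down the sign outright: the Hodge index theorem forces the intersection form on any rational smooth proper surface over an algebraically closed field to have signature $(1,\rho-1)$, so a ring isomorphism carrying $Q_X$ to $-Q_{X'}$ is only possible when $\rho=2$, and in that case $Q\cong -Q$ anyway. Either way the gap you flag closes.
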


\section{Classifying spaces, cohomology and strong $\aone$-invariance}
\label{s:classifyingspaces}
In this section we study $\aone$-local classifying spaces in the Nisnevich and \'etale topologies.  Using these techniques we prove some vanishing theorems for cohomology of smooth $\aone$-connected schemes over a field (see Propositions \ref{prop:etalefundamentalgrouptrivial} and \ref{prop:brauergroup}), provide foundations for further study of \'etale $\aone$-connectivity (Definition \ref{defn:aoneconnected}), and provide an explanation for the ``source" of $\aone$-$h$-cobordisms (Definition \ref{defn:aonehcobordism}) constructed by means of Proposition \ref{prop:bundles}.  Furthermore, ideas from this section will be used in the course of the computations of the $\aone$-fundamental group undertaken in \S \ref{s:fundamentalgroup}.

\subsection{Cohomological properties of smooth $\aone$-connected schemes}
As pointed out to the second author by B. Bhatt, if $k$ is an algebraically closed field having characteristic exponent $p$, and $X$ is an $\aone$-connected scheme over $k$, the cohomological Brauer group of $X$ is a $p$-group (see Proposition \ref{prop:brauergroup}).  Even over $\cplx$, Artin and Mumford ({\em cf.} \cite[Appendix 4.1]{Manin}) constructed examples of conic bundles over $2$-dimensional rational surfaces that are unirational but have non-trivial cohomological Brauer group.  Thus, even over $\cplx$ there exist weakly $\aone$-connected smooth proper varieties that are not $\aone$-connected.  Here, we discuss the structure of some low degree cohomology groups of smooth $\aone$-connected  schemes; we defer the proofs of these results to later in this section.

The next result shows, in particular, that for algebraically closed fields $k$ having characteristic $0$, the \'etale fundamental group of a smooth $\aone$-connected $k$-scheme is trivial (see Proposition \ref{prop:strongfactor} and the subsequent discussion for a proof of a more general result).  Let us emphasize that neither of the next two results require properness assumptions.

\begin{prop}[{\em cf.} {\cite[Remark 3.9]{MICM}}]
\label{prop:etalefundamentalgrouptrivial}
Suppose $k$ is a separably closed field having characteristic exponent $p$.  If $X \in {\mathcal Sm}_k$ is $\aone$-connected, then $X$ admits no non-trivial finite \'etale Galois covers of order coprime to $p$.
\end{prop}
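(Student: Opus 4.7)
The strategy is to represent finite \'etale Galois covers by sections of an $\aone$-invariant Nisnevich sheaf, and then exploit $\aone$-connectedness to force those sections to come from the base. First I would apply Lemma \ref{lem:etalecomparison} to pass to \'etale $\aone$-connectedness, so that $\pi_0^{\aone,\et}(X) = \Spec k$. Since $k$ is separably closed, any finite \'etale group scheme $G$ over $k$ of order coprime to $p$ is constant, and a finite \'etale Galois cover $Y \to X$ with group $G$ is classified by an element of $H^1_{\et}(X, G)$, which is trivial precisely when the cover is trivial. It therefore suffices to show $H^1_{\et}(X, G) = 0$ for every finite constant group $G$ of order coprime to $p$.

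Set $F(U) = H^1_{\et}(U, G)$ as a functor on $\Sm_k$. I would check that $F$ is a Nisnevich sheaf of pointed sets (since $G$-torsors satisfy \'etale descent and Nisnevich covers are \'etale), and that $F$ is $\aone$-invariant, i.e., the pullback $F(U) \to F(U \times \aone)$ is a bijection for every $U \in \Sm_k$; this is the classical homotopy invariance of \'etale cohomology with torsion coefficients of order coprime to $\mathrm{char}(k)$, going back to SGA 4. A discrete $\aone$-invariant Nisnevich sheaf of pointed sets, viewed as a simplicially constant sheaf, is automatically $\aone$-local, since mapping spaces of simplicial sheaves into such $F$ have no higher simplicial homotopy.

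With $F$ both $\aone$-local and discrete, I would read off $[X, F]_{\aone}$ in two compatible ways. On the one hand, $\aone$-locality yields $[X, F]_{\aone} = [X, F]_s = F(X)$. On the other hand, morphisms of simplicial sheaves into a discrete sheaf factor through $\pi_0^s$ of the source, so after passing through the $\aone$-localization,
\[
[X, F]_{\aone} = \mathrm{Hom}_{\Shv_{Nis}(\Sm_k)}(\pi_0^{\aone}(X), F) = F(\Spec k),
\]
the final equality using $\pi_0^{\aone}(X) = \Spec k$. Therefore $H^1_{\et}(X, G) = F(X) = F(\Spec k) = H^1_{\et}(\Spec k, G)$, which vanishes because $k$ is separably closed.

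The main technical obstacle is $\aone$-invariance of the cohomology functor for non-abelian $G$; the abelian case is the standard theorem, while the non-abelian case requires either a direct \v{C}ech-cocycle argument or a d\'evissage along a composition series of the cover. Everything else is the formal interplay between discreteness, $\aone$-locality, and the defining property of $\pi_0^{\aone}$, which is exactly what makes $\aone$-connectedness translate into a concrete cohomological vanishing.
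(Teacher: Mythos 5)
Your overall strategy (exploit $\aone$-connectedness to factor the classifying invariant through $\pi_0^{\aone}(X) = \Spec k$) mirrors the paper's, but you replace the classifying space $B_{\et}G$ by the presheaf $F(U) = H^1_{\et}(U,G)$.  The paper's Proposition \ref{prop:strongfactor} applies the same factorization to the canonical map $X \to \pi_0^{\aone}(B_{\et}G)$; since $\pi_0^{\aone}(B_{\et}G)$ is by definition the \emph{Nisnevich sheafification} of your $F$, the paper only concludes that every class in $H^1_{\et}(X,G)$ becomes trivial Nisnevich-locally, and then implicitly uses that a $G$-torsor over an integral scheme which is trivial at the generic point is split.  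Your version would skip that last step by proving $F(X)=\ast$ outright, but only if $F$ really is a Nisnevich sheaf.

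That is exactly where the gap lies.  The justification offered — ``$G$-torsors satisfy \'etale descent and Nisnevich covers are \'etale'' — is not a valid reason for $U \mapsto H^1_{\et}(U,G)$ to be a sheaf.  Descent of torsors is a statement about the \emph{category} (stack) of torsors; at the level of the set of isomorphism classes one loses the automorphisms, and for a general elementary distinguished square $(U,V,W,X)$ the map $H^1_{\et}(X,G) \to H^1_{\et}(U,G)\times_{H^1_{\et}(W,G)} H^1_{\et}(V,G)$ has no reason to be injective.  The claim does hold for $F$ on $\Sm_k$, but for a genuinely geometric reason: since $X$ is normal and $U\subset X$ is dense open, the map $\pi_1^{\et}(U)\to\pi_1^{\et}(X)$ is surjective (Zariski--Nagata), so $H^1_{\et}(X,G)\hookrightarrow H^1_{\et}(U,G)$ is injective, and this gives the separation axiom; gluing then succeeds because there is no nontrivial cocycle condition for an elementary square.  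This is precisely the same geometric input the paper absorbs into the ``Nisnevich-locally trivial $\Rightarrow$ split'' step, so your argument is not simpler in content, only in appearance: without supplying that purity/generic-triviality input the proof is incomplete.  Finally, a small point: your opening invocation of Lemma \ref{lem:etalecomparison} to pass to $\pi_0^{\aone,\et}(X)=\Spec k$ is never used; the crucial factorization in your last step uses the Nisnevich sheaf $\pi_0^{\aone}(X)=\Spec k$, which is the hypothesis as given.
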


The following result was communicated to us by Bhargav Bhatt.  We provide a slightly different proof of a more general result than the one he suggested (see Proposition \ref{prop:strictfactor}); our proof is very similar to the proof of Proposition \ref{prop:etalefundamentalgrouptrivial}

\begin{prop}[B. Bhatt (private communication)]
\label{prop:brauergroup}
Let $k$ be a separably closed field having characteristic exponent $p$.  If $X \in {\mathcal Sm}_k$ is $\aone$-connected, and $x \in X(k)$ is a base-point, then the Brauer group $Br(X)$ is $p$-torsion.
\end{prop}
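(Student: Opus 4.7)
The plan is to adapt the argument of Proposition~\ref{prop:etalefundamentalgrouptrivial} from finite \'etale coverings to the Brauer group, working with \'etale $\aone$-local Eilenberg--MacLane spaces in place of classifying spaces of finite groups.

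First I would reduce to proving $Br(X)[n] = 0$ for every integer $n$ coprime to $p$. The Kummer sequence $1 \to \mu_n \to \gm \xrightarrow{n} \gm \to 1$ of \'etale sheaves yields the standard exact sequence
\[
\operatorname{Pic}(X)/n \longrightarrow H^2_{\et}(X, \mu_n) \longrightarrow Br(X)[n] \longrightarrow 0,
\]
so $Br(X)[n] = 0$ is equivalent to surjectivity of the mod-$n$ first Chern class map.

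Next I would recast the situation in terms of \'etale $\aone$-local classifying spaces. For $n$ coprime to $p$ the \'etale sheaf $\mu_n$ is strictly $\aone$-invariant (by homotopy invariance of \'etale cohomology with prime-to-$p$ finite coefficients), so $K(\mu_n, 2)$ is \'etale $\aone$-local; likewise $B\gm = K(\gm, 1)$ is \'etale $\aone$-local by $\aone$-invariance of $\operatorname{Pic}$ for smooth schemes, invoking \cite[\S 4 Proposition 3.8]{MV}. This yields identifications
\[
H^2_{\et}(X, \mu_n) = [(X,x),(K(\mu_n,2),*)]_{\aone,\et}, \quad \operatorname{Pic}(X) = [(X,x),(B\gm,*)]_{\aone,\et},
\]
and the Chern class map is induced by the pointed map of \'etale $\aone$-local spaces $B\gm \to K(\mu_n, 2)$ coming from the Kummer fiber sequence $B\gm \xrightarrow{n} B\gm \to K(\mu_n, 2)$. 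The vanishing of $Br(X)[n]$ is thus equivalent to the assertion that every pointed map $(X, x) \to K(\mu_n, 2)$ in $\hpet{k}$ lifts to $(X, x) \to B\gm$.

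The last step is to establish this lift using the $\aone$-connectedness of $X$, which by Lemma~\ref{lem:etalecomparison} passes to \'etale $\aone$-connectedness. The parallel argument of Proposition~\ref{prop:etalefundamentalgrouptrivial} shows that all pointed maps $(X, x) \to BG$ in $\hpet{k}$ are nullhomotopic whenever $G$ is a finite \'etale group scheme of order coprime to $p$, amounting essentially to the vanishing of the prime-to-$p$ part of $\pi_1^{\aone,\et}(X,x)$. The Brauer analogue asks for a lifting rather than a nullhomotopy, governed one cohomological degree higher by the connecting obstruction in the fiber sequence above. The main obstacle will be to control this higher obstruction rigorously; following the indication in the paper, I expect this to be packaged as a more general statement (Proposition~\ref{prop:strictfactor}) concerning the cohomology of smooth $\aone$-connected schemes with coefficients in strictly $\aone$-invariant \'etale sheaves, from which the present statement follows by specialization to $F = \mu_n$.
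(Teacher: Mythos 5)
Your framework is right in outline — reduce via Lemma~\ref{lem:comparisonhigher} to a statement about $\aone$-local \'etale Eilenberg--MacLane spaces, in parallel with the $\pi_1$ argument of Proposition~\ref{prop:etalefundamentalgrouptrivial} — but the choice of coefficient sheaf $\mu_n$ is wrong, and this is not a minor point.

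First, the "lifting" reformulation is circular. A class $c\in H^2_{\et}(X,\mu_n)$ lifts along $B\gm\to K(\mu_n,2)$ precisely when its image in $H^2_{\et}(X,\gm)$ vanishes; but that image is exactly the $n$-torsion Brauer class you are trying to kill. So the translation into a lifting problem restates the goal without progress.

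Second, and more to the point, the paper's general statement (Proposition~\ref{prop:strictfactor}) is that the structure map induces an isomorphism $H^2_{\et}(\Spec k, F)\to H^2_{\et}(X,F)$, and this is applied with $F=\gm'$, where $\gm'$ denotes $\gm\otimes_{\Z}\Z[1/p]$, not $F=\mu_n$. Specializing to $F=\mu_n$ gives a \emph{false} statement: for $X=\mathbb{P}^2$ over a separably closed field, $H^2_{\et}(\mathbb{P}^2,\mu_n)\cong\Z/n$ (the image of $\operatorname{Pic}/n$) while $H^2_{\et}(\Spec k,\mu_n)=0$. The reason $\gm'$ works and $\mu_n$ does not is that $\pi_1^{\aone}(K_{\et}(\gm',2))$ is the Nisnevich sheafification of $U\mapsto H^1_{\et}(U,\gm')=\operatorname{Pic}(U)\otimes\Z[1/p]$, which vanishes by Hilbert's Theorem~90 since Picard groups of Henselian local rings are trivial; this kills the intermediate Postnikov stage and forces every class in $H^2_{\et}(X,\gm')$ to factor through $\pi_0^{\aone}(X)=\Spec k$ (given $H^2_{Nis}(X,\gm')=0$ for $X$ smooth). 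The corresponding $\pi_1^{\aone}$ for $\mu_n$-coefficients is the sheafification of $U\mapsto \O^*(U)/n\oplus\operatorname{Pic}(U)[n]$, which does \emph{not} vanish, so no such factorization holds. The passage from $Br(X)$ to $H^2_{\et}(X,\gm')$ via the injection $Br(X)\hookrightarrow H^2_{\et}(X,\gm)$ (and inverting $p$ so that $\gm'$ admits transfers and Lemma~\ref{lem:etalehomotopyinvarianceandtransfers} applies) is the essential trick your proposal is missing.
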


\begin{ex}
K3 surfaces over a field $k$ are $\aone$-disconnected because they have non-trivial cohomological Brauer group.  Suppose $k$ is an algebraically closed field having characteristic exponent $p$, $\ell$ is a prime number not equal to $p$, and $X$ is a smooth proper variety over $k$. One can show (see \cite[Theorem 3.1 p. 80]{GrothendieckDix}) that the $\ell$-torsion subgroup of $H^2_{\et}(X,\gm)$ is isomorphic to $(\Q_{\ell}/Z_{\ell})^{b_2 - \rho} \oplus M$ where $b_2$ is the second $\ell$-adic Betti number of $X$, $\rho$ is the rank of the N\'eron-Severi group of $X$, and $M$ is a finite $\ell$-group.
\end{ex}

\begin{extension}
There are various generalizations of Propositions \ref{prop:etalefundamentalgrouptrivial} and \ref{prop:brauergroup} that we do not consider here.  We will see that the techniques used in the proofs of these results are quite robust and one can show that various ``higher unramified invariants" (see, e.g., \cite[\S 4.1]{CTPurity} and \cite{CTO}) vanish for smooth $\aone$-connected schemes; this point of view is developed further in \cite{ABirational}.
\end{extension}

\subsubsection*{The source of $\aone$-$h$-cobordisms}
Suppose $X \in {\mathcal Sm}_k$, and consider the projection morphism
\[
p_X: X \times \aone \longrightarrow X.
\]
Suppose $Y \in {\mathcal Sm}_k$.  By a {\em Nisnevich $Y$-bundle over $X$}, we will mean a Nisnevich locally trivial morphism $g: Z \to X$ with fibers isomorphic to $Y$.  Let $Aut(Y)$ denote the subsheaf of $\underline{Hom}(Y,Y)$ consisting of automorphisms of $Y$; $Aut(Y)$ is in fact a sheaf of groups.  Every Nisnevich $Y$-bundle over $X$, say given by $g$, defines (via \u Cech cohomology) an element $[g]$ of $H^1_{Nis}(X,Aut(Y))$.  The image of $[g]$ under the natural map
\[
p_X^*: H^1_{Nis}(X,Aut(Y)) \longrightarrow H^1_{Nis}(X \times \aone,Aut(Y)).
\]
corresponds in geometric terms to the pull-back via $p_X$, i.e., a morphism $p_X^*(g): Z \times_{X} (X \times \aone) \to X \times \aone$ that is automatically a Nisnevich $Y$-bundle over $X \times \aone$.

To move forward, recall the definition of a torsor.  Note that in general $Aut(Y)$ is only a sheaf of groups, as opposed to a smooth $k$-group scheme, so for technical reasons we use here the definition of torsor given in \cite[p. 127-128]{MV}.  For geometric intuition, it is useful to keep in mind the usual definition of a torsor over a $k$-scheme $X$ under a $k$-group scheme $G$ locally trivial in the \'etale (resp. Zariski, Nisnevich) topology, i.e., a triple $({\mathscr P},f,G)$ consisting of a scheme ${\mathscr P}$ equipped with a scheme-theoretically free right $G$-action, a faithfully flat morphism $f:  {\mathscr P} \to X$ that is equivariant for the trivial $G$-action on $X$, such that locally in the \'etale (resp. Zariski, Nisnevich) topology $f$ is isomorphic to a product.  For compactness of notation, such objects will just be called {\em \'etale (resp. Zariski, Nisnevich) locally trivial $G$-torsors over $X$}.  When $Aut(Y)$ is a smooth $k$-group scheme, the definition of \cite{MV} alluded to above coincides with the one.   We will show that if $Y$ is proper and $k$ has characteristic $0$, then $Aut(Y)$ is actually a group scheme (see Proposition \ref{prop:automorphismgroup}), though contrary to our conventions this scheme need not have finite type over $k$.

In this language, the cocycle $[g]$ represents an isomorphism class of $Aut(Y)$-torsors over $X$.  Any non-trivial $\aone$-$h$-cobordism constructed by means of Proposition \ref{prop:bundles} determines an element of $H^1_{Nis}(X \times \aone,Aut(Y))$ not lying in the image of $p_X^*$.  We would like to study conditions on $G$ under which $p_X^*$ is always a bijection.

\begin{rem}
In the special case $X = \pone$, and $G$ a reductive group, the failure of $p_X^*$ to be a bijection has been studied in great detail by Ramanathan (see \cite{Ramanathan}) building on a classical theorem of  Grothendieck-Harder (see \cite{Harder} Satz 3.1 and 3.4) describing all $G$-torsors over $\pone$.  The discussion of Example \ref{ex:rank2bundles} is a special case of that discussion.
\end{rem}

\subsection{Strong $\aone$-invariance and $\aone$-local classifying spaces}
We now use the notation of Appendix \ref{s:homotopyoverview}.  Henceforth, the term {\em Nisnevich (resp. \'etale) sheaf of groups} will be synonymous with Nisnevich (resp. \'etale) sheaf of groups on ${\mathcal Sm}_k$.

\begin{defn}
\label{defn:aoneinvariance}
Suppose $G$ is a Nisnevich sheaf of groups.  We will say that $G$ is {\em strongly $\aone$-invariant} if for every $U \in {\mathcal Sm}_k$, the canonical maps
\[
p_U^*: H^i_{Nis}(U,G) \longrightarrow H^i_{Nis}(U \times \aone,G)
\]
induced by pullback along the projection $p_U: U \times \aone \to \aone$ are bijections for $i = 0,1$.  Similarly, if $G$ is an \'etale sheaf of groups, we will say that $G$ is {\em strongly $\aone$-invariant in the \'etale topology} if for every $U \in {\mathcal Sm}_k$, the maps
\[
p_U^*: H^i_{\et}(U,G) \longrightarrow H^i_{\et}(U \times \aone,G),
\]
defined as above, are bijections for $i = 0,1$.
\end{defn}

\begin{rem}
Strong $\aone$-invariance was introduced and extensively studied in \cite{MField}.  Many examples of strongly $\aone$-invariant sheaves of groups that are non-commutative will be provided in \S \ref{s:fundamentalgroup}.
\end{rem}

Suppose $G$ is a Nisnevich (resp. \'etale) sheaf of groups.  Let $EG$ denote the \u Cech object associated with the structure morphism $G \to \Spec k$.  Since $G \to \Spec k$ is an epimorphism, the discussion of Example \ref{ex:cechobjects} shows that $EG \to \Spec k$ is a simplicial weak equivalence, i.e., $EG$ is simplicially contractible.  There is an obvious right $G$-action on $EG$, and we let $BG$ denote the Nisnevich (resp. \'etale) sheaf quotient $EG/G$.  The map $EG \to BG$ is a right $G$-torsor in the sense mentioned above; this $G$-torsor is called the universal $G$-torsor.

In \cite[\S 4.1, especially Proposition 1.16]{MV}, the second author and Voevodsky showed that $BG$ classifies $G$-torsors locally trivial in the Nisnevich (resp. \'etale) topology.  More precisely, pullback of the universal $G$-torsor determines a bijection from the set of simplicial homotopy classes of maps from a(n \'etale) space ${\mathcal X}$ to a fibrant model of $BG$ to the set of (\'etale) Nisnevich locally trivial $G$-torsors over ${\mathcal X}$.  Using this result, \cite[\S 2 Proposition 3.19]{MV}, and the existence of fibrant replacements, we observe that if $G$ is a Nisnevich (resp. \'etale) sheaf of groups, then $BG$ is $\aone$-local (resp. \'etale $\aone$-local) if and only if $G$ is strongly $\aone$-invariant (in the \'etale topology).

\begin{notation}
Suppose $G$ is an \'etale sheaf of groups. We set
\[
B_{\et}G := \alpha_* BG^f
\]
where $BG^f$ is an \'etale simplicially fibrant replacement for $BG$.
\end{notation}

\begin{lem}
\label{lem:comparison1}
If $G$ is an \'etale sheaf of groups, then $G$ is strongly $\aone$-invariant in the \'etale topology if and only if $B_{\et}G$ is $\aone$-local.  Thus, if $G$ is strongly $\aone$-invariant in the \'etale topology, for every $U \in {\mathcal Sm}_k$ the canonical maps
\[
[\Sigma^i_s \wedge U_+,(B_{\et}G,\ast)]_{\aone} \longrightarrow H^{1-i}_{\et}(U,G)
\]
induced by adjunction are bijections for $i = 0,1$.
\end{lem}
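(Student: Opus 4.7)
My plan is to derive the forward implication immediately from the results already set up in the paper, and to obtain the converse together with the concluding formula by unwinding the adjunction between the topology-change functors $\alpha^*$ and $\mathbf{R}\alpha_*$.

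For the forward direction, the discussion just preceding the lemma records that $G$ is strongly $\aone$-invariant in the \'etale topology if and only if $BG$ is \'etale $\aone$-local. Assuming this, an \'etale simplicially fibrant model $BG^f$ remains \'etale $\aone$-local, and Lemma \ref{lem:aonefunctoriality} then yields that $B_{\et}G = \mathbf{R}\alpha_* BG^f$ is $\aone$-local.

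For the converse, suppose $B_{\et}G$ is $\aone$-local. The adjunction displayed just before Lemma \ref{lem:etalecomparison} gives, for any $V \in \Sm_k$, a canonical bijection $Hom_{\het{k}}(V, BG) \cong [V, B_{\et}G]_{\aone}$. Combining this with the classification theorem of \cite[\S 4.1]{MV}, which identifies $[V, BG^f]_{s,\et}$ with $H^1_{\et}(V, G)$, and with the fact that $\aone$-locality of $B_{\et}G$ collapses $[V, B_{\et}G]_{\aone}$ to $[V, B_{\et}G]_s$, applying the identification both to $V = U$ and to $V = U \times \aone$ and using that $U \times \aone \to U$ is an $\aone$-weak equivalence recovers strong $\aone$-invariance of $H^1_{\et}(-, G)$. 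For the $H^0_{\et}$ case, I would run the same argument with $V_+$ replaced by $\Sigma_s V_+$, relying on the identification $[\Sigma_s V_+, (B_{\et}G, \ast)]_s \cong G(V)$ coming from $\pi_1^s(BG^f) = G$ and the same adjunction.

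The trailing ``Thus'' statement is then immediate: under strong $\aone$-invariance, $B_{\et}G$ is $\aone$-local, so $[\Sigma^i_s \wedge U_+, (B_{\et}G, \ast)]_{\aone} = [\Sigma^i_s \wedge U_+, (B_{\et}G, \ast)]_s$, and the right-hand side is identified with $H^{1-i}_{\et}(U, G)$ for $i = 0, 1$ by the adjunction above together with the classification theorem applied in degrees $1 - i$. I expect the main technical obstacle to be verifying rigorously that $[\Sigma^i_s \wedge U_+, (B_{\et}G, \ast)]_s$ really computes $H^{1-i}_{\et}(U, G)$: this requires that $\mathbf{R}\alpha_*$ interacts appropriately with simplicial looping, so that looping $B_{\et}G$ recovers (in the derived sense) $\mathbf{R}\alpha_* G$ and thus yields \'etale cohomology in degree $0$. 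Everything else in the argument is formal adjunction and $\aone$-locality, but this cohomology interpretation of the loop space should be handled with care.
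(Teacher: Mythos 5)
Your forward direction matches the paper: one passes from strong \'etale $\aone$-invariance of $G$ to \'etale $\aone$-locality of $BG$ (as the paper records just before the lemma), then to $\aone$-locality of $B_{\et}G$ via Lemma \ref{lem:aonefunctoriality}.

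Your converse, however, has a genuine gap. You invoke the bijection $Hom_{\het{k}}(V, BG) \cong [V, B_{\et}G]_{\aone}$ displayed just before Lemma \ref{lem:etalecomparison}, but that identification is stated (and is only valid) for ${\mathcal X}$ \'etale $\aone$-local: with the paper's convention that ${\bf R}\alpha_*$ is the \emph{simplicially}-derived pushforward, $B_{\et}G := {\bf R}\alpha_* BG^f$ only coincides with the $\aone$-derived pushforward of $BG$ (the object actually appearing in the Quillen adjunction at the $\aone$-level) when $BG$ is \'etale $\aone$-local. For the converse that is precisely what you are trying to establish, so the step is circular. Moreover even granting that bijection, your chain does not close: to link the classification identity $H^1_{\et}(V, G) \cong [V, BG^f]_{s,\et}$ to $[V, B_{\et}G]_s$ you would need $Hom_{\het{k}}(V, BG) \cong [V,BG^f]_{s,\et}$, which again requires $BG$ \'etale $\aone$-local. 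The correct tool — and the one the paper uses — is the \emph{simplicial} adjunction
\[
Hom_{\hspet}(({\mathcal X},x),(BG,\ast)) \isomto [({\mathcal X},x),(B_{\et}G,\ast)]_s,
\]
which holds unconditionally at the level of simplicial homotopy categories. With that in hand your argument goes through: compose it with the $\aone$-locality of $B_{\et}G$ to get $H^1_{\et}(V,G) \cong [V_+,B_{\et}G]_{\aone}$, then plug in $V=U$ and $V = U \times \aone$ and use that $U\times\aone \to U$ is an $\aone$-weak equivalence, and likewise with $\Sigma^1_s V_+$ for the $H^0$ statement.

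The technical obstacle you flag — whether ${\bf R}\alpha_*$ interacts correctly with simplicial looping — is addressed in the paper by combining the simplicial adjunction above with the classification result \cite[\S 4 Proposition 1.16]{MV}, which identifies $Hom_{\hspet}(\Sigma^i_s \wedge U_+, (BG,\ast))$ directly with $H^{1-i}_{\et}(U,G)$ for $i=0,1$; no separate argument about $\Omega^1_s$ and $\alpha_*$ is needed once you route everything through the simplicial category.
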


\begin{proof}
By Lemma \ref{lem:aonefunctoriality} the functor ${\bf R}\alpha_*$ preserves $\aone$-local objects.  Now $G$ is strongly $\aone$-invariant in the \'etale topology if and only if $BG$ is $\aone$-local in the \'etale topology.  If $BG$ is $\aone$-local in the \'etale topology, then $BG^f$ is $\aone$-local in the \'etale topology and thus $B_{\et}G$ is $\aone$-local.

If $B_{\et}G$ is $\aone$-local we conclude that for any pointed space $({\mathcal X},x)$ the canonical map
\[
[({\mathcal X},x),(B_{\et}G,\ast)]_s \longrightarrow [({\mathcal X},x),(B_{\et}G,\ast)]_{\aone}
\]
is a bijection.  If $({\mathcal X},x)$ is a pointed \'etale simplicial sheaf, adjunction gives a canonical bijection
\[
Hom_{\hspet}(({\mathcal X},x),(BG,\ast)) \isomto [({\mathcal X},x),(B_{\et}G,\ast)]_s.
\]
The final statement follows immediately from this by applying \cite[\S 4 Proposition 1.16]{MV}.
\end{proof}

We quote without proof the following result of the second author, which we will use frequently in the sequel.   Together with Theorem \ref{thm:strongabelianimpliesstrict}, this result provides the backbone for all structural results regarding $\aone$-homotopy groups.

\begin{thm}[{\cite[Theorem 5.1]{MField}}]
\label{thm:strongaoneinvaranceofpi1}
If $({\mathcal X},x)$ is a pointed space, then $\pi_i^{\aone}({\mathcal X},x)$ is a strongly $\aone$-invariant sheaf of groups for any integer $i > 0$.
\end{thm}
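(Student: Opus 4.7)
First I would reduce to the case where $({\mathcal X},x)$ is simplicially fibrant and $\aone$-local by replacing it with its $\aone$-fibrant model $(L_{\aone}{\mathcal X},x)$ from Definition \ref{defn:aonehomotopycategories}. Since $\pi_i^{\aone}$ is an invariant of the $\aone$-weak equivalence class, one then has a canonical isomorphism $\pi_i^{\aone}({\mathcal X},x) \cong \pi_i^s(L_{\aone}{\mathcal X},x)$, and the theorem becomes the assertion: for every simplicially fibrant, $\aone$-local pointed space $({\mathcal Y},y)$ and every integer $i \geq 1$, the simplicial homotopy sheaf $\pi_i^s({\mathcal Y},y)$ is strongly $\aone$-invariant.

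I would then treat the fundamental group case $i = 1$ separately, as this is the heart of the matter. Write $G = \pi_1^s({\mathcal Y},y)$. The characterization recalled just before Lemma \ref{lem:comparison1} --- that $BG$ is $\aone$-local if and only if $G$ is strongly $\aone$-invariant --- reduces the problem to proving that $BG$ is $\aone$-local. I would identify $BG$ with the first Postnikov section of ${\mathcal Y}$ via a functorial Moore-Postnikov factorization of ${\mathcal Y} \to \ast$, producing a fibration sequence ${\mathcal Y}\langle 2\rangle \to {\mathcal Y} \to BG$ with simply connected fiber ${\mathcal Y}\langle 2\rangle$. Stability of $\aone$-local objects under homotopy limits, together with the $\aone$-locality of ${\mathcal Y}$, then forces $BG$ to be $\aone$-local once one knows ${\mathcal Y}\langle 2\rangle$ is $\aone$-local.

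For $i \geq 2$ the sheaf $\pi_i^s({\mathcal Y},y)$ is abelian, and the identification $\pi_i^s({\mathcal Y},y) \cong \pi_1^s(\Omega^{i-1}{\mathcal Y}, \ast)$, together with the observation that $\Omega$ preserves $\aone$-local simplicially fibrant pointed spaces, inductively reduces the higher-degree cases to $i = 1$. In fact a stronger conclusion, namely strict $\aone$-invariance, can be obtained for $i \geq 2$ by an analogous argument involving the Eilenberg-MacLane classifying spaces $K(\pi_i^s({\mathcal Y},y), n)$ and again the stability of $\aone$-local objects under homotopy limits.

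The principal obstacle is the case $i = 1$: establishing $\aone$-locality of $BG$ requires controlling $H^1_{Nis}(U \times \aone, G)$, hence classifying $G$-torsors on relative affine lines $U \times \aone \to U$. This demands a substantive theory of unramified Nisnevich sheaves of groups equipped with residue maps along codimension one valuations, together with a careful patching argument for torsors over henselian discrete valuation rings. This is precisely the preparatory technical content of \cite{MField}, and constitutes the main work underlying the theorem.
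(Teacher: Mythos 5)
The paper itself quotes this theorem \emph{without proof}, citing \cite{MField}; so what you must be judged against is whether your proposed argument is sound on its own terms, and it is not.

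Your reductions are fine as far as they go: replacing $({\mathcal X},x)$ by its $\aone$-fibrant model, and for $i \geq 2$ using $\pi_i^s({\mathcal Y},y) \cong \pi_1^s(\Omega^{i-1}{\mathcal Y},\ast)$ together with the observation that $\Omega$ preserves $\aone$-local fibrant spaces (being a homotopy pullback along the basepoint), are both legitimate and do reduce everything to the case $i=1$. The equivalence ``$G$ strongly $\aone$-invariant $\iff$ $BG$ is $\aone$-local'' is also correctly invoked. The gap is in the one step that actually has to bear the weight.

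You claim that from the Postnikov fibration sequence ${\mathcal Y}\langle 2\rangle \to {\mathcal Y} \to BG$, $\aone$-locality of ${\mathcal Y}$ and of ${\mathcal Y}\langle 2\rangle$, and ``stability of $\aone$-local objects under homotopy limits,'' one deduces that $BG$ is $\aone$-local. This runs the homotopy-limit argument in the wrong direction. The fiber ${\mathcal Y}\langle 2\rangle$ is the homotopy pullback of $\ast \to BG \leftarrow {\mathcal Y}$, so locality of the \emph{base and total space} implies locality of the \emph{fiber}; there is no general principle going from fiber-and-total-space to base. (In fact a contractible total space fibers over every connected base, so no such principle can exist.) Moreover the argument is circular even as posed: the only reasonable route to $\aone$-locality of ${\mathcal Y}\langle 2\rangle$ itself is via that same fibration, which presupposes $\aone$-locality of $BG$. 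So the claimed reduction collapses, and you are left with exactly the original problem --- showing $H^1_{Nis}(U\times\aone, G)\cong H^1_{Nis}(U,G)$ --- which is precisely what must be proved from scratch.

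Your final paragraph correctly identifies the needed ingredients (an unramified-sheaf formalism for $G$, specialization/residue maps along codimension-one valuations, patching of torsors over henselian DVRs), but frames them as ``preparatory technical content.'' In Morel's actual development the logical order is reversed: strong $\aone$-invariance of $\pi_1^{\aone}$ is proved first, directly and laboriously, by establishing that this sheaf is unramified and constructing a Gersten-type complex for it (using the unstable $\aone$-connectivity theorem and a detailed analysis of $G$-torsors over low-dimensional local schemes), and only \emph{then} does one get the good behavior of the $\aone$-Postnikov tower --- e.g.\ that Postnikov truncations of $\aone$-local spaces are again $\aone$-local --- as a consequence. The Postnikov-tower maneuver you propose cannot bootstrap the theorem; it is downstream of it.
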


\begin{cor}
\label{cor:etaleimpliesNisnevich}
If $G$ is an \'etale sheaf of groups that is strongly $\aone$-invariant in the \'etale topology, then the Nisnevich sheaf underlying $G$ is strongly $\aone$-invariant.
\end{cor}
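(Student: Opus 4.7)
The plan is to identify the underlying Nisnevich sheaf $\tilde G := \alpha_* G$ of $G$ with the sheaf $\pi_1^{\aone}(B_{\et}G, \ast)$, and then invoke Theorem \ref{thm:strongaoneinvaranceofpi1}.

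First, since $G$ is strongly $\aone$-invariant in the \'etale topology, Lemma \ref{lem:comparison1} guarantees that $B_{\et}G$ is $\aone$-local. In particular, because $B_{\et}G \to L_{\aone}(B_{\et}G)$ is then a simplicial weak equivalence, the canonical map $\pi_i^s(B_{\et}G, \ast) \to \pi_i^{\aone}(B_{\et}G, \ast)$ is an isomorphism of Nisnevich sheaves for every $i \geq 0$. Next, I would compute $\pi_1^s(B_{\et}G, \ast)$ explicitly. By definition it is the Nisnevich sheaf associated with the presheaf
\[
U \longmapsto [S^1_s \wedge U_+, (B_{\et}G, \ast)]_s.
\]
Using the $\aone$-locality of $B_{\et}G$ to replace simplicial homotopy classes by $\aone$-homotopy classes, and then applying Lemma \ref{lem:comparison1} in the case $i = 1$, one obtains natural bijections
\[
[S^1_s \wedge U_+, (B_{\et}G, \ast)]_s \cong [S^1_s \wedge U_+, (B_{\et}G, \ast)]_{\aone} \cong H^0_{\et}(U, G) = G(U).
\]
Since every \'etale sheaf is automatically a Nisnevich sheaf, the presheaf $U \mapsto G(U)$ is already the Nisnevich sheaf $\tilde G$, so no further sheafification is needed and we conclude $\pi_1^s(B_{\et}G, \ast) \cong \tilde G$.

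Combining these two steps yields $\pi_1^{\aone}(B_{\et}G, \ast) \cong \tilde G$. Theorem \ref{thm:strongaoneinvaranceofpi1} asserts that $\pi_1^{\aone}$ of any pointed space is a strongly $\aone$-invariant Nisnevich sheaf of groups, and the corollary follows. I expect the main subtlety to lie in the identification $\pi_1^s(B_{\et}G, \ast) \cong \tilde G$: one must check that the comparison maps from Lemma \ref{lem:comparison1} are natural in $U$ and compatible with the group structure coming from loop composition (and with the canonical base-point of $BG$ coming from the trivial $G$-torsor), so that the isomorphism really holds in the category of sheaves of groups rather than merely sheaves of pointed sets. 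Everything else is formal consequence of the adjunction $({\bf L}\alpha^*, {\bf R}\alpha_*)$ and the $\aone$-locality just established.
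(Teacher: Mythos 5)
Your proposal is correct and takes essentially the same route as the paper: deduce $\aone$-locality of $B_{\et}G$ from Lemma \ref{lem:comparison1}, identify $\pi_1^s(B_{\et}G,\ast)$ with $G$ (again via Lemma \ref{lem:comparison1} with $i=1$), and then invoke Theorem \ref{thm:strongaoneinvaranceofpi1}. The paper states this more tersely; you have simply spelled out the identification $\pi_1^s(B_{\et}G,\ast) \cong G$ and correctly flagged the compatibility-with-group-structure check that makes it an isomorphism of sheaves of groups rather than of pointed sheaves.
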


\begin{proof}
If $B_{\et}G$ is $\aone$-local, the map $\pi_1^s(B_{\et}G,\ast) \to \pi_1^{\aone}(B_{\et}G,\ast)$ is an isomorphism.  Then, apply the previous theorem together with the identification of $\pi_1^s(B_{\et}G)$ with $G$ itself using Lemma \ref{lem:comparison1}.
\end{proof}

\subsubsection*{The category of strongly $\aone$-invariant sheaves of groups}
Let ${\mathcal Gr}_k$ denote the category of Nisnevich sheaves of groups on $\Sm_k$.  We write ${\mathcal Gr}^{\aone}_k$ for the full subcategory of ${\mathcal Gr}_k$ consisting of strongly $\aone$-invariant sheaves of groups.

\begin{lem}[{\em cf.} {\cite[Remark 6.11]{MField}}]
The category ${\mathcal Gr}^{\aone}_k$ admits small colimits.
\end{lem}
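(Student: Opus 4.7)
The plan is to realize ${\mathcal Gr}^{\aone}_k$ as a reflective subcategory of ${\mathcal Gr}_k$ and import finite colimits along the reflector. The ambient category ${\mathcal Gr}_k$ is cocomplete --- it is the category of group objects in the Grothendieck topos $\Shv_{Nis}(\Sm_k)$ --- so any finite diagram $D$ in ${\mathcal Gr}^{\aone}_k$ automatically admits a colimit in ${\mathcal Gr}_k$; the difficulty is only that this ${\mathcal Gr}_k$-colimit need not remain strongly $\aone$-invariant, and must be corrected to lie back in the subcategory.

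To construct a left adjoint $L \colon {\mathcal Gr}_k \to {\mathcal Gr}^{\aone}_k$ to the inclusion $\iota$, I would set $L(G) := \pi_1^{\aone}(BG, \ast)$. Theorem \ref{thm:strongaoneinvaranceofpi1} guarantees that $L(G)$ is strongly $\aone$-invariant. For $H \in {\mathcal Gr}^{\aone}_k$, the discussion preceding Lemma \ref{lem:comparison1} shows that $BH$ is $\aone$-local, and hence $\pi_1^{\aone}(BH) = \pi_1^s(BH) = H$ canonically, so $L \circ \iota \cong \mathrm{Id}$.

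The core step is the adjunction isomorphism
\[
\mathrm{Hom}_{{\mathcal Gr}^{\aone}_k}(L(G), H) \;\cong\; \mathrm{Hom}_{{\mathcal Gr}_k}(G, H)
\]
for $G \in {\mathcal Gr}_k$ and $H \in {\mathcal Gr}^{\aone}_k$. Since $BG$ and $BH$ are simplicial sheaves of type $K(-,1)$, the classification of pointed simplicial maps into aspherical sheaves identifies $\mathrm{Hom}_{{\mathcal Gr}_k}(G,H)$ with $[(BG,\ast),(BH,\ast)]_s$. The $\aone$-locality of $BH$ then yields
\[
[(BG,\ast),(BH,\ast)]_s \;=\; [(BG,\ast),(BH,\ast)]_{\aone} \;=\; [(L_{\aone}BG,\ast),(BH,\ast)]_s,
\]
and a second application of the classification theorem, now to the pointed connected sheaf $L_{\aone}BG$, identifies this last set with $\mathrm{Hom}(\pi_1^s(L_{\aone}BG), H) = \mathrm{Hom}(L(G), H)$. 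Granted the adjunction, the finite colimit in ${\mathcal Gr}^{\aone}_k$ of a diagram $D$ is then computed as $L\bigl(\colim_{{\mathcal Gr}_k} \iota D\bigr)$, and in particular exists.

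The main technical obstacle is justifying the classification theorem invoked twice above: for any pointed connected simplicial sheaf $(X,x)$ and any Nisnevich sheaf of groups $H$, one needs $[(X,x),(BH,\ast)]_s \cong \mathrm{Hom}(\pi_1^s(X), H)$. This sheaf-theoretic analogue of the standard classification of maps into Eilenberg-MacLane spaces should follow from \cite[\S 4 Proposition 1.16]{MV} combined with obstruction theory along the Postnikov tower of $X$, but requires care since $L_{\aone}(BG)$ typically has nontrivial higher simplicial homotopy sheaves that must be shown not to obstruct the identification with $\pi_1$.
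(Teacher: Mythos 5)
Your proposal reproduces the paper's argument: both establish that ${\mathcal Gr}^{\aone}_k$ is a reflective subcategory of the cocomplete ${\mathcal Gr}_k$ via the reflector $G \mapsto \pi_1^{\aone}(BG,\ast)$, and both verify the adjunction through the same chain of identifications $\mathrm{Hom}_{{\mathcal Gr}_k}(\cdot,\cdot) \cong [B(\cdot),B(\cdot)]_s \cong [B(\cdot),B(\cdot)]_{\aone} \cong \mathrm{Hom}_{{\mathcal Gr}^{\aone}_k}(\pi_1^{\aone}B(\cdot),\cdot)$, citing the Postnikov tower / obstruction theory for the classification steps at either end. The technical point you flag (that higher simplicial homotopy of $L_{\aone}BG$ must not obstruct the identification with maps out of $\pi_1$) is exactly what the paper's citation of the Postnikov tower is meant to handle, so your caution is well-placed but resolved as you anticipate.
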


\begin{proof}
We claim that the inclusion functor ${\mathcal Gr}^{\aone}_k \hookrightarrow {\mathcal Gr}_k$ admits a left adjoint defined by the functor $G \mapsto \pi_1^{\aone}(BG,\ast)$.  We have maps
\[
Hom_{{\mathcal Gr}_k}(H,G) \longleftarrow [(BH,\ast),(BG,\ast)]_s \longrightarrow
[(BH,\ast),(BG,\ast)]_{\aone} \longrightarrow Hom_{{\mathcal Gr}^{\aone}_k}(\pi_1^{\aone}(BH,\ast),G)
\]
where the left-most map is given by applying the functor $\pi_1^s$, and the right-most map is given by applying the functor $\pi_1^{\aone}$.  The map in the middle is a bijection since $G$ is strongly $\aone$-invariant, and the Postnikov tower ({\em cf.} \cite[3.10.1]{ADExcision}) can be used to show that both the left-most and right-most maps are bijections; this  observation establishes adjointness.

Now, any functor that {\em is} a left adjoint preserves small colimits (see \cite[V.5]{MacLane}).  The category of presheaves of groups on $\Sm_k$ admits small colimits (defined sectionwise).  Since sheafification is a left adjoint, it follows that ${\mathcal Gr}_k$ admits all small colimits.  Finally, using the fact that the functor $H \mapsto \pi_1^{\aone}(BH)$ is a left adjoint, we deduce that ${\mathcal Gr}^{\aone}_k$ admits all small colimits.
\end{proof}

\begin{defn}
\label{defn:amalgamatedsum}
Given a diagram of strongly $\aone$-invariant sheaves of groups of the form
\[
G_1 \longleftarrow H \longrightarrow G_2,
\]
we write $G_1 \star_H^{\aone} G_2$ for the colimit of this diagram computed in ${\mathcal Gr}^{\aone}_k$.  Precisely, $G_1 \star_H^{\aone} G_2$ is the strongly $\aone$-invariant sheaf of groups $\pi_1^{\aone}(B(G_1 \star_H G_2))$, where $G_1 \star_H G_2$ is the pushout computed in the category ${\mathcal Gr}_k$.  We refer to $G_1 \star_H^{\aone} G_2$ as the {\em sum of $G_1$ and $G_2$ amalgamated over $H$}, or, if $H$ is trivial, as the {\em amalgamated sum of $G_1$ and $G_2$} (where in both cases strong $\aone$-invariance is understood).
\end{defn}

\begin{defn}
\label{defn:freestronglyaoneinvariant}
The {\em free strongly $\aone$-invariant sheaf of groups on a (pointed) sheaf of sets $({\mathcal S},s)$}, denoted $F_{\aone}({\mathcal S})$, is the Nisnevich sheaf of groups $\pi_1^{\aone}(\Sigma^1_s {\mathcal S})$.
\end{defn}

One can show (\cite[Lemma 6.23]{MField}) that if ${\mathcal S}$ is a sheaf of pointed sets, then for any strongly $\aone$-invariant sheaf of groups $G$, the canonical map ${\mathcal S} \to \Omega^1_s \Sigma^1_s {\mathcal S}$ induces a bijection
\[
Hom_{{\mathcal Gr}^{\aone}_k}(F_{\aone}({\mathcal S}),G) \isomto Hom_{\Spc_{k,\bullet}}({\mathcal S},G).
\]
Thus, $F_{\aone}$ is left adjoint to the forgetful functor ${\mathcal Gr}^{\aone}_k \to {\mathcal Spc}_{k,\bullet}$, and this observation justifies our naming convention.

\subsubsection*{Proof of Proposition \ref{prop:etalefundamentalgrouptrivial}}
We first prove the following more precise result.

\begin{prop}
\label{prop:strongfactor}
Suppose $X \in {\mathcal Sm}_k$ is $\aone$-connected, and $G$ is an \'etale sheaf of groups strongly $\aone$-invariant in the \'etale topology.  For any two points $x_1,x_2 \in X(k)$ the restriction maps
\[
x_1^*, x_2^*: H^1_{\et}(X,G) \longrightarrow H^1_{\et}(\Spec k,G)
\]
coincide, and we denote $\rho$ the map induced by any choice of point.  The natural map
\[
H^1_{Nis}(X,G) \longrightarrow H^1_{\et}(X,G)
\]
injects into the inverse image under $\rho$ of the base-point of the pointed set $H^1_{\et}(\Spec k,G)$.  In other words, an \'etale locally trivial $G$-torsor over $X$ whose restriction to a rational point is trivial is Nisnevich locally trivial.
\end{prop}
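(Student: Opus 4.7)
The plan is to exploit the $\aone$-local classifying spaces $BG$ and $B_{\et}G$. By Corollary \ref{cor:etaleimpliesNisnevich} combined with Lemma \ref{lem:comparison1}, both $BG$ (in the Nisnevich topology) and $B_{\et}G$ are $\aone$-local and they represent Nisnevich and \'etale first cohomology of $G$, respectively. In particular $[X, BG]_{\aone} = H^1_{Nis}(X, G)$ and $[X, B_{\et}G]_{\aone} = H^1_{\et}(X, G)$, and the canonical comparison map $BG \to B_{\et}G$ realizes the change-of-topology map $H^1_{Nis}(X, G) \to H^1_{\et}(X, G)$.

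For the equality $x_1^* = x_2^*$, I would first show $x_1 = x_2$ as morphisms $\Spec k \to X$ in $\ho{k}$. Since $X$ is $\aone$-connected, $\pi_0^{\aone}(X) \cong \Spec k$; and because $\Spec k$ is Henselian local, Nisnevich sheafification preserves sections over it, so $[\Spec k, X]_{\aone} = \pi_0^{\aone}(X)(\Spec k) = \ast$. Thus any two $k$-rational points determine the same morphism in $\ho{k}$, and precomposition with a morphism $X \to B_{\et}G$ yields $x_1^* = x_2^*$. The common map is $\rho$.

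The image of $H^1_{Nis}(X, G) \to H^1_{\et}(X, G)$ lies in $\rho^{-1}(\ast)$ essentially for free: if $P$ is a Nisnevich locally trivial $G$-torsor on $X$, then $x^*P$ is Nisnevich locally trivial over $\Spec k$, and since $\Spec k$ is Henselian local with no nontrivial Nisnevich covers, $H^1_{Nis}(\Spec k, G) = \ast$. Hence $x^*P$ is trivial, and its image in $H^1_{\et}(\Spec k, G)$ is the basepoint.

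The crucial step, and the main obstacle, is establishing the bijection $H^1_{Nis}(X, G) \isomto \rho^{-1}(\ast)$. I would accomplish this by analyzing the Postnikov truncation of $B_{\et}G$. Using Lemma \ref{lem:comparison1} together with the analogous adjunction computation of $\pi_0^{\aone}$, one obtains $\pi_0^{\aone}(B_{\et}G) = R^1\alpha_*G$ (the Nisnevich sheaf associated to $U \mapsto H^1_{\et}(U, G)$), $\pi_1^{\aone}(B_{\et}G) = G$, and vanishing higher $\aone$-homotopy sheaves. Since $BG$ and the $\aone$-homotopy fiber of the truncation $B_{\et}G \to R^1\alpha_*G$ over the basepoint are both $\aone$-local $K(G,1)$-type spaces with matching fundamental sheaf, the canonical map $BG \to B_{\et}G$ factors through this fiber as an $\aone$-weak equivalence. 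For $\aone$-connected $X$ with rational point $x$, a morphism $f : X \to B_{\et}G$ induces on $\pi_0^{\aone}$ the constant map $\ast \to R^1\alpha_*G$ valued at $\rho([P]) = x^*[P]$, so $f$ lifts to a morphism $X \to BG$ precisely when $\rho([P]) = \ast$. This yields both injectivity and surjectivity onto $\rho^{-1}(\ast)$. The principal technical work lies in verifying the $K(G,1)$-type identification, namely the vanishing of the higher $\aone$-homotopy sheaves of $B_{\et}G$ and the fact that the induced map $\pi_1^{\aone}(BG) \to \pi_1^{\aone}(B_{\et}G)$ is the identity on $G$.
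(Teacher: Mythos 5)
Your proposal is correct and follows essentially the same route as the paper: both use Lemma \ref{lem:comparison1} to identify $[X, B_{\et}G]_{\aone}$ with $H^1_{\et}(X,G)$, derive the independence of $\rho$ from $\pi_0^{\aone}(X) = \ast$, and then recognize $BG \to B_{\et}G$ as the inclusion of the $\aone$-connected component of the basepoint via the computation $\pi_1^{\aone}(B_{\et}G) \cong G$ with vanishing higher $\aone$-homotopy sheaves. The paper states the last identification in one line where you unpack it through the Postnikov truncation, but the underlying argument is the same.
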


\begin{proof}
From Lemma \ref{lem:comparison1} we know that for any $U \in {\mathcal Sm}_k$ the canonical map
\[
[U,B_{\et}G]_{\aone} \longrightarrow H^1_{\et}(U,G)
\]
is a bijection.  Given a class $\tau \in H^1_{\et}(X,G)$, choose an explicit representative $\tau: X \to B_{\et}G$ (we can do this because $B_{\et}G$ is $\aone$-fibrant).  The composite map
\[
X \stackrel{\tau}{\longrightarrow} B_{\et}G \longrightarrow \pi_0^{\aone}(B_{\et}G)
\]
factors through the canonical map $X \to \pi_0^{\aone}(X) = \ast$.  As the set of sections of $\pi_0^{\aone}(B_{\et}G)$ over $\Spec k$ is exactly $H^1_{\et}(\Spec k,G)$, this proves the independence statement.

Now, again by Lemma \ref{lem:comparison1} we know that $B_{\et}G$ is $\aone$-local and that $\pi_1^{\aone}(B_{et}G) = G$ (we are abusing notation and writing $G$ for the restriction of the \'etale sheaf $G$ to the Nisnevich topology).  Write $BG$ be the classifying space of the Nisnevich sheaf of groups just mentioned; this space is $\aone$-$0$-connected by \cite[\S 2 Corollary 3.22]{MV}.  The (pointed) map $BG \to B_{\et}G$ given by adjunction induces an inclusion $* \to \pi_0^{\aone}(B_{\et}G)$ and isomorphisms $\pi_i^{\aone}(BG) \to \pi_i^{\aone}(B_{\et}G)$: we will say that this map is the inclusion of the $\aone$-connected component of the base-point.  If $X \in {\mathcal Sm}_k$ is $\aone$-connected, it follows that the induced map
\[
H^1_{Nis}(X,G) \isomto [X,BG]_{\aone} \longrightarrow [X,B_{et}G]
\]
is an injection whose image can be identified with the set of morphisms $X \to B_{\et}G$ that map $\pi_0^{\aone}(X)$ to the base-point of $\pi_0^{\aone}(B_{\et}G)$.  By the discussion of the previous paragraphs, this proves our claim.
\end{proof}

\begin{proof}[Proof of \ref{prop:etalefundamentalgrouptrivial}]
Let $G$ be a finite \'etale group scheme of order prime to $p$.  In this situation, $B_{\et}G$ is $\aone$-local by \cite[\S 4 Proposition 3.1]{MV}, so we could just apply Proposition \ref{prop:strongfactor}.  Really, we just have to observe that $G$ is a strongly $\aone$-invariant sheaf of groups in the \'etale topology by \cite[Expos\'e XV Corollaire 2.2]{SGA43}.
\end{proof}

\begin{rem}
We continue with notation as in Proposition \ref{prop:strongfactor}.  The map $\rho: H^1_{\et}(X,G) \to H^1_{\et}(\Spec k,G)$ can be reinterpreted as follows.  Recall the identification $H^1_{\et}(X,G) := [X,B_{\et}G]_{\aone}$.   Since $X$ is $\aone$-connected, ``evaluation on $\pi_0^{\aone}$" gives a map
\[
[X,B_{\et}G]_{\aone} \longrightarrow Hom_{\Spc_k}(\pi_0^{\aone}(X),B_{\et}G) \isomto Hom_{\Spc_k}(\ast,B_{\et}G) \isomto [\Spec k,B_{\et}G]_{\aone}.
\]
that coincides with $\rho$.
\end{rem}

\begin{rem}
Given a $1$-cocycle of $k$ with values in $G$ associated with a class $\tau \in H^1(\Spec k,G)$, one may twist $G$ by $\tau$ to get another sheaf of groups that we denote by $G_\tau$.  Using a similar but more involved argument, one can prove that the sheaf $G_\tau$ is also strongly $\aone$-invariant in the \'etale topology, and the fiber of $\rho$ at $\tau$ is (the image of) $H^1_{Nis}(X,G_\tau)$.
\end{rem}

\subsection{Strict $\aone$-invariance and $\aone$-local Eilenberg-MacLane spaces}
There are versions of the results proved above for higher cohomology of sheaves of abelian groups; we give here the corresponding statements together with brief indications of the modifications required in the proofs.  For any Nisnevich (resp. \'etale) sheaf of abelian groups $A$, one can define Eilenberg-MacLane spaces $K(A,i)$ such that, if $U \in {\mathcal Sm}_k$, the $H^i_{Nis}(U,A)$ (resp. $H^i_{\et}(U,A)$) can be computed in terms of homotopy classes of maps from $U$ to $K(A,i)$ in $\hsnis$ (resp. $\hset$).  See \cite[pp. 55-60]{MV} for more details.

\begin{defn}
Suppose $A$ is a Nisnevich sheaf of abelian groups.  We will say that $A$ is {\em strictly $\aone$-invariant} if for every $U \in {\mathcal Sm}_k$ the pull-back map
\[
H^i_{Nis}(U,A) \longrightarrow H^i_{Nis}(U \times \aone,A)
\]
is a bijection for every $i \geq 0$.  Similarly, given an \'etale sheaf of abelian groups, we will say that $A$ is {\em strictly $\aone$-invariant in the \'etale topology} if for every $U \in {\mathcal Sm}_k$ the pull-back map
\[
H^i_{\et}(U,A) \longrightarrow H^i_{\et}(U \times \aone,A)
\]
is a bijection for every $i \geq 0$.
\end{defn}

Combining  \cite[\S 2 Proposition 1.26]{MV}, {\em ibid.} \S 2 Theorem 1.34, and {\em ibid.} \S 2 Proposition 3.19, we observe that if $A$ is a Nisnevich (resp. \'etale) sheaf of abelian groups, then $A$ is strictly $\aone$-invariant (resp. for the \'etale topology) if and only if $K(A,i)$ is $\aone$-local for every $i \geq 0$.

\begin{notation}
Suppose $A$ is an \'etale sheaf of abelian groups.  We set
\[
K_{\et}(A,i) := \alpha_*K(A,i)^f
\]
where $K(A,i)^f$ is an \'etale simplicially fibrant replacement of $K(A,i)$.
\end{notation}

The proof of the following result is essentially identical to the proof of Lemma \ref{lem:comparison1}.

\begin{lem}
\label{lem:comparisonhigher}
Suppose $A$ is an \'etale sheaf of abelian groups, then $A$ is strictly $\aone$-invariant in the \'etale topology if and only if $K_{\et}(A,i)$ is $\aone$-local.  Thus, if $A$ is strictly $\aone$-invariant in the \'etale topology, for every $U \in {\mathcal Sm}_k$ the canonical maps
\[
[\Sigma^j \wedge U_+,K_{\et}(A,i)]_{\aone} \longrightarrow H^{i-j}_{\et}(U,A)
\]
induced by adjunction are bijections for $0 \leq j \leq i$.
\end{lem}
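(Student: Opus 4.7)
The plan is to follow essentially verbatim the structure of the proof of Lemma \ref{lem:comparison1}, replacing the classifying space $BG$ and the statement about strong $\aone$-invariance in a single cohomological degree by the Eilenberg--MacLane space $K(A,i)$ and the statement about strict $\aone$-invariance in all degrees.  First I would note the basic dictionary (\cite[\S 2 Propositions 1.26, 3.19 and Theorem 1.34]{MV}) that identifies strict $\aone$-invariance of $A$ in the \'etale topology with $\aone$-locality (in the \'etale topology) of the Eilenberg--MacLane spaces $K(A,i)$ for all $i\geq 0$.  Once this is in place, the forward implication is immediate: if $K(A,i)$ is $\aone$-local in the \'etale topology, then any \'etale simplicially fibrant replacement $K(A,i)^f$ is also \'etale $\aone$-local, and Lemma \ref{lem:aonefunctoriality} asserts that ${\bf R}\alpha_*$ carries \'etale $\aone$-local objects to $\aone$-local objects, so $K_{\et}(A,i) = {\bf R}\alpha_* K(A,i)^f$ is $\aone$-local.

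For the converse, I would argue exactly as in Lemma \ref{lem:comparison1}.  Assuming $K_{\et}(A,i)$ is $\aone$-local, for any pointed space $({\mathcal X},x)$ the canonical map
\[
[({\mathcal X},x),(K_{\et}(A,i),\ast)]_s \longrightarrow [({\mathcal X},x),(K_{\et}(A,i),\ast)]_{\aone}
\]
is a bijection.  Moreover, the $\alpha^*\dashv{\bf R}\alpha_*$ adjunction gives a natural bijection $Hom_{\hspet}(({\mathcal X},x),(K(A,i),\ast))\cong[({\mathcal X},x),(K_{\et}(A,i),\ast)]_s$, and the right-hand side is identified with $H^i_{\et}({\mathcal X},A)$ by the standard MV result.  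Specializing to ${\mathcal X}=U\in\Sm_k$ gives $[U,K_{\et}(A,i)]_{\aone}\cong H^i_{\et}(U,A)$, so the fact that pullback along $U\times\aone\to U$ induces a bijection on $\aone$-homotopy classes forces strict $\aone$-invariance.

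For the final displayed isomorphism, I would combine the preceding identification with the simplicial suspension--loop adjunction in pointed spaces.  Since $K_{\et}(A,i)$ is $\aone$-local, we have
\[
[\Sigma^j_s\wedge U_+,K_{\et}(A,i)]_{\aone} \;\cong\; [U_+,\Omega^j_s K_{\et}(A,i)]_{\aone},
\]
and $\Omega^j_s K_{\et}(A,i) \simeq K_{\et}(A,i-j)$ for $0\leq j\leq i$ (one checks this by deriving the same identification on the \'etale side and applying ${\bf R}\alpha_*$, which commutes with homotopy limits, hence with $\Omega^j_s$); then the target is identified with $H^{i-j}_{\et}(U,A)$ by the case $j=0$ applied to the index $i-j$.

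There is essentially no serious obstacle here: once Lemma \ref{lem:aonefunctoriality} and the MV identification of $H^i_{\et}$ with homotopy classes into $K(A,i)$ are in hand, everything reduces to formal manipulations with the $\alpha^*\dashv{\bf R}\alpha_*$ adjunction and suspension/loop adjunction.  The only point requiring mild care is the compatibility of ${\bf R}\alpha_*$ with $\Omega^j_s$, which one can either verify directly or circumvent entirely by quoting the identification $[U,K_{\et}(A,i-j)]_{\aone}\cong H^{i-j}_{\et}(U,A)$ and the representability of higher cohomology by suspensions of $U_+$ mapping into $K_{\et}(A,i)$.
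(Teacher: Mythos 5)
Your proposal is correct and matches the paper's approach: the paper simply records that the proof is ``essentially identical'' to that of Lemma \ref{lem:comparison1}, and you carry out exactly that replacement of $BG$/strong $\aone$-invariance by $K(A,i)$/strict $\aone$-invariance, together with the same use of Lemma \ref{lem:aonefunctoriality} and the $\alpha^*\dashv{\bf R}\alpha_*$ adjunction. The extra detail you supply about $\Omega^j_s K_{\et}(A,i)\simeq K_{\et}(A,i-j)$ is the correct way to unwind the final displayed bijection, which the paper's one-line proof leaves implicit.
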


\begin{thm}[{\cite[Theorem 4.47]{MField}}]
\label{thm:strongabelianimpliesstrict}
If $A$ is a strongly $\aone$-invariant sheaf of abelian groups, then $A$ is strictly $\aone$-invariant.
\end{thm}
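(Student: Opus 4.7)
The plan is to recognize that the theorem is the assertion that for a strongly $\aone$-invariant sheaf of abelian groups $A$, \emph{all} the Eilenberg--MacLane spaces $K(A,n)$ are $\aone$-local -- equivalently, that the hypothesis of $\aone$-invariance in cohomological degrees $0$ and $1$ propagates to every higher degree. By the characterization recalled in the paragraph just before Lemma~\ref{lem:comparisonhigher}, this is exactly what strict $\aone$-invariance of $A$ means. The base cases $n=0,1$ are immediate tautologies from the definition of strong $\aone$-invariance.

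One natural first attempt at the inductive step is the path--loop fibration sequence $K(A,n) \to PK(A,n+1) \to K(A,n+1)$, in which the total space is simplicially contractible. If one could show that $\aone$-localization is compatible enough with such fibrations (e.g.\ that $\Omega^1_s L_{\aone} K(A,n+1) \simeq L_{\aone}K(A,n)$, taking into account the simple $\aone$-connectivity of the intermediate Postnikov stages), then $\aone$-locality would deloop from $n$ to $n+1$ and the induction would close. However, this compatibility is not formal: the difference between $\pi_i^s$ and $\pi_i^{\aone}$ on the putative delooping is precisely what needs to be controlled, and verifying it requires additional structural input.

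The main obstacle is that passing from $\aone$-invariance in degrees $0$ and $1$ to $\aone$-invariance in degrees $\geq 2$ is a genuine theorem, not a consequence of abstract nonsense. The strategy is to exploit additional structure specific to strongly $\aone$-invariant sheaves: one shows that such a sheaf $A$ admits a canonical Gersten--Rost--Schmid-style resolution on each smooth $k$-scheme $U$ by flasque sheaves supported on points of $U$, with sections expressed via ``contractions'' $A_{-i}$ evaluated at residue fields of points and with differentials given by residue maps attached to valuations. Strong $\aone$-invariance of $A$ translates, via the contraction formalism, into the degree-zero identity $A(F) = A(\aone_F)$ for finitely generated field extensions $F/k$, and the corresponding statement for each $A_{-i}$. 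These identities imply that the Gersten--Rost--Schmid resolution of $U\times\aone$ maps quasi-isomorphically to that of $U$, whence $H^n_{Nis}(U,A) \isomto H^n_{Nis}(U \times \aone,A)$ for all $n$ by a diagram chase.

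The genuinely hard part is therefore the construction of the Rost--Schmid-type resolution together with the compatibility of its terms with strong $\aone$-invariance; once this is in place, the theorem itself drops out. Everything else -- the reduction to Eilenberg--MacLane spaces, the induction on $n$, and the propagation from degree $0$ to higher degrees once the resolution is available -- is essentially formal.
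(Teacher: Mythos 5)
The paper does not prove this statement itself --- it quotes it as a black box from \cite[Theorem~3.25]{MField} --- so there is no ``paper's own proof'' to compare against. Your sketch is, however, a faithful high-level outline of Morel's actual argument in that reference: the reformulation in terms of $\aone$-locality of all the $K(A,n)$, the observation that naive delooping along the path--loop fibration does not close the induction, and the identification of the real content as the construction of the Rost--Schmid complex (whose terms involve the contractions $A_{-i}$ at points and whose differentials are residue maps) together with the verification that it is a flasque resolution computing Nisnevich cohomology and that strong $\aone$-invariance of $A$ and all its contractions makes the resolution of $U\times\aone$ quasi-isomorphic to that of $U$. The one ingredient your summary compresses somewhat is that establishing the ``unramifiedness'' of a strongly $\aone$-invariant sheaf --- i.e.\ that its sections over a smooth local scheme inject into its sections over the function field and are cut out by codimension-one conditions, which is what makes the Rost--Schmid complex well defined in the first place --- is itself a substantial preliminary step in \cite{MField}, not merely a formal consequence; but since you explicitly flag the construction and compatibility of the resolution as ``the genuinely hard part,'' this is a matter of emphasis rather than a gap.
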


Using this theorem, the proof of the next result is very similar to the proof of Corollary \ref{cor:etaleimpliesNisnevich}.

\begin{cor}
\label{cor:etaleimpliesNisnevichstrictly}
If $A$ is an \'etale sheaf of groups that is strictly $\aone$-invariant for the \'etale topology, then the Nisnevich sheaf underlying $A$ is strictly $\aone$-invariant.
\end{cor}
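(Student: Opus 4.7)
The plan is to imitate the proof of Corollary \ref{cor:etaleimpliesNisnevich}, with the étale classifying space $B_{\et}G$ replaced by the pushforward Eilenberg--MacLane space $K_{\et}(A,i)$, and the final invocation of Theorem \ref{thm:strongaoneinvaranceofpi1} supplemented by the bootstrap from strong to strict $\aone$-invariance provided by Theorem \ref{thm:strongabelianimpliesstrict}. Concretely, the idea is to exhibit the Nisnevich sheaf underlying $A$ as an $\aone$-homotopy sheaf of an $\aone$-local space, and then to quote that $\aone$-homotopy sheaves of abelian groups are automatically strictly $\aone$-invariant.

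The steps I would carry out are as follows. First, by Lemma \ref{lem:comparisonhigher}, the hypothesis that $A$ is strictly $\aone$-invariant in the étale topology implies that $K_{\et}(A,i)$ is $\aone$-local for every $i\geq 0$; fix some $i\geq 2$. Second, $\aone$-locality of $K_{\et}(A,i)$ yields that the canonical map
\[
\pi_i^{s}(K_{\et}(A,i)) \longrightarrow \pi_i^{\aone}(K_{\et}(A,i))
\]
is an isomorphism of Nisnevich sheaves. Third, I would identify $\pi_i^{s}(K_{\et}(A,i))$ with the Nisnevich sheaf underlying $A$: by adjunction between $\alpha^{*}$ and $\mathbf{R}\alpha_{*}$ together with Lemma \ref{lem:comparisonhigher}, the presheaf $U\mapsto [\Sigma_{s}^{i}U_{+},K_{\et}(A,i)]_{s}$ on $\Sm_k$ is canonically identified with $U\mapsto H^{0}_{\et}(U,A)=A(U)$, so its Nisnevich sheafification is exactly $A$. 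Fourth, by Theorem \ref{thm:strongaoneinvaranceofpi1} the sheaf $\pi_i^{\aone}(K_{\et}(A,i))$ is strongly $\aone$-invariant, and since $i\geq 2$ it is abelian; hence by Theorem \ref{thm:strongabelianimpliesstrict} it is strictly $\aone$-invariant. Combining the three isomorphisms, the Nisnevich sheaf underlying $A$ is strictly $\aone$-invariant, which is the desired conclusion.

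The main obstacle is step three, the identification $\pi_i^{s}(K_{\et}(A,i))\cong A$ as Nisnevich sheaves, since this is where the interplay between the two topologies intervenes and where one must unwind the definition of $K_{\et}(A,i)$ as a derived pushforward; all other ingredients are formal consequences of the preceding lemmas and theorems. As a sanity check (and in fact a shorter alternative proof), one notes that strict $\aone$-invariance in the étale topology trivially implies strong $\aone$-invariance in the étale topology, so Corollary \ref{cor:etaleimpliesNisnevich} already yields strong $\aone$-invariance of the underlying Nisnevich sheaf, and then Theorem \ref{thm:strongabelianimpliesstrict} applied to the (abelian) sheaf $A$ upgrades this directly to strict $\aone$-invariance.
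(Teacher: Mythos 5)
Your main argument is essentially the paper's own: the paper says only that the proof is "very similar to the proof of Corollary \ref{cor:etaleimpliesNisnevich}," and the intended argument is exactly what you carry out — replace $B_{\et}G$ by $K_{\et}(A,i)$, use Lemma \ref{lem:comparisonhigher} in place of Lemma \ref{lem:comparison1} to get $\aone$-locality and the identification $\pi_i^s(K_{\et}(A,i))\cong A$, apply Theorem \ref{thm:strongaoneinvaranceofpi1} to conclude strong $\aone$-invariance, and then upgrade via Theorem \ref{thm:strongabelianimpliesstrict}. Your sketch of step three is also the right one: adjunction plus $\aone$-locality of $K_{\et}(A,i)$ identifies the presheaf $U\mapsto[\Sigma_s^i\wedge U_+,K_{\et}(A,i)]_s$ with $U\mapsto H^0_{\et}(U,A)$, and its Nisnevich sheafification is the underlying Nisnevich sheaf of $A$, so this is not really an "obstacle."

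The alternative you close with is worth keeping in mind, and is arguably the cleaner route: since strict $\aone$-invariance in the \'etale topology trivially implies strong $\aone$-invariance in the \'etale topology, Corollary \ref{cor:etaleimpliesNisnevich} already gives strong $\aone$-invariance of the underlying Nisnevich sheaf, and then Theorem \ref{thm:strongabelianimpliesstrict} (applicable because $A$ is abelian) upgrades this to strict. This avoids any direct manipulation of Eilenberg--MacLane spaces and replaces the verification of the identification $\pi_i^s(K_{\et}(A,i))\cong A$ with a one-line reduction to the group case already proved, at the cost of making the parallel between the two corollaries slightly less visible.
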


\subsubsection*{Deducing $\aone$-invariance properties}
\begin{defn}
\label{defn:aoneinvariant}
Recall that a presheaf (resp. sheaf, or \'etale sheaf) of sets ${\mathcal S}$ is said to be {\em $\aone$-invariant}, if for every $U \in {\mathcal Sm}_k$, the canonical map
\[
{\mathcal S}(U) \longrightarrow {\mathcal S}(U \times \aone)
\]
induced by pull-back along the projection $U \times \aone \to U$ is a bijection.
\end{defn}

The following result gives a way to construct \'etale sheaves of abelian groups that are strictly $\aone$-invariant in the \'etale topology.

\begin{lem}[{\em cf.} {\cite[\S 3.4]{VCohTh}}]
\label{lem:etalehomotopyinvarianceandtransfers}
Let $k$ be a field having characteristic exponent $p$, and suppose $A$ is an $\aone$-invariant \'etale sheaf of $\Z[1/p]$-modules with transfers (in the sense of \textup{\cite[\S 6 p. 39]{MVW}}), then $A$ is strictly $\aone$-invariant for the \'etale topology.
\end{lem}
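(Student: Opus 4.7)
The plan is to reduce to the Nisnevich version of strict $\aone$-invariance (which we already have in hand via Theorem \ref{thm:strongabelianimpliesstrict} or, more directly, via Voevodsky's theorem on presheaves with transfers) and then transport the conclusion across the topologies using a comparison theorem that holds precisely after inverting the characteristic exponent $p$.

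First I would regard $A$ simultaneously as an \'etale sheaf with transfers and, via $\alpha_*$, as a Nisnevich sheaf with transfers on $\Sm_k$. The property of being $\aone$-invariant is a statement about sections over objects of $\Sm_k$ and is therefore insensitive to the topology, so the Nisnevich sheaf underlying $A$ is again an $\aone$-invariant sheaf with transfers. By Voevodsky's foundational theorem for presheaves with transfers (\cite[Theorem 24.1]{MVW}), any such sheaf has $\aone$-invariant Nisnevich cohomology, i.e., $A$ is strictly $\aone$-invariant for the Nisnevich topology.

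Next, I would invoke the comparison between Nisnevich and \'etale cohomology for \'etale sheaves of $\Z[1/p]$-modules with transfers: Voevodsky proves (this is essentially the content of \cite[\S 3.4]{VCohTh}; see also \cite[Theorem 9.25, Proposition 14.23]{MVW}) that under these hypotheses the canonical morphism
\[
H^i_{Nis}(X,A) \longrightarrow H^i_{\et}(X,A)
\]
is an isomorphism for every $X \in \Sm_k$ and every $i \geq 0$. The hypothesis that $p$ be invertible in the coefficients is essential here, as it neutralizes Artin--Schreier phenomena and wild ramification that obstruct the comparison in positive characteristic. Combining this isomorphism with the Nisnevich strict $\aone$-invariance established in the first step immediately yields that $H^i_{\et}(-,A)$ is $\aone$-invariant, i.e., $A$ is strictly $\aone$-invariant in the \'etale topology.

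The only nontrivial ingredient is the comparison isomorphism between Nisnevich and \'etale cohomology in the second step; the rest is formal. This input is the deep content of the lemma, and it rests on the analysis of the cohomology of strictly Henselian local rings for sheaves with transfers after inverting $p$. Since we are quoting it from \cite{VCohTh}, the present argument is really a packaging of the two steps above.
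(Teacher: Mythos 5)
There is a genuine gap at the crux of your argument. The comparison isomorphism $H^i_{Nis}(X,A)\isomto H^i_{\et}(X,A)$ that you invoke for all \'etale sheaves of $\Z[1/p]$-modules with transfers is simply \emph{false} outside the rational case. For instance, take $A=\mu_{\ell}$ with $\ell$ a prime different from $p$: this is a locally constant \'etale sheaf with transfers, hence homotopy invariant, but $H^1_{Nis}(\Spec k,\mu_{\ell})=0$ (the Nisnevich topology on a field has no nontrivial cohomology), while $H^1_{\et}(\Spec k,\mu_{\ell})\cong k^{*}/(k^{*})^{\ell}$, which is typically nonzero. The citation \cite[Proposition 14.23]{MVW} is about sheaves of $\Q$-vector spaces, where the $\et$/Nisnevich comparison does hold; it does not extend to torsion coefficients even after inverting $p$, and \cite[Theorem 9.25]{MVW} does not supply such an extension either. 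Precisely because of this, "transport from the Nisnevich case" cannot be the whole argument: the \'etale cohomology sees Galois-theoretic phenomena invisible to the Nisnevich topology.

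The paper's proof circumvents this by a d\'evissage. One first observes that strict $\aone$-invariance in the \'etale topology is stable under extensions via the long exact sequence, and then uses the four-term exact sequence
\[
0 \longrightarrow A_{tors} \longrightarrow A \longrightarrow A\tensor\Q \longrightarrow A\tensor\Q/\Z \longrightarrow 0
\]
to reduce to two cases: sheaves of $\Q$-vector spaces, where your comparison argument is valid (this is the case \cite[Proposition 14.23]{MVW} actually covers, and one concludes via \cite[Theorem 13.8]{MVW}); and sheaves of prime-to-$p$ torsion, which are handled completely differently. In the torsion case one does \emph{not} compare with Nisnevich cohomology; instead, the Suslin rigidity theorem (\cite[Theorem 7.20]{MVW}) shows that such a sheaf is locally constant for the \'etale topology, and then $\aone$-invariance of its \'etale cohomology is the classical smooth base change / homotopy invariance statement \cite[Expos\'e XV Corollaire 2.2]{SGA43}. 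Your plan collapses the two cases into one and relies on a comparison that only holds in the first; to repair it you would need to separate out the torsion part and treat it by rigidity as the paper does.
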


\begin{proof}
Given a short exact sequence of \'etale sheaves of $\Z[1/p]$-modules
\[
0 \longrightarrow A' \longrightarrow A \longrightarrow A'' \longrightarrow 0,
\]
the associated long exact sequence in cohomology shows that if any two of the three sheaves are strictly $\aone$-invariant in the \'etale topology, then the third must be as well.  Using the exact sequence of \'etale sheaves
\[
0 \longrightarrow A_{tors} \longrightarrow A \longrightarrow A \tensor \Q \longrightarrow A \tensor \Q/\Z \longrightarrow 0.
\]
one reduces to treating the cases where $A$ is an \'etale sheaf of $\Q$-vector spaces or, using the assumptions, $A$ is an \'etale sheaf of torsion prime to $p$.  In the first case, one reduces to \cite[Theorem 13.8]{MVW} by using the fact the Nisnevich and \'etale cohomology coincide (see \cite[Proposition 14.23]{MVW}).  In the second case, the Suslin rigidity theorem (see \cite[Theorem 7.20]{MVW}) shows that $A$ is in fact a locally constant \'etale sheaf of groups and one concludes by applying \cite[Expos\'e XV Corollaire 2.2]{SGA43}.
\end{proof}

\subsubsection*{Proof of Proposition \ref{prop:brauergroup}}
Suppose $k$ is a field having characteristic exponent $p$.  Let $\gm'$ denote the \'etale sheaf whose sections over $U \in {\mathcal Sm}_k$ are given by
\[
U \longmapsto \O^*(U) \tensor_{\Z} \Z[1/p].
\]
We refer to $\gm'$ as the multiplicative group with characteristic exponent inverted.  This \'etale sheaf of groups is just $\gm$ if $k$ has characteristic $0$.  By \cite[p. 48 Proposition 1.4]{GrothendieckDix} we know that there is a canonical injection $Br(X) \hookrightarrow H^2_{\et}(X,\gm)$.  We can conclude that the map $Br(X) \to H^2_{\et}(X,\gm')$ induces an injection on $\ell$-torsion subgroups for $\ell$ prime to $p$.

\begin{prop}
\label{prop:strictfactor}
Let $k$ be a field having characteristic exponent $p$, suppose $X \in {\mathcal Sm}_k$ is $\aone$-connected, and $x \in X(k)$.  The structure morphism $X \to \Spec k$ induces an isomorphism
\[
H^2_{\et}(\Spec k,\gm') \longrightarrow H^2_{\et}(X,\gm').
\]
In particular, if $k$ is separably closed, then $Br(X)$ is $p$-torsion.
\end{prop}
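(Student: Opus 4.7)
The plan is to adapt the proof of Proposition \ref{prop:strongfactor} to the strictly $\aone$-invariant abelian setting, replacing $B_{\et}G$ with the \'etale Eilenberg--MacLane space $K_{\et}(\gm',2)$. First, I would apply Lemma \ref{lem:etalehomotopyinvarianceandtransfers} to conclude that $\gm'$ is strictly $\aone$-invariant in the \'etale topology: $\gm$ is manifestly $\aone$-invariant and carries transfers via norms, and tensoring with $\Z[1/p]$ preserves both properties. Lemma \ref{lem:comparisonhigher} then yields the identification $H^2_{\et}(X,\gm') = [X, K_{\et}(\gm',2)]_{\aone}$.

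Imitating the argument of Proposition \ref{prop:strongfactor}, any $\tau \in H^2_{\et}(X,\gm')$ represented by $\tau: X \to K_{\et}(\gm',2)$ produces, upon post-composition with $K_{\et}(\gm',2) \to \pi_0^{\aone}(K_{\et}(\gm',2))$ and use of $\pi_0^{\aone}(X) = \Spec k$, an element $\rho(\tau) \in H^2_{\et}(\Spec k,\gm')$ that equals $x^*(\tau)$; in particular $\rho \circ \pi^* = \operatorname{id}$, so $\pi^*$ is a split injection. To establish surjectivity, I would show $\ker \rho = 0$ by identifying the map $K(\gm',2) \to K_{\et}(\gm',2)$ with the inclusion of the $\aone$-connected component of the basepoint. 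A direct computation of $\aone$-homotopy sheaves using Lemma \ref{lem:comparisonhigher} and Corollary \ref{cor:etaleimpliesNisnevichstrictly} shows that both $K(\gm',2)$ and the basepoint component of $K_{\et}(\gm',2)$ are $\aone$-local with $\pi_2^{\aone} = \gm'$ and $\pi_1^{\aone} = 0$ (since Pic vanishes on Henselian local rings); an $\aone$-Whitehead argument for $\aone$-simply connected spaces then yields the identification, and hence $\ker \rho$ is the image of $H^2_{Nis}(X,\gm')$ in $H^2_{\et}(X,\gm')$.

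The main technical input is the vanishing $H^2_{Nis}(X,\gm') = 0$ for smooth $X$, which I would deduce from the flasque/Gersten resolution
\[
0 \to \gm \to i_*\, k(X)^{\times} \to \bigoplus_{\xi \in X^{(1)}} i_{\xi,*}\,\Z \to 0
\]
on the small Nisnevich site: the middle term, pushforward from the generic point, and the right term, a direct sum of constant sheaves $\Z$ on irreducible closed subschemes, both have vanishing higher Nisnevich cohomology, so the long exact sequence gives $H^i_{Nis}(X,\gm) = 0$ for $i \geq 2$; tensoring with $\Z[1/p]$ preserves this. Combining yields $\ker \rho = 0$, so $\pi^*$ is an isomorphism. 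The \emph{in particular} statement follows: when $k$ is separably closed, $H^2_{\et}(\Spec k,\gm') = \operatorname{Br}(k) \otimes \Z[1/p] = 0$, forcing $H^2_{\et}(X,\gm') = 0$; since $\operatorname{Br}(X) \hookrightarrow H^2_{\et}(X,\gm)$ is torsion while the kernel of $H^2_{\et}(X,\gm) \to H^2_{\et}(X,\gm')$ consists of $p$-primary torsion, $\operatorname{Br}(X)$ must be $p$-primary. The principal obstacle is the third step---rigorously identifying $K(\gm',2) \to K_{\et}(\gm',2)$ with the inclusion of the basepoint component via the Nisnevich-versus-\'etale comparison of $\aone$-homotopy sheaves of Eilenberg--MacLane objects.
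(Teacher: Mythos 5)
Your proposal is correct and follows the same approach as the paper's very terse sketch: reduce to strict $\aone$-invariance of $\gm'$ in the \'etale topology via Lemma \ref{lem:etalehomotopyinvarianceandtransfers}, identify $H^2_{\et}(X,\gm')$ with $[X,K_{\et}(\gm',2)]_{\aone}$ via Lemma \ref{lem:comparisonhigher}, compute the $\aone$-homotopy sheaves of $K_{\et}(\gm',2)$ (with $\pi_1^{\aone}$ killed by Hilbert's Theorem 90), and identify the basepoint component with $K(\gm',2)$ so that $\ker\rho$ is the image of $H^2_{Nis}(X,\gm')$. You make explicit the final step that the paper's outline leaves unstated but which is genuinely needed for surjectivity of $\pi^*$, namely $H^2_{Nis}(X,\gm')=0$ for smooth $X$, supplying it via the divisor exact sequence (equivalently, one could invoke that $\gm'$ is a homotopy-invariant sheaf with transfers, so its Nisnevich cohomology agrees with Zariski cohomology, where the divisor sequence argument is standard).
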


\begin{proof}
In outline, this proof is essentially identical to the proof of Proposition \ref{prop:etalefundamentalgrouptrivial} via Proposition \ref{prop:strongfactor}.  In this case, we use Lemma \ref{lem:comparisonhigher} to reduce to showing that $\gm'$ is strictly $\aone$-invariant in the \'etale topology; this latter fact follows from Lemma \ref{lem:etalehomotopyinvarianceandtransfers}.  Indeed, $\gm'$ is an \'etale sheaf of $\Z[1/p]$-modules, has transfers given by the ``norm" map, and is $\aone$-invariant since $\gm$ itself is $\aone$-invariant.  Then $\pi_i^{\aone}(K_{\et}(\gm',2))$ is $0$ for $i \geq 3$, and is the Nisnevich sheaf associated with the presheaf $U \mapsto H^{2-i}_{\et}(U,\gm')$ for $0 \leq i \leq 2$.  We can use Grothendieck's version of Hilbert's Theorem 90 to show that $\pi_1^{\aone}(K_{\et}(\gm',2))$ is trivial.
\end{proof}

\begin{ex}
Suppose $G$ is a simply connected, semi-simple algebraic group over a field $k$ having characteristic exponent $p$.  Let $G^+(k)$ denote the subgroup of $G(k)$ generated by the images of homomorphisms from the additive group $\ga(k)$.  The quotient $G(k)/G^+(k)$ is called the Whitehead group of $G$, often denoted $W(k,G)$.  The Kneser-Tits problem asks for which groups $W(k,G) = 1$.  Slightly more generally, \cite[Question 1.1]{PGille} asks whether one can characterize groups such that $W(L,G)$ is trivial for every extension $L/k$; such groups are called $W$-trivial.  If $G$ is $W$-trivial, it is $\aone$-chain connected and thus $\aone$-connected by Proposition \ref{prop:aoneconnected}.  If $G$ is in addition {\em split} then \cite[Proposition 3.1]{PGille} shows that $G$ is $W$-trivial and hence $\aone$-connected.  Thus, Proposition \ref{prop:strictfactor} shows that the Brauer group of a $W$-trivial group is $p$-torsion.  S. Gille uses related ideas to study the Brauer group of general simply connected, semi-simple algebraic groups $G$ (see \cite{SGille}).
\end{ex}

\subsection{Algebraic groups and strong $\aone$-invariance}
We now study the subcategory of ${\mathcal Gr}^{\aone}_k$ consisting of representable objects, i.e., smooth group schemes having finite type over $k$.  Throughout this section, if $G$ is a smooth $k$-group scheme, we denote by $G^0$ the connected component of $G$ containing the identity element in the sense of algebraic groups.

\begin{lem}
\label{lem:linearalgebraiccase}
Let $k$ be a perfect field, and suppose $G$ is a smooth affine algebraic $k$-group.  The sheaf of groups $G$ is $\aone$-invariant if and only if $G^0$ is a $k$-torus.
\end{lem}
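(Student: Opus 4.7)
The plan is to handle the two implications separately, using the structure theory of smooth affine algebraic groups over the perfect field $k$, together with two complementary observations: tori are $\aone$-invariant (a $k$-torus $T$ becomes $\gm^n$ over a splitting field, and $\O^*(U\times\aone) = \O^*(U)$ for $U$ reduced), while a closed immersion $\ga \hookrightarrow G$ always destroys $\aone$-invariance.

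For the sufficiency direction, I would assume $G^0$ is a $k$-torus and consider an arbitrary morphism $\phi \colon U\times\aone \to G$ with $U \in \Sm_k$. Because $k$ is perfect, $F := G/G^0$ is finite \'etale, and the composite $U\times\aone \to F$ factors through $U$ since $\aone$ is geometrically connected (so the sheaf of connected components of $U\times\aone$ agrees with that of $U$). Multiplying $\phi$ by the inverse of its restriction along the zero section $U \hookrightarrow U \times \aone$---which is constant in the $\aone$-direction---would reduce me to the case where $\phi$ factors through $G^0$, and the toral observation above then forces $\phi$ itself to be pulled back from $U$.

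For the necessity direction, I would argue contrapositively: if $G^0$ is not a $k$-torus, I want to exhibit $U \in \Sm_k$ and an element of $G(U \times \aone)$ not pulled back from $U$. Since being a torus can be checked after extension to $\bar k$ (for a smooth connected commutative affine group, ``toral'' is equivalent to the absence of unipotent $\bar k$-points), $G^0_{\bar k}$ is also not a torus. Classical structure theory then produces a closed subgroup isomorphic to $\ga_{\bar k}$ inside $G^0_{\bar k}$---either as a step in a central composition series of the (non-trivial) unipotent radical, or as a root subgroup when $G^0_{\bar k}$ is reductive and non-toral. This closed subscheme, being cut out by finitely many equations involving finitely many coefficients, would descend to a closed immersion $\ga_L \hookrightarrow G_L$ for some finite separable extension $L/k$. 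Taking $U = \Spec L \in \Sm_k$, the resulting morphism $U \times \aone \to G$ is visibly non-constant in the $\aone$-direction, contradicting $\aone$-invariance.

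The main obstacle will be the descent step in the necessity direction: one must promote the geometric existence of a $\ga$-subgroup in $G^0_{\bar k}$---a statement proved via Lie-theoretic structure theory over algebraically closed fields---to a closed immersion over a finite separable extension of $k$, so that the resulting morphism can be tested against the sheaf-theoretic $\aone$-invariance hypothesis on $\Sm_k$. The sufficiency direction is essentially bookkeeping once one has the $\aone$-rigidity of tori and of finite \'etale group schemes already noted in Example \ref{ex:aonerigid}.
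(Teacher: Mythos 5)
Your proof is correct, but it takes a genuinely different route from the paper's. The paper runs a d\'evissage entirely over $k$: it quotients to the finite component group $\Gamma$ (strongly $\aone$-invariant), then inside $G^0$ peels off the unipotent radical $R_u$ using Lazard's theorem (valid because $k$ is perfect, so $R_u$ is a smooth $k$-group admitting a filtration with $\ga$-subquotients {\em over $k$}), then peels off the radical torus $R(G)$ using \'etale descent of $\aone$-rigidity, and only in the final semisimple step does it pass to a finite separable extension to exhibit a root subgroup. Your necessity argument instead collapses all of this into a single step: go straight to $\bar k$, extract a closed $\ga_{\bar k}$, and descend the resulting morphism $\aone_{\bar k} \to G_{\bar k}$ to a finite subextension $L$. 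This is more uniform but shifts the burden onto the descent step, and your phrasing ``descend to a closed immersion $\ga_L \hookrightarrow G_L$'' is a bit stronger than needed (and slightly delicate, since a priori only a {\em form} of $\ga$ descends as a subgroup): what actually matters is only that the morphism of schemes $\aone_{\bar k}\to G_{\bar k}$, being defined by finitely many polynomials with coefficients in some finite subextension $L \subset \bar k$, descends to a nonconstant morphism $\aone_L \to G_L$. Perfectness of $k$ then ensures $L/k$ is separable so that $\Spec L \in \Sm_k$ is a legitimate test object, which is exactly where you flagged the obstacle. Your sufficiency direction, multiplying by the inverse of the value along the zero section to land in $G^0$, is essentially the explicit form of the paper's exact-sequence d\'evissage, and has the small advantage of avoiding any appeal to {\em strong} $\aone$-invariance of $\Gamma$: plain $\aone$-invariance of $\Gamma$ (i.e.\ $\aone$-rigidity of a finite \'etale $k$-scheme) and $\aone$-invariance of $G^0$ suffice. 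In short: the paper trades a single descent argument for a longer but $k$-rational d\'evissage, and your version trades that d\'evissage for a single descent step whose justification you correctly identify as the crux.
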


\begin{proof}
We use the following {\em d\'evissage}.  There is an exact sequence of algebraic groups
\[
1 \longrightarrow G^0 \longrightarrow G \longrightarrow \Gamma \longrightarrow 1
\]
where $\Gamma$ is the (finite) group of connected components.  Since the group $\Gamma$ is strongly $\aone$-invariant by \cite[\S 4 Proposition 3.5]{MV}, proving the statement for $G$ is equivalent to proving it for $G^0$.  Thus, we assume $G$ is connected.

As $k$ is perfect, the unipotent radical $R_u$ of $G$ is a smooth unipotent $k$-group scheme.  Therefore, $G$ fits into an exact sequence of the form
\[
1 \longrightarrow R_u \longrightarrow G \longrightarrow G^{red} \longrightarrow 1,
\]
where $G^{red}$ is a reductive $k$-group scheme.  Since $R_u$ is connected and smooth, and $k$ is perfect, by a theorem of Lazard \cite[Chapter IV \S 2.3.9]{DemazureGabriel} $R_u$ is split, i.e., admits an increasing sequence of normal subgroups with subquotients isomorphic to $\ga$.  Thus, if $R_u$ is non-trivial, it possesses a non-trivial group homomorphism from $\ga$.  Since $\ga$ is not $\aone$-invariant, it follows in this case that $R_u$ is not $\aone$-invariant either.  Thus, for $G$ to be $\aone$-invariant $R_u$ must be trivial, and we may assume $G$ is reductive.

If $G$ is reductive, we have an exact sequence of the form
\[
1 \longrightarrow R(G) \longrightarrow G \longrightarrow G^{ss} \longrightarrow 1,
\]
where $R(G)$ is a $k$-torus.  Now, since $R(G)$ is a $k$-torus, it splits over a finite separable extension $L/k$.  By \'etale descent, it follows that $R(G)$ is $\aone$-rigid.  Thus, $G$ is $\aone$-invariant if and only if $G^{ss}$ is $\aone$-invariant, so we can assume $G = G^{ss}$.

If $G$ is a (non-trivial) semi-simple group, then it splits over a finite separable extension $L/k$.  Passing to such an extension, we obtain non-trivial morphisms from $\aone_L$ (any root subgroup provides such a morphism), and thus $G$ is not $\aone$-invariant.
\end{proof}

\begin{prop}
\label{prop:commutativeinvariant}
Suppose $k$ is a perfect field, and assume $G$ is a smooth $k$-group scheme.  The sheaf $G$ is $\aone$-invariant if and only if $G^0$ is an extension of an abelian variety by a $k$-torus.
\end{prop}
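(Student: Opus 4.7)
The plan is to reduce this proposition to the affine case handled by Lemma \ref{lem:linearalgebraiccase} through two standard d\'evissages. First, using the exact sequence $1 \to G^0 \to G \to \Gamma \to 1$ with $\Gamma$ the finite \'etale group of components, and the strong $\aone$-invariance of $\Gamma$ from \cite[\S 4 Proposition 3.5]{MV}, a diagram chase on sections over $U$ and $U \times \aone$ shows that $G$ is $\aone$-invariant if and only if $G^0$ is. So we may assume $G = G^0$ is connected. Second, since $k$ is perfect, Chevalley's structure theorem yields a canonical short exact sequence
\[
1 \to H \to G \to A \to 1
\]
with $H$ a connected smooth affine $k$-group and $A$ an abelian $k$-variety.

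Before analyzing the two implications, I would record two consequences of the principle that any morphism from a complete $k$-variety to an affine $k$-group scheme is constant: applied to the conjugation action $A \to \mathrm{Aut}(H)$ it forces $H$ to be central in $G$; applied next to the commutator morphism $G \times G \to H$, which then descends to $A \times A \to H$, it forces $G$ to be commutative.

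For the ``only if'' direction, suppose $G$ is $\aone$-invariant. Any morphism $\phi \colon U \times \aone \to H$, viewed in $G$, equals $\phi_0 \circ p_U$ for some $\phi_0 \colon U \to G$ by hypothesis; but $\phi_0 = \phi|_{U \times 0}$ automatically lands in $H$ since $\phi$ does. Hence $H$ is $\aone$-invariant as a sheaf of groups, and Lemma \ref{lem:linearalgebraiccase} forces $H$ to be a $k$-torus.

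For the ``if'' direction, assume $H$ is a $k$-torus and take $\phi \colon U \times \aone \to G$. Composing with $G \to A$ and invoking $\aone$-rigidity of abelian varieties (Example \ref{ex:aonerigid}), the induced map $U \times \aone \to A$ is pulled back from $U$. Setting $\phi_0 = \phi|_{U \times 0}$ and using commutativity of $G$, the product $\psi = \phi \cdot (\phi_0 \circ p_U)^{-1}$ factors through $H$ and vanishes on $U \times 0$. Since $\gm(U \times \aone) = \gm(U)$ for smooth $U$ and \'etale descent extends this identity to arbitrary $k$-tori, we conclude $\psi$ is trivial, and hence $\phi = \phi_0 \circ p_U$, as required. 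The main obstacle is precisely this last step: forming the comparison $\psi$ requires both centrality of $H$ and commutativity of $G$, both furnished by the complete-to-affine rigidity principle above; without them the reduction to the case of a torus would not go through.
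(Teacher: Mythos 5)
Your broad strategy coincides with the paper's: pass to the connected component via the finite quotient $\Gamma = G/G^0$, apply Chevalley's theorem to obtain $1 \to H \to G^0 \to A \to 1$, and then invoke Lemma~\ref{lem:linearalgebraiccase} for the connected affine group $H$. Your handling of the Chevalley d\'evissage is in fact somewhat more hands-on than the paper's, which simply records that $A$ is strongly $\aone$-invariant and declares the transfer; you instead verify the transfer of $\aone$-invariance directly on sections.

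The commutativity preamble, however, is both flawed and unnecessary, and your closing claim that ``without them the reduction to the case of a torus would not go through'' is wrong. It is flawed because the conjugation action is a morphism $G \to \mathrm{Aut}(H)$, not $A \to \mathrm{Aut}(H)$; to descend it to $A = G/H$ you would need $H$ to act trivially on itself by conjugation, i.e.\ $H$ abelian, which is precisely what you are trying to conclude. (If you pass instead to $\mathrm{Out}(H)$ to get a genuine map out of $A$, triviality of that map only says $G$ acts by inner automorphisms of $H$, not that $H$ is central.) It is unnecessary because the difference $\psi = \phi \cdot (\phi_0 \circ p_U)^{-1}$ already lands in $H = \ker(\pi)$ for an arbitrary (possibly non-commutative) $G$: since $\pi$ is a group homomorphism, $\pi \circ \psi = (\pi\circ\phi)\cdot(\pi\circ\phi_0\circ p_U)^{-1}$, and this is the identity once one knows $\pi\circ\phi = (\pi\circ\phi_0)\circ p_U$ from the $\aone$-rigidity of $A$. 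Neither centrality of $H$ nor commutativity of $G$ is used anywhere; deleting that paragraph leaves a correct and complete proof.
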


\begin{proof}
By Chevalley's theorem \cite{Conrad}, there is a canonical extension of the form
\[
1 \longrightarrow G^{aff} \longrightarrow G \longrightarrow A \longrightarrow 1,
\]
where $G^{aff}$ is a normal, smooth closed affine algebraic group, and $A$ is an abelian variety.  Since $A$ is $\aone$-rigid ({\em cf.} Example \ref{ex:aonerigid}), and the underlying Nisnevich sheaf is flasque, we conclude that $A$ is strongly $\aone$-invariant.  Thus, proving the result for $G$ is equivalent to proving the result for $G^{aff}$.  Since $G^{aff}$ is a smooth affine algebraic $k$-group, we apply Lemma \ref{lem:linearalgebraiccase} to finish the proof.
\end{proof}

\begin{prop}
\label{prop:representablestronglyinvariant}
Assume $k$ is a field having characteristic $0$, and suppose $G$ is a smooth $k$-group scheme.  The \'etale sheaf $G$ is strongly $\aone$-invariant in the \'etale topology if and only if $G^0$ is an extension of an abelian variety by a $k$-torus.  If one of these equivalent conditions holds, then $G$ is strongly $\aone$-invariant in the Nisnevich topology as well.
\end{prop}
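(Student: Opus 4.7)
The plan is to handle the two implications separately: the ``only if'' direction reduces at once to Proposition \ref{prop:commutativeinvariant}, while the ``if'' direction is proved by a two-step d\'evissage using Lemma \ref{lem:etalehomotopyinvarianceandtransfers} together with standard exact sequence arguments. For the forward direction, I would argue that strong $\aone$-invariance of $G$ in the \'etale topology implies in particular that the pullback $G(U) \to G(U \times \aone)$ is a bijection for every $U \in \Sm_k$; since the representable sheaf associated with $G$ agrees with its \'etale sheafification on sections, the underlying Nisnevich sheaf $G$ is $\aone$-invariant, and Proposition \ref{prop:commutativeinvariant} forces $G^0$ to be an extension of an abelian variety by a $k$-torus.

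For the converse direction, write $1 \to T \to G^0 \to A \to 1$ with $T$ a $k$-torus and $A$ an abelian variety, and set $\Gamma := G/G^0$ (finite \'etale). I would treat the three building blocks separately in the \'etale topology. Since $k$ has characteristic $0$, the group $\Gamma$ is locally constant \'etale of order prime to the characteristic exponent, hence strongly $\aone$-invariant in the \'etale topology by \cite[Expos\'e XV Corollaire 2.2]{SGA43} (the same input used in Proposition \ref{prop:etalefundamentalgrouptrivial}). For the torus $T$, after reducing to $T = \gm$ by \'etale descent along a finite separable splitting field, the sheaf $\gm$ is $\aone$-invariant (being $\aone$-rigid), has transfers via the usual norm maps, and is automatically a sheaf of $\Z$-modules, so Lemma \ref{lem:etalehomotopyinvarianceandtransfers} delivers strict, hence strong, $\aone$-invariance in the \'etale topology. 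The identical line of reasoning applies to the abelian variety $A$, which is also $\aone$-rigid by Example \ref{ex:aonerigid} and admits canonical transfers via norms for finite flat morphisms of smooth schemes.

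To assemble the pieces, I would first combine $T$ and $A$ via the long exact sequence in \'etale cohomology attached to $1 \to T \to G^0 \to A \to 1$ and a degree-by-degree application of the 5-lemma, concluding strict (hence strong) $\aone$-invariance of $G^0$ in the \'etale topology. I would then invoke the six-term exact sequence of non-abelian \'etale cohomology for $1 \to G^0 \to G \to \Gamma \to 1$, together with a standard 5-lemma argument for pointed sets, to upgrade strong $\aone$-invariance from $G^0$ and $\Gamma$ to $G$ itself. The final Nisnevich assertion is then an immediate consequence of Corollary \ref{cor:etaleimpliesNisnevich}. The main obstacle I anticipate is verifying that the abelian variety $A$ admits the presheaf-with-transfers structure required by Lemma \ref{lem:etalehomotopyinvarianceandtransfers} in the sense of \cite{MVW}: norms of sections of smooth commutative group schemes along finite flat maps are well known, but packaging them into the additive functor on the category of finite correspondences used in \cite{MVW} warrants careful checking. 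A secondary subtlety is the non-abelian 5-lemma in the last assembly step, which needs a direct diagram chase because the outer cohomology sets are only pointed sets; fortunately this becomes routine once both $G^0$ and $\Gamma$ are known to be strongly $\aone$-invariant in the \'etale topology.
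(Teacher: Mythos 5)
Your forward direction is exactly the paper's: strong $\aone$-invariance in the \'etale topology forces $\aone$-invariance of the underlying Nisnevich sheaf (the degree-$0$ condition, which is just a statement about sections of the representable sheaf), and Proposition~\ref{prop:commutativeinvariant} then identifies $G^0$ as an extension of an abelian variety by a torus. The final sentence is the same citation of Corollary~\ref{cor:etaleimpliesNisnevich} in both arguments.

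Where you diverge is in the converse, and there the paper's route is shorter and sidesteps the two issues you flag. You split $G^0$ further as $1 \to T \to G^0 \to A \to 1$ and run separate transfers arguments for $T$ and $A$, followed by a five-lemma to recombine them; you then (rightly) worry about whether the abelian variety $A$ actually carries a presheaf-with-transfers structure in the precise sense of \cite{MVW}, and your reduction of the torus case to $\gm$ via ``\'etale descent along a splitting field'' is left unjustified -- strong $\aone$-invariance is not obviously something one descends from a finite separable base change without an argument (e.g., via the inclusion of $T$ as a direct summand of $R_{L/k}\gm^n$). The paper avoids both points by invoking Lemma~\ref{lem:transfers} (Orgogozo's result) once, for the whole commutative group $G^0$: every smooth commutative $k$-group scheme is canonically an \'etale sheaf with transfers, so $G^0$ itself -- being a semi-abelian variety, hence commutative -- directly satisfies the hypotheses of Lemma~\ref{lem:etalehomotopyinvarianceandtransfers}. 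No further devissage of $G^0$, no five-lemma for the $T$--$A$ extension, and no uncertainty about transfers on $A$.

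On the other hand, you are more explicit than the paper about the final assembly $1 \to G^0 \to G \to \Gamma \to 1$: the paper simply notes that finite groups are strongly $\aone$-invariant in the \'etale topology and leaves the non-abelian five-lemma argument implicit, whereas you correctly call out that one must chase the six-term exact sequence of pointed sets (with the usual twisting caveats). That step does require care since the outer terms are only pointed sets and $\Gamma$ need not be central, so acknowledging it is the right instinct. In short: your proof is essentially correct modulo the torus-descent and abelian-variety-transfers gaps, both of which are closed at a stroke by Lemma~\ref{lem:transfers}; if you replace your $T$-and-$A$ analysis by a single application of that lemma to $G^0$, your argument becomes the paper's proof with the assembly step spelled out.
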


\begin{proof}
By Proposition \ref{prop:commutativeinvariant}, we know that $G$ is $\aone$-invariant if and only if $G^0$ is an extension of an abelian variety by a $k$-torus.  In this case, applying Lemma \ref{lem:transfers}, we conclude that $G^0$ is an \'etale sheaf with transfers in the sense of \cite[\S 6 p. 39]{MVW}.  Then, since $k$ has characteristic $0$, we may apply Lemma \ref{lem:etalehomotopyinvarianceandtransfers} to conclude that $G^0$ is in fact strongly $\aone$-invariant in the \'etale topology.  Also, since $k$ has characteristic $0$, we know that finite groups are strongly $\aone$-invariant in the \'etale topology.  The last statement follows from the equivalences by applying Corollary \ref{cor:etaleimpliesNisnevich}.
\end{proof}

\begin{lem}[{\cite[Lemme 3.1.2]{Orgogozo}}]
\label{lem:transfers}
If $S$ is a smooth commutative $k$-group scheme, then the \'etale sheaf underlying $S$ can be equipped canonically with transfers (in the sense of \textup{\cite[\S 6 p. 39]{MVW}}).
\end{lem}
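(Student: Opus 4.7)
The plan is to extend $S$ canonically from a functor $\Sm_k^{\operatorname{op}}\to \mathrm{Ab}$ to an additive functor on the category $\mathrm{SmCor}_k$ of smooth schemes with finite correspondences, following the template of Suslin--Voevodsky (see \cite[\S 6]{MVW}). By bilinearity of correspondence composition, it suffices to construct a transfer $[Z]^{*}\colon S(Y)\to S(X)$ for each elementary finite correspondence, i.e., each integral closed subscheme $Z\subset X\times Y$ that is finite and surjective over a connected component of $X$, and then to verify additivity in $[Z]$ and compatibility with composition of correspondences.

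The key ingredient is a norm morphism. Write $p\colon Z\to X$ for the finite surjective projection and let $n$ be its generic degree. For $g\in S(Y) = \operatorname{Hom}_k(Y,S)$, restrict along $Z\hookrightarrow X\times Y\to Y$ to obtain $g_Z\colon Z\to S$, and set $[Z]^{*}(g):=\operatorname{Nm}_{Z/X}(g_Z)$, where the norm $\operatorname{Nm}_{Z/X}\colon \operatorname{Hom}_k(Z,S)\to \operatorname{Hom}_k(X,S)$ is defined as follows. The morphism $p$ is flat over a dense open $U\subset X$; over $U$ the $n$-th symmetric power $\operatorname{Sym}^{n}_{U}(Z_U)$ admits a canonical section $U\to \operatorname{Sym}^{n}_{U}(Z_U)$ classifying the universal degree-$n$ divisor, and $g_Z$ induces $\operatorname{Sym}^{n}(g_Z)\colon \operatorname{Sym}^{n}_{U}(Z_U)\to \operatorname{Sym}^{n}(S)$. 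Composing with the $n$-fold sum map $\operatorname{Sym}^{n}(S)\to S$, which is a morphism of schemes precisely because $S$ is commutative, produces a morphism $U\to S$, i.e., an element of $S(U)$.

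The technical heart of the argument is to extend this section across $X\setminus U$ to a morphism $X\to S$. Here one invokes the Weil extension theorem for smooth commutative $k$-group schemes: a rational map from a normal $k$-scheme to such a group scheme is defined on the complement of a closed subset of codimension $\geq 2$, and extends through a codimension-one point whenever no valuation-theoretic obstruction arises. Because the norm construction is compatible with finite flat base change, a normalization step and a local valuation analysis at the generic points of $X\setminus U$ yield the required extension, producing a well-defined $[Z]^{*}(g)\in S(X)$.

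The remaining verifications are largely formal. Additivity in the correspondence is built into the norm construction (the norm of a disjoint union is the sum of the norms); the construction depends only on the intrinsic group law of $S$, and hence is canonical; naturality with respect to étale base change $X'\to X$ follows from compatibility of symmetric powers with flat base change, and this gives the étale sheaf structure. The main obstacle, and the only serious technical step, is the composition formula $(\beta\circ\alpha)^{*}=\alpha^{*}\beta^{*}$; this reduces, via a generic-point analysis on the fiber product $X\times Y\times Y'$, to the transitivity of norms $\operatorname{Nm}_{Z_{2}/X}=\operatorname{Nm}_{Z_{1}/X}\circ \operatorname{Nm}_{Z_{2}/Z_{1}}$ in a tower of finite surjective morphisms, together with the standard projection formula for cycle-theoretic composition of correspondences.
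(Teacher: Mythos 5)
The paper does not prove this lemma; it simply cites Orgogozo's Lemme 3.1.2, so there is no in-text argument to compare against. Your sketch follows what is essentially the standard route for equipping a smooth commutative group scheme with transfers: construct a norm $S(Z)\to S(X)$ via symmetric powers and the group law, and then use the Weil extension theorem to get a global section. That overall strategy is sound, and the verification you describe at the end (additivity built into the norm of a disjoint union, compatibility with flat base change for the sheaf structure, and reduction of the composition formula to transitivity of norms plus the projection formula) is the right shape for the remaining formalities.

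The place where the argument is genuinely imprecise is the extension step, and the way you state the Weil theorem gets the logic backwards. The correct statement, for a smooth separated $k$-group scheme $G$ and a smooth variety $X$, is that the non-definedness locus of a rational map $X\dashrightarrow G$ has pure codimension one; hence a rational map that is defined at \emph{every} codimension-one point of $X$ is a morphism. Writing ``defined on the complement of a closed subset of codimension $\geq 2$'' and then speaking of ``extending through a codimension-one point whenever no valuation-theoretic obstruction arises'' does not establish this; the content you need is precisely that there is \emph{no} codimension-one obstruction, and this must be proved. The proof is short and does not require normalization: for a codimension-one point $\eta\in X$, the local ring $\O_{X,\eta}$ is a DVR (as $X$ is smooth, hence normal), and $Z\times_X\Spec\O_{X,\eta}$ is finite over $\O_{X,\eta}$ and is a localization of the integral scheme $Z$, hence a torsion-free $\O_{X,\eta}$-module, hence free. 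So the finite-flat locus of $p\colon Z\to X$ already contains all codimension-one points, your norm section is defined there, and the Bosch--L\"utkebohmert--Raynaud form of Weil's extension theorem then yields a morphism $X\to S$. Once this is stated correctly, the rest of your outline goes through; the composition check is the most labor-intensive verification and you are right that it reduces to transitivity of norms in a tower of finite surjective morphisms, though a complete treatment must also confirm that the cycle-theoretic intersection multiplicities appearing in the composition $W\circ Z$ match the multiplicities with which the norm construction weighs the components above a generic point of $X$.
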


\begin{ex}
For fields having positive characteristic, the sheaf $\gm$ is not strictly $\aone$-invariant in the \'etale topology.  Thus, $\gm$ is an \'etale sheaf whose underlying Nisnevich sheaf is strictly $\aone$-invariant, but which is not strictly $\aone$-invariant in the \'etale topology in general.  In other words, the converse to Corollary \ref{cor:etaleimpliesNisnevichstrictly} is false.  However, let us note that $\gm$ {\em is} strongly $\aone$-invariant in the \'etale topology because of Hilbert's theorem 90, i.e., $H^1_{Nis}(X,\gm) = H^1_{\et}(X,\gm)$ for any $X \in \Sm_k$.

With more work, one can construct counter-examples to the converse of Corollary \ref{cor:etaleimpliesNisnevich} even for fields having characteristic $0$.  If $C$ is a smooth curve of genus $g > 0$, then one can consider the free strongly invariant sheaf of abelian groups generated by $C$, often denoted $\Z_{\aone}(C)$ ({\em cf.} \cite[p.193]{MField}), in a manner similar to Definition \ref{defn:freestronglyaoneinvariant}.  This sheaf of groups is actually an \'etale sheaf of groups that is not strongly $\aone$-invariant in the \'etale topology.
\end{ex}

\begin{rem}
Strong $\aone$-invariance (or its failure) for $GL_n$ has been studied in great detail.  On the one hand, in \cite[\S 7]{MField} it is proven that if $X$ is a smooth {\em affine} scheme, then $[X,BGL_n]_{\aone}$ is in canonical bijection with the set of isomorphism classes of rank $n$ vector bundles on $X$ whenever $n \neq 2$.  On the other hand if $X$ is {\em not} affine, the examples of \cite{ADBundle} show there is essentially no ``lower bound" on how badly this identification can fail.
\end{rem}

\subsubsection*{Automorphism groups of smooth proper varieties}
The automorphism groups of smooth proper $k$-varieties form a quite restricted class.  Combining the next result with Proposition \ref{prop:representablestronglyinvariant}, we obtain an essentially complete understanding of $\aone$-$h$-cobordisms constructed by means of Proposition \ref{prop:bundles}.

\begin{prop}
\label{prop:automorphismgroup}
Suppose $k$ is a field having characteristic $0$.  If $X \in {\mathcal Sm}_k$ is also proper, then $Aut(X)^0$ is a smooth $k$-group scheme.
\end{prop}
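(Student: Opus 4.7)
The plan is to combine two classical ingredients: representability of the automorphism functor of a smooth proper variety as a group scheme locally of finite type over $k$, and Cartier's theorem that any such group scheme is automatically smooth in characteristic zero.

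For representability I would proceed via the Hilbert scheme. Because $X$ is proper over $k$, an $S$-morphism $X \times_k S \to X \times_k S$ over $S$ is the same data as its graph, a closed subscheme of $X \times_k X \times_k S$ that is flat over $S$ with Hilbert polynomial equal to that of the diagonal of $X$. Thus $\underline{Hom}(X,X)$ is an open subfunctor of the Hilbert scheme of $X \times_k X$ over $k$, which is representable by a $k$-scheme locally of finite type by Grothendieck (or, in the proper but not necessarily projective case, by the Matsumura--Oort ``Representability of the automorphism functor'' theorem). Since $X$ is smooth and proper, the condition that an endomorphism be an isomorphism is an open condition on the base (both projections from the graph to $X$ must be isomorphisms, which by properness and flatness is open). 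Hence $Aut(X) \subset \underline{Hom}(X,X)$ is an open subfunctor, and the composition and inverse laws endow it with the structure of a $k$-group scheme locally of finite type.

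For smoothness I would invoke Cartier's theorem: every group scheme locally of finite type over a field of characteristic $0$ is reduced, and a group scheme reduced at the identity is smooth by homogeneity under translation (see, e.g., Demazure--Gabriel II.\S 6 or Mumford's \emph{Lectures on Abelian Varieties}, Lecture II). Applying this to the group scheme $Aut(X)$ constructed above completes the proof.

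The main obstacle is really just citational rather than mathematical: representability for proper but possibly non-projective $X$ requires the Matsumura--Oort theorem rather than Grothendieck's original construction for projective morphisms. Once representability as a locally-of-finite-type group scheme is in hand, the characteristic zero hypothesis makes the smoothness step automatic via Cartier.
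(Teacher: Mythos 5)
Your proof is correct and follows the same overall strategy as the paper: embed $Aut(X)$ in the Hilbert functor via graphs, establish representability as a group scheme locally of finite type, then invoke Cartier's theorem in characteristic~$0$. The one point of genuine divergence is how you handle the proper-but-not-projective case. You cite the Matsumura--Oort representability theorem directly, which indeed gives $Aut(X)$ as a group scheme locally of finite type over $k$ when $X$ is merely proper. The paper instead first represents $Aut(X)$ as an \emph{algebraic space} (citing Olsson), and then uses Knutson's result that an algebraic space contains a dense open affine subscheme together with translation by the group action to cover $Aut(X)$ by schemes, thereby upgrading it to an honest scheme. Your route is more economical since it avoids the detour through algebraic spaces; the paper's route is slightly more robust in the sense that it reduces to a general principle (``a group algebraic space over a field is a scheme'') that applies beyond this specific automorphism functor. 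Either way, once representability as a group scheme locally of finite type over $k$ is established, the smoothness conclusion via Cartier is identical in both arguments.
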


\begin{proof}
For this proof, we drop any finite-type assumptions in our conventions for schemes.  An automorphism of a scheme $X$ is a morphism $f: X \to X$.  Such a morphism defines a graph $\Gamma_f \subset X \times X$.  By this construction, we can identify the functor defining $Aut(X)$ as a sub-functor of an appropriate Hilbert functor.  In the case $X$ is projective, representability of this functor follows from \cite[Theorem I.1.10 ({\em cf.} Exercise I.1.10.2)]{Kollar}.  If $X$ is only a proper scheme, then the sheaf $Aut(X)$ is represented by an {\em algebraic space} by \cite[Lemma 5.1]{Olsson}.  By \cite[II.6.7]{Knutson}, any algebraic space has a dense open affine subscheme and one can use the group action to construct a Zariski open cover of $Aut(X)$ by such schemes.  Thus, $Aut(X)$ is always a $k$-group scheme under the hypotheses.  Finally, over fields having characteristic $0$, one can show that the group scheme $Aut(X)$ is actually smooth by explicit computation of its tangent space and application of Cartier's theorem, \cite[Chapter II \S 6.1.1]{DemazureGabriel}.
\end{proof}

\section{Computing the $\aone$-fundamental group}
\label{s:fundamentalgroup}
This section is the spiritual center of the paper.  Given an $\aone$-connected space ${\mathcal X}$, it is natural to study its higher $\aone$-homotopy invariants.  If $k$ is a field, Theorem \ref{thm:stablyrational} showed that $k$-rational smooth proper surfaces are $\aone$-connected, and if furthermore $k$ has characteristic $0$, then retract $k$-rational smooth proper $k$-varieties are $\aone$-connected.  See Appendix \ref{s:notationalpostscript} for a summary of results relating $\aone$-connectivity and rationality properties.  Thus, let $X$ be an $\aone$-connected smooth variety, and fix a basepoint $x \in X(k)$.  We focus now on computing the next $\aone$-homotopy invariant of such varieties: the $\aone$-fundamental sheaf of groups, denoted $\pi_1^{\aone}(X,x)$.

To facilitate topological intuition, throughout this section we refer to $\pi_1^{\aone}(X,x)$ as simply the {\em $\aone$-fundamental group}.  To partially justify this abuse of terminology, we begin by proving or recalling a collection of results that are analogous to corresponding topological statements.  Theorem \ref{thm:aonevanKampen} establishes a version of the van Kampen theorem, and Proposition \ref{prop:torustorsor} indicates a relationship between the covering space theory associated with the $\aone$-fundamental group and geometry.  We also discuss in great detail the structure of the $\aone$-fundamental group of $\pone$, which is of fundamental importance in unstable $\aone$-homotopy theory.  Unlike its topological counterpart, Proposition \ref{prop:nontrivialfundamentalgroup} shows that the $\aone$-fundamental group of a smooth proper $\aone$-connected variety is {\em always} non-trivial; Proposition \ref{prop:etaleaonefundamentalgroupnontrivial} explains a corresponding result for \'etale-$\aone$-connected varieties.

The ultimate goal of this section, accomplished in Corollary \ref{thm:surfacesfundamentalgroup}, is to show that if $X$ is a $k$-rational smooth proper surface over an algebraically closed field $k$, the $\aone$-homotopy type of $X$ is determined by its $\aone$-fundamental group.  To establish this, we will simply compute the $\aone$-fundamental groups of all $k$-rational smooth proper surfaces.  Theorem \ref{thm:surfaces} shows that we need only perform the computation for Hirzebruch surfaces, and for certain blow-ups of points.   The first case is addressed by Proposition \ref{prop:hirzebruchfundamental}, and Proposition \ref{prop:fundamentalgroupblowup} addresses the second case by establishing a general ``reduction theorem" for blow-ups of points on smooth schemes that are covered by affine spaces in the sense of Definition \ref{defn:combinatorial}.

\subsection{Generalities on the $\aone$-fundamental group}
Suppose $X \in {\mathcal Sm}_k$, choose a basepoint $x \in X(k)$, and consider the $\aone$-fundamental group $\pi_1^{\aone}(X,x)$.  If furthermore $X$ is $\aone$-connected, and we pick another base-point $x' \in X(k)$, the $\aone$-fundamental group $\pi_1^{\aone}(X,x')$ is conjugate to $\pi_1^{\aone}(X,x)$.  For this reason, we fix and (occasionally) suppress basepoints in all our subsequent discussion.  In Definition \ref{defn:aoneinvariance} we recalled the notion of a strongly $\aone$-invariant sheaf of groups.  We also noted \cite[Theorem 5.1]{MField} shows that $\pi_1^{\aone}(X,x)$ is a strongly $\aone$-invariant sheaf of groups.  Here is a version of the classical van Kampen theorem (more general versions are known).

\begin{thm}[$\aone$-van Kampen theorem {\cite[Theorem 6.12]{MField}}]
\label{thm:aonevanKampen}
Suppose $X$ is a smooth $\aone$-connected $k$-variety covered by $\aone$-connected open subsets $U,V$ such that $U \cap V$ is $\aone$-connected.  Then we have a canonical isomorphism
\[
\pi_1^{\aone}(U) \star^{\aone}_{\pi_1^{\aone}(U \cap V)} \pi_1^{\aone}(V) \isomto \pi_1^{\aone}(X),
\]
where the operation $\star^{\aone}$ is given by \textup{Definition \ref{defn:amalgamatedsum}}.
\end{thm}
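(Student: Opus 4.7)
The plan is to realize $X$ as a homotopy pushout and then apply a sheaf-theoretic Seifert--van Kampen theorem, finally translating the resulting pushout in ${\mathcal Gr}_k$ into the amalgamated sum in ${\mathcal Gr}^{\aone}_k$. First, since $\{U,V\}$ is a Zariski (hence Nisnevich) open cover of $X$, the square with $U\cap V$ in the upper-left corner and $X$ in the lower-right corner is homotopy cocartesian in $\simpnis$ equipped with the simplicial model structure; this is a special case of the assertion that elementary Nisnevich distinguished squares are homotopy pushouts in $\hsnis$ (\cite[\S 3 Proposition 1.16]{MV}).

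Next, I would push this square through the $\aone$-localization functor $L_{\aone}$. Because $L_{\aone}$ is a left adjoint to the inclusion of $\aone$-local objects, it preserves homotopy colimits, so $L_{\aone}(X)$ is the homotopy pushout in $\ho{k}$ of $L_{\aone}(U)\leftarrow L_{\aone}(U\cap V)\to L_{\aone}(V)$. The $\aone$-connectedness of $U$, $V$ and $U\cap V$ then guarantees that each of these three $L_{\aone}$-localized objects is simplicially $0$-connected.

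I would then appeal to a Seifert--van Kampen theorem for pointed simplicial Nisnevich sheaves: a homotopy pushout of simplicially connected pointed sheaves over a simplicially connected intersection induces a pushout of the $\pi_1^s$'s in ${\mathcal Gr}_k$. Applied to the $L_{\aone}$-localized square, and using the identification $\pi_1^{\aone}(-) = \pi_1^s(L_{\aone}(-))$, this yields an isomorphism
\[
\pi_1^{\aone}(U)\star_{\pi_1^{\aone}(U\cap V)}\pi_1^{\aone}(V)\isomto\pi_1^{\aone}(X)
\]
in ${\mathcal Gr}_k$, where $\star$ denotes the unlocalized pushout of sheaves of groups. Since $\pi_1^{\aone}(X)$ is strongly $\aone$-invariant by Theorem \ref{thm:strongaoneinvaranceofpi1}, the universal property of the left adjoint $G\mapsto\pi_1^{\aone}(BG)$ to the inclusion ${\mathcal Gr}^{\aone}_k\hookrightarrow{\mathcal Gr}_k$ (see the discussion just before Definition \ref{defn:amalgamatedsum}) identifies the left-hand side with $\pi_1^{\aone}(U)\star^{\aone}_{\pi_1^{\aone}(U\cap V)}\pi_1^{\aone}(V)$, which is the claimed isomorphism.

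The main obstacle is the Seifert--van Kampen theorem for simplicial sheaves in the third step: the classical topological proof uses paths, which must be replaced by an argument adapted to the sheaf-theoretic setting. Morel's approach in \cite[\S 4]{MField} proceeds via the covering space theory of strongly $\aone$-invariant sheaves of groups: for any such $G$, morphisms to $BG$ classify $G$-torsors in the appropriate sense, and so the gluing property of torsors for the cover $\{U,V\}$ translates directly into the universal property characterizing the amalgamated sum $\star^{\aone}_H$, after which one concludes by Yoneda in ${\mathcal Gr}^{\aone}_k$. An alternative route is to reduce Seifert--van Kampen to the classical statement for simplicial sets on Nisnevich stalks, with the residual sheafification and descent issues controlled by the strong $\aone$-invariance of the relevant $\pi_1^{\aone}$'s established in \cite{MField}.
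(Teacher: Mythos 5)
The paper does not prove this theorem itself --- it cites \cite[Theorem 4.12]{MField} --- so there is no in-paper argument to compare against. Your overall plan (realize $X$ as a homotopy pushout, $\aone$-localize, apply a van Kampen, and translate into ${\mathcal Gr}^{\aone}_k$) is the right one, and you correctly flag the key obstacle and point to the right tool (classification of torsors by $BG$, then Yoneda in ${\mathcal Gr}^{\aone}_k$). However, steps 3--4 as written contain a category confusion that leaves a real gap. In step 2 you correctly observe that $L_{\aone}(X)$ is the homotopy pushout of the localized square \emph{in $\ho{k}$}; in step 3 you then apply a van Kampen theorem \emph{for $\hsnis$} to that localized square and read off a pushout of $\pi_1^s$'s identified with $\pi_1^{\aone}(X)$. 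But the homotopy pushout $P$ of the localized square computed in $\hsnis$ is not $L_{\aone}(X)$ --- one only has $L_{\aone}(P)\simeq L_{\aone}(X)$ --- so what the simplicial van Kampen could deliver is $\pi_1^s(P)$, which is not $\pi_1^{\aone}(X)$ in general. Nor can you apply the simplicial van Kampen to the original, unlocalized square, since $U$, $V$, $U\cap V$ are only assumed $\aone$-connected, not simplicially connected.

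Two clean repairs exist, both amounting to running the van Kampen argument ``after $\aone$-localization.'' Either (i) show that for any simplicially connected pointed $\mathcal{Y}$ one has that $\pi_1^{\aone}(\mathcal{Y})$ is the reflection of $\pi_1^s(\mathcal{Y})$ into ${\mathcal Gr}^{\aone}_k$ (compare $[\mathcal{Y},BG]_s$, $[\mathcal{Y},BG]_{\aone}$ and $[L_{\aone}\mathcal{Y},BG]_s$ for strongly $\aone$-invariant $G$), so that applying the reflector --- a left adjoint, hence colimit-preserving --- to the $\hsnis$-pushout of $\pi_1^s$'s for $P$ gives the $\star^{\aone}$-pushout of the $\pi_1^{\aone}$'s, and $\pi_1^{\aone}(P)\cong\pi_1^{\aone}(X)$ since $P$ and $X$ are $\aone$-weakly equivalent. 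Or, more directly, (ii) establish (via the Postnikov tower applied to the simplicially connected object $L_{\aone}(\mathcal{X})$, exactly the argument used to prove that ${\mathcal Gr}^{\aone}_k$ admits finite colimits) the adjunction $[(\mathcal{X},x),(BG,\ast)]_{\aone}\cong Hom_{{\mathcal Gr}^{\aone}_k}(\pi_1^{\aone}(\mathcal{X},x),G)$ for $\aone$-connected pointed $\mathcal{X}$, so $\pi_1^{\aone}$ is a left adjoint on $\aone$-connected pointed spaces and preserves homotopy pushouts; since $\pi_0^{\aone}$ also preserves homotopy pushouts, a pushout of $\aone$-connected spaces along an $\aone$-connected space remains $\aone$-connected, and the theorem follows in one step with no detour through ${\mathcal Gr}_k$.
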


A version of covering space theory for the $\aone$-fundamental group dubbed $\aone$-covering space theory has been developed by the second author (see \cite[\S 6.1]{MField}).  For our purposes, the following result will suffice.

\begin{prop}[{\em cf.} {\cite[Corollary 5.3]{ADExcision}}]
\label{prop:torustorsor}
Suppose $\tilde{X}$ and $X$ are two smooth $\aone$-connected  $k$-varieties.  If $f: \tilde{X} \to X$ is a $\gm^{\times r}$-torsor over $X$, then the morphism $f$ is an $\aone$-fibration, one has a short exact sequence of the form
\[
1 \longrightarrow \pi_1^{\aone}(\tilde{X}) \longrightarrow \pi_1^{\aone}(X) \longrightarrow \gm^{\times r} \longrightarrow 1,
\]
and isomorphisms $\pi_i^{\aone}(\tilde{X}) \isomt \pi_i^{\aone}(X)$ for every $i > 1$.
\end{prop}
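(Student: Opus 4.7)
The plan is to realize $f$ as the $\aone$-homotopy fiber of the classifying map $X\to B\gm^{\times r}$ and to extract all the stated conclusions from the associated long exact sequence of $\aone$-homotopy sheaves.

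First I would check that $\gm^{\times r}$ is strongly $\aone$-invariant: $\gm$ is $\aone$-rigid (see Example \ref{ex:aonerigid}) and $Pic$ is $\aone$-homotopy invariant on smooth schemes, so $H^i_{Nis}(-,\gm)$ is $\aone$-invariant for $i=0,1$. Consequently $B\gm^{\times r}$ is $\aone$-local by the discussion following Definition \ref{defn:aoneinvariance}, and in fact models the Eilenberg--MacLane object $K(\gm^{\times r},1)$ since $\gm^{\times r}$ is abelian and strictly $\aone$-invariant by Theorem \ref{thm:strongabelianimpliesstrict}. The torsor $f$ is classified by a morphism $\varphi:X\to B\gm^{\times r}$ identifying $\tilde X$ with the pullback $X\times_{B\gm^{\times r}} E\gm^{\times r}$. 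Because $E\gm^{\times r}\to B\gm^{\times r}$ is a simplicial fibration between $\aone$-local spaces with $\aone$-local fiber $\gm^{\times r}$, its pullback along $\varphi$ is an $\aone$-fibration, yielding the first assertion.

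Next I would feed the resulting $\aone$-fibration $\gm^{\times r}\to\tilde X\to X$ into the long exact sequence of $\aone$-homotopy sheaves. Two vanishing inputs drive the calculation: since $\gm^{\times r}$ is $\aone$-local and simplicially discrete, $\pi_i^{\aone}(\gm^{\times r})=0$ for every $i\geq 1$, and since $B\gm^{\times r}$ is an $\aone$-local $K(\gm^{\times r},1)$, one has $\pi_1^{\aone}(B\gm^{\times r})=\gm^{\times r}$ and $\pi_i^{\aone}(B\gm^{\times r})=0$ for $i\neq 1$. For $i\geq 2$ both terms flanking $\pi_i^{\aone}(\tilde X)\to \pi_i^{\aone}(X)$ vanish, so this map is an isomorphism. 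In low degree the sequence collapses to
\[
1 \longrightarrow \pi_1^{\aone}(\tilde X) \longrightarrow \pi_1^{\aone}(X) \longrightarrow \gm^{\times r} \longrightarrow \pi_0^{\aone}(\tilde X),
\]
and $\aone$-connectedness of $\tilde X$ forces the rightmost term to be trivial, producing the desired short exact sequence.

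The principal technical hurdle is foundational rather than computational: one must verify rigorously that $f$ is an $\aone$-fibration in the Morel--Voevodsky model structure (so that an honest long exact sequence of $\aone$-homotopy sheaves is available) and check exactness sheaf-theoretically with careful base-point tracking, despite the fundamental group being generally non-abelian. Once these points are in place---they follow from $\aone$-locality of both $B\gm^{\times r}$ and the fiber $\gm^{\times r}$ together with stability of fibrations under pullback---the remainder of the argument is formal. For $r=1$ the argument reduces to \cite[Corollary 5.3]{ADExcision} already cited in the statement, which serves both as a model and as a sanity check.
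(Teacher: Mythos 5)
Your broad strategy — classify the torsor by a map $X\to B\gm^{\times r}$, observe that $B\gm^{\times r}$ is $\aone$-local because $\gm^{\times r}$ is strongly $\aone$-invariant, and read off the conclusions from the resulting $\aone$-fibration sequence — is exactly the one underlying the cited result \cite[Corollary 5.3]{ADExcision}, to which the paper delegates the proof. The identification of $\gm^{\times r}$ as strongly (hence, being abelian, strictly) $\aone$-invariant, the computation of $\pi_*^{\aone}(B\gm^{\times r})$, and the collapse of the long exact sequence are all correct.

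There is, however, a genuine gap in the specific justification you give for the first assertion. You assert that $E\gm^{\times r}\to B\gm^{\times r}$ is ``a simplicial fibration between $\aone$-local spaces,'' but in the Joyal--Jardine injective model structure of Definition~\ref{defn:injectivemodelstructure} neither claim is automatic: the bar-construction map $E\gm^{\times r}\to B\gm^{\times r}$ is a \emph{local} (stalkwise Kan) fibration but not visibly an injective fibration, and $E\gm^{\times r}$ (respectively $B\gm^{\times r}$) is not simplicially fibrant on the nose, so ``$\aone$-local'' as in Definition~\ref{defn:aonelocal} does not directly apply to it. Moreover, the classifying map $\varphi$ is a priori only a simplicial homotopy class; to identify $\tilde X$ with an honest (not merely homotopy) pullback one must choose a representative compatibly with the cocycle describing the torsor. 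Consequently your argument as written shows that $\tilde X$ is the $\aone$-homotopy fiber of the classifying map — which is all that is needed for the short exact sequence and the isomorphisms $\pi_i^{\aone}(\tilde X)\cong\pi_i^{\aone}(X)$ for $i>1$ — but it does not yet establish the literal model-categorical statement that the scheme morphism $f$ itself is an $\aone$-fibration. You flag this hurdle at the end, and the paper's remark following the proposition indicates the intended remedy: torsors under $\aone$-rigid groups are $\aone$-covering spaces in the sense of \cite[\S 4.1]{MField}, and one invokes \cite[Lemma 4.5]{MField} (or the analogous argument in \cite{ADExcision}) to conclude directly that $f$ has the right lifting property, rather than trying to transport fibrancy through the bar construction.
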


\begin{rem}
Slightly more generally, one can show that Zariski locally trivial torsors with $\aone$-rigid fibers are always $\aone$-fibrations.  Torsors under split tori over smooth schemes are examples of $\aone$-covering spaces in the sense of \cite[\S 6.1]{MField} by {\em ibid.} Lemma 4.5.  This fact has been used in \cite{ADExcision} and \cite{Wendt} to describe the $\aone$-fundamental group of a smooth proper toric variety as an extension of a torus by a strongly $\aone$-invariant sheaf of groups of arithmetic nature.
\end{rem}

The main problem with Proposition \ref{prop:torustorsor} is that it does not provide an explicit identification of the extension or the group structure on the $\aone$-fundamental group.  The problem of identifying this additional data, which Proposition \ref{prop:hirzebruchfundamental} shows to be very subtle,  will occupy us in what follows.  Nevertheless, we can use the geometry behind Proposition \ref{prop:torustorsor} to establish that smooth proper $\aone$-connected schemes necessarily have non-trivial $\aone$-fundamental groups.

\begin{prop}
\label{prop:nontrivialfundamentalgroup}
Suppose $X \in {\mathcal Sm}_k$ is $\aone$-connected and $x \in X(k)$.  We have a canonical isomorphism
\[
Hom_{{\mathcal Gr}^{\aone}_k}(\pi_1^{\aone}(X,x),\gm) \isomto Pic(X).
\]
In particular, if $X$ is a strictly positive dimensional, $\aone$-connected, smooth proper $k$-variety then $Pic(X)$ is non-trivial and thus $\pi_1^{\aone}(X)$ is non-trivial.
\end{prop}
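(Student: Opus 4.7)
The plan is to chain several identifications: represent $Pic(X)$ as pointed $\aone$-homotopy classes of maps into $B\gm$, then apply a Postnikov-tower argument in the $\aone$-connected case to identify these with homomorphisms $\pi_1^{\aone}(X,x) \to \gm$. The non-triviality claim will then follow from the classical fact that smooth proper positive-dimensional integral varieties have non-trivial Picard group.

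For the first identification, I would start with $Pic(X) = H^1_{Nis}(X,\gm)$ (the classical Zariski Picard group coincides with the Nisnevich one by Hilbert's Theorem 90). Classifying-space theory for Nisnevich sheaves of groups gives $H^1_{Nis}(X,\gm) \cong [X, B\gm]_s$. Since $\gm$ is strongly $\aone$-invariant as a Nisnevich sheaf of abelian groups (using $\aone$-invariance of both $\O^{\times}$ and $Pic$ for smooth schemes), $B\gm$ is $\aone$-local, whence $[X,B\gm]_s = [X,B\gm]_{\aone}$. Because $B\gm$ is simplicially connected with abelian fundamental group $\gm$, pointed and unpointed $\aone$-homotopy classes agree, yielding $Pic(X) \cong [(X,x),(B\gm,\ast)]_{\aone}$.

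For the identification $[(X,x),(B\gm,\ast)]_{\aone} \cong Hom_{{\mathcal Gr}^{\aone}_k}(\pi_1^{\aone}(X,x),\gm)$, I would mimic the Postnikov-tower calculation sketched in the paper's proof that ${\mathcal Gr}^{\aone}_k$ admits finite colimits. Since $X$ is $\aone$-connected and $B\gm$ is an $\aone$-Eilenberg-MacLane space $K(\gm,1)$ whose only non-trivial $\aone$-homotopy sheaf is $\gm$ in degree one, the $\pi_1^{\aone}$ functor induces the desired bijection: any pointed map $(X,x) \to (B\gm,\ast)$ factors through the first Postnikov section $X \to B\pi_1^{\aone}(X,x)$, and such factorizations are parametrized precisely by $Hom_{{\mathcal Gr}^{\aone}_k}(\pi_1^{\aone}(X,x),\gm)$. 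This Eilenberg-MacLane-type identification is the central technical step and the main obstacle, relying crucially on $B\gm$ being an $\aone$-$K(\gm,1)$ together with the Postnikov-tower analysis in the $\aone$-model structure.

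For the non-triviality assertion, it suffices to show $Pic(X) \neq 0$. Choose any prime Weil divisor $D \subset X$ (which exists since $\dim X \geq 1$); smoothness makes $D$ Cartier, so $\O_X(D)$ is a line bundle. Suppose $\O_X(D) \cong \O_X$; then there exists $f \in k(X)^{\times}$ with $\operatorname{div}(f) = D$. Effectivity of $D$ forces $f$ to have no poles, so $f \in H^0(X,\O_X)$. Since $X$ is integral and proper over $k$, $H^0(X,\O_X)$ is a field, so $f$ is a global unit on $X$ and $\operatorname{div}(f) = 0$, contradicting $D \neq 0$. Thus $Pic(X) \neq 0$, and the canonical isomorphism forces $\pi_1^{\aone}(X,x)$ to admit a non-trivial $\gm$-valued character, and in particular to be non-trivial.
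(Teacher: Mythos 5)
Your proof follows essentially the same route as the paper's: both identify $Pic(X)$ with $[(X,x),(B\gm,\ast)]_{\aone}$ via strong $\aone$-invariance of $\gm$ and the agreement of pointed and unpointed classes for abelian targets, then invoke functoriality of the Postnikov tower to obtain the bijection with $Hom_{{\mathcal Gr}^{\aone}_k}(\pi_1^{\aone}(X,x),\gm)$. The only difference is cosmetic: to see $Pic(X)\neq 0$ you argue directly from a prime Weil divisor and properness, whereas the paper appeals to the existence of an ample family of line bundles together with non-affineness of $X$; both are standard and correct.
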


\begin{proof}
This proof requires use of the Postnikov tower; the identification claimed in the first statement is mentioned in \cite[Lemma B.7]{MField} or \cite[Theorem 3.30]{ADExcision}.  Here is a quick sketch of the idea.  One can show that the functor $\pi_1^{\aone}(\cdot)$ induces a canonical map $[(X,x),(B\gm,\ast)]_{\aone} \to Hom_{{\mathcal Gr}^{\aone}_k}(\pi_1^{\aone}(X,x),\gm)$.  By functoriality of the Postnikov tower any map $X \to B\gm$ factors through a map $B\pi_1^{\aone}(X,x) \to B\gm$.

Now, since $\gm$ is abelian, we know that the canonical map from base-pointed to base-point free maps is an isomorphism.  Thus, we see that $[(X,x),(B\gm,\ast)] \isomt [X,B\gm]_{\aone} \isomt Pic(X)$.  If $Pic(X)$ is non-trivial, it follows that $Hom_{{\mathcal Gr}^{\aone}_k}(\pi_1^{\aone}(X,x),\gm)$ is non-trivial and thus, by the Yoneda lemma, that $\pi_1^{\aone}(X,x)$ is itself non-trivial.

Now, if $X$ is a strictly positive dimensional smooth proper variety, we claim $Pic(X)$ is non-trivial.  Indeed, since $X$ is smooth scheme over a field, it is, by our assumptions and conventions, separated, regular and Noetherian, and so admits an ample family of line bundles.  Since $X$ is strictly positive dimensional and proper, it is not affine, and thus one of these line bundles must be non-trivial.  Since $X$ is $\aone$-connected, we know $X(k)$ is non-empty.  Upon choice of a base-point $x \in X(k)$, we can appeal to the first part of the statement to finish the proof.
\end{proof}

\begin{prop}
\label{prop:etaleaonefundamentalgroupnontrivial}
If $(X,x)$ is a pointed smooth \'etale $\aone$-connected $k$-scheme.  We have a canonical identification
\[
Hom_{{\mathcal Gr}^{\et}_k}(\pi_1^{\aone,\et}(X,x),\gm) \isomto H^1_{\et}(X,\gm).
\]
If moreover $X$ is proper, then $\pi_1^{\aone,\et}(X,x)$ is non-trivial.
\end{prop}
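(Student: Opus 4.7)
The plan is to transpose, essentially verbatim, the proof of Proposition \ref{prop:nontrivialfundamentalgroup} to the \'etale setting, using the classifying space $B_{\et}\gm$ in place of $B\gm$. First I would observe that $\gm$ is strongly $\aone$-invariant in the \'etale topology: for any $U \in \Sm_k$, the group of sections $\gm(U) = \O^*(U)$ is $\aone$-invariant, while by Grothendieck's version of Hilbert's Theorem~90 we have $H^1_{\et}(U,\gm) = Pic(U)$, which is well-known to be $\aone$-invariant on smooth schemes. Applying Lemma \ref{lem:comparison1} with $G = \gm$ then gives that $B_{\et}\gm$ is $\aone$-local and that there is a canonical bijection
\[
[X, B_{\et}\gm]_{\aone} \isomto H^1_{\et}(X, \gm).
\]
Since $\gm$ is abelian, the canonical forgetful map from pointed to free $\aone$-homotopy classes of maps into $B_{\et}\gm$ is a bijection, so we may work pointedly.

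Next I would produce the isomorphism of the first assertion by the Postnikov tower argument sketched in the proof of Proposition \ref{prop:nontrivialfundamentalgroup}. Because $B_{\et}\gm$ is \'etale $\aone$-local and has only one nonvanishing $\aone$-homotopy sheaf, namely $\pi_1^{\aone,\et}(B_{\et}\gm) = \gm$, any pointed map $(X,x) \to (B_{\et}\gm, *)$ in the \'etale $\aone$-homotopy category factors uniquely through the first stage of the \'etale Postnikov tower of $(X,x)$, namely $B_{\et}\pi_1^{\aone,\et}(X,x)$. Applying $\pi_1^{\aone,\et}(\,\cdot\,)$ and invoking the adjunction between $\pi_1^{\aone,\et}$ and $B_{\et}$ (the \'etale analog of the adjunction used to produce amalgamated sums in Definition \ref{defn:amalgamatedsum}), this identifies $[(X,x),(B_{\et}\gm,*)]_{\aone}$ with $\operatorname{Hom}_{{\mathcal Gr}^{\et}_k}(\pi_1^{\aone,\et}(X,x), \gm)$. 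Combining this with the bijection of the previous paragraph yields the claimed canonical isomorphism
\[
Hom_{{\mathcal Gr}^{\et}_k}(\pi_1^{\aone,\et}(X,x), \gm) \isomto H^1_{\et}(X, \gm).
\]

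For the final statement, assume $X$ is in addition smooth proper (and strictly positive-dimensional, as in Proposition \ref{prop:nontrivialfundamentalgroup}). Since $X$ is separated, regular and Noetherian, it admits an ample family of line bundles; as $X$ is proper and non-affine, some such line bundle must be non-trivial, whence $H^1_{\et}(X, \gm) = Pic(X)$ is non-trivial. The isomorphism above then forces $\operatorname{Hom}_{{\mathcal Gr}^{\et}_k}(\pi_1^{\aone,\et}(X,x), \gm)$ to be non-trivial, and hence $\pi_1^{\aone,\et}(X,x)$ itself is non-trivial by Yoneda. The main technical point to verify is the Postnikov-tower factorization in the \'etale topology, i.e., that the Postnikov apparatus used to obtain the $(\pi_1^{\aone}, B)$-adjunction in the Nisnevich setting (see \cite[Remark 4.11]{MField} and the proof of Proposition \ref{prop:nontrivialfundamentalgroup}) carries over to ${\bf R}\alpha_*$-images of \'etale fibrant models; this, however, is formal given that $B_{\et}\gm$ has been shown to be \'etale $\aone$-local, so in fact the argument is essentially routine.
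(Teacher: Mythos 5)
Your overall strategy---strong $\aone$-invariance of $\gm$ in the \'etale topology, a Postnikov-type factorization, plus existence of a non-trivial line bundle---matches the paper's, but the route through $B_{\et}\gm$ introduces a real subtlety that you gloss over. The space $B_{\et}\gm = {\bf R}\alpha_*((B\gm)^f)$ is a \emph{Nisnevich} simplicial sheaf, so the Hom set $[X,B_{\et}\gm]_{\aone}$ from Lemma \ref{lem:comparison1} lives in $\ho{k}$, not $\het{k}$. Since $X$ is only assumed \'etale $\aone$-connected---not Nisnevich $\aone$-connected---you cannot directly run the Postnikov argument in $\ho{k}$: there is a non-trivial $\pi_0^{\aone}(X)$ stage, and $\pi_1^{\aone}(X,x)$ need not agree with $\pi_1^{\aone,\et}(X,x)$. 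To recover your claimed factorization you must first use the adjunction $[X,{\bf R}\alpha_*({\mathcal Y})]_{\aone} \cong Hom_{\het{k}}(X,{\mathcal Y})$ to transfer the computation to $\het{k}$, and then apply the \emph{\'etale} Postnikov tower of the \'etale $\aone$-localization $L_{\aone}(X)$. The object through which one factors is then $B\pi_1^{s,\et}(L_{\aone}(X),x) = B\pi_1^{\aone,\et}(X,x)$ formed in the \'etale category, not its Nisnevich pushforward $B_{\et}\pi_1^{\aone,\et}(X,x)$ as you write. The paper avoids $B_{\et}\gm$ entirely: it notes that $B\gm$ is \'etale $\aone$-local, so $[X,B\gm]_{\aone,\et} \cong [L_{\aone}(X),B\gm]_{s,\et}$, that $L_{\aone}(X)$ is \'etale simplicially connected (this is precisely what \'etale $\aone$-connectedness of $X$ says), and then applies the \'etale simplicial Postnikov tower to identify $[L_{\aone}(X),B\gm]_{s,\et}$ with $Hom_{{\mathcal Gr}^{\et}_k}(\pi_1^{\aone,\et}(X,x),\gm)$. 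Your final paragraph waves at this (``${\bf R}\alpha_*$-images of \'etale fibrant models'') without carrying out the adjunction carefully; the bookkeeping is exactly where the argument lives, so it should not be declared ``formal.'' Once that is fixed, your non-triviality step (ample family of line bundles, hence $Pic(X) \neq 0$ for $X$ proper and positive-dimensional) is correct and identical to the paper's.
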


\begin{proof}
Since $\gm$ is strongly $\aone$-invariant in the \'etale topology, $B\gm$ is $\aone$-local in the \'etale topology we have canonical bijections
\[
[L_{\aone}(X),B\gm]_{s,\et} \stackrel{\sim}{\longleftarrow} [L_{\aone}(X),B\gm]_{\aone,\et} \isomto [X,B\gm]_{\aone,\et}\isomto [X,B\gm]_{s,\et}.
\]
To say that $X$ is \'etale $\aone$-connected is equivalent to saying that $L_{\aone}(X)$ is \'etale simplicially connected.  Furthermore, we have $\pi_1^{\aone,\et}(X,x) = \pi_1^{s,\et}(L_{\aone}(x),x)$.

Again by existence and functoriality of the Postnikov tower (using the same tools mentioned in the proof of Proposition \ref{prop:nontrivialfundamentalgroup}) we have a canonical identification
\[
[L_{\aone}(X),B\gm]_{s,\et} \isomto Hom_{{\mathcal Gr}^{\et}_k}(\pi_1^{s,\et}(L_{\aone}(X),x),\gm);
\]
here we have implicitly used the fact that since $\gm$ is abelian, the canonical map from base-pointed maps to unpointed maps from any space to $B\gm$ is a bijection.  Tracking through all the bijections and identifications mentioned we deduce the existence of a bijection
\[
Hom_{{\mathcal Gr}^{\et}_k}(\pi_1^{\aone,\et}(X,x),\gm) \isomto H^1_{\et}(X,\gm),
\]
where ${\mathcal Gr}^{\et}_k$ denotes the category of \'etale sheaves of groups on $\Sm_k$.  By Hilbert's theorem 90 we know that $H^1_{\et}(X,\gm) = H^1_{Zar}(X,\gm) = Pic(X)$.
\end{proof}

\begin{rem}
It seems reasonable to expect that, generalizing \cite[Theorem 5.1]{MField}, the \'etale $\aone$-fundamental group is always strongly $\aone$-invariant in the \'etale topology, and a proof formally analogous to that of Proposition \ref{prop:nontrivialfundamentalgroup} may be used to establish Proposition \ref{prop:etaleaonefundamentalgroupnontrivial} as well.
\end{rem}

Define a notion of \'etale $\aone$-$h$-cobordism by replacing each occurrence of $\aone$-weak equivalence in Definition \ref{defn:aonehcobordism} by \'etale $\aone$-weak equivalence.  Using Proposition \ref{prop:nontrivialfundamentalgroup} or \ref{prop:etaleaonefundamentalgroupnontrivial}, the next result follows immediately from the discussion of Example \ref{ex:aonerigid}.

\begin{prop}[$\aone$-$h$-cobordism theorem]
\label{prop:etaleaonehcobordismtheorem}
Any (\'etale) $\aone$-$h$-cobordism between (\'etale) $\aone$-connected and (\'etale) $\aone$-simply connected smooth proper varieties over a field is trivial.
\end{prop}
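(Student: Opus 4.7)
The strategy is to show that the hypotheses force both endpoints, and in fact the entire cobordism, to collapse to the ``zero-dimensional case,'' where triviality becomes immediate.

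First, I would identify the two endpoints with $\Spec k$. In the Nisnevich case, Proposition \ref{prop:nontrivialfundamentalgroup} asserts that any strictly positive-dimensional smooth proper $\aone$-connected $k$-variety has non-trivial $\aone$-fundamental group; hence the hypotheses $\aone$-connected and $\aone$-simply connected force $\dim X = \dim X' = 0$. Combined with the existence of a $k$-rational point (automatic from $\aone$-connectedness, since $\Spec k$ is henselian local), such a zero-dimensional smooth proper variety must be isomorphic to $\Spec k$. The \'etale version is the analogous consequence of Proposition \ref{prop:etaleaonefundamentalgroupnontrivial}, possibly after base-changing to a separable closure of $k$ to produce a rational base point (and verifying that \'etale $\aone$-connectedness and \'etale $\aone$-simple connectedness descend through this base change). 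Applying this to $X = f^{-1}(0)$ and $X' = f^{-1}(1)$ yields $X \cong X' \cong \Spec k$.

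Second, I would pin down the total space. The remark immediately following Definition \ref{defn:aonehcobordism} asserts that the map $f \colon W \to \aone$ underlying any $\aone$-$h$-cobordism is flat. Since $f^{-1}(0) \cong \Spec k$ has dimension zero and $\aone$ is one-dimensional and regular, flatness forces every fiber of $f$ to have dimension zero, so $W$ is one-dimensional. A proper morphism with zero-dimensional fibers is finite, and the degree of the finite flat morphism $f$ equals the length of $f^{-1}(0) = \Spec k$, namely $1$. Therefore $f \colon W \to \aone$ is a finite flat morphism of degree $1$ between smooth integral curves, hence an isomorphism.

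Finally, the resulting $(W,f)$ is isomorphic as a pair to $(\Spec k \times \aone, \mathrm{pr}_{\aone}) = (\aone, \mathrm{id})$, which is precisely the trivial $\aone$-$h$-cobordism on $\Spec k$. The principal obstacle is really the first step: the vanishing of positive-dimensional $\aone$-connected and $\aone$-simply connected smooth proper varieties. In the Nisnevich case Proposition \ref{prop:nontrivialfundamentalgroup} supplies this directly via the Picard group. In the \'etale case the key input is Proposition \ref{prop:etaleaonefundamentalgroupnontrivial}, though one must handle the mild technical point of arranging a $k$-rational point to apply it; base change to a separable closure should resolve this. Once this reduction to points is achieved, the remainder is standard relative dimension theory together with flatness.
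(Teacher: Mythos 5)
Your argument is correct and follows the same two-step strategy the paper sketches: apply Proposition \ref{prop:nontrivialfundamentalgroup} (resp.\ Proposition \ref{prop:etaleaonefundamentalgroupnontrivial}) to force both endpoints to be zero-dimensional and hence isomorphic to $\Spec k$, and then conclude that the cobordism is trivial. The paper declares the second step ``immediate'' from the discussion of Example \ref{ex:aonerigid}; your chain---$f$ flat with zero-dimensional fiber over $0$, hence $W$ a smooth connected curve, hence $f$ finite of degree one, hence an isomorphism---supplies the geometric substance the authors elide.
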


\subsection{The $\aone$-fundamental group of $\pone$ (and related computations)}
We now discuss the computation of the $\aone$-fundamental group of $\pone$; this example, which is the simplest non-trivial case, is studied in great detail in \cite[\S 6.3]{MField}.  The discussion below uses basic properties of homotopy colimits; see \cite[\S 18.1]{Hirschhorn} for definitions and basic properties and \cite{BousfieldKan} for motivation and some useful facts.

To begin, let us first describe the ${\mathbb A}^1$-homotopy type of ${\mathbb P}^1$ ({\em cf.} \cite[\S 3 Corollary 2.18]{MV}).  The usual open cover of ${\mathbb P}^1$ by two copies of the affine line with intersection $\gm$ presents ${\mathbb P}^1$ as a push-out of the following diagram
\[
\aone \longleftarrow \gm \longrightarrow \aone.
\]
The push-out of this diagram can also be computed in the ${\mathbb A}^1$-homotopy category, where up to $\aone$-weak equivalence, it can be replaced by the diagram
\[
\ast \longleftarrow \gm \longrightarrow C(\gm).
\]
Here, $C(\gm) = \gm \wedge \Delta^1_s$ is the cone over $\gm$ (where the simplicial interval $\Delta^1_s$ is pointed by $1$).  The canonical map from the homotopy colimit to the colimit gives a morphism $\Sigma^1_s \gm \to {\mathbb P}^1$ that is an $\aone$-weak equivalence (since either morphism $\gm \hookrightarrow \aone$ is a cofibration, this follows from \cite[{Chapter XII Example 3.1iv}]{BousfieldKan}).  Now, consider Definition \ref{defn:freestronglyaoneinvariant}.

\begin{notation}
Set $F_{\aone}(1) := F_{\aone}(\gm) = \pi_1^{\aone}({\mathbb P}^1)$ where $\gm$ is pointed by $1$.
\end{notation}

The defining property of free strongly $\aone$-invariant sheaves of groups gives rise to a canonical morphism $\theta: \gm \to F_{\aone}(\gm)$.  On the other hand, Proposition \ref{prop:torustorsor} shows that the standard $\gm$-torsor ${\mathbb A}^2 \setminus 0 \to \pone$ induces a short exact sequence of $\aone$-homotopy groups
\begin{equation}
\label{eqn:centralextension}
1 \longrightarrow \pi_1^{\aone}({\mathbb A}^2 \setminus 0) \longrightarrow F_{\aone}(1) \longrightarrow \gm \longrightarrow 1
\end{equation}
that is {\em split} by $\theta$.

The ${\mathbb A}^1$-fundamental group of ${\mathbb A}^2 \setminus 0$ can be studied using an explicit description of its ${\mathbb A}^1$-homotopy type.  We pause to establish a more general result regarding the $\aone$-homotopy type of the complement of a finite set of $k$-points in ${\mathbb A}^n_k$.

\begin{prop}
\label{prop:affinespacesminuspoints}
Suppose $m$ and $n$ are integers with $m > 0$ and $n \geq 2$.
Suppose $p_1,\ldots,p_m \in {\mathbb A}^n_k$ are distinct points.  If $m > 1$, assume further that $k$ is an infinite field.  For any choice of points $q_1,\ldots,q_m \in {\mathbb A}^1_k$, there is an $\aone$-weak equivalence
\[
\Sigma^{n-1}_s \gm^{\wedge n-1} \wedge ({\mathbb A}^1 \setminus \setof{q_1,\ldots,q_m}) \isomto {\mathbb A}^n \setminus \setof{p_1,\ldots,p_m}.
\]
Thus, if $n > 2$, then ${\mathbb A}^n \setminus \setof{p_1,\ldots,p_m}$ is $\aone$-$1$-connected, and
\[
\pi_1^{\aone}({\mathbb A}^2 \setminus \setof{p_1,\ldots,p_m}) \isomto F_{\aone}(\gm^{\wedge n-1} \wedge ({\mathbb A}^1 \setminus \setof{q_1,\ldots,q_m})).
\]
\end{prop}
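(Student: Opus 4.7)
I would establish the stated $\aone$-weak equivalence by induction on $m$, with the base case $m=1$ reducing, after a translation, to the classical identification $\mathbb{A}^n\setminus 0\simeq \Sigma^{n-1}_s\gm^{\wedge n}$. I would prove this identification by induction on $n$, in parallel with the description of the $\aone$-homotopy type of $\pone$ recalled at the start of this section: cover $\mathbb{A}^n\setminus 0$ by the two opens $\{x_n\neq 0\}$ (which $\aone$-retracts onto $\gm$) and $\{(x_1,\ldots,x_{n-1})\neq 0\}$ (which $\aone$-retracts onto $\mathbb{A}^{n-1}\setminus 0$); their intersection $\aone$-retracts onto $\gm\times(\mathbb{A}^{n-1}\setminus 0)$, and the resulting homotopy pushout identifies $\mathbb{A}^n\setminus 0$ with $\Sigma^1_s(\gm\wedge(\mathbb{A}^{n-1}\setminus 0))$. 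Since $\mathbb{A}^1\setminus\{q_1\}\cong\gm$, the case $m=1$ of the proposition follows.

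For the inductive step I would use the infinite-field hypothesis to change coordinates so that all $p_i$ lie on the coordinate line $L=\mathbb{A}^1\times\{0\}\subset\mathbb{A}^n$ with $p_i=(q_i,0,\ldots,0)$; this first requires showing that the $\aone$-homotopy type of $\mathbb{A}^n\setminus\{p_1,\ldots,p_m\}$ depends only on $n$ and $m$, which I would handle by parametrising a family of configurations over $\aone_t$ and invoking $\aone$-invariance on the total space of the complement of the disjoint union of graph sections. Next I would apply the Morel--Voevodsky homotopy purity theorem to the smooth closed embedding $L\setminus\{q_1,\ldots,q_m\}\hookrightarrow \mathbb{A}^n\setminus\{p_1,\ldots,p_m\}$, whose normal bundle is trivial of rank $n-1$. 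This yields a cofibre sequence
\[
\mathbb{A}^n\setminus L \longrightarrow \mathbb{A}^n\setminus\{p_1,\ldots,p_m\} \longrightarrow (\mathbb{A}^1\setminus\{q_1,\ldots,q_m\})_+ \wedge \Sigma^{n-1}_s\gm^{\wedge n-1},
\]
whose left-hand term is $\mathbb{A}^1\times(\mathbb{A}^{n-1}\setminus 0)\simeq \Sigma^{n-2}_s\gm^{\wedge n-1}$ by the base case. After choosing a basepoint of $\mathbb{A}^1\setminus\{q_1,\ldots,q_m\}$ and using the splitting $Y_+\simeq Y\vee S^0$, the right-hand term decomposes as $\Sigma^{n-1}_s(\gm^{\wedge n-1}\wedge(\mathbb{A}^1\setminus\{q_1,\ldots,q_m\}))\vee \Sigma^{n-1}_s\gm^{\wedge n-1}$; identifying the connecting map of the cofibre sequence as an $\aone$-weak equivalence onto this second summand then extracts the asserted identification of $\mathbb{A}^n\setminus\{p_1,\ldots,p_m\}$ with the first summand.

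The ``thus'' assertions then follow immediately: for $n>2$ the space $\Sigma^{n-1}_s\gm^{\wedge n-1}\wedge(\mathbb{A}^1\setminus\{q_1,\ldots,q_m\})$ is simplicially $(n-2)$-connected, hence $\aone$-simply connected by Morel's $\aone$-connectivity theorem; for $n=2$ the space is a single simplicial suspension, and the defining universal property of $F_{\aone}$ recalled in Definition \ref{defn:freestronglyaoneinvariant} gives the claimed identification of $\pi_1^{\aone}(\mathbb{A}^2\setminus\{p_1,\ldots,p_m\})$. The principal obstacle is justifying that the $\aone$-homotopy type is independent of the placement of the points $p_i$: this genuinely uses the infinite-field hypothesis and requires a nontrivial $\aone$-invariance statement for the family of open complements over $\aone_t$. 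Identifying the connecting map in the purity cofibre sequence precisely enough to extract the clean splitting is the secondary technical hurdle.
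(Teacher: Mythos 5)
Your plan diverges from the paper's argument in two ways, one of which is a genuine gap. The paper's proof handles the reduction to a standard configuration with a single citation: Srinivas's theorem (\cite[Theorem 2]{Srinivas}) asserts that for $k$ infinite and $n\ge 2$ the automorphism group of ${\mathbb A}^n$ acts $d$-transitively on $k$-points, so for any $m$ one can choose an automorphism of ${\mathbb A}^n$ moving $p_1,\ldots,p_m$ onto the coordinate line $x_2=\cdots=x_n=0$. This produces an actual isomorphism of schemes between the two open complements; no homotopy-invariance of the configuration is needed, because the statement becomes an equality of $\aone$-homotopy types. Your proposal instead tries to establish this independence by ``parametrising a family of configurations over $\aone_t$ and invoking $\aone$-invariance on the total space of the complement of the disjoint union of graph sections.'' That step does not follow from $\aone$-invariance as stated: flatness of the family does not make the inclusion of a fiber into the total space an $\aone$-weak equivalence (compare the nontrivial work in Proposition~\ref{prop:bundles} or Proposition~\ref{prop:blowups}, where exactly this kind of conclusion has to be earned via \v{C}ech resolutions or Zariski-local structure). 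As written, this is the key missing idea.

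The second deviation is a matter of economy rather than correctness. After the points are on a line, the paper simply covers ${\mathbb A}^n\setminus\{p_1,\ldots,p_m\}$ by $({\mathbb A}^1\setminus\{q_1,\ldots,q_m\})\times{\mathbb A}^{n-1}$ and ${\mathbb A}^1\times({\mathbb A}^{n-1}\setminus 0)$; the Mayer--Vietoris pushout is then, up to $\aone$-weak equivalence, the join $({\mathbb A}^1\setminus\{q_1,\ldots,q_m\})\ast({\mathbb A}^{n-1}\setminus 0)\simeq\Sigma^1_s\bigl(({\mathbb A}^1\setminus\{q_1,\ldots,q_m\})\wedge({\mathbb A}^{n-1}\setminus 0)\bigr)$, and the displayed formula follows by induction on $n$. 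Your purity cofiber sequence is the ``excision half'' of this Mayer--Vietoris square, but to recover $X$ from the cofiber you would have to split the sequence by analysing the connecting map unstably, a delicate point you flag but do not resolve; the join argument sidesteps this entirely by using the full pushout square rather than its quotient. The ``thus'' part of your write-up (Morel's $\aone$-connectivity theorem for $n>2$, and the universal property of $F_{\aone}$ for $n=2$) agrees with the paper.
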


\begin{proof}
The second statement follows from the first by the unstable $\aone$-$n$-connectivity theorem \cite[Theorem 5.36]{MField}.  One can show that aforementioned result follows (essentially) formally from the Theorem \ref{thm:strongaoneinvaranceofpi1}.  The last statement follows from the definition of the free strongly $\aone$-invariant sheaf of groups on a sheaf of sets.

Choose coordinates $x_1,\ldots,x_n$ on ${\mathbb A}^n$.  If $k$ is infinite, for $n \geq 2$, the automorphism group of ${\mathbb A}^n$ acts $d$-transitively on ${\mathbb A}^n$ for any $d > 1$, i.e., any set of $d$-points can be moved to any other set of $d$-points by an automorphism; this follows immediately from \cite[Theorem 2]{Srinivas}.  Choose an automorphism that moves $p_1,\ldots,p_m$ to the points where $x_1 = q_i$ and $x_2 = \cdots = x_n = 0$.  Note that if $m = 1$, then the discussion of this paragraph is unnecessary.

We can then cover ${\mathbb A}^{n} \setminus \setof{p_1,\ldots,p_m}$ by the two open sets ${\mathbb A}^1 \setminus \setof{q_1,\ldots,q_m} \times {\mathbb A}^{n-1}$ and ${\mathbb A}^1 \times {\mathbb A}^{n-1} \setminus 0$.  This open cover realizes ${\mathbb A}^n \setminus \setof{p_1,\ldots,p_m}$ as the homotopy pushout of the diagram
\[
{\mathbb A}^1 \setminus \setof{q_1,\ldots,q_m} \longleftarrow {\mathbb A}^1 \setminus \setof{q_1,\ldots,q_m} \times {\mathbb A}^{n-1} \setminus 0 \longrightarrow {\mathbb A}^{n-1} \setminus 0
\]
computed in the $\aone$-homotopy category.  This homotopy pushout is better known as the {\em join} ${\mathbb A}^1 \setminus \setof{q_1,\ldots,q_m}$ and  ${\mathbb A}^{n-1} \setminus 0$, and an argument using cones as above can be used to identify this space as $\Sigma^1_s {\mathbb A}^1 \setminus \setof{q_1,\ldots,q_m} \wedge ({\mathbb A}^{n-1} \setminus 0)$.  The result then follows from a straightforward induction.
\end{proof}

\begin{rem}
The case of ${\mathbb A}^n \setminus 0$ of the above result is contained in \cite[Theorem 5.38]{MField}.  According to our above definitions, $\pi_1^{\aone}({\mathbb A}^2 \setminus 0)$ is the free strongly $\aone$-invariant sheaf of groups generated by $\gm \wedge \gm$; this group is sometimes denoted $F_{\aone}(2)$, but we will now describe it more explicitly.
\end{rem}

In classical topology, one knows that the fundamental group of a topological group is abelian; the next result proves the corresponding result in $\aone$-homotopy theory.

\begin{lem}
\label{lem:fundamentalgroupofagroupisabelian}
If $G$ is a Nisnevich sheaf of groups, the canonical map $G \to {\bf R}\Omega^1_s BG$ is a simplicial weak equivalence.  Thus, $\pi_1^{\aone}(G,1)$ is always an abelian sheaf of groups.
\end{lem}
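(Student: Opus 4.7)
The plan is first to establish the simplicial equivalence $G \isomto {\bf R}\Omega^1_s BG$, and then to deduce abelianness of $\pi_1^{\aone}(G,1)$ by an Eckmann--Hilton argument. Both statements are analogs of classical facts from pointed simplicial homotopy theory, and the main work is to translate them into the simplicial Nisnevich sheaf setting.

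For the first statement, I would use the description $BG = EG/G$ recalled just before the lemma, together with the fact that $EG$ is the \u Cech object associated with the epimorphism $G \to \Spec k$ and is therefore simplicially contractible. The quotient map $EG \to BG$ is the universal $G$-torsor, and standard simplicial torsor theory shows that, after passing to a simplicially fibrant model, this map is a Kan fibration of simplicial sheaves whose fiber over the canonical basepoint of $BG$ is $G$. The long exact sequence of simplicial homotopy sheaves, combined with $\pi_i^s(EG) = \ast$ for all $i$, then identifies the canonical morphism $G \to {\bf R}\Omega^1_s BG$ as a simplicial weak equivalence.

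For the second statement, I would invoke Eckmann--Hilton. The sheaf $G$ is a group object in pointed simplicial Nisnevich sheaves with unit $1$, equipped with multiplication $m: G \times G \to G$, inversion, and unit. The functor $\pi_1^{\aone}(-, 1)$ preserves finite products of pointed spaces, since the $\aone$-model structure on simplicial Nisnevich sheaves is cartesian: products of $\aone$-fibrant (i.e., $\aone$-local simplicially fibrant) objects are $\aone$-fibrant, and products preserve $\aone$-weak equivalences in the ranges needed. Consequently, $m$ endows $\pi_1^{\aone}(G,1)$ with a second binary operation that shares a unit with, and is a homomorphism for, the usual loop concatenation; the classical middle-four interchange law then forces the two operations to coincide and be commutative.

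The main technical point is therefore the compatibility of $\pi_1^{\aone}(-,1)$ with finite products; once this is granted the Eckmann--Hilton step is formal. The apparent alternative of reading abelianness off from an identification $\pi_1^{\aone}({\bf R}\Omega^1_s BG) \cong \pi_2^{\aone}(BG)$ after applying $L_{\aone}$ to the first part is obstructed by the fact that $\Omega^1_s$ does not commute with $\aone$-localization unless $BG$ is already $\aone$-local, so the group-object approach via products seems cleanest.
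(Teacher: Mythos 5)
Your proof is correct, and it supplies substantially more detail than the paper does. The paper's entire proof is the single sentence ``Checking on stalks, one reduces the following result to the corresponding statement for simplicial sets,'' which addresses the first assertion and leaves the derivation of the second (``Thus, $\pi_1^{\aone}(G,1)$ is abelian'') entirely unexplained. Your fibration-sequence argument for the first part is essentially the stalk-wise reduction made explicit: after passing to stalks the claim becomes the classical fact that for a discrete group $H$, the canonical map $H \to \Omega BH$ is a weak equivalence, and the \u{C}ech fibration $EG \to BG$ is the standard mechanism for seeing that. For the second part you supply the argument the paper elides: since $G$ is a group object in $\Spc_{k,\bullet}$ and the $\aone$-local model structure is cartesian (products of $\aone$-local simplicially fibrant objects are $\aone$-local, and products of $\aone$-weak equivalences are $\aone$-weak equivalences --- the latter being \cite[\S 2 Lemma 2.15]{MV}, which the paper uses elsewhere), the multiplication $m$ induces a second unital operation on each $[S^1_s \wedge U_+, G]_{\aone}$ interchanging with loop concatenation, and Eckmann--Hilton forces commutativity. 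Your observation that the na\"ive alternative --- deducing abelianness from an identification of $\pi_1^{\aone}({\bf R}\Omega^1_s BG)$ with $\pi_2^{\aone}(BG)$ --- is obstructed because $\Omega^1_s$ does not commute with $L_{\aone}$ unless $BG$ is already $\aone$-local is a correct and worthwhile point; the group-object route you take sidesteps this entirely and in fact makes the ``Thus'' in the lemma statement a slight red herring, since the Eckmann--Hilton argument does not actually need the first assertion. The one point where your write-up is a bit loose is the phrase ``after passing to a simplicially fibrant model, this map is a Kan fibration whose fiber \emph{is} $G$''; strictly one gets that the homotopy fiber is weakly equivalent to $G$, which is all that is needed and is most cleanly checked stalk-wise --- exactly the paper's shortcut.
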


\begin{proof}
Checking on stalks, one reduces the following result to the corresponding statement for simplicial sets.
\end{proof}

There is a projection morphism $SL_2 \to {\mathbb A}^2 \setminus 0$ that is an $\aone$-weak equivalence (being Zariski locally trivial with affine space fibers).  By Lemma \ref{lem:fundamentalgroupofagroupisabelian}, $F_{\aone}(2)$ is abelian.  It is closely related to both Milnor K-theory and Witt groups as explained in \cite[\S 2]{MField}, where a completely explicit presentation via ``symbols" (generators and relations) is given, and can be identified with the sheaf of second Milnor-Witt K-theory groups.

\begin{notation}
\label{notation:secondmilnorwittktheorysheaf}
Set $F_{\aone}(2) := F_{\aone}(\gm \wedge \gm) := \K^{MW}_2$, where $\gm$ is pointed by $1$.
\end{notation}

In any case, $F_{\aone}(1)$ fits into a split short exact sequence of the form
\[
1 \longrightarrow \K^{MW}_2 \longrightarrow F_{\aone}(1) \longrightarrow \gm \longrightarrow 1.
\]
Theorem 6.29 of \cite{MField} demonstrates that this short exact sequence is in fact a {\em central extension}.  As a sheaf of sets $F_{\aone}(1)$ is a product $\K^{MW}_2 \times \gm$ and we can be extremely explicit about the group structure on this sheaf of sets.

We will need a few pieces of notation about the sheaf $\K^{MW}_2$.  There is a canonical {\em symbol} morphism
\[
\Phi: \gm \times \gm \longrightarrow \K^{MW}_2
\]
obtained via composition of the projection $\gm \times \gm \to \gm \wedge \gm$ and the canonical morphism $\gm \wedge \gm \to \pi_1^{\aone}(\Sigma^1_s \gm \wedge \gm)$ described above.  Given a Henselian local scheme $S$, and sections $a,b \in \gm(S)$, we write $[a][b]$ for the image in $\K^{MW}_2(S)$.  The symbol morphism is, up to an explicit automorphism of $\K^{MW}_2$, related to the morphism $\theta$ by the following formula (\cite[Theorem 6.29 and Remark 6.30]{MField}):
\[
[a][b] = \Phi(a,b) = \theta(a)\theta(b)\theta(ab)^{-1}.
\]
This formula provides an explicit description of the multiplication on $\K^{MW}_2 \times \gm$ giving $F_{\aone}(1)$ its group structure.

\subsection[Preliminary computations for surfaces]{$\aone$-fundamental groups of surfaces I: Hirzebruch surfaces}
The Hirzebruch surface ${\mathbb F}_a$ is isomorphic to ${\mathbb P}(\O \oplus \O(-a))$ and comes equipped with a structure morphism ${\mathbb F}_a \to \pone$ admitting a section.  This morphism induces (split) group homomorphisms
\[
\pi_1^{\aone}({\mathbb F}_a) \longrightarrow F_{\aone}(1)
\]
for any integer $a$.  Pulling back the structure morphism along the $\gm$-torsor ${\mathbb A}^2 \setminus 0 \to \pone$ produces a trivial bundle of the form ${\mathbb A}^2 \setminus 0 \times \pone$.  Let $V$ be the $2$-dimensional representation of $\gm$ defined by the action $v \cdot (x_1,x_2) = (v^0x_1,v^{a}x_2)$.  This action induces an action of $\gm$ on ${\mathbb A}^2$ commuting with the scaling action.  Thus, the above action induces a $\gm$-action on ${\mathbb P}^1 = {\mathbb P}(V)$.  Furthermore, this $\gm$-action preserves the point with homogeneous coordinates $[1,0]$, which we refer to as $\infty$.  The induced map $\gm \to Aut({\mathbb P}^1)$ gives a morphism of sheaves
\[
\gm \longrightarrow Aut(F_{\aone}(1)).
\]
Note that $\gm$ also acts on ${\mathbb A}^2 \setminus 0$, in a manner inducing the central extension of Equation \ref{eqn:centralextension}.  The inclusion of a fiber $\pone$ (say over the image of a chosen base-point) then gives a morphism $F_{\aone}(1) \to \pi_1^{\aone}({\mathbb F}_a)$, and the aforementioned discussion shows that one has a split short exact sequence of groups
\[
1 \longrightarrow F_{\aone}(1) \longrightarrow \pi_1^{\aone}({\mathbb F}_a) \longrightarrow F_{\aone}(1) \longrightarrow 1.
\]
Our discussion of the actions shows that the action of $F_{\aone}(1)$ on itself factors through the quotient map $F_{\aone}(1) \to \gm \to Aut(F_{\aone}(1))$, and this last map is completely determined by the integer $a$.  We write
\[
\pi_1^{\aone}({\mathbb F}_a) := F_{\aone}(1) \rtimes^a F_{\aone}(1).
\]
With this notation in place, we can state the first computation.

\begin{prop}
\label{prop:hirzebruchfundamental}
We have isomorphisms of sheaves of groups
\[
\pi_1^{\aone}({\mathbb F}_{a}) \isomto \begin{cases}
F_{\aone}(1) \times F_{\aone}(1) & \text{ if } a \text{ is even, and}\\
F_{\aone}(1) \rtimes^1 F_{\aone}(1) & \text{ if } a \text{ is odd}.
\end{cases}
\]
Furthermore, the sheaves of groups $F_{\aone}(1) \times F_{\aone}(1)$ and $F_{\aone}(1) \rtimes^1 F_{\aone}(1)$ are not isomorphic.
\end{prop}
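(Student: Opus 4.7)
The statement naturally splits into two parts: the chain of isomorphisms and the final non-isomorphism. My plan treats them separately, with the second being the main challenge.

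For the isomorphisms, the plan is to use Lemma \ref{lem:mod2}, which gives an $\aone$-weak equivalence ${\mathbb F}_a \simeq {\mathbb F}_{a'}$ whenever $a \equiv a' \pmod 2$, to reduce to the boundary cases $a = 0$ and $a = 1$. The $a = 0$ case gives ${\mathbb F}_0 = \pone \times \pone$ and hence $\pi_1^{\aone}({\mathbb F}_0) = F_{\aone}(1) \times F_{\aone}(1)$ by the compatibility of $\pi_1^{\aone}$ with finite products of pointed $\aone$-connected spaces; the $a = 1$ case is, by the discussion preceding the proposition, essentially the definition of $F_{\aone}(1) \rtimes^1 F_{\aone}(1)$.

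To prove the non-isomorphism I would distinguish the two sheaves of groups by their centers (as sheaves of groups). For the direct product, $Z(F_{\aone}(1) \times F_{\aone}(1)) = Z(F_{\aone}(1)) \times Z(F_{\aone}(1))$, which contains $\K^{MW}_2 \times \K^{MW}_2$ since the extension $1 \to \K^{MW}_2 \to F_{\aone}(1) \to \gm \to 1$ is central. Using that in the semidirect product the conjugation action of $F_{\aone}(1)$ on itself factors through the abelian quotient $\gm$, a direct calculation shows that $(z_1, z_2)$ is central precisely when $z_1, z_2 \in Z(F_{\aone}(1))$ and $z_1$ is fixed by the induced $\gm$-action on $Z(F_{\aone}(1))$. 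Thus the two centers are isomorphic as sheaves of groups only if this induced $\gm$-action on $Z(F_{\aone}(1)) \supseteq \K^{MW}_2$ is trivial. Note that the abelianization cannot distinguish the two groups, because any $\gm$-action on $F_{\aone}(1)^{ab} = \gm$ by sheaf-automorphisms must factor through $Aut(\gm)$, hence is trivial.

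The key technical step, and expected main obstacle, is to verify that the induced $\gm$-action on $\K^{MW}_2$ is non-trivial. Using the geometric model of the central extension afforded by the $\gm$-torsor ${\mathbb A}^2 \setminus 0 \to \pone$ (see Equation \ref{eqn:centralextension}), I would lift the weight-$1$ $\gm$-action on $\pone$ to ${\mathbb A}^2 \setminus 0$ and track its effect on the generating symbols $[a][b] = \theta(a)\theta(b)\theta(ab)^{-1}$. I expect the action to correspond to multiplication by the class $\langle v \rangle$ in the Grothendieck--Witt sheaf, which is non-trivial whenever the base admits non-square elements, establishing the claim. A fallback, should this direct computation prove delicate, is to take sections over a carefully chosen test field $L$ and distinguish the resulting discrete groups via a more accessible invariant such as the structure of their commutator subgroups.
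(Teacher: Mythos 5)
Your handling of the isomorphisms is the same as the paper's: reduce to $a=0,1$ via Lemma \ref{lem:mod2} and apply the preceding discussion of the split extension to identify the two cases. For the non-isomorphism, however, you take a genuinely different route. The paper's proof invokes the explicit description from Morel's work of $Aut(F_{\aone}(1))$ as the sheaf of units in $\Z\oplus\K^{MW}_1$, observes that $u\mapsto(1,[u])$ sends the $\gm$-action to a visibly non-trivial automorphism for $a=1$ (citing \cite[Corollary 4.34, Remark 4.31]{MField}), and stops there; you instead propose to distinguish the two sheaves of groups by their centers and then verify non-triviality of the induced $\gm$-action on $\K^{MW}_2$ directly from the symbol formula. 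Your approach has the advantage of naming an explicit invariant, something the paper's terse argument leaves implicit, and of being more self-contained (less dependence on the precise form of \cite[Corollary 4.34]{MField}).

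There are, however, two gaps to close. First, your characterization of $Z(F_{\aone}(1)\rtimes^1 F_{\aone}(1))$ is slightly off: one also needs the component coming from the acting factor to lie in $\ker(F_{\aone}(1)\to\gm\to Aut(F_{\aone}(1)))$, not merely in $Z(F_{\aone}(1))$. In this case $Z(F_{\aone}(1))=\K^{MW}_2$ is contained in the kernel of the $\gm$-quotient, so the conclusion happens to be correct, but the computation deserves care. Second, and more seriously, having identified the two centers as $\K^{MW}_2\times\K^{MW}_2$ and $(\K^{MW}_2)^{\gm}\times\K^{MW}_2$, it does not follow merely from $(\K^{MW}_2)^{\gm}$ being a \emph{proper} subsheaf that the two products are non-isomorphic as abstract sheaves of abelian groups; one could imagine a non-obvious isomorphism. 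Your fallback --- comparing sections over a well-chosen test field $L$ (e.g., $L=k(t)$ so that non-squares exist) and distinguishing the discrete groups there --- is exactly what is needed to close this gap, so it should be promoted from a contingency plan to an integral step. Once those two points are addressed, the argument is sound.
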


\begin{proof}
The first statement follows immediately from the proof of Lemma \ref{lem:mod2}; the isomorphisms of $\aone$-fundamental groups are induced by inclusions in the appropriate $\aone$-$h$-cobordisms.

To establish the second statement, we study the morphism of sheaves $\gm \to Aut(F_{\aone}(1))$ in more detail.  For any finitely generated separable field extension $L/k$ and any element $u \in L^*$, consider the map $\pone \to \pone$ defined on homogeneous coordinates by $[1,u^a]$.  This preserves the point with homogeneous coordinates $[1,0]$, which we called $\infty$, and induces the map $F_{\aone}(1)(L) \to F_{\aone}(1)(L)$ that we'd like to study.  If $a = 0$, this map is the trivial map.

For $a \neq 0$, we use the identification of \cite[Corollary 6.34]{MField}.  Indeed, $Aut(F_{\aone}(1))$ can be identified with the sheaf of units in $\Z \oplus \K^{MW}_1$.  Now, for any finitely generated separable extension $L/k$, and any $u \in L^*$, the map $L^* = \gm(L) \to \Z \oplus \K^{MW}_1(L)$, which is not a morphism of sheaves of groups, is given by the formula $u \mapsto (1,[u])$.   We thus want to compute the action of the element $(1,[u^a])$ by conjugation on an element of $F_{\aone}(1)$.  In case $a = 1$, this is exactly the action mentioned in Remark 6.31 of \cite{MField} and, in particular, not the trivial action.
\end{proof}

\subsection[Presentations of $\aone$-fundamental groups]{$\aone$-fundamental groups of surfaces II: blow-ups and presentations}
We now study the $\aone$-fundamental groups of blow-ups of smooth schemes that are covered by affine spaces.  Before we proceed, we recall that for any integer $n > 1$ and arbitrary choices of base-point, Proposition \ref{prop:affinespacesminuspoints} shows $\pi_1^{\aone}({\mathbb A}^{n+1} \setminus 0) = 1$; using Proposition \ref{prop:torustorsor} we deduce that $\pi_1^{\aone}({\mathbb P}^n) = \gm$.

\begin{prop}
\label{prop:fundamentalgroupblowup}
Suppose $X$ is a smooth $k$-variety of dimension $n \geq 2$ that is covered by affine spaces and $x \in X(k)$ is a $k$-point.  In case $n = 2$, we have an isomorphism $\pi_1^{\aone}(X \setminus x) \star^{\aone}_{\K^{MW}_2} e \isomt \pi_1^{\aone}(X)$.  If $n > 2$, the open immersion $X \setminus x \hookrightarrow X$ induces an isomorphism $\pi_1^{\aone}(X \setminus x) \isomt \pi_1^{\aone}(X)$.  Furthermore, we have isomorphisms:
\[
\pi_1^{\aone}({\sf Bl}_x(X)) \cong \begin{cases}
\pi_1^{\aone}(X \setminus x) \star^{\aone}_{\K^{MW}_2} F_{\aone}(1)  & \text{ if } n = 2, \text{ and } \\
\pi_1^{\aone}(X) \star^{\aone} \gm & \text{ if } n > 2.
\end{cases}
\]
\end{prop}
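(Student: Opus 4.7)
The strategy is to apply the $\aone$-van Kampen theorem (Theorem \ref{thm:aonevanKampen}) to suitable two-element open covers, first of $X$ itself and then of ${\sf Bl}_x(X)$, identifying each factor using the $\aone$-homotopy computations for affine, punctured affine, and projective spaces already at our disposal. Since $X$ is covered by affine spaces, some chart $U \subset X$ with $U \cong {\mathbb A}^n$ contains $x$, and we may arrange for $x$ to correspond to the origin.

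For the comparison between $\pi_1^{\aone}(X)$ and $\pi_1^{\aone}(X \setminus x)$, the relevant cover is $\{X \setminus x,\,U\}$, whose intersection is $U \setminus x \cong {\mathbb A}^n \setminus 0$. By Proposition \ref{prop:affinespacesminuspoints}, ${\mathbb A}^n \setminus 0$ is $\aone$-weakly equivalent to $\Sigma^{n-1}_s \gm^{\wedge n}$, so $\pi_1^{\aone}({\mathbb A}^2 \setminus 0) = \K^{MW}_2$ (Notation \ref{notation:secondmilnorwittktheorysheaf}), while ${\mathbb A}^n \setminus 0$ is $\aone$-simply connected for $n > 2$. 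Combined with $\pi_1^{\aone}(U) = e$, Theorem \ref{thm:aonevanKampen} yields precisely the two formulas stated.

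For the blow-up, I would use the cover $\{{\sf Bl}_x(X) \setminus E,\,\beta^{-1}(U)\}$, where $\beta: {\sf Bl}_x(X) \to X$ denotes the blow-up morphism and $E = \beta^{-1}(x) \cong {\mathbb P}^{n-1}$ is the exceptional divisor. The first piece is canonically isomorphic to $X \setminus x$, while the second is ${\sf Bl}_0({\mathbb A}^n)$, the total space of the tautological line bundle $\O_{{\mathbb P}^{n-1}}(-1)$ over ${\mathbb P}^{n-1}$, and hence $\aone$-weakly equivalent to ${\mathbb P}^{n-1}$ via the bundle projection. Their intersection is again ${\mathbb A}^n \setminus 0$. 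Using $\pi_1^{\aone}({\mathbb P}^1) = F_{\aone}(1)$ directly, and $\pi_1^{\aone}({\mathbb P}^{n-1}) = \gm$ (for $n > 2$) via the $\gm$-torsor ${\mathbb A}^n \setminus 0 \to {\mathbb P}^{n-1}$ together with Proposition \ref{prop:torustorsor}, Theorem \ref{thm:aonevanKampen} produces the remaining two isomorphisms; the $n > 2$ case further simplifies via the identification $\pi_1^{\aone}(X \setminus x) \cong \pi_1^{\aone}(X)$ established in the previous step.

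The main obstacle will be verifying the $\aone$-connectedness hypotheses required by Theorem \ref{thm:aonevanKampen}, chiefly for $X \setminus x$. I plan to check this by restricting the given affine cover of $X$: each chart becomes either ${\mathbb A}^n$ (if it did not contain $x$) or ${\mathbb A}^n$ minus a point (if it did), each of which is $\aone$-chain connected for $n \geq 2$ by an elementary chain-of-lines argument, and the pairwise intersection conditions of Definition \ref{defn:combinatorial} remain satisfied after removing a codimension-$n$ point. Lemma \ref{lem:combinatorialconnected}, or a direct adaptation of its proof, then yields $\aone$-chain connectedness of $X \setminus x$, and hence $\aone$-connectedness by Proposition \ref{prop:aoneconnected}. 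The remaining identifications, namely ${\sf Bl}_0({\mathbb A}^n) \simeq {\mathbb P}^{n-1}$ and the $\aone$-fundamental group computations for projective spaces, are standard or reduce to results already recalled in this section.
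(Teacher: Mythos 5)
Your proposal matches the paper's proof essentially step for step: cover $X$ by the chart ${\mathbb A}^n$ containing $x$ and by $X \setminus x$ with intersection ${\mathbb A}^n \setminus 0$, apply $\aone$-van Kampen together with Proposition \ref{prop:affinespacesminuspoints}, then repeat on ${\sf Bl}_x(X)$ using the Mayer–Vietoris square built from ${\sf Bl}_0({\mathbb A}^n) \simeq {\mathbb P}^{n-1}$ and $X \setminus x$, invoking Proposition \ref{prop:torustorsor} (and the $\pone$ computation) for the exceptional factor; the $\aone$-connectedness of $X \setminus x$ is likewise verified by restricting the affine-space cover. The only trivial deviation is the sign of the twist on the blow-up of affine space (you correctly say $\O_{{\mathbb P}^{n-1}}(-1)$, the tautological subbundle, whereas the paper writes $\O_{{\mathbb P}^{n-1}}(1)$), which is immaterial since both produce a line bundle over ${\mathbb P}^{n-1}$ and hence an $\aone$-weak equivalence with the base.
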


\begin{proof}
Suppose $X$ is a smooth $k$-variety of dimension $n$ covered by affine spaces, where $n \geq 2$.  Either $X \cong {\mathbb A}^n$, or we can cover $X$ by two open sets, the first isomorphic to ${\mathbb A}^n$, the second isomorphic to $X \setminus x$, and having intersection ${\mathbb A}^n \setminus 0$ (after applying an automorphism of affine space if necessary).  Note that ${\mathbb A}^n \setminus 0$ and ${\mathbb A}^n$ are $\aone$-connected (e.g., they are both $\aone$-chain connected).  Using this, the space $X \setminus x$ is $\aone$-connected since it admits an open cover by $\aone$-chain connected open sets (namely open sets isomorphic to ${\mathbb A}^n \setminus 0$ and ${\mathbb A}^n$).

We know that $\pi_1^{\aone}({\mathbb A}^n \setminus 0)$ is trivial if $n > 2$ and isomorphic to $\K^{MW}_2$ (for any choice of base-point) for $n = 2$; this establishes the first part of the proposition in case $X = {\mathbb A}^n$.  In general, we can write $X$ as the push-out of the diagram
\[
\xymatrix{
{\mathbb A}^n \setminus 0 \ar[r]\ar[d] & {\mathbb A}^n \ar[d] \\
X \setminus x \ar[r] & X
}
\]
Now, since ${\mathbb A}^n$ is $\aone$-contractible and hence has trivial $\aone$-fundamental group, the $\aone$-van Kampen theorem (\ref{thm:aonevanKampen}) gives us an isomorphism of the form
\[
\pi_1^{\aone}(X \setminus x) \star^{\aone}_{\pi_1^{\aone}({\mathbb A}^n \setminus 0)} e \isomto \pi_1^{\aone}(X).
\]
A straightforward computation involving the definition of amalgamated sums and Proposition \ref{prop:affinespacesminuspoints} shows that if $n > 2$, then the open immersion $X \setminus x \to X$ gives an isomorphism $\pi_1^{\aone}(X \setminus x) \isomt \pi_1^{\aone}(X)$.  On the other hand, if $n = 2$, using Notation \ref{notation:secondmilnorwittktheorysheaf}, we get an isomorphism
\[
\pi_1^{\aone}(X \setminus x) \star^{\aone}_{\K^{MW}_2} e \isomto \pi_1^{\aone}(X),
\]
which establishes the first part of our statement.

Now, consider ${\sf Bl}_x(X)$.  If $X \cong {\mathbb A}^n$, one knows that ${\sf Bl}_0({\mathbb A}^n)$ is isomorphic to the total space of the line bundle associated with the locally free sheaf $\O_{{\mathbb P}^{n-1}}(1)$ over ${\mathbb P}^{n-1}$.  Thus ${\sf Bl}_0({\mathbb A}^n)$ is $\aone$-weakly equivalent to ${\mathbb P}^{n-1}$, and we deduce that $\pi_1^{\aone}({\sf Bl}_0({\mathbb A}^n))$ is $\gm$ if $n > 2$ and $F_{\aone}(1)$ if $n = 2$.

If $X$ is covered by more than one copy of affine space, using the open cover above, together with the fact that blowing up is Zariski local, we get a Mayer-Vietoris diagram of the form
\[
\xymatrix{
{\mathbb A}^n \setminus 0 \ar[r]\ar[d] & {\sf Bl}_0{\mathbb A}^n \ar[d] \\
X \setminus x \ar[r] & {\sf Bl}_xX.
}
\]

{\em Case $n > 2$.} If $n > 2$, then we know that $\pi_1^{\aone}({\mathbb A}^n \setminus 0)$ is trivial.  The $\aone$-van Kampen theorem then provides an isomorphism
\[
\pi_1^{\aone}({\sf Bl}_{0}({\mathbb A}^n)) \star^{\aone} \pi_1^{\aone}(X \setminus x) \isomto \pi_1^{\aone}({\sf Bl}_x(X)).
\]
Thus, by the discussion in the case of ${\mathbb A}^n$, it follows that $\pi_1^{\aone}({\sf Bl}_x(X))$ is isomorphic to the amalgamated sum $\pi_1^{\aone}(X \setminus x) \star^{\aone} \gm$.  Finally, the first part of the proposition allows us to conclude that $\pi_1^{\aone}(X \setminus x) \isomt \pi_1^{\aone}(X)$.

{\em Case $n = 2$.} For $n = 2$, we know $\pi_1^{\aone}({\mathbb A}^2 \setminus 0) = \K^{MW}_2$, which we recall is abelian by Lemma \ref{lem:fundamentalgroupofagroupisabelian}.  Since, ${\sf Bl}_0({\mathbb A}^2)$ is $\aone$-weakly equivalent to $\pone$, we know $\pi_1^{\aone}({\sf Bl}_0({\mathbb A}^2)) \cong F_{\aone}(1)$.
\end{proof}

\begin{cor}
\label{cor:pnblowups}
Suppose $m$ is an integer $\geq 3$.  For distinct points $x_1,\ldots,x_n \in {\mathbb P}^m(k)$, we have an isomorphism of strongly $\aone$-invariant sheaves of groups:
\[
\gm \star^{\aone} \cdots \star^{\aone} \gm \stackrel{\sim}{\longrightarrow}
 \pi_1^{\aone}({\sf Bl}_{x_1,\ldots,x_n}({\mathbb P}^m)),
\]
where the amalgamated sum on the left hand side has $n$-factors of $\gm$.
\end{cor}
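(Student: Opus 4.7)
The plan is to establish the corollary by induction on $n$, iteratively invoking the blow-up formula of Proposition \ref{prop:fundamentalgroupblowup} in its high-dimensional regime. Since $m \geq 3$, each intermediate variety has dimension $m > 2$, placing us squarely in the second case of that proposition, where blowing up a $k$-point amalgamates a single copy of $\gm$ onto the $\aone$-fundamental group without any $\K^{MW}_2$-interference.

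For the base of the induction, one needs $\pi_1^{\aone}({\mathbb P}^m) = \gm$. This was recorded just before Proposition \ref{prop:fundamentalgroupblowup} and follows from Proposition \ref{prop:torustorsor} applied to the standard $\gm$-torsor ${\mathbb A}^{m+1} \setminus 0 \to {\mathbb P}^m$, combined with the $\aone$-$1$-connectedness of ${\mathbb A}^{m+1} \setminus 0$ for $m \geq 2$ supplied by Proposition \ref{prop:affinespacesminuspoints}. For the inductive step, set $X_j := {\sf Bl}_{x_1,\ldots,x_j}({\mathbb P}^m)$ and note that the blow-up $X_{j+1} \to X_j$ at the $k$-point $x_{j+1}$ satisfies the hypotheses of Proposition \ref{prop:fundamentalgroupblowup} (once the covered-by-affine-spaces condition is verified; see below). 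Since $\dim X_j = m \geq 3$, that proposition yields
\[
\pi_1^{\aone}(X_{j+1}) \;\cong\; \pi_1^{\aone}(X_j) \star^{\aone} \gm,
\]
and feeding this back through the inductive hypothesis appends one more factor of $\gm$ to the amalgamated sum at each stage, producing the claimed description.

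The main obstacle to be addressed is verifying at each step that the iterated blow-up $X_j$ remains covered by affine spaces in the sense of Definition \ref{defn:combinatorial}, since this is a hypothesis of Proposition \ref{prop:fundamentalgroupblowup}. The variety ${\mathbb P}^m$ is certainly so covered, and the operation of blowing up a single $k$-point is Zariski-local, so one reduces to checking that ${\sf Bl}_0({\mathbb A}^m)$ admits such a cover: indeed, ${\sf Bl}_0({\mathbb A}^m)$ is the total space of $\mathcal{O}_{{\mathbb P}^{m-1}}(-1)$, a smooth toric variety with its standard toric cover by $m$ copies of ${\mathbb A}^m$ whose pairwise intersections contain toric $k$-fixed points. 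Gluing this local picture (around each $x_i$) into an affine cover of the ambient $X_{j-1}$ realizes $X_j$ as covered by copies of ${\mathbb A}^m$ satisfying Definition \ref{defn:combinatorial}.

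With this combinatorial verification in hand, the induction proceeds mechanically, and each application of Proposition \ref{prop:fundamentalgroupblowup} produces the next factor of $\gm$ in the amalgamated sum. A subsidiary bookkeeping point, to be dispatched in passing, is that the choice of base-point is immaterial since the varieties in question are $\aone$-connected (they are rational smooth proper varieties covered by affine spaces, hence $\aone$-chain connected by Lemma \ref{lem:combinatorialconnected} and thus $\aone$-connected by Proposition \ref{prop:aoneconnected}), so conjugation by paths identifies the $\aone$-fundamental groups at different base-points.
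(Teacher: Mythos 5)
Your proof matches the paper's argument exactly: the paper dispatches this corollary with a one-line statement that it follows by induction from Proposition \ref{prop:fundamentalgroupblowup}, and you have correctly fleshed out the induction, identified that $m\geq 3$ places every step in the $n>2$ branch of that proposition, and supplied the (implicitly assumed) verification that iterated blow-ups remain covered by affine spaces, a stability the paper notes only in passing in the proof of Proposition \ref{prop:movingpointsarbitraryfield}. One small imprecision: the pairwise intersections of charts in the standard toric cover of ${\sf Bl}_0({\mathbb A}^m)$ do not contain torus-fixed points (each torus-fixed point lies in a unique maximal-cone chart); what Definition \ref{defn:combinatorial} actually requires is only that the intersections have a $k$-rational point, which they do, and which is automatic when $k$ is infinite.
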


\begin{proof}
This follows immediately by induction from Proposition \ref{prop:fundamentalgroupblowup}.
\end{proof}

\begin{ex}
We can be somewhat more explicit about the structure of some of the above amalgamated products.  For example, for any point $x \in {\mathbb P}^m(k)$, there is a $\gm^{\times 2}$-torsor ${\mathbb A}^{m} \setminus 0 \times {\mathbb A}^2 \setminus 0 \to {\sf Bl}_x({\mathbb P}^m)$.  In particular, if $m > 2$, Proposition \ref{prop:torustorsor} shows that this $\gm^{\times 2}$-torsor gives rise to an exact sequence of the form
\[
1 \longrightarrow \K^{MW}_2 \longrightarrow \gm \star^{\aone} \gm \longrightarrow \gm \times \gm \longrightarrow 1.
\]
More generally, for any integer $n > 2$, $\aone$-covering space theory of \cite[\S 6.1]{MField} can be used to construct an epimorphism:
\[
\gm \star^{\aone} \cdots \star^{\aone} \gm \longrightarrow \gm \times \cdots \times \gm,
\]
where both sides contain $n$ copies of $\gm$.  Precisely, one can check that the blow-up of $n$-points of ${\mathbb P}^m$ has an $\aone$-covering space corresponding to a torsor under the torus dual to the Picard group.  The factor $\gm^{\times n}$ can be thought of as having motivic weight $1$, and the kernel of the epimorphism can be thought of as having motivic weight $2$.  A similar filtration should exist on the $\aone$-fundamental group of any smooth $\aone$-connected $k$-variety.
\end{ex}

We now study $\aone$-fundamental groups of blow-ups of points on ${\mathbb P}^2$ in greater detail.  Unfortunately, the computation is not as nice as Corollary \ref{cor:pnblowups}.  Instead, we discuss a method of presenting $\aone$-fundamental groups in terms of generators and relations.  To begin, let us compute the $\aone$-fundamental group of a wedge sum of finitely many copies of $\pone$.  For comparison, recall that the (usual) fundamental group of a wedge of $n$ circles is the free group on $n$ generators, or, equivalently, the amalgamated sum of $n$ copies of $\Z$ (the free group on one generator).

\begin{lem}
\label{lem:fundamentalgroupofawedgesum}
For each integer $n \geq 1$, we have an isomorphism
\[
F_{\aone}(1) \star^{\aone} \cdots \star^{\aone} F_{\aone}(1) \isomto \pi_1^{\aone}({\pone}^{\vee n}),
\]
where there are $n$-factors of $F_{\aone}(1)$ on the left hand side.
\end{lem}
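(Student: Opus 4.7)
The plan is to realize both sides of the claimed isomorphism as the value of a left-adjoint functor on an $n$-fold coproduct. First, I would recall from the discussion of the $\aone$-homotopy type of $\pone$ earlier in this section that there is an $\aone$-weak equivalence $\pone \simeq_{\aone} \Sigma^1_s \gm$ (with $\gm$ pointed by $1$). Since $\Sigma^1_s = (-)\wedge S^1_s$ is left adjoint to the simplicial loop functor $\Omega^1_s$, it preserves coproducts in $\Spc_{k,\bullet}$; as coproducts in $\Spc_{k,\bullet}$ are exactly wedge sums, this gives an $\aone$-weak equivalence
\[
(\pone)^{\vee n}\;\simeq_{\aone}\;\Sigma^1_s(\gm^{\vee n}).
\]

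Second, by the very definition of the free strongly $\aone$-invariant sheaf of groups (Definition \ref{defn:freestronglyaoneinvariant}), this identification yields $\pi_1^{\aone}((\pone)^{\vee n})\cong F_{\aone}(\gm^{\vee n})$. I would then invoke the adjunction recalled immediately after Definition \ref{defn:freestronglyaoneinvariant}: the functor $F_{\aone}\colon \Spc_{k,\bullet}\to {\mathcal Gr}^{\aone}_k$ is left adjoint to the forgetful functor and therefore preserves all small colimits. Applied to the $n$-fold wedge $\gm^{\vee n}$, this produces an isomorphism between $F_{\aone}(\gm^{\vee n})$ and the $n$-fold coproduct of $F_{\aone}(\gm)=F_{\aone}(1)$ in the category ${\mathcal Gr}^{\aone}_k$.

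Finally, I would verify that this coproduct in ${\mathcal Gr}^{\aone}_k$ is precisely the iterated amalgamated sum $F_{\aone}(1)\star^{\aone}\cdots\star^{\aone}F_{\aone}(1)$: this follows by induction from Definition \ref{defn:amalgamatedsum} taking $H$ to be the trivial sheaf of groups, combined with associativity of colimits. Stringing the isomorphisms together gives the claim.

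The proof is essentially formal once the two adjunctions are in hand, so there is no substantial obstacle; the only points requiring any care are the initial input $\pone\simeq_{\aone}\Sigma^1_s\gm$ (recalled earlier in the section) and the fact that $F_{\aone}$ preserves coproducts — the latter is immediate from the universal property $\operatorname{Hom}_{{\mathcal Gr}^{\aone}_k}(F_{\aone}({\mathcal S}),G)\cong\operatorname{Hom}_{\Spc_{k,\bullet}}({\mathcal S},G)$, since both sides of the desired isomorphism corepresent the functor $G\mapsto\prod_{i=1}^n\operatorname{Hom}_{\Spc_{k,\bullet}}(\gm,G)$ on strongly $\aone$-invariant sheaves of groups $G$.
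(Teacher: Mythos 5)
Your proof is correct but takes a genuinely different route from the paper's. The paper constructs an explicit smooth scheme model of $\pone^{\vee n}$: the open subvariety $X_n\subset(\pone)^{\times n}$ obtained by deleting the codimension-$2$ locus where at least two coordinates equal $\infty$. This $X_n$ is covered by $n$ open sets isomorphic to $\pone\times{\mathbb A}^{n-1}$ with pairwise intersections $\cong{\mathbb A}^n$, and the lemma is deduced by inductive application of the $\aone$-van Kampen theorem (Theorem \ref{thm:aonevanKampen}), followed by a projection argument that identifies $X_n$ with the wedge. You replace this geometry by category theory: since $\Sigma^1_s$ and $F_{\aone}$ are both left adjoints and hence commute with coproducts, the lemma follows formally from the single geometric input that $\pone$ is $\aone$-weakly equivalent to $\Sigma^1_s\gm$. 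Both arguments are valid; the paper's has the merit of producing an explicit scheme model of the wedge sum (a device reused in the proof of Proposition \ref{prop:removeapointgetawedge}) and of illustrating the van Kampen theorem concretely, whereas yours is shorter, more robust, and in effect proves the stronger universal fact that $F_{\aone}$ carries wedges of pointed sheaves of sets to amalgamated sums in ${\mathcal Gr}^{\aone}_k$. One small gap you should fill in: to deduce that $(\Sigma^1_s\gm)^{\vee n}\to(\pone)^{\vee n}$ is an $\aone$-weak equivalence from the equivalence $\Sigma^1_s\gm\to\pone$, note that the base-point inclusions are monomorphisms, hence cofibrations, so the wedge is a homotopy pushout and therefore preserves $\aone$-weak equivalences --- the same observation with which the paper opens its own proof.
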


\begin{proof}
First, let us construct a geometric model of the wedge of $n$ copies of $\pone$.  Abstractly, the wedge sum of two pointed spaces $({\mathcal X},x)$ and $({\mathcal Y},y)$ is defined as the pushout of the diagram
\[
{\mathcal X} \longleftarrow \ast \longrightarrow {\mathcal Y}
\]
where the two morphisms are inclusion of the basepoint.  Since both of these morphisms are cofibrations, the canonical map from the homotopy colimit of this diagram to the colimit is an $\aone$-weak equivalence.  Thus, it suffices to compute the $\aone$-fundamental group of a weakly equivalent diagram of spaces.

Take $\pone$ and fix a point $\infty$.  Consider the variety ${\pone}^{\times n}$, which we can think of as $n$ ordered points in $\pone$.  There are $\frac{n(n-1)}{2}$ closed subvarieties isomorphic to ${\pone}^{\times {n-2}}$ corresponding to fixing two points to be $\infty$.  Let $X_n$ be the complement in ${\pone}^{\times n}$ of the union of these closed subvarieties.  Observe that $X_n$ can be covered by $n$ open sets $X_{n,i}$ ($i = 1,\ldots,n$) of the form $\pone \times {\mathbb A}^{n-1}$ whose pairwise intersections are ${\mathbb A}^n$.  Choosing the base-point $0$ in ${\mathbb A}^n$ induces a base-point in each of these open sets.  An easy induction argument together with the $\aone$-van Kampen theorem (\ref{thm:aonevanKampen}) shows that the $\aone$-fundamental group of this space is the amalgamated sum of $n$ copies of $F_{\aone}(1)$.

Projection away from $\pone$ defines a (pointed) $\aone$-weak equivalence between the diagram consisting of ${\mathbb A}^n$ together with each of the open immersions to $X_{n,i}$ and the diagram presenting the wedge sum of $n$-copies of $\pone$.  Thus, $X_n$ is $\aone$-weakly equivalent to the wedge sum of $n$-copies of $\pone$.
\end{proof}

\begin{ex}
There is an alternate presentation of the homotopy type of a wedge sum of $n$ copies of $\pone$ as a smooth affine surface.  Let $a_1,\ldots,a_{n+1} \in k$ be a collection of $(n+1)$-distinct elements of a field $k$.  Consider the hypersurface in ${\mathbb A}^3$ defined by the equation
\[
xy = \prod_{i=1}^{n+1} (z - a_i);
\]
such hypersurfaces have been studied by Danielewski and Fieseler \cite[p. 9]{Fieseler}.  Note the similarity between this example and Example \ref{ex:conicbundles}.

Let us, for this example, lift our convention that schemes be separated.  Let ${\mathbb A}^1_{n,0}$ denote the non-separated (smooth) scheme obtained by gluing $n$ copies of $\aone$ along $\aone \setminus 0$ via the identity morphism.  One can show that the above hypersurface is the total space of a flat morphism with target ${\mathbb A}^1_{n+1,0}$ and fibers isomorphic to $\aone$.  From this one can construct a cover of the above hypersurface by $(n+1)$-copies of ${\mathbb A}^2$ glued along $\gm \times \aone$.  A homotopy colimit argument as above can be used to identify this homotopy type with ${\pone}^{\wedge n}$.
\end{ex}

Recall that if $M$ is a closed $2$-manifold, then $M \setminus pt$ has the homotopy type of a wedge of circles.  The following result proves the analogous fact in $\aone$-homotopy theory.

\begin{prop}
\label{prop:removeapointgetawedge}
Suppose $X$ is a rational smooth proper surface over an algebraically closed field $k$, and assume $n = \operatorname{rk} Pic(X)$.  If $x \in X(k)$, then there exists an $\aone$-weak equivalence
\[
X -x \cong {\pone}^{\vee n}.
\]
\end{prop}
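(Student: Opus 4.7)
The plan is to proceed by induction on $n = \operatorname{rk}\mathrm{Pic}(X)$, combining the classification of rational smooth proper surfaces from Theorem \ref{thm:surfaces} with Mayer--Vietoris / van Kampen computations in the $\aone$-homotopy category. The first step would be to reduce to convenient basepoints by showing that the $\aone$-weak equivalence class of $X\setminus x$ is independent of the choice of $x\in X(k)$: since $X$ is covered by affine spaces (Example \ref{ex:coveredbyaffinespaces}), any two $k$-rational points can be joined by a chain of affine lines in such charts, and along each such line Proposition \ref{prop:blowups} produces an $\aone$-$h$-cobordism between the corresponding blow-ups; passing to the complement of the (codimension-one) family of exceptional divisors in this cobordism should yield the required $\aone$-weak equivalence $X\setminus x\simeq_{\aone} X\setminus x'$.

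For the base case $n=1$, the Picard rank forces $X\cong {\mathbb P}^2$, and linear projection from $x$ identifies ${\mathbb P}^2\setminus x$ with the total space of $\O_{\pone}(1)$, hence with the $\aone$-homotopy type of $\pone$. For the minimal rank-two cases one has $X\cong \pone\times\pone$ or $X\cong {\mathbb F}_a$. For $\pone\times\pone$ with $x=(\infty,\infty)$, the cover by $U={\mathbb A}^1\times\pone$ and $V=\pone\times{\mathbb A}^1$ with $U\cap V = {\mathbb A}^2\simeq_{\aone}\ast$ realizes $X\setminus x$ as the homotopy pushout $\pone\leftarrow\ast\rightarrow\pone$, i.e.\ $\pone\vee\pone$. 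For ${\mathbb F}_a$, a standard two-chart decomposition yields a pushout $\pone\leftarrow\gm\rightarrow\ast$ in which the left leg is nullhomotopic, whose homotopy cofiber simplifies to $\pone\vee\Sigma^1_s\gm\simeq \pone\vee\pone$.

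For the inductive step with $n\geq 3$, write $X={\sf Bl}_y(X_0)$ where $X_0$ is a rational smooth proper surface with $\operatorname{rk}\mathrm{Pic}(X_0)=n-1$; let $\pi\colon X\to X_0$ denote the blow-down and $E\cong\pone$ the exceptional divisor. By the basepoint reduction we may choose $x\in X\setminus E$, so that $X\setminus x \cong {\sf Bl}_y(X_0\setminus\pi(x))$. A Mayer--Vietoris cover by a tubular neighborhood of $E$ (which is $\simeq_{\aone}\pone$ via ${\sf Bl}_0({\mathbb A}^2)\simeq_{\aone}\pone$) and by the complement of $E$ (equal to $X_0\setminus\{y,\pi(x)\}$), with intersection ${\mathbb A}^2\setminus 0$, presents $X\setminus x$ as a homotopy pushout. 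A second Mayer--Vietoris applied to $X_0\setminus\{y,\pi(x)\}$, together with the inductive hypothesis $X_0\setminus\pi(x)\simeq_{\aone}{\pone}^{\vee (n-1)}$, should make the attaching maps sufficiently degenerate that the total pushout collapses to ${\pone}^{\vee n}$.

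The main obstacle will be verifying that the final Mayer--Vietoris produces the required wedge decomposition. A natural way to handle this cleanly is to strengthen the inductive hypothesis to allow the removal of finite sets of points: namely, to prove that for a rational smooth proper surface $X$ with $\operatorname{rk}\mathrm{Pic}(X)=n$ and distinct $k$-points $x_1,\ldots,x_m\in X(k)$, one has $X\setminus\{x_1,\ldots,x_m\}\simeq_{\aone}{\pone}^{\vee (n+m-1)}$. This stronger statement makes the description of $X_0\setminus\{y,\pi(x)\}$ immediate, and the final wedge decomposition then follows by the same kind of pushout simplification (with one nullhomotopic leg) already employed in the rank-two base cases.
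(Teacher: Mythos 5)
Your proposal follows the same overall strategy as the paper---induction on Picard rank, with Mayer--Vietoris covers built from tubular neighborhoods of exceptional divisors and the base cases ${\mathbb P}^2$ and $\pone\times\pone$---but the way you set up the inductive step differs from the paper's in a way that creates a genuine gap. In the paper's inductive step one forms $Y={\sf Bl}_x X$ with exceptional divisor $D$, deletes a point $y$ \emph{on} $D$, and covers $Y\setminus y$ by ${\sf Bl}_0({\mathbb A}^2)\setminus y$ and by $Y\setminus D\cong X\setminus x$, with intersection ${\mathbb A}^2\setminus 0$. The crucial preliminary step (the paper's Step 2) shows ${\sf Bl}_0({\mathbb A}^2)\setminus y\simeq_{\aone}({\mathbb A}^2\setminus 0)\vee\pone$ for $y$ on the exceptional divisor, with the inclusion ${\mathbb A}^2\setminus 0\hookrightarrow{\sf Bl}_0({\mathbb A}^2)\setminus y$ identified as the wedge-summand inclusion; this is what makes the Mayer--Vietoris pushout collapse in a single step to $(X\setminus x)\vee\pone$.

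You instead write $X={\sf Bl}_y(X_0)$, delete a point $x$ \emph{off} the exceptional divisor $E$, and cover $X\setminus x$ by the \emph{full} tubular neighborhood ${\sf Bl}_0({\mathbb A}^2)$ of $E$ and by $X_0\setminus\{y,\pi(x)\}$, with intersection ${\mathbb A}^2\setminus 0$. This does not simplify in the way you claim. Under the identification ${\sf Bl}_0({\mathbb A}^2)\simeq_{\aone}\pone$ (projection to the exceptional divisor), the map ${\mathbb A}^2\setminus 0\hookrightarrow{\sf Bl}_0({\mathbb A}^2)$ becomes the $\gm$-torsor projection ${\mathbb A}^2\setminus 0\to\pone$, i.e.\ the complement of the zero section of $\O_{\pone}(-1)$. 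This map is very far from null-homotopic---it is exactly what produces the non-split extension $1\to\K^{MW}_2\to F_{\aone}(1)\to\gm\to 1$ on $\pi_1^{\aone}$ (Equation \ref{eqn:centralextension})---so neither leg of your final pushout is degenerate, and the statement that the conclusion "follows by the same kind of pushout simplification (with one nullhomotopic leg)" is not correct. Strengthening the inductive hypothesis to handle complements of several points only tells you the abstract $\aone$-homotopy type of $X_0\setminus\{y,\pi(x)\}$; it gives no control over the attaching map ${\mathbb A}^2\setminus 0\to{\sf Bl}_0({\mathbb A}^2)$. Concretely, the pasting law for pushouts shows your decomposition only identifies the \emph{cofiber} of $\pone\simeq{\sf Bl}_0({\mathbb A}^2)\hookrightarrow X\setminus x$ with $\pone^{\vee(n-1)}$; one is left having to split this cofiber sequence, which is precisely what one cannot do without more information. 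The remedy is the paper's choice: delete a point lying on the exceptional divisor itself, so that the tubular-neighborhood piece is ${\sf Bl}_0({\mathbb A}^2)\setminus y$, whose wedge decomposition $({\mathbb A}^2\setminus 0)\vee\pone$ has ${\mathbb A}^2\setminus 0$ sitting as a summand. Two lesser points: your ${\mathbb F}_a$ base case is not a pushout of the form $\pone\leftarrow\gm\to\ast$ as stated (the two-chart pieces are $\pone$-bundles over $\aone$ and the gluing has a richer intersection), and your basepoint reduction by deleting the moving section from ${\sf Bl}_\Gamma(X\times\aone)$ is a plausible and potentially useful idea, but you would need to verify that the fiber inclusions remain $\aone$-weak equivalences after removing the section, which is not automatic.
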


\begin{proof}
There are several possible proofs of this fact.  They key point is that a smooth proper rational surface admits an open dense subscheme isomorphic to ${\mathbb A}^2$ whose complement is a union of a finite number of copies of $\aone$ and a point.\newline

\noindent{\em Step 1.} The case where $X = {\mathbb P}^2$ is clear since removing a point from ${\mathbb P}^2$ produces the total space of a line bundle over $\pone$.  The case where $X = {\mathbb P}^1 \times {\mathbb P}^1$ is exactly the geometric construction in the case $n = 2$ of Lemma \ref{lem:fundamentalgroupofawedgesum}.  \newline

\noindent{\em Step 2.} We will show that ${\sf Bl}_0({\mathbb A}^2)$ is homotopy equivalent to the wedge of ${\mathbb A}^2 \setminus 0 \vee \pone$.  To see this, first cover ${\sf Bl}_0({\mathbb A}^2)$ by two open sets isomorphic to ${\mathbb A}^2$ with intersection $\gm \times \aone$.  Now, the point can be assumed to lie in a single copy of ${\mathbb A}^2$ so we get a Mayer-Vietoris square of the form
\[
\xymatrix{
\gm \times {\mathbb A}^1 \ar[r]\ar[d]& {\mathbb A}^2 \setminus 0 \ar[d] \\
{\mathbb A}^2 \ar[r] & {\sf Bl}_0({\mathbb A}^2).
}
\]
Up to $\aone$-weak equivalence, we can replace this by a diagram of the form
\[
\ast \longleftarrow \gm \longrightarrow {\mathbb A}^2 \setminus 0.
\]
Replacing $\ast$ by $C(\gm)$, the homotopy colimit of the resulting diagram coincides with $\pone \vee {\mathbb A}^2 \setminus 0$. \newline

\noindent{\em Step 3.} Finally, we treat the general case by induction.  By Theorem \ref{thm:surfaces}, it suffices to prove the following fact.  Let $X$ be a rational smooth proper surface, and let $x \in X(k)$.  Choose an open subscheme of $X$ isomorphic to ${\mathbb A}^2$ and containing $x$.  Let $Y = {\sf Bl}_x X$, $D$ the exceptional divisor of the blow-up, and $y$ a point of $D$.  Then, there is an $\aone$-weak equivalence $Y \setminus y \cong X \setminus x \vee \pone$.

To see this, observe that there is a Mayer-Vietoris square of the form
\[
\xymatrix{
{\mathbb A}^2 \setminus 0 \ar[r] \ar[d]& Y \setminus D \ar[d]\\
{\sf Bl}_0{\mathbb A}^2 \setminus y \ar[r] & Y \setminus y.
}
\]
Now $Y \setminus D$ is isomorphic to $X \setminus x$.  In {\em Step 2}, we showed that ${\sf Bl}_0({\mathbb A}^2) \setminus y \cong {\mathbb A}^2 \setminus 0 \vee \pone$.  Further contemplation of {\em Step 2} reveals that the left vertical map identifies ${\mathbb A}^2 \setminus 0$ with the corresponding wedge summand in ${\mathbb A}^2 \setminus 0 \vee \pone$.  The homotopy pushout of this diagram is then precisely $X \setminus x \vee \pone$, which gives the required $\aone$-weak equivalence.

{\em Step 4.} It remains to observe, as we did before, that the number of blowups that occurs is equal to the rank of the Picard group.
\end{proof}

\begin{rem}
One can also prove this result by appeal to the theory of toric surfaces \cite[\S 2.5]{Fulton}.  Observe that by the proof of Theorem \ref{thm:surfaces} any smooth proper rational surface is $\aone$-$h$-cobordant to a smooth proper toric surface.  Namely, either $\pone \times \pone$ or an iterated toric blow-up of ${\mathbb P}^2$.  In either case, one can explicitly realize $X - x$ as a union of total spaces of line bundles over ${\mathbb P}^1$ glued along the common copy of ${\mathbb A}^2$.  A homotopy colimit argument analogous to the one above finishes the proof.  This method has the benefit of providing explicit morphisms $\pone \to X \setminus x$ that can be viewed as explicit ``geometric generators" of the $\aone$-fundamental group, namely the zero sections to the various line bundles.
\end{rem}

\begin{cor}
\label{cor:fundamentalgroupofblowupofpointsonsurface}
If $X$ is a rational smooth proper surface over an algebraically closed field $k$, and $n = \operatorname{rk} Pic(X)$, then for any $x \in X(k)$ there is an isomorphism of the form
\[
(F_{\aone}(1) \star^{\aone} \cdots \star^{\aone} F_{\aone}(1)) \star^{\aone}_{F_{\aone}(2)} e \isomto \pi_1^{\aone}(X)
\]
giving a ``presentation" of the $\aone$-fundamental group of $X$.
\end{cor}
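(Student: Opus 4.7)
The plan is simply to concatenate three results already established in the paper. First, since $X$ is a rational smooth proper surface over an algebraically closed field, it is covered by affine spaces in the sense of Definition \ref{defn:combinatorial} (see Example \ref{ex:coveredbyaffinespaces}); in particular, the hypotheses of Proposition \ref{prop:fundamentalgroupblowup} hold for $X$ in dimension $n = 2$. That proposition therefore provides a canonical isomorphism
\[
\pi_1^{\aone}(X \setminus x) \star^{\aone}_{\K^{MW}_2} e \isomto \pi_1^{\aone}(X),
\]
induced by the Mayer--Vietoris square associated with the open cover of $X$ by a Zariski neighbourhood of $x$ isomorphic to $\aone \times \aone$ and its complement $X \setminus x$, together with the computation $\pi_1^{\aone}(\aone^2 \setminus 0) = \K^{MW}_2$.

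Second, Proposition \ref{prop:removeapointgetawedge} supplies an $\aone$-weak equivalence $X \setminus x \simeq (\pone)^{\vee r}$ with $r = \operatorname{rk} Pic(X)$. Composing the $\aone$-fundamental group functor with Lemma \ref{lem:fundamentalgroupofawedgesum} then identifies $\pi_1^{\aone}(X \setminus x)$ with the $r$-fold amalgamated sum $F_{\aone}(1) \star^{\aone} \cdots \star^{\aone} F_{\aone}(1)$ in the category ${\mathcal Gr}^{\aone}_k$. Substituting this identification into the isomorphism displayed above, and recalling from Notation \ref{notation:secondmilnorwittktheorysheaf} that $F_{\aone}(2) = \K^{MW}_2$, yields exactly the presentation claimed in the corollary.

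The main obstacle is bookkeeping rather than substance: one must check that the map $\K^{MW}_2 \to \pi_1^{\aone}(X \setminus x)$ implicit on the left-hand side of the amalgamated sum produced by Proposition \ref{prop:fundamentalgroupblowup}, coming from the open immersion $\aone^2 \setminus 0 \hookrightarrow X \setminus x$, is compatible, under the chain of pointed $\aone$-weak equivalences produced in the proof of Proposition \ref{prop:removeapointgetawedge}, with a natural map into the free amalgamated sum of copies of $F_{\aone}(1)$. This is a compatibility of the $\aone$-van Kampen pushouts used to prove Proposition \ref{prop:removeapointgetawedge} (via the inductive Mayer--Vietoris construction realising $X \setminus x$ as an iterated wedge of $\pone$'s) with the one used in Proposition \ref{prop:fundamentalgroupblowup}; once base-points are fixed consistently at a chosen $k$-point of $\aone^2 \setminus 0$, everything follows from naturality of the $\aone$-van Kampen theorem (Theorem \ref{thm:aonevanKampen}) applied to the relevant homotopy pushouts of spaces. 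Beyond that, the corollary is a direct formal consequence of the cited results.
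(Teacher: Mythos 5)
Your proposal is correct and follows exactly the route the paper takes: the paper's entire proof is ``Combine Proposition \ref{prop:fundamentalgroupblowup} and Proposition \ref{prop:removeapointgetawedge}.'' You simply spell out the intermediate invocation of Lemma \ref{lem:fundamentalgroupofawedgesum} and the substitution $F_{\aone}(2) = \K^{MW}_2$, and flag a base-point/compatibility point that the paper leaves implicit.
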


\begin{proof}
Combine Proposition \ref{prop:fundamentalgroupblowup} and Proposition \ref{prop:removeapointgetawedge}.
\end{proof}

\begin{rem}
Much more can be said if we use aspects of $\aone$-homology developed in \cite[\S 5.2]{MField}.  Assume again $X$ is a rational smooth proper surface, fix $x \in X(k)$, and choose ${\mathbb A}^2 \subset X$ containing $x$.  Consider the Mayer-Vietoris square
\[
\xymatrix{
{\mathbb A}^2 \setminus 0 \ar[r]\ar[d] & X \setminus x \ar[d]\\
{\mathbb A}^2 \ar[r] & X
}
\]
as we did in the proof of Proposition \ref{prop:fundamentalgroupblowup}.

Observe that the canonical morphism $H_1^{\aone}({\mathbb A}^2 \setminus 0) \to H_1^{\aone}(X \setminus x)$ is an isomorphism. Indeed, by assumption, $X$ can be covered by open sets isomorphic to affine space, and we can use the Mayer-Vietoris sequence (see \cite[Proposition 3.32]{ADExcision}) together with a straightforward induction argument to deduce this fact.  Since $\pi_1^{\aone}({\mathbb A}^2 \setminus 0)$ is abelian, the $\aone$-Hurewicz theorem (\cite[Theorem 5.55]{MField}) shows that the morphism
\[
\pi_1^{\aone}({\mathbb A}^2 \setminus 0) \longrightarrow \pi_1^{\aone}(X \setminus x)
\]
factors through the abelianization of $\pi_1^{\aone}(X \setminus x)$.  In other words $\pi_1^{\aone}({\mathbb A}^2 \setminus 0) \cong \K^{MW}_2$ can be thought of as being in the ``commutator subgroup" of $\pi_1^{\aone}(X,x)$.  Nevertheless, it is unclear how to give a simple closed form expression for the $\aone$-fundamental groups of blow-ups of finitely many points on ${\mathbb P}^2$ in a manner similar to Corollary \ref{cor:pnblowups}.
\end{rem}

\begin{rem}
That there are various higher dimensional generalizations of Proposition \ref{prop:removeapointgetawedge}.  Also, it would be interesting to know which $\aone$-fundamental groups can be ``finitely presented" in the sense suggested by Corollary \ref{cor:fundamentalgroupofblowupofpointsonsurface}.
\end{rem}

\subsection{The $\aone$-homotopy classification of rational smooth proper surfaces revisited}
Finally, we can deduce the remaining theorem statement (Theorem \ref{thm:refinedsurfaces}) from \S \ref{s:introduction}.  Indeed, combining Theorem \ref{thm:surfaces}, Proposition \ref{prop:intersectionpairing}, Proposition \ref{prop:hirzebruchfundamental}, and Corollary \ref{cor:fundamentalgroupofblowupofpointsonsurface} we obtain the following result.

\begin{cor}
\label{thm:surfacesfundamentalgroup}
If $X$ and $Y$ are two rational smooth proper surfaces over an algebraically closed field, the following conditions are equivalent.
\begin{itemize}
\item[i)] The varieties $X$ and $Y$ are $\aone$-$h$-cobordant.
\item[ii)] The varieties $X$ and $Y$ are $\aone$-weakly equivalent.
\item[iii)] The varieties $X$ and $Y$ have isomorphic $\aone$-fundamental groups.
\item[iv)] The $\Z$-modules $H^{2,1}(X,\Z)$ and $H^{2,1}(Y,\Z)$ equipped with the structure of quadratic spaces by means of the intersection pairing are isomorphic as quadratic spaces.
\end{itemize}
\noindent Furthermore the set ${\mathscr S}_{\aone}(X)$ consists of exactly $1$ element.
\end{cor}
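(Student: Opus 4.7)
The plan is to prove the cycle (i) $\Rightarrow$ (ii) $\Rightarrow$ (iii) $\Rightarrow$ (ii) together with (ii) $\Leftrightarrow$ (iv) and (ii) $\Rightarrow$ (i); most of these follow essentially formally from results already in hand. The implication (i) $\Rightarrow$ (ii) is immediate from Definition \ref{defn:aonehcobordism}, composing the two defining $\aone$-weak equivalences $X\hookrightarrow W\hookleftarrow Y$. The implication (ii) $\Rightarrow$ (iii) is functoriality of $\pi_1^{\aone}$ on $\ho{k}$. The equivalence (ii) $\Leftrightarrow$ (iv) is exactly Proposition \ref{prop:intersectionpairing}. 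The implication (ii) $\Rightarrow$ (i) is already contained in the proof of Theorem \ref{thm:surfaces}: the $\aone$-weak equivalence from any rational smooth proper surface to its standard representative is realized as a chain of explicit $\aone$-$h$-cobordisms, and concatenating the chains from $X$ and from $Y$ to their common standard representative shows that $X$ and $Y$ are $\aone$-$h$-cobordant.

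The only non-formal direction is (iii) $\Rightarrow$ (ii). First I would apply Theorem \ref{thm:surfaces} to replace $X$ and $Y$ by members of the standard list $\{\pone\times\pone\}\cup\{{\sf Bl}_{x_1,\ldots,x_n}({\mathbb P}^2) : n\geq 0\}$ without loss of generality, since the $\aone$-fundamental group depends only on the $\aone$-homotopy type. The key numerical invariant separating this list is the Picard rank, which is recoverable from the $\aone$-fundamental group by Proposition \ref{prop:nontrivialfundamentalgroup}:
\[
\operatorname{rk} Pic(X)=\operatorname{rk} Hom_{{\mathcal Gr}^{\aone}_k}(\pi_1^{\aone}(X),\gm).
\]
For Picard rank $1$ only ${\mathbb P}^2$ appears on the list; for Picard rank $n+1\geq 3$ only ${\sf Bl}_n({\mathbb P}^2)$ appears; the sole ambiguity occurs at Picard rank $2$, where the two candidates are $\pone\times\pone\cong{\mathbb F}_0$ and ${\sf Bl}_1({\mathbb P}^2)\cong{\mathbb F}_1$. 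These are distinguished by Proposition \ref{prop:hirzebruchfundamental}, which computes $\pi_1^{\aone}({\mathbb F}_0)\cong F_{\aone}(1)\times F_{\aone}(1)$ and $\pi_1^{\aone}({\mathbb F}_1)\cong F_{\aone}(1)\rtimes^1 F_{\aone}(1)$ and verifies these are non-isomorphic as sheaves of groups. This is the main obstacle in the argument, and although the needed computation is already handled upstream, the delicate point is precisely that the Picard rank alone does not suffice: one genuinely needs the non-abelian information in $\pi_1^{\aone}({\mathbb F}_1)$ reflecting the non-trivial $\gm$-action on $F_{\aone}(1)$ induced by the standard $\gm$-torsor ${\mathbb A}^2\setminus 0\to\pone$.

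For the ``furthermore'' statement, a class in ${\mathscr S}_{\aone}(X)$ is represented by a smooth proper scheme $X'$ together with an $\aone$-weak equivalence $X'\to X$. Since $\aone$-weak equivalence of smooth proper varieties preserves $\aone$-connectedness, dimension (visible through motivic cohomology), and the quadratic form on $H^{2,1}(-,\Z)$, any such $X'$ is again a smooth proper $\aone$-connected surface; in characteristic $0$ it is therefore rational by Corollary \ref{cor:aoneconnectednesssurfaces}, and in general it remains within the scope of the preceding equivalence because the quadratic form on $H^{2,1}$ matches that of a member of the standard list. Combining the equivalence (i) $\Leftrightarrow$ (ii) above with the fact that $X'$ and $X$ are $\aone$-weakly equivalent shows that $X'$ is $\aone$-$h$-cobordant to $X$, so ${\mathscr S}_{\aone}(X)$ consists of a single $\aone$-block equivalence class, which is non-empty since $X$ itself represents an element.
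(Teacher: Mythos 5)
Your argument for the equivalence of (i)--(iv) is correct and follows essentially the same route as the paper. The cycle (i)$\Rightarrow$(ii)$\Rightarrow$(iii) is formal, (ii)$\Leftrightarrow$(iv) is Proposition \ref{prop:intersectionpairing}, and (ii)$\Rightarrow$(i) comes from the fact that the proof of Theorem \ref{thm:surfaces} proceeds entirely by explicit $\aone$-$h$-cobordisms (Lemmas \ref{lem:movingpoints}, \ref{lem:mod2}, \ref{lem:decreasea}). For the crucial (iii)$\Rightarrow$(ii), your use of Proposition \ref{prop:nontrivialfundamentalgroup} to recover $\operatorname{rk}Pic$ from $\pi_1^{\aone}$ via $Hom_{{\mathcal Gr}^{\aone}_k}(\pi_1^{\aone}(X),\gm)\cong Pic(X)$ is exactly the right idea: this separates all members of the standard list except the Picard-rank-$2$ pair $\pone\times\pone$ and ${\mathbb F}_1$, which Proposition \ref{prop:hirzebruchfundamental} then distinguishes. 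The paper also cites Corollary \ref{cor:fundamentalgroupofblowupofpointsonsurface}, but as you correctly intuit, that ``presentation'' alone does not discriminate between the two rank-$2$ candidates (the amalgamation data differ), so Proposition \ref{prop:nontrivialfundamentalgroup} combined with Proposition \ref{prop:hirzebruchfundamental} is both necessary and sufficient; your choice of emphasis is if anything cleaner.

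The one place your write-up has a real gap is in the ``furthermore'' statement in positive characteristic. To invoke the equivalence (i)--(iv) for a scheme structure $(X',s')$, you must first know that $X'$ is a \emph{rational} smooth proper surface, not merely $\aone$-connected and $2$-dimensional. In characteristic $0$ you correctly appeal to Corollary \ref{cor:aoneconnectednesssurfaces}. In general, the sentence ``it remains within the scope of the preceding equivalence because the quadratic form on $H^{2,1}$ matches that of a member of the standard list'' does not establish rationality of $X'$ -- a matching quadratic form is a consequence of $\aone$-weak equivalence, not an independent input, and it does not by itself put $X'$ in the class for which (i)--(iv) was proved. The correct chain is available within the paper's toolbox: $X'$ is $\aone$-connected (being $\aone$-weakly equivalent to $X$), hence weakly $\aone$-connected, hence separably rationally connected by Corollary \ref{cor:chainconverse}, and separably rationally connected smooth proper surfaces over an algebraically closed field are rational (invoked explicitly in the remark following Theorem \ref{thm:surfaces}). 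With that substitution, your ``furthermore'' argument goes through.
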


\begin{rem}
We expect that $\aone$-disconnected varieties can be studied by considering the $\aone$-fundamental (sheaf of) groupoid(s) instead of the $\aone$-fundamental (sheaf of) group(s).
\end{rem}

\begin{rem}
Combining Corollaries \ref{cor:aoneconnectednesssurfaces} and \ref{thm:surfacesfundamentalgroup} we obtain the solution to the $\aone$-surgery problem (Problem \ref{problem:aonesurgery} from the introduction) for smooth proper $\aone$-connected surfaces over algebraically closed fields $k$.  If $k$ is not algebraically closed, Corollary \ref{thm:surfacesfundamentalgroup} will not provide the classification of smooth proper $\aone$-connected surfaces due to the examples of stably $k$-rational, non rational surfaces (see Example \ref{ex:BCTSSD}).  It seems reasonable to expect that smooth proper $\aone$-connected surfaces over an {\em arbitrary} field are classified up to $\aone$-$h$-cobordism by their $\aone$-fundamental group.
\end{rem}

\subsubsection*{Extension: Homotopy minimality}
We expect the sheaf of $\aone$-connected components to be a birational invariant of smooth proper schemes; we provide evidence for this expectation in \S \ref{s:unramified}.  Generalizing from the examples we know, it also seems reasonable to expect that two smooth proper schemes that are $\aone$-weakly equivalent are also birationally equivalent.  As we have seen, blowing-up increases complexity of the $\aone$-fundamental group.  We therefore suggest a way to tie together $\aone$-homotopy types of smooth schemes having a given function field.  Indeed, Theorem \ref{thm:surfaces}, and the discussion preceding it suggest that if a function field $K$ admits a smooth proper representative, it admits a (non-unique) representative that is homotopically minimal in the following sense.

\begin{defn}
\label{defn:aoneminimal}
An $\aone$-homotopy type admitting a smooth proper $k$-variety $X$ as a representative is called {\em minimal} if given any triple $(X',\psi,\varphi)$ consisting of a smooth proper $k$-variety $X'$, an $\aone$-weak equivalence $\psi: X' \to X$, and a proper birational morphism $\varphi: X' \to Y$ to a smooth $k$-variety $Y$, $\varphi$ is an isomorphism.  Any smooth proper representative of a minimal $\aone$-homotopy type will be called {\em $\aone$-minimal}.

Given two smooth proper $k$-varieties $X$ and $Y$, we will say that $X$ is an {\em $\aone$-minimal model} for $Y$ if $X$ is $\aone$-minimal, and there exist a smooth proper $k$-variety $X'$, an $\aone$-weak equivalence $X' \to Y$ and a proper, birational morphism $X' \to X$.
\end{defn}

\begin{problem}[$\aone$-minimality]
\label{problem:aoneminimality}
Let $K$ be the function field of a smooth proper $k$-variety of dimension $n$.  Describe the set ${\mathscr M}_{\aone}(K)$ of minimal $\aone$-homotopy types for smooth proper varieties with function field $K$ by providing explicit $\aone$-minimal smooth proper $k$-varieties in each $\aone$-homotopy type in ${\mathscr M}_{\aone}(K)$.  Moreover, for a given smooth proper $k$-variety $Y$ provide an explicit procedure to find an $\aone$-minimal model of $Y$ from the previous list.
\end{problem}

The $\aone$-minimal, rational surfaces are, of course, $\pone \times \pone$ and ${\mathbb P}^2$.  The first examples of $\aone$-minimal varieties are provided by the following result.

\begin{prop}
\label{prop:rigidminimal}
The $\aone$-homotopy type of a smooth proper $\aone$-rigid $k$-variety $X$ is $\aone$-minimal in the sense of \textup{Definition \ref{defn:aoneminimal}}.
\end{prop}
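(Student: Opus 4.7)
The plan is to reduce the statement to showing that $\psi\colon X'\to X$ is itself an isomorphism of schemes; once $X'\cong X$ is established, $X'$ inherits $\aone$-rigidity, and $\varphi$ can then be dispatched by a classical argument ruling out rational curves.

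The preliminary observation is that $\aone$-rigidity of $X$ forces every morphism $\pone\to X$ to be constant: restricting to either affine chart of $\pone$ reduces to \textup{Definition~\ref{defn:aonerigid}}. This fact will produce the final contradiction.

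The main step is to show that $\psi$ is an isomorphism of schemes. Since $\psi$ is an $\aone$-weak equivalence between smooth proper $k$-varieties, it induces isomorphisms on all bigraded motivic cohomology groups, hence via $H^{2i,i}(-,\Z)\cong \operatorname{CH}^i(-)$ (\cite[Corollary~2]{VCompare}) on all Chow groups. Applied to top Chow groups $\operatorname{CH}^n(-)\cong \Z$ with $n=\dim X=\dim X'$, this forces $\deg(\psi)=1$, so that $\psi$ is a proper birational morphism of smooth proper varieties. Moreover, via $\Pic(-)\cong [-,B\gm]_{\aone}$ (as in the proof of \textup{Proposition~\ref{prop:nontrivialfundamentalgroup}}), $\psi^*\colon \Pic(X)\to \Pic(X')$ is an isomorphism. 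If $\psi$ were not an isomorphism, a classical argument based on the negativity of contracted curves would provide an exceptional divisor $E\subset X'$ and a contracted curve $C\subset E$ with $E\cdot C<0$; but the projection formula gives $E\cdot C = \psi^*L\cdot C = L\cdot \psi_*C = 0$ for any $L\in\Pic(X)$ with $\psi^*L = [E]$, contradicting $[E]\in \psi^*\Pic(X)$. Hence $\psi$ is an isomorphism, whence $X'\cong X$ is $\aone$-rigid.

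To finish, suppose $\varphi\colon X'\to Y$ were not an isomorphism. A classical result on proper birational morphisms between smooth varieties (see \cite{Kollar}) then shows that each irreducible component of the exceptional locus of $\varphi$ is uniruled, providing a non-constant morphism $\pone\to X'$ and contradicting the $\aone$-rigidity of $X'$ established above. Thus $\varphi$ is an isomorphism. The principal obstacle I anticipate in carrying out this plan is marshalling the two inputs from classical birational geometry---negativity of contracted curves and uniruledness of exceptional loci---both of which are standard but demand some care in positive characteristic.
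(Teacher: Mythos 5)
Your final step --- killing $\varphi$ by combining $\aone$-rigidity of $X'$ with the fact that exceptional loci of proper birational morphisms between smooth varieties are covered by rational curves --- is precisely the paper's proof, which cites \cite[Proposition 1.3 and Corollary 2.63]{KollarMori}. You are also right to notice that Definition~\ref{defn:aoneminimal} quantifies over arbitrary scheme structures $X'$, while the paper's text works directly with a morphism out of $X$ itself; that reduction does need an argument, and the paper passes over it silently.

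The argument you supply for it, however, contains a genuine error. The assertion ``$CH^n(-)\cong\Z$'' with $n=\dim X$ is false for smooth proper varieties in general, and it fails on essentially every $\aone$-rigid example: for a smooth proper curve $C$ of genus $g\geq 1$, $CH^1(C)=Pic(C)$ sits in an extension of $\Z$ by $Pic^0(C)(k)$, and for an abelian variety $A$ of dimension $g$, $CH^g(A)$ is enormous (Mumford). So the deduction $\deg(\psi)=1$ is unjustified, and the negativity-of-contraction argument --- which requires $\psi$ to be birational so that an exceptional divisor of $\psi$ exists --- cannot be run as written. (You also tacitly assume $\dim X'=\dim X$ and that $X'$ is projective.) A fix much closer to what you have already written is available and does not need $\psi$ to be an isomorphism at all: if $\varphi$ were not an isomorphism, the Koll\'ar--Mori results give a curve $C\subset X'$ contracted by $\varphi$ and lying on a $\varphi$-exceptional divisor $E$ with $E\cdot C<0$; $\aone$-rigidity of $X$ forces $\psi$ to contract $C$ as well, so $\psi^*L\cdot C=L\cdot\psi_*C=0$ for all $L\in Pic(X)$; and your correct observation that $\psi^*$ is surjective on Picard groups then yields $E\cdot C=0$, a contradiction. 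This bypasses the false Chow-group step entirely, while still using your two sound inputs ($Pic$ is an $\aone$-invariant, and $\aone$-rigidity of $X$ makes $\psi$ contract any rational curve contracted by $\varphi$).
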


\begin{proof}
Suppose $X$ is a smooth proper $\aone$-rigid variety, and $f: X \to Y$ is a proper birational morphism with $Y$ smooth and proper.  By \cite[Proposition 1.3]{KollarMori}, for any point $y$ of $Y$, either $f^{-1}(y)$ is a point or $f^{-1}(y)$ contains and is covered by rational curves.  Now, the exceptional set of $f$ is of pure codimension $1$ (\cite[Corollary 2.63]{KollarMori}). If the exceptional set of $f$ is empty, $f$ is an isomorphism.  Otherwise, we can find a non-trivial morphism $\aone \to X$ contradicting $\aone$-rigidity of $X$.
\end{proof}

\begin{rem}
The existence of stably rational, non-rational surfaces over non-algebraically closed fields, i.e., Example \ref{ex:BCTSSD}, suggests minimality is not preserved by field extension.  Nevertheless,
\end{rem}

\subsubsection*{Extension: torsion of an $\aone$-weak equivalence}
We use the notation of \S \ref{s:introduction}.  Barden, Mazur and Stallings reconsidered the $h$-cobordism theorem in the non-simply connected case using J.H.C. Whitehead's notion of torsion of a homotopy equivalence.  The $s$-cobordism theorem states that an $h$-cobordism $(W,M,M')$ of manifolds of dimension $\geq 5$ such that the inclusions $M \hookrightarrow W$ and $M' \hookrightarrow W$ are {\em simple} homotopy equivalences (i.e., the torsion vanishes) is diffeomorphic to a product.  Furthermore, $h$-cobordisms $(W,M,M')$ with $\dim M \geq 5$ are parameterized by the Whitehead group of $M$, which is a certain quotient of the algebraic $K_1$ of the group algebra $\Z\pi_1(M)$. (Note: in low dimensions, $h$-cobordisms between simply connected manifolds can fail to be products.)

We know that $\aone$-$h$-cobordisms of strictly positive dimensional smooth proper $\aone$-connected varieties are always non-trivial by Proposition \ref{prop:nontrivialfundamentalgroup}.  The $s$-cobordism theorem suggests an explanation for non-triviality of such $\aone$-$h$-cobordisms in terms of the $\aone$-fundamental group.  One can formulate a notion of $\aone$-Whitehead torsion of an $\aone$-weak equivalence.  Suppose given an $\aone$-$h$-cobordism $(W,f)$ between smooth proper $\aone$-connected $k$-varieties $X$ to $X'$.  The inclusion $X \hookrightarrow W$ induces a morphism of $\aone$-singular chain complexes (see \cite{MField} \S 5.2 for a definition), and the cone of this morphism is an $\aone$-contractible chain complex of (sheaves of) modules over the (sheaf of) group algebra(s) $\Z[\pi_1^{\aone}(X)]$.  When the $\aone$-singular chain complexes are sufficiently well understood, one can associate with this complex a {\em computable} $\aone$-Whitehead torsion.  Optimistically, one can hope for an $\aone$-$s$-cobordism theorem stating that $\aone$-$h$-cobordisms can be parameterized by an appropriately defined Whitehead group of the $\aone$-fundamental group.

\begin{rem}
The computations above suggest that the $\aone$-Whitehead torsion will likely be quite complicated in general.  According to Corollary \ref{thm:surfacesfundamentalgroup}, non-minimal $\aone$-homotopy types can, in general, contain moduli of non-isomorphic varieties.  On the other hand, the minimal $\aone$-homotopy types for rational surfaces contain a discrete set of isomorphism classes of smooth proper varieties.
\end{rem}

\subsubsection*{Extension: determining ${\mathscr S}_{\aone}(X)$}
Given a finite CW complex $X$, we now recall some aspects of the surgery problem and determination of the structure set ${\mathscr S}(X)$.  If $X$ is homotopy equivalent to a manifold then (a) the cohomology of $X$ satisfies Poincar\'e duality, and (b) $X$ has a tangent bundle or, by Atiyah duality, $X$ has a stable normal bundle.  Amazingly, these two pieces of data turn out to be essentially sufficient to identify manifolds among CW complexes, provided certain compatibility conditions are satisfied.

A finite CW complex satisfying Poincar\'e duality is called a {\em geometric Poincar\'e complex}.  Any geometric Poincar\'e complex  $X$ admits a {\em Spivak normal fibration}, which is a homotopy theoretic substitute for the stable normal bundle.  The Spivak normal fibration is a homotopy sphere bundle and is ``classified" by a map $\nu_X: X \to BG$ where $BG$ is the colimit of the classifying spaces of the monoids of homotopy self-equivalences of the sphere of dimension $n$ for a natural sequence of inclusions.  If $X$ is homotopy equivalent to a manifold, the Spivak normal fibration will admit a vector bundle reduction (classifying the stable normal bundle).  If $BO$ denotes the classifying space for the stable orthogonal group, there is a map $\iota: BO \to BG$ that induces the Hopf-Whitehead $J$-homomorphism. A vector bundle reduction of $\nu_X$ is a lift along $\iota$.  The homotopy cofiber of $\iota$ can be identified with $B(G/O)$ and reductions exist if and only if the induced map $X \to B(G/O)$ is homotopically trivial.  A primary obstruction to $X$ admitting a manifold structure is the homotopic triviality of this map, and if this obstruction vanishes, lifts are classified by the set of homotopy classes of maps $[X,G/O]$.

The secondary {\em surgery obstruction} provides a map from $[X,G/O]$ to a group $L_n(\Z(\pi_1(X)))$ defined in terms of complexes of $\Z(\pi_1(X))$-modules with duality (where $n$ is the ``formal" dimension of $X$).  Finally, the structure set ${\mathscr S}(X)$ fits into an exact sequence of {\em sets} of the form:
\[
L_{n+1}(\Z(\pi_1(X))) \Longrightarrow {\mathscr S}(X) \longrightarrow [X,G/O] \longrightarrow L_n(\Z(\pi_1(X))),
\]
called the surgery exact sequence.  Our notation signifies that group $L_{n+1}(\Z(\pi_1(X)))$ acts on the set ${\mathscr S}(X)$, and the last map on the right hand side is {\em not} in general a group homomorphism (even though both its source and target are groups)!

For an appropriate analog in $\aone$-homotopy theory, note that Algebraic K-theory is representable in the $\aone$-homotopy category (see \cite[\S 4 Theorem 3.13]{MV}).  Smooth schemes have tangent bundles, which in some situations are classified by maps to an infinite Grassmannian (\cite[\S 7]{MField}), and the statement of $\aone$-Atiyah duality for smooth projective schemes (see \cite[Theorem A.1]{Hu} or \cite[Th\'eor\`eme 2.2]{Riou}) tells us how to define the notion of an $\aone$-Poincar\'e complex.

One may define the natural analog of ``$G$" as the $\pm 1$-components of the $\pone$-infinite loop space $Q_{\pone}S^0_k$ corresponding to the $\pone$-sphere spectrum ${\mathbb S}^0_k$.  Except at the ``zeroth" level, the $\aone$-homotopy groups of this space coincide with the stable motivic homotopy groups of spheres.  One needs to prove existence of analogs of Spivak normal fibrations for $\aone$-Poincar\'e complexes.  To develop the primary K-theory obstruction for Problem \ref{problem:aonesurgery}, one needs to study the ``(sheaf theoretic) motivic $J$-homomorphism," and the ``$\pone$-loop space recognition problem" as mentioned by Voevodsky (to show that spaces like ``$G/GL$" are $\pone$-infinite loop spaces).  Adopting this point of view, computations of stable $\aone$-homotopy (sheaves of) groups (e.g., \cite{Mpi0}) have bearing on the geometry and arithmetic of algebraic varieties.   Analogs of the secondary surgery obstruction theory (even conjectural) involving the $\aone$-fundamental group are still mysterious.

\section{Birational sheaves and $\aone$-chain connectedness}
\label{s:unramified}
The goal of this section is to establish Theorem \ref{thm:propercharacterization}.  We have deferred the proof of this result here because, while it is in a sense elementary, the techniques used in the proof differ substantially from those (explicitly) used in previous sections.  We present the proof in outline here.

\begin{proof}[Proof of \textup{Theorem \ref{thm:propercharacterization}}]
We will construct a new sheaf $\pi_0^{b\aone}(X)$ (see Definition \ref{defn:birationalconnectedcomponents}) that is explicitly $\aone$-invariant in the sense of Definition \ref{defn:aoneinvariant} and has an additional birationality property.  The existence of the sheaf $\pi_0^{b\aone}(X)$, together with the proof of its $\aone$-invariance, follows from Theorem \ref{thm:definingthesheaf}; this point is both technically and notationally the most complicated part of the proof.  We will show in Proposition \ref{prop:connectednessfactorization} that there is a factorization
\[
\pi_0^{ch}(X) \longrightarrow \pi_0^{\aone}(X) \longrightarrow \pi_0^{b\aone}(X)
\]
inducing a bijection between sections of the first and last sheaves over finitely generated separable extensions $L/k$ that establishes the required result.
\end{proof}

\subsection{Birational and $\aone$-invariant sheaves}
To establish the propositions referenced in the proof of Theorem \ref{thm:propercharacterization} above, we need to introduce some terminology.  For $X \in \Sm_k$, we write $X^{(p)}$ for the set of codimension $p$ points of $X$.  We introduce a notion of birational and $\aone$-invariant sheaves inspired by (though independent from) the axiomatic framework developed in \cite[\S 1.1]{MField}; the superscript ``$b\aone$" in $\pi_0^{b\aone}$ indicates these properties.  We refer the reader to \cite[\S 2]{CTPurity}, \cite{RostChow} and \cite[\S 1.1]{MField} for some discussion of ideas leading up to the next definition.

\begin{defn}
\label{defn:birationalpresheaf}
Suppose ${\mathcal S}$ is a presheaf of sets on $\Sm_k$.  We will say that ${\mathcal S}$ is {\em birational} if it satisfies the following two properties.
\begin{itemize}
\item[(i)] For any $X \in \Sm_k$ having irreducible components $X_{\eta}$ $(\eta \in X^{(0)})$, the map
\[
{\mathcal S}(X) \longrightarrow \prod_{\eta \in X^{(0)}} {\mathcal S}(X_\eta)
\]
is a bijection.
\item[(ii)] For any $X \in \Sm_k$, and any open dense subscheme $U \subset X$, the restriction map ${\mathcal S}(X) \to {\mathcal S}(U)$ is a bijection.
\end{itemize}
A sheaf ${\mathcal S}$ of sets on $\Sm_k$ is called birational if the underlying presheaf of sets on $\Sm_k$ is birational.
\end{defn}

\begin{lem}
\label{lem:birationalpresheafisasheaf}
If ${\mathcal S}$ is a birational presheaf on $\Sm_k$, then ${\mathcal S}$ is a automatically a (Nisnevich) sheaf.
\end{lem}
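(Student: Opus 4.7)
The plan is to verify the Nisnevich sheaf condition through the cd-structure of elementary distinguished squares: by the standard criterion recalled in \cite[\S 3.1]{MV}, a presheaf $\mathcal S$ on $\Sm_k$ is a Nisnevich sheaf if and only if (a) $\mathcal S(\emptyset) = *$ and $\mathcal S(X \sqcup Y) \isomto \mathcal S(X) \times \mathcal S(Y)$, and (b) for every elementary distinguished square given by an open immersion $j\colon U \hookrightarrow X$ and an \'etale morphism $p\colon V \to X$ for which $p^{-1}(X\setminus U)_{\mathrm{red}} \isomto (X\setminus U)_{\mathrm{red}}$, the comparison map $\mathcal S(X) \to \mathcal S(U) \times_{\mathcal S(U \times_X V)} \mathcal S(V)$ is a bijection. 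Condition (a) falls out of (i) of Definition \ref{defn:birationalpresheaf}, since the irreducible components of $X \sqcup Y$ are the union of those of $X$ and $Y$ and the empty product is a singleton.

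First I would reduce (b) to the case where $X$ is irreducible using (i), and then dispose of the trivial case $U=\emptyset$, in which $p$ is an isomorphism. Assuming $U$ nonempty, hence dense in the irreducible scheme $X$, property (ii) yields a bijection $\mathcal S(X) \isomto \mathcal S(U)$. The key geometric observation is that $V \times_X U$ is also dense in $V$: since $p$ is \'etale, every irreducible component of $V$ is equidimensional with $X$, whereas the closed complement $p^{-1}(X\setminus U)$ is isomorphic to $X\setminus U$ and therefore has dimension strictly less than $\dim X$, so no component of $V$ can be contained in it. A second application of (ii) then gives $\mathcal S(V) \isomto \mathcal S(V \times_X U)$, so the fibre product on the right of the comparison map collapses to $\mathcal S(U)$, and the required bijection is just the first instance of (ii).

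The only substantive point I expect to need care — and the main obstacle to a purely formal argument — is the density of $V \times_X U$ in $V$; it is precisely here that the \'etale hypothesis on $p$ enters essentially, through the equidimensionality of \'etale covers of an irreducible base. Without it one could imagine spurious components of $V$ supported over the closed complement $X\setminus U$, and the Mayer--Vietoris collapse built into (ii) would not take place.
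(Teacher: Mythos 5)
Your proof is correct and follows the same route as the paper: reduce to the Nisnevich distinguished-square (cd-structure) criterion of \cite[\S 3 Proposition 1.4]{MV} and then verify Cartesianness directly from the two birationality axioms. The paper simply asserts this last step ``follows immediately from the definition of birationality,'' whereas you have (correctly) supplied the details, in particular the density of $V \times_X U$ in $V$ coming from equidimensionality of \'etale covers over an irreducible base.
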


\begin{proof}
Using \cite[\S 3 Proposition 1.4]{MV}, it suffices to show that given any Nisnevich distinguished square (see {\em loc. cit.} Definition 1.3), the induced square of sets obtained by applying ${\mathcal S}$ is Cartesian.  This property follows immediately from the definition of birationality.
\end{proof}

\begin{notation}
Let $\Shv^{b\aone}_k$ denote the full subcategory of $\Shv_k$ consisting of sheaves that are both birational in the sense of Definition \ref{defn:birationalpresheaf} and $\aone$-invariant in the sense of Definition \ref{defn:aoneinvariant}.
\end{notation}

\subsubsection*{An equivalence of categories}
Let $\F_k$ denote the category whose objects are finitely generated separable extension fields $L$ of $k$ and whose morphisms are field extensions.

\begin{notation}
We write $\fkset$ for the category of {\em covariant} functors from $\F_k$ to the category of sets.
\end{notation}

Suppose that $(L,\nu)$ is a pair consisting of a finitely generated separable field extension $L/k$ and a discrete valuation $\nu$ on $L$; write $\O_{\nu}$ for the corresponding discrete valuation ring and $\kappa_\nu$ for the associated residue field.  Given an object ${\mathcal S} \in \Shv^{b\aone}_k$, birationality of ${\mathcal S}$ implies that the map ${\mathcal S}(\Spec \O_\nu) \to {\mathcal S}(\Spec L)$ is a bijection.  The morphism $\O_{\nu} \to \kappa_{\nu}$ induces a map ${\mathcal S}(\Spec \O_{\nu}) \to {\mathcal S}(\Spec \kappa_\nu)$ that when composed with the inverse to the aforementioned bijection induces a specialization map ${\mathcal S}(L) \to {\mathcal S}(\kappa_\nu)$.

\begin{defn}
\label{defn:fkrset}
The category $\fkrset$ has as objects elements ${\mathcal S} \in \fkset$ together with the following additional structure:
\begin{itemize}
\item[({\bf R})] for each pair $(L,\nu)$ consisting of $L \in \F_k$ and a discrete valuation $\nu$ on $L$ with residue field $\kappa_{\nu}$ separable over $k$, a specialization (or residue) morphism $s_{\nu}: {\mathcal S}(L) \to {\mathcal S}(\kappa_{\nu})$.
\end{itemize}
Morphisms in the category $\fkrset$ are those natural transformations of functors preserving the data $({\bf R})$.
\end{defn}

With these definitions, evaluation on sections determines a functor
\begin{equation}
\label{eqn:birationalsheavesrestriction}
\Shv^{b\aone}_k \longrightarrow \fkrset
\end{equation}
that we refer to simply as {\em restriction}.  We will see momentarily that restriction is fully-faithful; let us first study its essential image.

\begin{defn}
\label{defn:birationalfkrset}
An object ${\mathcal S} \in \fkrset$ is called {\em sheaflike} if it has the following three properties.
\begin{itemize}
\item[({\bf A1})] Given pairs $(L,\nu)$ and $(L',\nu')$ consisting of finitely generated separable extensions $k \subset L \subset L'$ such that $\nu'$ restricts to a discrete valuation $\nu$ on $L$ with ramification index $1$, and both $\kappa_\nu$ and $\kappa_{\nu'}$ are separable over $k$, the following diagram commutes:
\[
\xymatrix{
{\mathcal S}(L) \ar[r]\ar[d] & {\mathcal S}(L') \ar[d] \\
{\mathcal S}(\kappa_\nu) \ar[r] & {\mathcal S}(\kappa_{\nu'}).
}
\]
\item[({\bf A2})] Given pairs $(L,\nu)$ and $(L',\nu')$ consisting of finitely generated separable extensions $k \subset L \subset L'$ such that $\nu'$ restricts to $0$ on $L$, and $\kappa_{\nu'}$ is separable over $k$, the composite map ${\mathcal S}(L) \to {\mathcal S}(L') \to {\mathcal S}(\kappa_{\nu})$ and the map ${\mathcal S}(L) \to {\mathcal S}(\kappa_{\nu})$ are equal.
\item[({\bf A3})] For any $X\in Sm_k$ irreducible with function field $F$,
for any point $z\in X^{(2)}$ with residue field $\kappa(z)$ separable over $k$, and for any point $y_0\in X^{(1)}$ with residue field $\kappa(y_0)$, such that $z\in \overline{y}_0$ and such that $\overline{y}_0\in Sm_k$, the composition
\[
{\mathcal S}(F) \longrightarrow {\mathcal S}(\kappa(y_0)) \longrightarrow {\mathcal S}(\kappa(z))
\]
is independent of the choice of $y_0$.
\end{itemize}
If furthermore, ${\mathcal S}$ has the following property, we will say that it is a {\em sheaflike and $\aone$-invariant} $\fkrset$.
\begin{itemize}
\item[({\bf A4})] For any $L \in \F_k$, the map ${\mathcal S}(L) \to {\mathcal S}(L(t))$ is a bijection.
\end{itemize}
\end{defn}

\begin{thm}
\label{thm:equivalenceofcategories}
The restriction functor
\[
\Shv^{b\aone}_k \longrightarrow \fkrset
\]
of \textup{Equation \ref{eqn:birationalsheavesrestriction}} is fully-faithful and has essential image the full subcategory of $\fkrset$ spanned by objects that are sheaflike and $\aone$-invariant.
\end{thm}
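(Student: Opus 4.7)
The plan is to establish full faithfulness by exploiting birationality to reduce morphisms to their restrictions on function fields, and to establish essential surjectivity by explicitly constructing a birational $\aone$-invariant sheaf $\mathcal{S}_F$ on $\Sm_k$ from any sheaflike $\aone$-invariant $F \in \fkrset$.

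For full faithfulness, given $\mathcal{S}, \mathcal{T} \in \Shv^{b\aone}_k$, properties (i) and (ii) of birationality combine to yield a canonical bijection $\mathcal{S}(X) \isomto \prod_{\eta \in X^{(0)}} \mathcal{S}(\Spec k(X_\eta))$ for every $X \in \Sm_k$, so any natural transformation $\varphi: \mathcal{S} \to \mathcal{T}$ is determined by its restriction to sections over finitely generated separable extensions. Conversely, a morphism $\psi$ in $\fkrset$ between the restrictions determines a candidate $\varphi_X$ component-wise via this bijection, and compatibility of $\psi$ with residue maps ensures that $\varphi$ commutes with pullback along morphisms of smooth schemes, since any such pullback is controlled by its effect on generic points together with the associated specializations.

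For the claim that the image consists of sheaflike $\aone$-invariant objects, I would verify (A1)--(A4) for $\mathcal{S} \in \Shv^{b\aone}_k$ directly. Axioms (A1) and (A2) are formal consequences of the definition of specialization via restriction from the discrete valuation ring $\O_\nu$ to its closed point, combined with functoriality of $\mathcal{S}$ under the inclusion $\O_\nu \hookrightarrow \O_{\nu'}$. Axiom (A3) follows by applying $\mathcal{S}$ to a suitable Nisnevich chart around $z$ in which $\overline{y_0}$ and any competing smooth codimension-one branch through $z$ become coordinate hyperplanes, reducing both specialization composites to the same restriction-to-$z$ map. Axiom (A4) is immediate from the chain of identifications $\mathcal{S}(L) \isomto \mathcal{S}(\aone_L) \isomto \mathcal{S}(\Spec L(t))$, the first by $\aone$-invariance and the second by birationality applied to the dense open immersion $\Spec L(t) \hookrightarrow \aone_L$.

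For essential surjectivity, given a sheaflike $\aone$-invariant $F$, I set $\mathcal{S}_F(X) := \prod_{\eta \in X^{(0)}} F(k(X_\eta))$; this is forced if $\mathcal{S}_F$ is to extend $F$ and be birational. The main task is to define functorial pullback along any morphism $f: Y \to X$ in $\Sm_k$. Working component-wise, I reduce to irreducible $X, Y$, factor $f$ through the closure $Z$ of its image in $X$ to obtain a field inclusion $k(Z) \hookrightarrow k(Y)$ (yielding $F(k(Z)) \to F(k(Y))$), and construct a map $F(k(X)) \to F(k(Z))$ by composing specializations along a chain of codimension-one generizations $\eta_X = \eta_0 > \eta_1 > \cdots > \eta_n = \eta_Z$ in $X$ with each intermediate closure smooth at the next point; such a chain exists after passing to a Nisnevich neighborhood of $\eta_Z$, using smoothness of $X$. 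The hard part is showing independence of this composite on the chosen chain: axiom (A3) handles the elementary move of replacing one intermediate generization by another at codimension two, and a connectedness argument on the ``graph of chains'' (exchanging one vertex at a time while holding endpoints fixed, with (A1) comparing across Nisnevich neighborhoods and (A2) covering trivial specializations) then reduces any two chains to each other. Once independence is in hand, the Nisnevich sheaf property of $\mathcal{S}_F$ is automatic by Lemma \ref{lem:birationalpresheafisasheaf}, birationality holds by construction, and $\aone$-invariance reduces via birationality on $\aone_X$ to (A4); the restriction of $\mathcal{S}_F$ back to $\F_k$ then recovers $F$ together with its specialization data, completing the equivalence.
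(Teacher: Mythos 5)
Your treatment of full-faithfulness and of the verification that the image lands in sheaflike $\aone$-invariant objects matches the paper's approach and is fine. The divergence is in essential surjectivity, and there you have a genuine gap.

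The paper factors an arbitrary morphism $f: Y \to X$ through its graph $Y \hookrightarrow Y \times X$ (a closed immersion between \emph{smooth} schemes) followed by the dominant projection, and then, for a closed immersion of smooth schemes $Y \hookrightarrow X$ of arbitrary codimension, uses the diagram ${\mathbb P}({\mathscr N}_f) \hookrightarrow {\sf Bl}_Y(X) \to X$: birationality gives $\Stilde({\sf Bl}_Y(X)) = \Stilde(X)$, the codimension-one immersion of the exceptional divisor supplies a single specialization map, and $\aone$-invariance gives $\Stilde({\mathbb P}({\mathscr N}_f)) = \Stilde(Y)$ because a projective bundle is Zariski-locally a product with affine space. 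This collapses a codimension-$c$ specialization into \emph{one} codimension-one step, canonically, and axiom (A3) is then only invoked once, in the compatibility Lemma \ref{lem:birationalcompatibilities}, in a tightly controlled situation (two codimension-one immersions through a common codimension-two center). You instead factor through the schematic image $Z$ --- which need not be smooth, so you cannot apply the sheaf to it and must work purely at the level of function fields --- and then define $F(k(X)) \to F(k(Z))$ by iterating codimension-one specializations along an arbitrary chain $\eta_0 > \eta_1 > \cdots > \eta_n$.

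Two things are missing. First, well-definedness: you assert that (A3) handles elementary moves and that a ``connectedness argument on the graph of chains'' does the rest, but this is exactly the hard content and is nowhere proved. Axiom (A3) compares two chains differing in one intermediate vertex through a common codimension-two point; showing that any two chains between fixed endpoints are connected by such moves is a nontrivial geometric statement (it is essentially a moving-lemma-type claim about regular flags in a smooth scheme), and you also need independence of the Nisnevich-local charts used along the way. Second, existence: you need each intermediate $\overline{\eta_i}$ to be regular in codimension one at $\eta_{i+1}$ (so that the local ring is a DVR) and each $\kappa(\eta_i)$ to be separable over $k$ (else the datum $({\bf R})$ does not apply); you assert this can be arranged after Nisnevich shrinking ``using smoothness of $X$,'' but over an imperfect base this requires proof. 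The blow-up construction sidesteps both issues at once: ${\mathbb P}({\mathscr N}_f)$ is smooth over $Y$, its function field is a purely transcendental (hence separable) extension of the separable $k(Y)$, and no choice of chain is ever made. The key idea you are missing is precisely the use of deformation to the normal cone combined with $\aone$-invariance on projective bundles as a canonical, choice-free reduction to codimension one.
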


\begin{proof}
The full-faithfulness follows easily from birationality.  It is also straightforward to check that the essential image of restriction is contained in the full subcategory of $\fkrset$ spanned by objects that are sheaflike and $\aone$-invariant.  Indeed, $({\bf A1})$ follows immediately from the property of being a Nisnevich sheaf (via the distinguished square characterization), both $({\bf A2})$ and $({\bf A3})$ follow by choosing explicit smooth models for appropriate closed immersions.  We will thus show how to construct an explicit quasi-inverse functor.  Suppose given an object ${\mathcal S} \in \fkrset$ that is sheaflike and $\aone$-invariant.  We define a presheaf $\Stilde$ on $\Sm_k$ as follows.

{\em Step 1.} For an irreducible $U \in \Sm_k$ define $\Stilde(U) := {\mathcal S}(k(U))$.  Extend this assignment to all smooth schemes in the unique way to make Property (i) of Definition \ref{defn:birationalpresheaf} hold.

{\em Step 2.} Given a morphism $f: Y \to X$, we define a morphism $\Stilde(f): \Stilde(X) \to \Stilde(Y)$ as follows.  If $f$ is dominant, we define $\Stilde(f)$ to be the induced morphism ${\mathcal S}(f): {\mathcal S}(k(X)) \to {\mathcal S}(k(Y))$.  In the case $f: Y \to X$ is a closed immersion of smooth schemes, we proceed as follows.  Let ${\mathscr N}_f$ denote the normal bundle to the immersion.  Consider the diagram
\[
\xymatrix{
{\mathbb P}({\mathscr N}_f) \ar[r]^{\iota}\ar[d]^{\pi'} & {\sf Bl}_Z(X) \ar[d]^{\pi} \\
Y \ar[r]^{f} & X
}
\]
The top horizontal morphism is a codimension $1$ closed immersion, the left vertical arrow is a Zariski locally trivial morphism with projective space fibers, and the right vertical morphism is a proper birational morphism (in particular dominant).  Using Zariski local triviality of $\pi'$, together with the fact that ${\mathcal S}$ is $\aone$-invariant and $\Stilde$ only depends on the function field of its input, we observe that $\Stilde({\mathbb P}({\mathscr N}_f)) = \Stilde(Y)$.  Again using the fact that $\Stilde$ only depends on the function field of its input, we observe that $\Stilde(X_Z) = \Stilde(X)$.  The top vertical morphism is a codimension $1$ closed immersion, so we can define $\Stilde(X_Z) \to \Stilde({\mathbb P}({\mathscr N}_f))$ using the specialization morphism for the corresponding valuation.

{\em Step 3.} A general morphism $f: Y \to X$ can be factored as a closed immersion $Y \to Y \times X$ (the graph) followed by a projection (which is dominant).  We can then define $\Stilde(f)$ as the composite of these two morphisms.  We now need to check that the above constructions are actually compatible and define a functor.  These facts are checked in Lemma \ref{lem:birationalcompatibilities}.

Assuming these compatibilities, note that $\Stilde$ is by construction a birational and $\aone$-invariant presheaf.  Thus, Lemma \ref{lem:birationalpresheafisasheaf} shows $\Stilde$ actually defines a birational and $\aone$-invariant sheaf.  Furthermore, it is straightforward to check that this construction provides an inverse to restriction.
\end{proof}

\begin{lem}
\label{lem:birationalcompatibilities}
Continuing with notation as in \textup{Theorem \ref{thm:equivalenceofcategories}} (and its proof), we have the following two facts.
\begin{itemize}
\item Given a Cartesian square of the form
\[
\xymatrix{
Y' \ar[r]\ar[d] & Y \ar[d] \\
X' \ar[r] & X
}
\]
where the vertical morphisms are smooth and the horizontal morphisms are closed immersions, the diagram of sets obtained by applying $\Stilde$ commutes.
\item Given a sequence of closed immersions $Z \hookrightarrow Y \hookrightarrow X$, the triangle (of sets) obtained by applying $\Stilde$ commutes.
\end{itemize}
\end{lem}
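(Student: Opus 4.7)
The plan is to reduce each of the two stated compatibilities to the axioms $({\bf A1})$--$({\bf A3})$ of Definition~\ref{defn:birationalfkrset}, exploiting the fact that the morphism $\Stilde(f)$ associated to a closed immersion $f\colon Y\hookrightarrow X$ is, by construction, a specialization map for the discrete valuation on $k(X) = k({\sf Bl}_Y(X))$ cut out by the exceptional divisor ${\mathbb P}({\mathscr N}_f)$.

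For the first compatibility, I would first observe that, since blowing up commutes with flat base change along the smooth morphism $X'\to X$, one has ${\sf Bl}_{Y'}(X')\cong{\sf Bl}_Y(X)\times_X X'$ with exceptional divisor ${\mathbb P}({\mathscr N}_{Y'/X'})\cong{\mathbb P}({\mathscr N}_{Y/X})\times_Y Y'$. The valuation $\nu'$ on $k(X')$ cut out by the exceptional divisor of ${\sf Bl}_{Y'}(X')$ therefore restricts to the analogous valuation $\nu$ on $k(X)\hookrightarrow k(X')$ with ramification index $1$, and the residue fields $\kappa_\nu$ and $\kappa_{\nu'}$ are function fields of smooth $k$-varieties, hence separable over $k$. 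Axiom $({\bf A1})$ then supplies commutativity of the specialization step, while axiom $({\bf A4})$ is used to identify $\Stilde$ of each projectivized normal bundle with $\Stilde$ of its base.

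For the second compatibility, the difficulty is that the composite $\Stilde(X)\to\Stilde(Y)\to\Stilde(Z)$ and the direct map $\Stilde(X)\to\Stilde(Z)$ are defined via \emph{different} pairs of discrete valuations on $k(X)$. I would therefore work inside the blow-up $W:={\sf Bl}_Z(X)$, whose exceptional divisor $E\cong{\mathbb P}({\mathscr N}_{Z/X})$ encodes the direct specialization. The strict transform $\widetilde Y\subset W$ of $Y$ is smooth of the same codimension in $W$ as $Y$ is in $X$ and meets $E$ along ${\mathbb P}({\mathscr N}_{Z/Y})\subset E$, so the composite map can be realized inside $W$ as specialization along $\widetilde Y$ followed by specialization along the exceptional divisor of ${\sf Bl}_{\widetilde Y\cap E}(\widetilde Y)$. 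Both composites then pass through a common codimension-two point lying in both $E$ and $\widetilde Y$, and axiom $({\bf A3})$ asserts precisely that the resulting specialization does not depend on whether one first travels along $E$ or along $\widetilde Y$. Axioms $({\bf A2})$ and $({\bf A4})$ collapse the purely transcendental parts of the residue fields that arise, and an induction on the codimension of $Z$ in $X$ (with $({\bf A1})$ propagating commutativity through chains of length greater than two) completes the argument.

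The main obstacle will be the second part: one must carefully verify the geometric identification of $E$, $\widetilde Y$ and their intersection, and then keep track of the auxiliary transcendental variables appearing in the residue fields when $Z$ has codimension greater than two in either $X$ or $Y$. If a single application of $({\bf A3})$ does not suffice in high codimension, a fallback is to reduce to the codimension-two case by interposing intermediate smooth subschemes $Z\subset Z_1\subset\cdots\subset Z_\ell=Y$ and using $({\bf A1})$ to propagate commutativity along the resulting chain of closed immersions.
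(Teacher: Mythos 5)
Your argument for the first bullet is essentially correct and matches the paper's brief indication: blowing up commutes with base change along the smooth morphism $X' \to X$, so the valuation cut out by the exceptional divisor of ${\sf Bl}_{Y'}(X')$ restricts with ramification index $1$ to the valuation cut out by the exceptional divisor of ${\sf Bl}_Y(X)$, and then $(\mathbf{A1})$ and $\aone$-invariance finish the job.

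For the second bullet there is a genuine gap. The axiom $(\mathbf{A3})$ is stated only for the very specific configuration of a codimension-$2$ point $z$ and a codimension-$1$ point $y_0 \in X^{(1)}$ with $z \in \overline{y}_0$ and $\overline{y}_0$ smooth. Working inside $W = {\sf Bl}_Z(X)$ as you propose, the exceptional divisor $E$ does have codimension $1$, but the strict transform $\widetilde{Y}$ has codimension $c := \operatorname{codim}_X(Y)$ in $W$, so $\widetilde{Y}\cap E$ has codimension $c+1$ rather than $2$ whenever $c > 1$. Your attempt to apply $(\mathbf{A3})$ at the generic point of $\widetilde{Y}\cap E$ therefore only works when $Y$ already has codimension $1$ in $X$ and $Z$ has codimension $1$ in $Y$, which is exactly the codimension-one reduction that you have not established. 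Your fallback of interposing smooth intermediate subschemes $Z \subset Z_1 \subset \cdots \subset Z_\ell = Y$ and ``propagating with $(\mathbf{A1})$'' does not repair this: $(\mathbf{A1})$ is an unramified compatibility axiom for a single specialization across a field extension, not a mechanism for chaining specializations along nested closed immersions, and moreover such a chain of smooth intermediate subschemes need not exist. The paper instead achieves the codimension-one reduction by blowing up the \emph{middle} subscheme $Y$ (not $Z$): the exceptional divisor $\mathbb{P}(\mathscr{N}_{Y/X}) \hookrightarrow {\sf Bl}_Y(X)$ has codimension one, and the projective bundle projections $\mathbb{P}(\mathscr{N}_{Y/X}) \to Y$ and $\mathbb{P}(\mathscr{N}_{Y/X}|_Z) \to Z$ identify $\Stilde$ of the new triangle with $\Stilde$ of the old one via birationality plus $\aone$-invariance; repeating this for the inner immersion $Z$ reduces entirely to a chain of two codimension-one immersions, at which point $(\mathbf{A3})$ applies as stated.
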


\begin{proof}
The first point follows immediately from the functorial properties of blow-ups together with $\aone$-invariance and the fact that ${\mathcal S}$ is an $\fkrset$.

For the second point, we proceed as follows.  Let ${\mathscr N}$ denote the normal bundle to the closed immersion $Y \hookrightarrow X$.  Consider the morphism ${\sf Bl}_Y(X) \to X$.  Pulling back this morphism along the closed immersion $Z \hookrightarrow Y$ induces the projective bundle ${\mathbb P}({\mathscr N}|_{Z}) \to Z$.  Again, using the fact that $\Stilde$ only depends on the function field of its argument together with $\aone$-invariance, we conclude that $\Stilde(Z) = \Stilde({\mathbb P}({\mathscr N}|_{Z}))$ and $\Stilde(Y) = \Stilde({\mathbb P}({\mathscr N}))$.  This observation reduces us to proving the property when $Y$ is a codimension $1$ closed subscheme of $X$.  Repeating this process for $Z$, we can reduce to the case where $Z \subset Y \subset X$ is a sequence of codimension $1$ closed immersions.  In this case, using the fact that ${\mathcal S}$ is sheaflike, we can apply Property ({\bf A3}) to finish.
\end{proof}

\subsection{Birational connected components associated with proper schemes}
Via Theorem \ref{thm:equivalenceofcategories}, to define a birational and $\aone$-invariant sheaf, it suffices to specify an object of $\fkrset$ and check the properties listed in Definition \ref{defn:birationalfkrset}.  The main goal of this section is to prove the following result.


\begin{thm}
\label{thm:definingthesheaf}
Suppose $X$ is proper scheme having finite type over a field $k$.   By abuse of notation, use $X$ to denote the Nisnevich sheaf defined by the functor of points of $X$.  There exists a birational and $\aone$-invariant sheaf $\pi_0^{b\aone}(X)$ together with a morphism of sheaves $X \to \pi_0^{b\aone}(X)$ such that, for any $L \in \F_k$, the induced map $X(L) \to \pi_0^{b\aone}(X)(L)$ factors through a bijection
\[
X(L)/\smallsim \cong \pi_0^{b\aone}(X)(L)
\]
(recall \textup{Notation \ref{notation:aoneequivalenceclassesofpoints}}).
\end{thm}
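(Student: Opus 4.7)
The plan is to apply Theorem \ref{thm:equivalenceofcategories}: rather than constructing $\pi_0^{b\aone}(X)$ directly as a sheaf, I will specify an object of $\fkrset$ that is sheaflike and $\aone$-invariant, and then read off the sheaf from the equivalence of categories. The natural candidate is the functor $L \mapsto X(L)/\smallsim$ on $\F_k$, so the main task reduces to (a) constructing specialization morphisms that are well-defined on $\aone$-equivalence classes, and (b) verifying the four axioms $(\mathbf{A1})$--$(\mathbf{A4})$.

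For the specialization morphism attached to a pair $(L,\nu)$ with residue field $\kappa_\nu$ separable over $k$, I use the valuative criterion of properness: every $x \in X(L)$ extends uniquely to $\tilde x \in X(\O_\nu)$, and I define $s_\nu([x]) := [\tilde x|_{\kappa_\nu}]$. To see this descends to $\aone$-equivalence classes, given an elementary $\aone$-equivalence $f : \aone_L \to X$ between $x_0$ and $x_1$, apply the valuative criterion to the $L(t)$-point obtained by composing $f$ with the inclusion of the generic point of $\aone_L$: properness and regularity of $\aone_{\O_\nu}$ (or rather, extending over the codimension-one point coming from $\nu$) produce a morphism $\tilde f : \aone_{\O_\nu} \to X$ whose restrictions to $0$ and $1$ interpolate between $s_\nu(x_0)$ and $s_\nu(x_1)$ over $\kappa_\nu$. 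Axioms $(\mathbf{A1})$ and $(\mathbf{A2})$ are then immediate from the uniqueness in the valuative criterion (for $(\mathbf{A2})$, an $L$-point becomes a constant $\O_{\nu'}$-point via pullback).

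Axiom $(\mathbf{A4})$, the $\aone$-invariance, is the nontrivial geometric input. Injectivity of $X(L)/\smallsim \to X(L(t))/\smallsim$ follows by composing the specialization at $t=0$ (which sends the image of a class $[x]$ back to $[x]$). For surjectivity, given $y \in X(L(t))$, interpret $y$ as a rational map $\aone_L \dashrightarrow X$ defined at the generic point; since $\aone_L$ is a smooth one-dimensional scheme over $L$ and $X$ is proper, this rational map extends by the valuative criterion (at each closed point of $\aone_L$) to an honest morphism $\tilde y : \aone_L \to X$. Then $\tilde y$ exhibits a chain of elementary $\aone$-equivalences between the specialization $s_{t=0}(y) \in X(L)$ and $y$ viewed over $L(t)$, so $[y]$ lies in the image. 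For $(\mathbf{A3})$, given a smooth irreducible $X'$ with function field $F$, a codimension two point $z \in (X')^{(2)}$ with $\kappa(z)/k$ separable, and two choices $y_0, y_0' \in (X')^{(1)}$ of smooth divisors containing $z$, the two iterated specialization maps $X(F) \to X(\kappa(z))$ agree: reduce to the case where $\overline{y_0}$ and $\overline{y_0'}$ meet transversally at $z$, then use properness to extend an $F$-point of $X$ to the two-dimensional regular local scheme $\Spec \O_{X',z}$, and observe that both specializations produce the restriction to the closed point.

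Having produced the sheaflike and $\aone$-invariant $\fkrset$-object, Theorem \ref{thm:equivalenceofcategories} yields the sheaf $\pi_0^{b\aone}(X) \in \Shv^{b\aone}_k$ with $\pi_0^{b\aone}(X)(L) = X(L)/\smallsim$ for $L \in \F_k$. The canonical morphism $X \to \pi_0^{b\aone}(X)$ is produced on each irreducible $U \in \Sm_k$ with function field $F_U$ by the composite $X(U) \to X(F_U) \twoheadrightarrow X(F_U)/\smallsim = \pi_0^{b\aone}(X)(U)$; functoriality is a direct consequence of the compatibilities built into the construction. I expect the main obstacles to be the verification of axiom $(\mathbf{A3})$ (compatibility of iterated specialization along two different divisors through a codimension-two point), which requires some care with the local geometry of smooth surfaces and the behavior of morphisms to proper schemes under extension across codimension-two loci, and the surjectivity half of $(\mathbf{A4})$, which genuinely uses properness of $X$ to extend rational curves from the generic point of $\aone_L$.
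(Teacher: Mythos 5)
Your plan of producing the object of $\fkrset$, verifying the axioms $(\mathbf{A1})$--$(\mathbf{A4})$, and invoking Theorem \ref{thm:equivalenceofcategories} is exactly the structure of the paper's proof (via Lemma \ref{lem:definingfkrset} and Proposition \ref{prop:definingbirationalfkrset}). However, the central technical step --- showing that the specialization maps $s_\nu$ are well defined on $\aone$-equivalence classes --- is wrong as you state it, and the same error reappears in your sketch of $(\mathbf{A3})$.

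You claim that, given an elementary $\aone$-equivalence $f : \aone_L \to X$, ``properness and regularity of $\aone_{\O_\nu}$'' produce an extension $\tilde f : \aone_{\O_\nu} \to X$. This is not what the valuative criterion gives you: $\aone_{\O_\nu}$ is a two-dimensional regular scheme, and a rational map from a regular two-dimensional scheme to a proper scheme only extends away from a finite set of closed points (the indeterminacy locus), not across them. If one of those indeterminacy points happens to lie at $0$ or $1$ of the special fiber $\aone_{\kappa_\nu}$ --- which can certainly happen --- then the naive restriction of $\tilde f$ to the special fiber does not compute $s_\nu(x_0)$ or $s_\nu(x_1)$, and your interpolation argument collapses. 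The same problem occurs in your sketch of $(\mathbf{A3})$, where you ``use properness to extend an $F$-point of $X$ to the two-dimensional regular local scheme $\Spec \O_{X',z}$'': the valuative criterion does not deliver such an extension across the codimension-two closed point. This is precisely the obstruction the paper is built to overcome: it resolves the indeterminacy by a proper birational morphism $S_\alpha \to S$ with $S_\alpha$ regular (Lemma \ref{lem:lipmanindeterminacy}, via Lipman), and then uses that the fibers of $S_\alpha \to S$ are $\aone$-chain connected (Lemma \ref{lem:chainconnectedfibers}, via the factorization of proper birational morphisms of regular surfaces into point blow-ups), so that the ambiguity in lifting $0$ and $1$ to $S_\alpha$ is invisible up to $\aone$-equivalence. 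Without this resolution-plus-chain-connectedness step your argument does not go through, and I do not see a way to avoid it. Your treatment of $(\mathbf{A4})$ surjectivity and of $(\mathbf{A1})$, $(\mathbf{A2})$ is fine once the specialization maps are correctly constructed.
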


\begin{rem}
One can show that the morphism $X \to \pi_0^{b\aone}(X)$ of the previous result is {\em initial} among morphisms of $X$ to birational $\aone$-invariant sheaves, but we will not need this fact.
\end{rem}

Suppose given a finitely generated separable extension $L$ of $k$, and a discrete valuation $\nu$ on $L$ with associated valuation ring $\O_{\nu}$ and residue field $\kappa_{\nu}$.  Assume $\kappa_{\nu}$ is separable over $k$.  Consider the $\fkset$ defined by
\[
L \longmapsto X(L)/\smallsim.
\]
The valuative criterion of properness implies that the map $X(\O_{\nu}) \to X(L)$ is a bijection.  Furthermore, the morphism $\O_{\nu} \to \kappa_{\nu}$ induces a morphism $X(\O_{\nu}) \to X(\kappa_{\nu})$.

\begin{lem}
\label{lem:definingfkrset}
If $X$ is a proper scheme having finite type over a field $k$, the composite morphism $X(L) = X(\O_{\nu}) \to X(\kappa_{\nu}) \to X(\kappa_{\nu})/\smallsim$ factors through a morphism
\[
s_{\nu}: X(L)/\smallsim \longrightarrow X(\kappa_{\nu})/\smallsim
\]
providing the data \textup{({\bf R})} of \textup{Definition \ref{defn:fkrset}}.  Denote by $\varpi_0^{b\aone}(X)$ the $\fkrset$ that associates with $L \in \F_k$ the set $X(L)/\smallsim$.
\end{lem}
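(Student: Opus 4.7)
The plan is to reduce the statement to showing that an elementary $\aone$-equivalence $f: \aone_L \to X$ with $f(0) = x_0$, $f(1) = x_1$ has the property that the canonical extensions $\overline{x}_0, \overline{x}_1 \in X(\kappa_\nu)$ (obtained by the valuative criterion of properness) are $\aone$-equivalent in $X(\kappa_\nu)/\smallsim$; since $\smallsim$ is generated by such elementary equivalences and specialization on points is canonical, this is enough both to define $s_\nu$ and to secure the functoriality required by $(\mathbf{R})$.

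First I would extend $f$ to $\overline{f}: \pone_L \to X$ using properness of $X$ at the local ring of $\infty \in \pone_L$, and then regard $\overline{f}$ as a rational map $\pone_{\O_\nu} \dashrightarrow X$. Since $\pone_{\O_\nu}$ is a regular $2$-dimensional scheme and $X$ is proper, the indeterminacy locus has codimension at least two and thus consists of finitely many closed points in the special fiber $\pone_{\kappa_\nu}$. Iterated blow-ups at these points produce a regular projective surface $\tilde{Y}$ equipped with a proper birational morphism $\rho: \tilde{Y} \to \pone_{\O_\nu}$ and a morphism $\phi: \tilde{Y} \to X$ extending $\overline{f}$. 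The sections $t=0$ and $t=1$ of $\pone_{\O_\nu} \to \Spec \O_\nu$ lift to sections $\tilde{s}_0, \tilde{s}_1: \Spec \O_\nu \to \tilde{Y}$ (strict transforms remain sections because they are regular closed subschemes of the smooth surface), and the compositions $\phi \circ \tilde{s}_i$ are morphisms $\Spec \O_\nu \to X$ that restrict to $x_i$ on the generic fiber; by uniqueness in the valuative criterion they coincide with the canonical extensions, so the images of the closed points of $\Spec \O_\nu$ under $\phi \circ \tilde{s}_i$ are exactly $\overline{x}_i$.

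To conclude, I would connect these two closed-point images via a chain of elementary $\aone$-equivalences by walking through the special fiber $\tilde{Y}_s$, which is a connected reduced curve whose irreducible components are rational curves meeting transversally in a tree. The key observation is that an irreducible component isomorphic to $\pone_{\kappa'}$ with $\kappa' \supsetneq \kappa_\nu$ admits no $\kappa_\nu$-rational points, so every $\kappa_\nu$-point of $\tilde{Y}_s$ lies on a component defined over $\kappa_\nu$; moreover the $\kappa_\nu$-rational components form a connected subtree meeting at $\kappa_\nu$-rational points, since a blow-up at a non-$\kappa_\nu$-rational closed point cannot be followed by a blow-up at a $\kappa_\nu$-rational point of its exceptional divisor (there are none). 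The hardest step will be verifying this connectedness of the $\kappa_\nu$-rational subtree and then constructing the chain: given adjacent $\pone_{\kappa_\nu}$-components meeting at a $\kappa_\nu$-rational point, composition of $\phi$ with a suitable affine reparametrization of each component produces an elementary $\aone$-equivalence in $X(\kappa_\nu)$, and concatenation of such equivalences along the path in the subtree from $\tilde{s}_0$ to $\tilde{s}_1$ gives the desired $\aone$-equivalence $\overline{x}_0 \sim \overline{x}_1$.
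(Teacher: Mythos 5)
Your proposal is correct and follows the same overall strategy as the paper's proof — resolve the indeterminacy of the rational map from the $2$-dimensional regular local model to $X$ by blow-ups, lift the sections at $0$ and $1$ via strict transforms, and connect the closed-point specializations through the special fiber — but it diverges at the final, key step. The paper works with $\aone_{\O_\nu}$ (not $\pone_{\O_\nu}$; compactifying is harmless but unnecessary), and at the crucial point it does not argue directly on the special fiber of the resolution over $\Spec\O_\nu$. Instead it passes through the proper transform $D\cong\aone_{\kappa_\nu}$ of the special fiber of $\aone_{\O_\nu}$ and invokes Lemma \ref{lem:chainconnectedfibers}: the fiber of a proper birational morphism of regular $2$-dimensional $k$-schemes over any point $s$ is $\aone$-chain connected as a $\kappa(s)$-scheme. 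That lemma itself is obtained from Lichtenbaum's strong factorization together with Proposition \ref{prop:blowupconnected}, and it is reused in the proof of Proposition \ref{prop:definingbirationalfkrset}, so the modular packaging pays off. Your proof replaces this with a hands-on analysis of the (reduced) special fiber as a tree of rational curves, restricting to the subtree of $\kappa_\nu$-rational components. This is a genuinely different way to close the argument, and it works, but the combinatorial claims — that the reduced special fiber is a tree, that a component $\pone_{\kappa'}$ with $\kappa'\supsetneq\kappa_\nu$ has no $\kappa_\nu$-points, that $\kappa_\nu$-rational components are always born adjacent to $\kappa_\nu$-rational components at $\kappa_\nu$-rational crossing points (so the $\kappa_\nu$-rational part is a connected subtree), and that crossings survive later blow-ups as $\kappa_\nu$-rational points — all need the careful verification you flag; these facts are exactly what Proposition \ref{prop:blowupconnected} and its iteration quietly absorb. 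Also note that the scheme-theoretic special fiber of $\tilde Y\to\Spec\O_\nu$ can acquire multiplicities, so the tree you are walking through is the reduced special fiber; this is harmless for producing $\aone$-chains but should be stated.
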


\begin{prop}
\label{prop:definingbirationalfkrset}
The object $\varpi_0^{b\aone}(X)$ of $\fkrset$ from \textup{Lemma \ref{lem:definingfkrset}} is both sheaflike and $\aone$-invariant in the sense of \textup{Definition \ref{defn:birationalfkrset}}.
\end{prop}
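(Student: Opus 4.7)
The plan is to verify each of the four axioms (A1)--(A4) of Definition \ref{defn:birationalfkrset} for the functor $L \mapsto X(L)/\smallsim$; all four will flow from the valuative criterion of properness for $X$, with the equivalence relation $\smallsim$ doing the nontrivial work only in axiom (A3).

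For the $\aone$-invariance axiom (A4), surjectivity of $X(L)/\smallsim \to X(L(t))/\smallsim$ is direct: a point $y \in X(L(t))$ corresponds to a rational map $\aone_L \dashrightarrow X$, which extends uniquely to a morphism $\hat{y} : \aone_L \to X$ because $\aone_L$ is a smooth curve and $X$ is proper. Base-changing $\hat{y}$ to $L(t)$ and evaluating at the tautological $L(t)$-point $s = t$ of $\aone_{L(t)}$ recovers $y$, while evaluating at $s = 0$ produces $\hat{y}(0)_{L(t)}$ for the $L$-point $\hat{y}(0) \in X(L)$, exhibiting $y \sim \hat{y}(0)_{L(t)}$. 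Injectivity will be proved by spreading an $\aone_{L(t)}$-chain between base-changes of two $L$-points to a family of morphisms $\tilde{f}_i : V_i \to X$ with $V_i \subset \aone_L \times \aone_s$ having codimension-two complement, translating in $t$ to avoid the bad locus at $s=0$ and $s=1$, and then specializing at $t=0$; uniqueness in the valuative criterion ensures that intermediate $L(t)$-points of the chain specialize consistently to $L$-points, yielding an $\aone_L$-chain joining the given endpoints.

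The sheaflike axioms (A1) and (A2) should be essentially formal consequences of the uniqueness clause of the valuative criterion. For (A2), the hypothesis $\nu'|_L = 0$ gives an inclusion $L \subset \O_{\nu'}$, and both maps $X(L) \to X(\kappa_{\nu'})$ under consideration extend the common $L'$-point $x|_{L'} \in X(L') = X(\O_{\nu'})$ to an $\O_{\nu'}$-point before reducing modulo the maximal ideal; uniqueness of this extension forces the two routes to agree, and this persists on passage to $\smallsim$-quotients. For (A1), the ramification index one condition produces a local inclusion $\O_\nu \hookrightarrow \O_{\nu'}$ of discrete valuation rings, so both composites yield $\O_{\nu'}$-points of $X$ with common restriction to $L'$, hence equal by uniqueness.

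The hard part is axiom (A3), where the equivalence $\smallsim$ must be genuinely exploited. Fix $x \in X(F)$ and two codimension-one points $y_0, y_0'$ of the smooth irreducible $k$-variety in the hypothesis of (A3), each with smooth closure through the codimension-two point $z$; denote this ambient smooth variety by $X'$ to distinguish it from the proper scheme $X$. Applying Zariski's resolution of indeterminacy for rational maps from smooth surfaces to proper schemes to the restriction of $x$ to $\Spec(\O_{X',z})$, one obtains a sequence of point blow-ups $\widetilde{X'} \to \Spec(\O_{X',z})$ beginning with the blow-up of $z$, together with a morphism $\tilde{x} : \widetilde{X'} \to X$ extending $x$; the exceptional divisor $E$ is a connected tree of projective lines whose root component is $\mathbb{P}^1_{\kappa(z)}$. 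Smoothness of $\overline{y}_0$ through $z$ ensures that its strict transform is isomorphic to $\overline{y}_0 \cap \Spec(\O_{X',z})$ and meets $E$ at a $\kappa(z)$-rational point $p_{y_0}$ lying on a $\mathbb{P}^1_{\kappa(z)}$-component, with $\O_{\tilde{y}_0, p_{y_0}}$ canonically identified with $\O_{\overline{y}_0, z}$; tracing the successive specializations through this identification produces the value $\tilde{x}(p_{y_0}) \in X(\kappa(z))$ as the image of the composite $X(F) \to X(\kappa(y_0)) \to X(\kappa(z))$, and similarly for $y_0'$. Finally, the two $\kappa(z)$-rational points $p_{y_0}, p_{y_0'}$ can be connected by a chain of $\aone_{\kappa(z)}$-paths within the $\kappa(z)$-rational components of $E$ (using that such components are copies of $\mathbb{P}^1_{\kappa(z)}$ and that the relevant intersection points are $\kappa(z)$-rational), and pushing this chain forward through $\tilde{x}$ exhibits the equivalence $\tilde{x}(p_{y_0}) \sim \tilde{x}(p_{y_0'})$ in $X(\kappa(z))$. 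The principal difficulty is managing the combinatorics of the resolution tree to ensure both intersection points land on $\kappa(z)$-rational components joined by a $\kappa(z)$-rational chain, which may require a careful choice of which centers to blow up at each stage.
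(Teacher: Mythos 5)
Your overall plan — reduce to the four axioms, lean on the valuative criterion for (A1), (A2), and (A4)-surjectivity, and invoke two-dimensional resolution of indeterminacy for (A3) and (A4)-injectivity — is the paper's plan too, and your treatments of (A1), (A2), and (A4)-surjectivity are essentially those in the paper. The genuine gap is in (A4)-injectivity. Your ``translate in $t$ to avoid the bad locus'' move requires infinitely many $L$-rational values of $t$ to choose from; but here $L$ ranges over \emph{all} finitely generated separable extensions of $k$, and when $k$ is finite $L$ itself may be a finite field, in which case the finitely many bad closed points of $\aone_L \times \aone_s$ can project onto all $L$-rational points of the $t$-line, leaving no valid translate. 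The paper splits into exactly these two cases (``if $U$ admits an $L$-rational point, e.g.\ if $L$ is infinite\ldots'' versus the complementary case) and handles the finite-field case by applying Lemma~\ref{lem:lipmanindeterminacy} to resolve the indeterminacy on $\aone_{\aone_L}$, then producing the $\aone$-equivalence by chaining through the fiber of the resolution, using that this fiber is $\aone$-chain connected (Lemma~\ref{lem:chainconnectedfibers}). Your argument simply has no route through that case.

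For (A3) the gap you flag is real, but it is also a sign you are fighting the problem from the wrong angle. Lemma~\ref{lem:lipmanindeterminacy} hands you a specific resolution $S_\alpha \to S_z$ with no control over which closed points get blown up, so ``a careful choice of centers'' is not an available degree of freedom; some exceptional components may well be $\mathbb{P}^1_{\kappa'}$ over proper extensions $\kappa'/\kappa(z)$, and one cannot in general thread a chain of $\aone_{\kappa(z)}$'s through the tree component by component. The fact that saves the argument — and what you should aim to prove instead of combinatorially walking the tree — is Lemma~\ref{lem:chainconnectedfibers}: the scheme-theoretic fiber of any proper birational map of regular $2$-dimensional $k$-schemes is $\aone$-chain connected over the base residue field. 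That is proved not by inspecting the tree's rationality, but by factoring the map into point blow-ups (Lichtenbaum's strong factorization) and repeatedly applying Proposition~\ref{prop:blowupconnected}, which says a blow-up at a smooth center induces a bijection on sets of $\aone$-equivalence classes of points over every finitely generated separable extension. Once you have that input, your two intersection points $p_{y_0}$, $p_{y_0'}$ lie in the same fiber and are therefore $\aone$-equivalent $\kappa(z)$-points with no further combinatorics, and pushing forward by $\tilde{x}$ finishes (A3) exactly as you intended.
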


The proofs of these results require some technical results about surfaces that we recall below.  Assuming for the moment the truth of this lemma and proposition, let us finish the proofs of the other results stated above.  Before proceeding, let us observe that, given a scheme $X$, one can define $\aone$-equivalence for morphisms $U \to X$ along the lines as Notation \ref{notation:aoneequivalenceclassesofpoints}.

\begin{defn}
\label{defn:birationalconnectedcomponents}
Suppose $X$ is proper scheme having finite type over a field $k$.  Write $\pi_0^{b\aone}(X)$ for the birational and $\aone$-invariant sheaf associated with the object $\varpi_0^{b\aone}(X)$ of $\fkrset$ via the equivalence of categories of Theorem \ref{thm:equivalenceofcategories}.  We refer to $\pi_0^{b\aone}(X)$ as the {\em sheaf of birational $\aone$-connected components of $X$}.
\end{defn}

\begin{proof}[Proof of Theorem \ref{thm:definingthesheaf}]
After Lemma \ref{lem:definingfkrset} and Proposition \ref{prop:definingbirationalfkrset}, it remains to construct the morphism of sheaves $X \to \pi_0^{b\aone}(X)$.   Let us construct a morphism of the underlying presheaves.  For any smooth $k$-scheme $U$, there is a well-defined map
\[
X(U) \longrightarrow X(U)/\smallsim \longrightarrow \pi_0^{b\aone}(X)(U)
\]
induced by functoriality of the construction $U \mapsto X(U)/\smallsim$ (``shrink $U$").  The associated morphism of sheaves is the required morphism.  Observe that the induced morphism $X \to \pi_0^{b\aone}(X)$ factors through the canonical epimorphism $X \to \pi_0^{ch}(X)$ of Lemma \ref{lem:chainepimorphism}.
\end{proof}

\begin{prop}
\label{prop:connectednessfactorization}
If $X$ is a proper scheme having finite type over a field $k$.   there is a canonical map $\pi_0^{\aone}(X) \to \pi_0^{b\aone}(X)$ such that the composite map
\[
\pi_0^{ch}(X) \longrightarrow \pi_0^{\aone}(X) \longrightarrow \pi_0^{b\aone}(X)
\]
(the first map is the canonical epimorphism of \textup{Lemma \ref{lem:chainepimorphism}}) induces a bijection on sections over finitely generated separable extension fields $L/k$.
\end{prop}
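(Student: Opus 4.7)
The plan is to use the $\aone$-locality of $\pi_0^{b\aone}(X)$ to produce the map $\pi_0^{\aone}(X) \to \pi_0^{b\aone}(X)$, and then to verify bijectivity over $L \in \F_k$ by identifying both $\pi_0^{ch}(X)(L)$ and $\pi_0^{b\aone}(X)(L)$ explicitly with the set $X(L)/\smallsim$ and observing that the induced map is the identity.

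For the construction, I note that Theorem \ref{thm:definingthesheaf} guarantees that $\pi_0^{b\aone}(X)$ is an $\aone$-invariant Nisnevich sheaf. Any sheaf of sets, viewed as a simplicially discrete object of $\Spc_k$, is automatically simplicially fibrant, and for such an object $\aone$-invariance is the same as $\aone$-locality. Thus $\pi_0^{b\aone}(X)$ is $\aone$-local, and the structural morphism $X \to \pi_0^{b\aone}(X)$ of Theorem \ref{thm:definingthesheaf} factors in $\hsnis$ through any $\aone$-fibrant replacement $L_{\aone}(X)$. Applying $\pi_0^s$ to this factorization and using $\pi_0^s(L_{\aone}(X)) = \pi_0^{\aone}(X)$ yields the desired morphism $\pi_0^{\aone}(X) \to \pi_0^{b\aone}(X)$. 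By naturality, precomposing with the epimorphism of Lemma \ref{lem:chainepimorphism} recovers the map obtained by applying $\pi_0^s$ to the evident morphism $Sing_*^{\aone}(X) \to Sing_*^{\aone}(\pi_0^{b\aone}(X)) = \pi_0^{b\aone}(X)$, the final equality being a consequence of $\aone$-invariance.

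Now fix $L \in \F_k$. Theorem \ref{thm:definingthesheaf} gives $\pi_0^{b\aone}(X)(L) = X(L)/\smallsim$. For the other side, I argue that $\pi_0^{ch}(X)(L) = \pi_0(Sing_*^{\aone}(X)(\Spec L))$: since $L$ is a field, every Nisnevich cover of $\Spec L$ admits a section, so Nisnevich sheafification of any presheaf of sets is a bijection on sections over $\Spec L$. Unwinding the definition of $\pi_0$ of a simplicial set, $\pi_0(Sing_*^{\aone}(X)(\Spec L))$ is the quotient of $X(L) = Hom(\Spec L, X)$ by the equivalence relation generated by $1$-simplices $X(\Delta^1_{\aone} \times \Spec L) = X(\aone_L)$ restricted to the endpoints $0, 1 \in \aone(L)$, which is exactly elementary $\aone$-equivalence. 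So $\pi_0^{ch}(X)(L) = X(L)/\smallsim$ as well. Under both identifications, the canonical morphisms $X(L) \to \pi_0^{ch}(X)(L)$ and $X(L) \to \pi_0^{b\aone}(X)(L)$ coincide with the quotient $X(L) \to X(L)/\smallsim$, so the induced map between them must be the identity on $X(L)/\smallsim$, hence a bijection.

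All the real work is absorbed into Theorem \ref{thm:definingthesheaf}; the present proposition is a formal consequence once the $\aone$-invariant sheaf $\pi_0^{b\aone}(X)$ and its values on spectra of fields are in hand. The only mildly delicate technical point is the commutation of $\pi_0^s$ with evaluation at $\Spec L$ for $L$ a field, which reduces to the sheafification computation at a Nisnevich point; this causes no trouble because $\Spec L$ is Henselian local in the relevant sense and the simplicial presheaf $Sing_*^{\aone}(X)$ need only be analyzed on such input.
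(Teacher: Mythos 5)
Your argument matches the paper's proof: both construct the map $\pi_0^{\aone}(X) \to \pi_0^{b\aone}(X)$ by exploiting the $\aone$-locality of the discrete $\aone$-invariant sheaf $\pi_0^{b\aone}(X)$ to factor $X \to \pi_0^{b\aone}(X)$ through $L_{\aone}(X)$ and then sheafifying. You go a little further than the paper does in explicitly verifying the bijectivity over $L \in \F_k$ by identifying both $\pi_0^{ch}(X)(L)$ and $\pi_0^{b\aone}(X)(L)$ with $X(L)/\smallsim$; the paper leaves this last step implicit as a direct consequence of Theorem \ref{thm:definingthesheaf} and the Henselian-point computation, so the addition is welcome but the route is the same.
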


\begin{proof}
By Theorem \ref{thm:definingthesheaf}, we know that $\pi_0^{b\aone}(X)$ is a birational and $\aone$-invariant sheaf.  This fact implies the space $\pi_0^{b\aone}(X)$ is $\aone$-local by the equivalent conditions of \cite[\S 2 Proposition 3.19]{MV}.  By the universal property of $\aone$-localization, the canonical map $X \to \pi_0^{b\aone}(X)$ factors uniquely through the $\aone$-localization $L_{\aone}(X)$ of $X$ (see \cite[\S 2 Theorem 3.2]{MV} for this notation).  Thus, we obtain a factorization
\[
X \longrightarrow L_{\aone}(X) \longrightarrow \pi_0^{b\aone}(X).
\]
The second morphism induces for any $U \in \Sm_k$ a morphism $[U,L_{\aone}(X)]_s \to [U,\pi_0^{b\aone}(X)]_s$ functorial in $U$.  The set $[U,\pi_0^{b\aone}(X)]_s$ coincides with $\pi_0^{b\aone}(X)(U)$ by \cite[\S 2 Remark 1.14]{MV}.  Sheafifying for the Nisnevich topology produces a morphism of sheaves
\[
\pi_0^{\aone}(X) \longrightarrow \pi_0^{b\aone}(X).
\]
Finally, note that the morphism $X \to L_{\aone}(X)$ induces the morphism of sheaves $X \to \pi_0^{\aone}(X)$, and this morphism factors through $\pi_0^{ch}(X)$.
\end{proof}

\begin{conj}
For any proper scheme $X$ having finite type over a field $k$, the canonical morphism $\pi_0^{\aone}(X) \to \pi_0^{b\aone}(X)$ is an isomorphism, i.e., $\pi_0^{\aone}(X)$ is birational and $\aone$-invariant.
\end{conj}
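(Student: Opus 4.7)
The plan is to reduce the conjecture, via the equivalence of categories of Theorem \ref{thm:equivalenceofcategories}, to showing that $\pi_0^{\aone}(X)$ itself is a birational and $\aone$-invariant sheaf, i.e., lies in $\Shv^{b\aone}_k$. Once this is known, both $\pi_0^{\aone}(X)$ and $\pi_0^{b\aone}(X)$ are objects of $\Shv^{b\aone}_k$, and the comparison map induces a bijection on sections over every $L \in \F_k$ by Theorem \ref{thm:propercharacterization}; the full-faithfulness of restriction then forces the morphism to be an isomorphism of sheaves. Thus the whole problem splits into two structural verifications for $\pi_0^{\aone}(X)$ in the proper case.

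The first verification is $\aone$-invariance. The defining presheaf $U \mapsto [U,X]_{\aone}$ is tautologically $\aone$-invariant because pullback along the $\aone$-weak equivalence $U \times \aone \to U$ is a bijection of sets of $\aone$-homotopy classes. Nisnevich sheafification does not in general preserve $\aone$-invariance, so one would carry out the check at the level of stalks: for every essentially smooth Henselian local ring $\O$, one shows $\pi_0^{\aone}(X)(\O) \to \pi_0^{\aone}(X)(\O[t])$ is a bijection by producing inverse $\aone$-homotopies Nisnevich-locally over $\Spec \O[t]$, using that $X$ is simplicially fibrant as a sheaf of sets and that $\pi_i^{\aone}(X,x)$ is strongly $\aone$-invariant for $i \geq 1$ by Theorem \ref{thm:strongaoneinvaranceofpi1}.

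The second verification, birationality, is the main obstacle. Concretely, for an open dense immersion $U \hookrightarrow V$ of smooth $k$-schemes, one must show that $\pi_0^{\aone}(X)(V) \to \pi_0^{\aone}(X)(U)$ is a bijection. Surjectivity is an extension statement: any $U \to X$ should extend Nisnevich-locally over $V$ up to $\aone$-homotopy. Properness of $X$ together with the valuative criterion handles extension across the codimension one strata of $V \setminus U$, but extension across strata of codimension $\geq 2$ requires an $\aone$-Hartogs phenomenon that is not formal from properness alone. Injectivity is the analogous statement applied to $\aone$-homotopies $U \times \aone \to X$ between two maps that extend to $V$.

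A concrete approach to birationality is to work directly with $\pi_0^{ch}(X) = \pi_0^s(Sing_*^{\aone}(X))$, which by Lemma \ref{lem:chainepimorphism} surjects onto $\pi_0^{\aone}(X)$, and to prove an analog of Conjecture \ref{conj:epimorphism} in the proper case: show that $Sing_*^{\aone}(X)$ is $\aone$-local, or at least that its $\pi_0^s$ is birational. One would attempt this simplex-by-simplex, extending elements of $X(\Delta_{\aone}^n \times U)$ across higher codimension loci by combining properness (to lift along closed immersions $\Delta_{\aone}^{n} \times (V\setminus U) \hookrightarrow \Delta_{\aone}^n \times V$ across codimension one), strong $\aone$-invariance of the higher $\aone$-homotopy sheaves of $X$ (to absorb the obstruction to extending across higher codimension into homotopies that can themselves be extended), and a noetherian induction on the dimension of the boundary stratum. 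The hard part is thus a purity/Hartogs result for $\aone$-homotopy classes into a smooth proper target, which is precisely the open problem that separates Conjecture \ref{conj:epimorphism} from Theorem \ref{thm:propercharacterization}.
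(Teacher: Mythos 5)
The statement you were given is labeled a \emph{conjecture} in the paper, not a theorem: the authors offer no proof of it, and within the paper it is genuinely open. There is therefore no ``paper proof'' against which your attempt can be matched, and your proposal should be judged as an analysis of what a proof would have to supply.

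Read in that light, what you wrote is essentially correct. The first paragraph gives a valid reduction: via Theorem \ref{thm:equivalenceofcategories} it suffices to show $\pi_0^{\aone}(X)$ is already an object of $\Shv^{b\aone}_k$; once that is known, the bijection on sections over finitely generated separable extensions $L/k$ supplied by Theorem \ref{thm:propercharacterization} (combined with the surjectivity from Lemma \ref{lem:chainepimorphism}) forces the comparison map to restrict to an isomorphism in $\fkrset$, and full-faithfulness of restriction then upgrades this to an isomorphism of sheaves. That is the right architecture.

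Two caveats about the remainder. First, you present $\aone$-invariance of $\pi_0^{\aone}(X)$ as the easier verification, but it is itself a well-known open problem; Nisnevich sheafification of an $\aone$-invariant presheaf need not be $\aone$-invariant, and the sketched stalkwise argument invoking strong $\aone$-invariance of the $\pi_i^{\aone}$ for $i \geq 1$ does not obviously close that gap. Second, and more to the point, your final paragraph correctly isolates the crux: one needs a purity or Hartogs statement for $\aone$-homotopy classes into a smooth proper target, extending across loci of codimension $\geq 2$ up to $\aone$-homotopy. The machinery behind Theorem \ref{thm:propercharacterization} --- Lipman desingularization of two-dimensional schemes combined with Lemma \ref{lem:chainconnectedfibers} --- is inherently a codimension-one, valuative device and does not reach this. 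In short, you have accurately located the open problem rather than solved it, which is the honest and correct outcome here.
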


\subsubsection*{Some technical results about surfaces}
We will recall a pair of results that will be used in the Proofs of Lemma \ref{lem:definingfkrset} and Proposition \ref{prop:definingbirationalfkrset}.  Throughout, $X$ denotes a proper scheme having finite type over a field $k$.

\begin{lem}
\label{lem:lipmanindeterminacy}
Let $S$ be an irreducible essentially smooth $k$-scheme of dimension $2$ with function field $F$, and suppose $\alpha \in X(F)$.  There exist finitely many closed points $z_1,\ldots,z_r$ in $S$, a proper birational morphism $f: S_\alpha \to S$, and $\beta \in X(S_\alpha)$ such that $S_{\alpha}$ is regular, $f$ is an isomorphism over the complement of $z_1,\ldots,z_r$ in $S$, and $\beta$ restricts to $\alpha$ under the induced map $X(S_{\alpha}) \to X(F)$.
\end{lem}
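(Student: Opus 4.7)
The plan is to reinterpret $\alpha \in X(F)$ as a rational $S$-map $\alpha \colon S \dashrightarrow X$ and then resolve its indeterminacy by a finite sequence of blow-ups at closed points. First I would isolate the indeterminacy locus. Because $X$ is proper and $S$ is regular of dimension $2$, for every codimension $1$ point $y \in S^{(1)}$ the local ring $\O_{S,y}$ is a discrete valuation ring, so the valuative criterion of properness produces a unique extension $\Spec \O_{S,y} \to X$ of $\alpha$. Consequently the locus $Z \subset S$ where $\alpha$ is not defined as a morphism has codimension $\geq 2$, and since $S$ is $2$-dimensional, $Z$ is a finite set of closed points $z_1,\ldots,z_r$ of $S$.

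Second, I would invoke the classical elimination of indeterminacy theorem for regular surfaces (going back to Zariski, and available in the essentially smooth setting via Lipman's work on desingularization of excellent $2$-dimensional schemes): given a rational map from a regular $2$-dimensional scheme to a proper scheme, there exists a finite iterated blow-up at closed points lying over the indeterminacy locus after which the pulled-back rational map extends to an everywhere-defined morphism. Applying this to $\alpha$ yields the required proper birational $f \colon S_\alpha \to S$ and a morphism $\beta \colon S_\alpha \to X$. Since each step is a blow-up at a closed point lying over some $z_i$, the morphism $f$ is an isomorphism over $S \setminus \{z_1,\ldots,z_r\}$, and $S_\alpha$ remains regular because blowing up a closed point of a regular surface preserves regularity.

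The main obstacle is the termination of the successive blow-up procedure. The usual argument proceeds by attaching to the rational map at each indeterminacy point a numerical invariant (such as the length of $\beta^{-1}(\mathfrak{m}_x)$ for a local model, or a suitable multiplicity of a linear system defining $\alpha$ locally), and showing that this invariant strictly decreases under blow-up, hence reaches a state at which the rational map extends. This is exactly the input from Lipman's theorem on the resolution of indeterminacies for maps from regular excellent surfaces; nothing else in the argument is more than bookkeeping.

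Finally, the compatibility $\beta|_{\Spec F} = \alpha$ is automatic since $f$ is an isomorphism over a dense open (in particular the generic point) and $\beta$ was constructed as an extension of $\alpha \circ f$ from that open to all of $S_\alpha$.
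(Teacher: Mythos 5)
Your proof is correct, and the first half (isolating the indeterminacy locus via the valuative criterion at codimension-one points, using that $X$ is proper and $\mathcal{O}_{S,y}$ is a DVR for $y\in S^{(1)}$) is exactly what the paper does. Where you diverge is in the second half: you invoke elimination of indeterminacy by a finite sequence of point blow-ups on the regular surface $S$ directly, \`a la Zariski, and justify termination via a decreasing numerical invariant. The paper instead takes the closure $S'$ of the graph of $\Omega\to X$ inside $S\times X$; the projection $S'\to S$ is automatically proper birational and an isomorphism over $\Omega$, but $S'$ need not be regular, so the paper then appeals to Lipman's desingularization of excellent two-dimensional schemes to obtain $S_\alpha\to S'$ with $S_\alpha$ regular and an isomorphism over $\Omega$, and sets $\beta$ to be the composite $S_\alpha\to S'\to X$. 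The trade-off: the paper's route needs only the existence of resolution of singularities for excellent surfaces and does not require $S_\alpha\to S$ to be a composition of point blow-ups, which is precisely why the lemma is stated without that extra structure. Your route produces the stronger conclusion (a factorization into point blow-ups) but relies on the elimination-of-indeterminacy theorem for rational maps from regular excellent two-dimensional schemes to proper schemes; this is a genuine theorem in the needed generality, but your attribution to ``Lipman's desingularization'' is slightly off --- the decreasing-invariant argument you sketch is closer to Zariski or to the treatment in Lichtenbaum's work on curves over Dedekind bases (which the paper cites in the neighboring Lemma for the factorization theorem) than to Lipman's resolution theorem itself. Either route delivers the lemma; yours just carries a citation burden the paper's graph-closure trick sidesteps.
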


\begin{proof}
Recall that $S$ is essentially smooth if it can be written as limit of smooth schemes having finite type over $k$ with smooth affine bonding morphisms.  In any case, this result follows from a form of resolution of indeterminacy.  As $X$ is proper, $\alpha$ extends to a morphism $\Omega \to X$ where $\Omega \subset S$ is an open subscheme whose complement consists of finitely many closed points $z_1,\ldots,z_r$.  Denote by $S'$ the closure of the graph of the induced morphism $\Omega \to X$.  The projection $S' \to S$ is then proper and an isomorphism over $\Omega$.  By \cite[p. 101 Theorem and p. 155 (B)]{Lipman}, there exists a morphism $S_{\alpha} \to S'$ with $S_{\alpha}$ regular that is proper, birational and isomorphism on $\Omega$.  The composite map $S_{\alpha} \to X$ is the required morphism $\beta$.
\end{proof}

\begin{lem}
\label{lem:chainconnectedfibers}
Given a proper birational morphism $S' \to S$ between regular $k$-schemes of dimension $2$, for any point $s$ of $S$, the scheme-theoretic fiber $S'_{\kappa(s)} := S' \times_{S} \Spec \kappa(s)$ is a $\kappa(s)$-scheme that is $\aone$-chain connected.
\end{lem}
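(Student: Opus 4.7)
The plan is to reduce to the case where $s$ is a closed point and then to exploit the classical structure theory of proper birational morphisms between regular surfaces. If $s$ has codimension $\leq 1$ in $S$, then $f$ is an isomorphism over an open neighborhood of $s$: the exceptional locus of a proper birational morphism between regular $2$-dimensional schemes has pure codimension $1$ in $S'$ (by purity-of-the-branch-locus type arguments) and its image in $S$ has codimension $\geq 2$ (otherwise Zariski's Main Theorem would force $f$ to be an isomorphism along that image). Hence $f^{-1}(s) = \Spec \kappa(s)$, which is trivially $\aone$-chain connected.

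Now assume $s$ is a closed point. By the Zariski--Abhyankar--Lipman factorization theorem, $f$ can be written as a composition of monoidal transformations $S' = T_n \to T_{n-1} \to \cdots \to T_0 = S$, each $T_i \to T_{i-1}$ blowing up a closed point of $T_{i-1}$. Since $L$-points and $\aone$-equivalence on $f^{-1}(s)$ coincide with those of its reduction for every field $L$, it suffices to prove that the reduced fiber $F := f^{-1}(s)_{\mathrm{red}}$ is $\aone$-chain connected. By Zariski's connectedness theorem $F$ is connected; by induction on $n$ its irreducible components are copies of $\mathbb{P}^1$ over various residue fields and the dual intersection graph is a tree. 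I would root this tree at the exceptional divisor $C_0$ of the first blow-up whose center lies over $s$; a short argument shows that such a center has residue field $\kappa(s)$ (its reduction lies in the still-isomorphic fiber $\Spec\kappa(s)$), so $C_0 \cong \mathbb{P}^1_{\kappa(s)}$. The crucial combinatorial invariant, proved by the same induction, is that this rooted tree satisfies the following \emph{monotonicity property}: for every parent--child edge, the child's field of definition contains the parent's. This is straightforward for a blow-up at an interior point of a single existing component (Case A), and follows for a blow-up at an intersection point of two existing components (Case B) from the inductive fact that for any current edge the intersection residue field equals the larger of the two incident fields of definition, so the new exceptional gets inserted between them with field equal to that larger one.

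To deduce $\aone$-chain connectedness, let $L/\kappa(s)$ be finitely generated separable and take two $L$-points $x,y \in F(L)$ lying on components $C$ and $C'$ respectively, so that $\kappa_C, \kappa_{C'} \subseteq L$. Let $D$ be the lowest common ancestor of $C$ and $C'$ in the rooted tree. Along the path from $C$ to $D$ one moves from children to parents, so fields decrease and every field encountered lies in $\kappa_C \subseteq L$; along the path from $D$ to $C'$ fields increase and every field encountered lies in $\kappa_{C'} \subseteq L$. Consequently every intermediate component on the path has $L$-points, and every intersection point on the path is $L$-rational (its residue field equals the field of definition of the junior of its two adjacent components, which is $\subseteq L$). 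Using the $\aone$-chain connectedness of $\mathbb{P}^1_{\kappa}$ over any extension of $\kappa$, one chains $\aone$-equivalences within each component on the path from one intersection point to the next, eventually linking $x$ to $y$. Since $F(L)$ is nonempty (the root component $C_0 = \mathbb{P}^1_{\kappa(s)}$ has abundant $L$-points), $F(L)/{\sim}$ is a singleton.

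The main obstacle is establishing the monotonicity invariant rigorously under both types of blow-ups. Case B is the delicate case: when blowing up an intersection point of two existing components, one must verify that the new exceptional gets inserted correctly into the rooted tree between the two old components, that both resulting parent--child edges satisfy the field-containment condition, and that no cycles are created. This requires careful bookkeeping of strict transforms, residue fields, and the chronological order of blow-ups, but once this combinatorial input is in place the $\aone$-chain-connectedness argument itself is essentially formal.
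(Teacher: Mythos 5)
Your proof is correct, but it takes a genuinely different and considerably more elaborate route than the paper's. Both proofs begin by factoring $S' \to S$ into a composition of blow-ups at closed points. The paper then argues inductively along the tower of fibers $G_i := (T_i)_{\kappa(s)}$, reusing the lifting argument of Proposition \ref{prop:blowupconnected}: each $G_{i+1}(L) \to G_i(L)$ is surjective because the fibers of a point blow-up are projective lines or points, and a chain of elementary $\aone$-equivalences in $G_i$ lifts to $G_{i+1}$ by first lifting the induced $L(t)$-point and then extending to a morphism from $\mathbb{P}^1_L$ by properness, with any endpoint mismatch absorbed within a single fiber (a point or a $\mathbb{P}^1_{\kappa'}$ with $\kappa' \subseteq L$). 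That argument never needs the explicit global geometry of $G_n$. You instead describe the reduced fiber explicitly as a rooted tree of projective lines over various residue fields and establish a ``field-of-definition monotonicity'' invariant on the tree, then connect two $L$-points through a path whose intermediate intersection points are shown to be $L$-rational. This works and has the virtue of making the geometry concrete, but the bookkeeping it demands --- maintaining the invariant under blow-ups at intersection points, the inductive claim that intersection residue fields equal the larger of the two incident fields of definition (which itself uses that each component is regular, preserved because strict transforms of regular curves under point blow-ups are isomorphisms) --- is more delicate than the paper's argument and, as you acknowledge, is only sketched. One small simplification: once the factorization is available, the reduction to $s$ closed is automatic (centers of point blow-ups are closed, so fibers over non-closed $s$ never change), making the opening appeal to Zariski's Main Theorem and purity unnecessary.
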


\begin{proof}
This result follows immediately from a strong factorization style result.  More precisely, \cite[Section II Theorem 1.15]{Lichtenbaum} states that a proper birational morphism between regular surfaces is a composition of blow-ups at closed points.  The fiber over any point $s$ in $S$ is a proper variety and one just needs to apply Proposition \ref{prop:blowupconnected} (note: that result is stated under they hypothesis $k$ is perfect, but that assumption is used only to establish that regular varieties over $k$ are in fact smooth).
\end{proof}

\subsubsection*{Proofs of Lemma \ref{lem:definingfkrset} and Proposition \ref{prop:definingbirationalfkrset}}
\begin{proof}[Proof of Lemma \ref{lem:definingfkrset}]
Suppose $L \in \F_k$, and suppose $\nu$ is a discrete valuation on $L$ with associated valuation ring $\O_{\nu}$, and residue field $\kappa_\nu$ assumed to be separable over $k$.  We want to prove that $X(L) \to X(\kappa_{\nu})/\smallsim$ factors through $\aone$-equivalence of points (see the discussion just prior to Notation \ref{notation:aoneequivalenceclassesofpoints}).  To do this, it suffices to prove that given a pair points $x_0,x_1 \in X(L)$, and a morphism $H: \aone_L \to X$ restricting to $x_0$ and $x_1$, the image of $x_0$ and $x_1$ under the map $X(L) \cong X(\O_{\nu}) \to X(\kappa_{\nu})$ are $\aone$-chain equivalent.

Consider the open immersion $\aone_L \to \aone_{\O_{\nu}}$.  Applying Lemma \ref{lem:lipmanindeterminacy}, we can assume that $H$ is defined on an open subscheme $U \subset \aone_{\O_{\nu}}$ whose complement is a collection of finitely many closed points $z_1,\ldots,z_r \in \aone_{\kappa_{\nu}}$, and extends to a morphism $\beta: S_{\alpha} \to X$, where $S_{\alpha}$ is smooth, and $S_{\alpha} \to \aone_{\O_{\nu}}$ is a proper birational morphism that is an isomorphism over $U$.

Again using the valuative criterion of properness, the $L$-points $x_i$ of $X$ uniquely extend to $\O_{\nu}$-points of $X$ ($i \in \{ 0,1\}$).  We view the sections $\O_{\nu} \to \aone_{\O_{\nu}}$ associated with these points as regular closed subschemes $C_i$ in $\aone_{\O_{\nu}}$.  The proper transforms $\tilde{C}_i$ of $C_i$ in $S_{\alpha}$ are closed subschemes that map properly and birationally onto $C_i$.  This observation implies that the maps $\tilde{C}_i \to C_i$ are in fact isomorphisms.  Thus, the closed points $z_i$ in $C_i$ lift uniquely to closed points $\tilde{z}_i$ in $\tilde{C}_i$.

Again for $i \in \{ 0,1 \}$, the image of $x_i$ under the map $X(L) \cong X(\O_{\nu}) \to X(\kappa_{\nu})$ is the composition
\[
\Spec \kappa_{\nu} \stackrel{x_i}{\longrightarrow} C_i \longrightarrow X
\]
where $C_i \to X$ is determined by $x_i \in X(L)$.  Using the observations of the last paragraph, this composite map factors as
\[
\Spec \kappa_{\nu} \stackrel{\tilde{z}_i}{\longrightarrow} \tilde{C}_i \longrightarrow S_{\alpha} \stackrel{\beta}{\longrightarrow} X;
\]
the morphisms $\tilde{C}_i \to S_{\alpha} \stackrel{\beta}{\to} X$ and $C_i \to S$ are equal because they agree on $\Spec L$.

The two points $z_i$ lie in $\aone_{\O_{\nu}}$ over the smooth curve $\aone_{\kappa_{\nu}}$ whose proper transform $D$ in $S_{\alpha}$ is isomorphic to $\aone_{\kappa_{\nu}}$.  By Lemma \ref{lem:chainconnectedfibers} the lifts of $z_i$ in $\tilde{C}_i$ and in $D$ are $\aone$-chain equivalent being in the same fiber of $S_{\alpha} \to \aone_{\O_{\nu}}$.  It follows that the two lifts $\tilde{z}_i$ are $\aone$-chain equivalent in $S_{\alpha}$, which implies that the images of $x_0$ and $x_1$ through $X(L) \cong X(\O_{\nu}) \to X(\kappa_{\nu})$ are $\aone$-chain equivalent.
\end{proof}

\begin{proof}[Proof of Proposition \ref{prop:definingbirationalfkrset}]
We just have to check conditions ({\bf A1})-({\bf A4}) of Definition \ref{defn:birationalfkrset}.  For ({\bf A1}), we observe that given an extension $L \subset L'$ in $\F_k$ together with discrete valuations $\nu$ and $\nu'$ satisfying the stated hypotheses, the diagram
\[
\xymatrix{
X(L) \ar[r]\ar[d] & X(L') \ar[d]\\
X(\kappa_{\nu}) \ar[r] & X(\kappa_{\nu'})
}
\]
commutes.  Lemma \ref{lem:definingfkrset} shows that the induced maps on $\aone$-equivalence classes of points also commute.  A similar argument can be used to establish ({\bf A2}).

For ({\bf A3}), we proceed along the same lines as the proof of Lemma \ref{lem:definingfkrset}.  Thus, let $z$ be a point of codimension $2$ on a smooth scheme $S$ with residue field $\kappa(z)$ separable over $k$.  Write $S_z$ for the corresponding local scheme.  Assume $y_0$ and $y_1$ are two points of codimension $1$ in $S_z$ whose closure $Y_i := \bar{y_i} \subset S_z$ is essentially smooth over $k$.  Let $F$ be the fraction field of $S_{z}$, and consider a morphism $\alpha: \Spec F \to X$.  By Lemma \ref{lem:lipmanindeterminacy}, we can find $S_{\alpha} \to S_z$ a proper birational morphism that is an isomorphism over $S_z \setminus z$ with $S_{\alpha}$ a smooth $k$-scheme, and $\beta: S_{\alpha} \to X$ inducing $\alpha$.

Since the $Y_i$ are assumed smooth, the morphisms $\tilde{Y_i} \to Y_i$ from the proper transform $\tilde{Y}_i$ is an isomorphism.  Let $\tilde{z}_i$ be the unique lifts of the closed point $z$ in $\tilde{Y}_i$.  The composite maps $X(F) \to X(\kappa(y_i)) \to X(\kappa(z))$ are equal to the corresponding composite maps $X(F) \to X(\kappa(\tilde{y}_i)) \to X(\kappa(\tilde{z}_i)) = X(\kappa(z))$.  Now, using Lemma \ref{lem:chainconnectedfibers}, the points $\tilde{z}_i$ both lie in the fiber over $z$ and are hence $\aone$-chain equivalent.  This observation implies that the images of the composite maps in $X(\kappa(z))/\smallsim$ are equal, which is what we wanted to show.

For ({\bf A4}), we need to show that the map
\[
X(L)/\smallsim \longrightarrow X(L(t))/\smallsim
\]
induced by the inclusion $L \subset L(t)$ is bijective.  Since $X$ is proper, for any $L \in \F_k$, an element of $X(L(t))$ determines a unique morphism $\pone_L \to X$.  From such a morphism, one obtains a morphism $\aone_L \to X$ by restriction.  Thus, the morphism $X(\aone_L) \to X(L(t))$ is surjective, and consequently $X(\aone_L) \to X(L(t))/\smallsim$ is surjective.  Given $H: \aone_L \to X$, $H$ is $\aone$-chain homotopic to $H_0: \aone_L \to \Spec L \to X$, where $\Spec L \to X$ is the restriction of $H$ to $0$.  However, the product map $\aone_L \times \aone_L \to \aone_L$ induces a chain homotopy in $X(\aone_L)$ between $H$ and $H_0$.  Picking a lift of an element in $X(L(t))$, we observe that the map $X(L)/\smallsim \to X(L(t))/\smallsim$ is surjective.

To prove injectivity, we proceed as follows.  It suffices to prove that given any field $L \in \F_k$, and two points $x_0,x_1 \in X(L)$, if the associated $L(t)$-points of $X$, which we denote $x_0'$ and $x_1'$,  are related by an elementary $\aone$-equivalence $H: \aone_{L(t)} \to X$, then $x_0$ and $x_1$ are themselves $\aone$-equivalent $L$-points.

There is an open dense subscheme $U \subset \aone_F$ such that $H$ is induced by a $k$-morphism $h: \aone_U \to X$, and the composite maps $U \stackrel{i}{\to} \aone_U \to X$ and $U \to \Spec(L) \stackrel{x_i}{\to} X$ are equal.  If $U$ admits an $L$-rational point (e.g., if $L$ is infinite), call it $y$, then composition with $y$ induces a morphism $h_y: \aone_L \to \aone_U \to X$ providing an elementary $\aone$-equivalence between $x_0$ and $x_1$.

If $U$ does not contain a rational point, then $h$ is defined on an open subscheme $\Omega \subset \aone_{\aone_L}$ whose complement is a finite collection of closed points, call them $z_j$.  Applying Lemma \ref{lem:lipmanindeterminacy}, we obtain a birational morphism $S_h \to \aone_{\aone_L}$ that is an isomorphism on $\Omega$, and a $k$-morphism $\tilde{h}: S_h \to X$ extending $h$.  Consider the copy of the affine line $D:= \aone_L \subset \aone_{\aone_F} = \aone_L \times \aone_L$ defined by $id \times 0$.  The intersection of $D$ with $\Omega$ is a dense open subscheme of $D$ (as $\aone_L$ has infinitely many closed points).  The proper transform of $D$ in $S_h$ is a closed curve that we denote by $\tilde{D}$.  Observe that the induced projection $\tilde{D} \to D$ is an isomorphism.

Now, the inclusion $\tilde{D} \hookrightarrow S_h$ is an elementary $\aone$-equivalence between the two $L$-points obtained by composing with $0$ and $1$; call these points $d_0$ and $d_1$.  Consider also the closed curves $Y_i$ of $\aone_{\aone_L}$ (also isomorphic to $\aone_L$) defined by the inclusions $0 \times id$ and $1 \times id$.  The proper transforms $\tilde{Y}_i \subset S_h$ of $Y_i$ are again isomorphic to $\aone_L$.  The $L$-points of $\tilde{Y}_i$ defined by inclusion at $0$, call them $\tilde{x}_i$, both lie in the same fiber of the morphism $S_h \to \aone_{\aone_L}$ as the $L$-point $d_i$.  By Lemma \ref{lem:chainconnectedfibers}, these fibers are $\aone$-chain connected, and thus the points $d_i$ and $\tilde{x}_i$ are $\aone$-equivalent $L$-points.  Finally, the composite map $\tilde{x}_i: \Spec L \to S_h \to X$ is equal to $x_i$ because the composite map $U \hookrightarrow \aone = \tilde{Y}_i \to S_h \to X$ is by construction equal to $x_i$; this provides the required $\aone$-equivalence.
\end{proof}

\appendix
\section{Aspects of homotopy theory for schemes}
\label{s:homotopyoverview}
For the convenience of the reader less familiar with notions of homotopy theory for schemes, we take this opportunity to provide an abridged presentation of some ideas from \cite{MV} supplemented by some more recent additions.  In truth there are several different ``space-level" categories giving rise to equivalent models for $\aone$-homotopy theory, yet we continue to focus on the one provided in \cite{MV}.  We emphasize the construction of $\aone$-homotopy categories as a left Bousfield localization, study some properties of $\aone$-homotopy theory in the \'etale topology, which are implicitly mentioned in \cite{MV}, and provide a simplified presentation of functoriality for $\aone$-homotopy categories with respect to change of topologies using the general theory of Quillen adjunctions.

Let $(\Sm_k)_{\et}$ and $(\Sm_k)_{Nis}$ denote the category of smooth $k$-schemes endowed with the structure of a site using either the \'etale or Nisnevich topology.  We denote by
\[
\alpha: (\Sm_k)_{\et} \longrightarrow (\Sm_k)_{Nis}
\]
the morphism of sites induced by the identity functor.  Recall this means that the presheaf pushforward $\alpha_*$ preserves sheaves, and admits a left adjoint, denoted $\alpha^*$ that preserves finite limits.  Throughout the text, we write $\Spc_k^{\tau}$ for the category of simplicial sheaves on $\Sm_k$ equipped with the topology $\tau$ where $\tau$ denotes either the Nisnevich or \'etale topologies.  Abusing notation, we write
\[
\begin{split}
\alpha_*&: \Spc_k^{\et} \longrightarrow \Spc_k, \text{ and }\\
\alpha^*&: \Spc_k \longrightarrow \Spc_k^{\et}
\end{split}
\]
for the functors induced by $\alpha_*$ and $\alpha^*$ at the level of sheaves.  All sites under consideration will have ``enough points." In particular, epimorphisms and isomorphisms can be detected stalkwise.  The Nisnevich and \'etale topology on $\Sm_k$ are both {\em subcanonical}, i.e., every representable presheaf is a sheaf.  Thus, the Yoneda functor composed with the functor of constant simplicial object induces a functor
\[
\Sm_k \to \Spc_{k}^{\tau}
\]
that is fully-faithful.  Using this functor, we identify $\Sm_k$ with the full subcategory of $\Spc_k^{\tau}$ consisting of representable objects.

\subsection{Simplicial homotopy categories}
Following modern terminology, a model category is a category that admits all limits and colimits indexed by small diagrams (i.e., it is complete and cocomplete) equipped with a model structure, i.e., three classes of morphisms called weak equivalences, cofibrations and fibrations satisfying the axioms of \cite[Definition 1.1.3]{Hovey}.  As mentioned above there are several different possible model structures on categories of sheaves, and we follow Joyal-Jardine.  All of these model structures arise from direct amalgamation of the homotopy theory of simplicial sets and sheaf theory.  (Note: the definition of model category that we use is the same as the one given in \cite[\S 2 Definition 0.1]{MV}).  We refer to \cite{GoerssJardine} for a systematic treatment of the homotopy theory of simplicial sets in the context of model categories, and \cite{Quillen}, \cite{Hovey} or \cite{Hirschhorn} for general results on model categories.

\begin{defn}
\label{defn:injectivemodelstructure}
Given a morphism $f: {\mathcal X} \to {\mathcal Y}$ in $\Spc_k^{\tau}$, we say that $f$ is a
\begin{itemize}
\item {\em simplicial weak equivalence}, if the morphisms of stalks (which are simplicial sets) induced by $f$ are weak equivalences of simplicial sets,
\item a {\em simplicial cofibration}, if $f$ is a monomorphism, and
\item a {\em simplicial fibration}, if $f$ has the right lifting property with respect to {\em acyclic simplicial cofibrations}, i.e., those morphisms that are simultaneously simplicial weak equivalences and simplicial cofibrations.
\end{itemize}
Write ${\mathbf W}_s$, ${\mathbf C}_s$ and ${\mathbf F}_s$ for the resulting classes of morphisms.
\end{defn}

Note that the category $\Spc_k^{\tau}$ is both complete and cocomplete, i.e., admits all limits and colimits indexed by small diagram categories.  In particular, this means that $\Spc_k^{\tau}$ admits an initial object (denoted $\emptyset$) and a final object (denoted $\ast$).  We write $\Spc_{k,\bullet}^{\tau}$ for the category of {\em pointed} $\tau$-simplicial sheaves, i.e., pairs $({\mathcal X},x)$ consisting of a simplicial sheaf and a morphism $x: \ast \to {\mathcal X}$.  The forgetful functor $\Spc_{k,\bullet}^{\tau} \to \Spc_k^{\tau}$ admits a left adjoint functor of ``adding a disjoint base-point."

We will say that a morphism of pointed simplicial sheaves is a simplicial weak equivalence (resp. simplicial cofibration or simplicial fibration) if the morphism of simplicial sheaves obtained by forgetting the base-points is a simplicial weak equivalence (resp. simplicial cofibration or simplicial fibration).  Recall that ${\mathcal X} \in \Spc_k^{\tau}$ is called {\em simplicially cofibrant} if the unique map $\emptyset \to {\mathcal X}$ is a simplicial cofibration and {\em simplicially fibrant} if the unique map ${\mathcal X} \to \ast$ is a simplicial fibration.  We use analogous terminology for pointed spaces.   The functor of adding a disjoint base-point preserves simplicial weak equivalences.

\begin{ex}
\label{ex:smoothschemesfibrant}
Every object in $\Spc_k^{\tau}$ is cofibrant.  It follows immediately from the definitions that any smooth $k$-scheme $X$ viewed as an object of $\Spc_k^{\tau}$ is simplicially fibrant.
\end{ex}

\begin{ex}[\u Cech objects]
\label{ex:cechobjects}
Suppose $f: {\mathcal X} \to {\mathcal Y}$ is an epimorphism in $\Spc^{\tau}_{k}$.  We let $\breve{C}(f)$ denote the simplicial sheaf whose $n$-th term is the $(n+1)$-fold fiber product of ${\mathcal X}$ with itself over ${\mathcal Y}$.  The morphism $f$ induces an augmentation map $\breve{C}(f) \to {\mathcal Y}$.  Since taking stalks commutes with formation of fiber products, one deduces that the map $\breve{C}(f) \to {\mathcal Y}$ is a simplicial weak equivalence by studying the corresponding situation for simplicial sets.  For applications to geometry, observe that if $u: U \to X$ a $\tau$-covering, then $u$ is an epimorphism of $\tau$-sheaves, and it follows that $\breve{C}(u) \to X$ is a simplicial weak equivalence in the $\tau$-topology.
\end{ex}

We also use the terminology {\em acyclic simplicial fibration} for any morphism in ${\mathbf W}_s \cap {\mathbf F}_s$.  The basic properties of these classes of morphisms is summarized in the following result.

\begin{thm}[Joyal-Jardine {\cite[Theorem 2.4 and Corollary 2.7]{Jardine}}]
\label{thm:injectivemodelstructure}
The three classes of morphisms $({\mathbf W}_s$,${\mathbf C}_s$,${\mathbf F}_s$) equip the category $\Spc_k^{\tau}$ (resp. $\Spc_{k,\bullet}^{\tau}$) with the structure of a model category.
\end{thm}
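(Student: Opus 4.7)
The plan is to verify Quillen's axioms for a model category for the triple $(\mathbf{W}_s, \mathbf{C}_s, \mathbf{F}_s)$, proceeding in the same order standard references use for the Joyal--Jardine injective model structure on simplicial sheaves. The category $\Spc_k^{\tau}$ is complete and cocomplete because limits in a category of sheaves are computed presheaf-wise and colimits by sheafifying presheaf colimits, so the limit/colimit axiom is immediate. The $2$-out-of-$3$ axiom and the closure of weak equivalences under retracts pass stalkwise to the corresponding statements for simplicial sets. Monomorphisms are trivially closed under retracts, and fibrations are closed under retracts because they are defined by a right lifting property.

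For the lifting axioms, one half is built into the definition: acyclic cofibrations have the left lifting property with respect to fibrations. The other half---cofibrations have the left lifting property against acyclic fibrations---follows, once the factorization axioms are in hand, from the usual retract argument: any cofibration $f$ can be factored as $f = p \circ i$ with $i$ an acyclic cofibration and $p$ a fibration; if $f$ happens to have the LLP against every acyclic fibration, then $p$ is actually an acyclic fibration (use that $f$ is a cofibration and factor $p$ the other way), and the lifting argument realizes $f$ as a retract of $i$, so $f$ is an acyclic cofibration. Combined with the characterization of acyclic fibrations as morphisms that are stalkwise acyclic Kan fibrations (which is verified via the stalk functors), this yields the lifting axioms.

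The main obstacle, as is usual in this setup, is the factorization axioms. The trouble is that one cannot directly apply Quillen's small object argument to the class of all (acyclic) monomorphisms because this is a proper class, not a set. The key technical ingredient I would invoke is Jardine's bounded cofibration lemma: there is a regular cardinal $\kappa$, depending only on the size of the underlying site $(\Sm_k)_{\tau}$ and on a cardinal bound for the coefficients $\Delta^n$, such that any acyclic monomorphism is a filtered colimit of acyclic monomorphisms between $\kappa$-bounded subobjects. Picking a set of representatives for the $\kappa$-bounded acyclic monomorphisms provides a set of generating acyclic cofibrations; Quillen's small object argument then yields the (acyclic cofibration, fibration) factorization. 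The (cofibration, acyclic fibration) factorization is similar but easier, using the set of boundary inclusions $\partial\Delta^n\otimes U_+ \hookrightarrow \Delta^n\otimes U_+$ indexed by $n\ge 0$ and representables $U\in\Sm_k$ as generating cofibrations.

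Finally, the pointed variant $\Spc_{k,\bullet}^{\tau}$ inherits its model structure from the unpointed one by the standard transfer along the free/forgetful adjunction between adding a disjoint basepoint and forgetting the basepoint: weak equivalences, cofibrations, and fibrations in the pointed category are defined by forgetting basepoints (as in the statement), and all five axioms for the pointed case follow formally from the unpointed case.
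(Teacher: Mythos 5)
The paper does not prove this statement; it is quoted with a citation to Joyal--Jardine and used as a black box. Your sketch correctly identifies the main ingredients of the cited argument: completeness and cocompleteness of the sheaf category, stalkwise verification of the two-out-of-three and retract axioms for weak equivalences, the bounded cofibration lemma as the device that circumvents the proper-class issue and makes the small object argument applicable to acyclic cofibrations, and the transfer of the model structure to the pointed category along the disjoint-basepoint adjunction.

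One imprecision worth flagging. You claim the second lifting axiom follows ``combined with the characterization of acyclic fibrations as morphisms that are stalkwise acyclic Kan fibrations (which is verified via the stalk functors).'' This is not how the argument can be organized. In the injective local structure, being a fibration is a \emph{global} lifting condition, not a stalkwise one, so the asserted characterization is not something you can check directly on stalks. The correct intermediate lemma (due to Joyal) is one-directional: a map with the right lifting property against \emph{all} monomorphisms is a stalkwise acyclic Kan fibration, and hence in particular a local weak equivalence. The other direction --- that a fibration which is simultaneously a local weak equivalence has the RLP against all monomorphisms --- is precisely the conclusion of the retract argument, run after both factorizations are established. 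As you have written it, the stalkwise characterization is presented as an input to the lifting axiom, but it is in fact an output of the whole argument; using it as an input makes the reasoning circular and you should rearrange the order of the steps so that Joyal's one-directional lemma, the factorizations, and then the retract argument come in that order.
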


\begin{rem}
The terminology ``trivial" fibration or ``trivial" cofibration is also standard.  We have avoided the use of this terminology because of the numerous other uses of the term ``trivial" in the paper (e.g., torsors, $\aone$-$h$-cobordisms, etc.).
\end{rem}

One input in the proof of Theorem \ref{thm:injectivemodelstructure} is the existence of a fibrant resolution functor.  More precisely, one of the axioms of model categories states that given any morphism $f: {\mathcal X} \to {\mathcal Y}$ in $\Spc_k^{\tau}$, we can {\em functorially} factor $f$ as
\[
{\mathcal X} \stackrel{i}{\longrightarrow} {\mathcal Z} \stackrel{p}{\longrightarrow} {\mathcal Y}
\]
where either $i$ is an acyclic simplicial cofibration and $p$ is a simplicial fibration, or $i$ is a simplicial cofibration, and $p$ is an acyclic simplicial fibration.  We introduce some notation for the corresponding factorization for the morphism ${\mathcal X} \to \ast$.

\begin{defn}
A {\em fibrant resolution functor} on $\Spc_k^{\tau}$ is a pair $(Ex_{\tau},\theta_{\tau})$ consisting of an endo-functor $Ex_{\tau}$ and a natural transformation $\theta: Id \to Ex_{\tau}$ having the property that for any ${\mathcal X} \in \Spc_k^{\tau}$, there is an acyclic simplicial cofibration ${\mathcal X} \to Ex_{\tau}({\mathcal X})$ where $Ex_{\tau}({\mathcal X})$ is simplicially fibrant.
\end{defn}

In view of Theorem \ref{thm:injectivemodelstructure}, the general machinery of model categories shows that the localization $\Spc_k^{\tau} [{\mathbf W}_s^{-1}]$ exists.  We refer to such a category as a simplicial homotopy category.

\begin{defn}
The {\em $\tau$-simplicial homotopy category}, denoted $\hstau$, is defined by
\[
\hstau := \Spc_k^{\tau} [{\mathbf W}_s^{-1}],
\]
i.e., it is the homotopy category of $\Spc_k^{\tau}$ for the model structure of Definition \ref{defn:injectivemodelstructure}.  The {\em pointed $\tau$-simplicial homotopy category}, denoted $\hsptau$, is similarly defined by
\[
\hsptau := \Spc_{k,\bullet}^{\tau}[{\mathbf W}_s^{-1}].
\]
Given ${\mathcal X},{\mathcal Y} \in \Spc_k^{\tau}$ we write $[{\mathcal X},{\mathcal Y}]_{s,\tau}$ for the set of homomorphisms between ${\mathcal X}$ and ${\mathcal Y}$ in $\hstau$.   Given $({\mathcal X},x)$ and $({\mathcal Y},y) \in \Spc_{k,\bullet}^{\tau}$ we write $[({\mathcal X},x),({\mathcal Y},y)]_{s,\tau}$ for the set of morphisms between these two pointed spaces in $\hsptau$.  When $\tau$ denotes the Nisnevich topology, we drop it from notation.
\end{defn}

If we let $\Delta^i_s$ denote the constant sheaf represented by the simplicial $i$-simplex, and $\partial \Delta^i_s$ denote its boundary, the simplicial spheres are denoted $S^i_s$.  One defines the simplicial homotopy sheaves as follows.

\begin{defn}
For a pointed space $({\mathcal X},x)$ we write $\pi_i^{s,\tau}({\mathcal X},x)$ for the $\tau$-sheaf on ${\mathcal Sm}_k$ associated with the presheaf on ${\mathcal Sm}_k$ defined by $U \mapsto [\Sigma^i_s \wedge U_+,({\mathcal X},x)]_{s,\tau}$.
\end{defn}

\subsubsection*{Functoriality of simplicial homotopy categories: change of sites}
The next lemma is the basis for statements regarding functoriality of simplicial homotopy categories.

\begin{lem}
\label{lem:quillenfunctors}
The functor $\alpha^*$ preserves simplicial cofibrations, simplicial weak equivalences, and acyclic simplicial weak equivalences.  The functor $\alpha_*$ preserves simplicial fibrations, and acyclic simplicial fibrations.
\end{lem}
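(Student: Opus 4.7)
The plan is to recognize this as the Quillen adjunction statement for the pair $(\alpha^*, \alpha_*)$: every assertion reduces to showing $\alpha^*$ preserves simplicial cofibrations and simplicial weak equivalences, with the statements about $\alpha_*$ then following formally by adjunction. (I read ``acyclic simplicial weak equivalences'' as ``acyclic simplicial cofibrations,'' which is the third standard condition for a left Quillen functor.)

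First I would treat cofibrations. In the Joyal--Jardine model structure, cofibrations are precisely monomorphisms of simplicial sheaves, and $\alpha^*$ is the inverse image functor of a morphism of topoi---equivalently, \'etale sheafification of the underlying Nisnevich sheaf---so it is left exact and in particular preserves monomorphisms. Hence $\alpha^*$ preserves simplicial cofibrations.

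The substantive point is preservation of weak equivalences, which rests on a stalk computation. Every geometric point $\bar{x} \to X$ of $(\Sm_k)_{\et}$ factors through the underlying point $x \in X$ together with a separable closure of $\kappa(x)$, and the strict Henselization $\O_{X,x}^{sh}$ is a filtered colimit of Henselizations $\O_{X_\alpha, x_\alpha}^h$ over the Nisnevich neighborhoods $X_\alpha \to X$ corresponding to the finite separable subextensions of $\kappa(x) \hookrightarrow \overline{\kappa(x)}$. Because sheafification does not change stalks, the \'etale stalk of $\alpha^*{\mathcal F}$ at $\bar{x}$ is canonically isomorphic to the filtered colimit over $\alpha$ of the Nisnevich stalks of ${\mathcal F}$ at $x_\alpha$. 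A Nisnevich weak equivalence ${\mathcal F} \to {\mathcal G}$ induces weak equivalences on every Nisnevich stalk, and filtered colimits of weak equivalences of simplicial sets are again weak equivalences, so $\alpha^* f$ is a stalkwise, hence \'etale, weak equivalence. Preservation of acyclic simplicial cofibrations is then immediate since $\alpha^*$ preserves cofibrations and weak equivalences separately.

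For $\alpha_*$ the conclusion is formal. In the Joyal--Jardine model structure, simplicial fibrations (resp.\ acyclic simplicial fibrations) are precisely those maps with the right lifting property against acyclic simplicial cofibrations (resp.\ simplicial cofibrations). The adjunction $\alpha^* \dashv \alpha_*$ transfers such a lifting property: a lifting problem for $\alpha_* g$ against a cofibration (resp.\ acyclic cofibration) $j$ in $\Spc_k$ transposes to one for $g$ against $\alpha^* j$, which is a cofibration (resp.\ acyclic cofibration) in $\Spc_k^{\et}$ by the previous steps, and therefore admits a solution. The only genuinely substantive obstacle in the whole argument is the stalk computation in the previous paragraph; the rest is model-categorical bookkeeping.
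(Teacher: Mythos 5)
Your proof is correct and follows the same two-part strategy as the paper's: a stalkwise argument establishing that $\alpha^*$ preserves cofibrations, weak equivalences, and acyclic cofibrations, followed by the adjunction/lifting transposition for $\alpha_*$ (the paper cites Hirschhorn, Proposition 7.2.18, for this step). You also correctly diagnose the typo in the statement (``acyclic simplicial weak equivalences'' should read ``acyclic simplicial cofibrations,'' which is what the paper's own proof uses); the one small slip is that in your final ``resp.''-pairing, ``cofibration (resp.\ acyclic cofibration)'' should be ``acyclic cofibration (resp.\ cofibration)'' to match ``fibrations (resp.\ acyclic fibrations),'' since a fibration has the right lifting property against {\em acyclic} cofibrations — but the content of the argument is unaffected.
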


\begin{proof}
The fact that $\alpha^*$ preserves simplicial cofibrations, simplicial weak equivalences, and acyclic simplicial cofibrations follows immediately from the fact that all these conditions can be checked stalkwise.  The fact that $\alpha^*$ preserves simplicial fibrations and acyclic simplicial fibrations is an easy consequence of adjointness of $\alpha_*$ and $\alpha^*$ (see \cite[Proposition 7.2.18]{Hirschhorn}).
\end{proof}

We now apply the theory of Quillen adjunctions (see, e.g., \cite[\S 1.3]{Hovey}) to study functoriality properties for simplicial homotopy categories as the Grothendieck topology changes.  The resulting functoriality statements are simpler than the general functoriality properties for simplicial homotopy categories developed in \cite{MV}.  Set ${\mathbf R}\alpha_* = \alpha_* \circ Ex$.

\begin{cor}
The functors $\alpha_*$ and $\alpha^*$ induce an adjoint pair of functors
\[
\begin{split}
{\mathbf R}\alpha_*&: \hset \longrightarrow \hsnis \\
\alpha^*&: \hsnis \to \hset.
\end{split}
\]
\end{cor}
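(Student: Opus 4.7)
The plan is to recognize this as a formal consequence of Quillen adjunction theory, given Lemma \ref{lem:quillenfunctors}. First I would observe that Lemma \ref{lem:quillenfunctors} asserts precisely that the pair $(\alpha^*,\alpha_*)$ is a Quillen adjunction between the model categories $\Spc_k$ and $\Spc_k^{\et}$ (with the injective/Joyal--Jardine model structures of Theorem \ref{thm:injectivemodelstructure}): the left adjoint $\alpha^*$ preserves simplicial cofibrations and acyclic simplicial cofibrations, and equivalently the right adjoint $\alpha_*$ preserves simplicial fibrations and acyclic simplicial fibrations. Adjointness of $\alpha^*$ and $\alpha_*$ at the level of simplicial sheaves is inherited from the underlying adjointness of the presheaf functors together with the fact that $\alpha^*$ preserves finite limits.

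Second, I would invoke the standard machinery of derived functors for Quillen adjunctions (e.g.\ \cite[Lemma 1.3.10]{Hovey}), which asserts that a Quillen adjunction descends to an adjunction between homotopy categories: $(\mathbf{L}\alpha^*, \mathbf{R}\alpha_*)$. The left derived functor $\mathbf{L}\alpha^*$ is computed by first replacing an object by a cofibrant model and then applying $\alpha^*$; the right derived functor $\mathbf{R}\alpha_*$ is computed by first applying a fibrant resolution in $\Spc_k^{\et}$ and then applying $\alpha_*$.

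Third, I would simplify these derived functors. Every object of $\Spc_k$ is cofibrant, so no cofibrant replacement is needed and $\mathbf{L}\alpha^* = \alpha^*$. For the right derived functor, we apply the fibrant resolution $Ex_{\et}$ in $\Spc_k^{\et}$ and then $\alpha_*$, recovering exactly the formula $\mathbf{R}\alpha_* = \alpha_* \circ Ex_{\et}$ stated immediately before the corollary. This identifies the derived adjoint pair with the one displayed in the statement, completing the proof.

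There is essentially no obstacle: once the Quillen adjunction has been verified (which was the content of the preceding lemma), the corollary is a mechanical application of the derived adjunction theorem. The only point deserving attention is to check that no cofibrant replacement is needed on the Nisnevich side, which is immediate from the fact that all objects are cofibrant in the injective model structure (monomorphisms being cofibrations, as already noted in Example \ref{ex:smoothschemesfibrant} and the surrounding discussion).
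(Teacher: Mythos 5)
Your proof follows exactly the same route as the paper: verify from Lemma \ref{lem:quillenfunctors} that $(\alpha^*,\alpha_*)$ is a Quillen adjunction, invoke the general derived adjunction machinery (the paper cites this implicitly, you cite Hovey explicitly), and then observe that $\mathbf{L}\alpha^* = \alpha^*$ because every object of $\Spc_k$ is cofibrant while $\mathbf{R}\alpha_* = \alpha_* \circ Ex_{\et}$. This matches the paper's argument step for step.
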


\begin{proof}
This follows from the general machinery of Quillen adjunctions.  For the purposes of notation, recall that Lemma \ref{lem:quillenfunctors} shows that $\alpha_*$ is a right Quillen functor and $\alpha^*$ is a left Quillen functor.  The functor ${\mathbf R}\alpha_*$ is the right derived functor of $\alpha_*$.  Since every object of $\Spc_k$ is already cofibrant, the left derived functor of $\alpha^*$ is precisely $\alpha^*$.
\end{proof}

\subsection{$\aone$-homotopy categories}
To obtain $\aone$-homotopy categories, one must perform a further categorical localization to ``invert the affine line."  We refer the reader to \cite[\S 3.3]{Hirschhorn} for a more detailed discussion of Bousfield localizations, which is the general context under which the definitions and results can be collected.

\begin{defn}
\label{defn:aonelocal}
An object ${\mathcal X} \in \Spc_k^{\tau}$ is called {\em $\tau$-$\aone$-local} (or just $\aone$-local if $\tau$ is clear from context) if, for any object ${\mathcal Y} \in \Spc_k^{\tau}$, the canonical map
\[
[{\mathcal Y},{\mathcal X}]_{\tau,s} \longrightarrow [{\mathcal Y} \times \aone,{\mathcal X}]_{\tau,s},
\]
induced by pullback along the projection ${\mathcal X} \times \aone \to {\mathcal X}$, is a bijection.
\end{defn}

We summarize some conditions that are equivalent characterizations of the property of being $\aone$-local.

\begin{defn}
A morphism $f: {\mathcal X} \to {\mathcal Y}$ in $\Spc_k^{\tau}$ is called
\begin{itemize}
\item a {\em $\tau$-$\aone$-weak equivalence} if for any $\aone$-local ${\mathcal Z} \in \Spc_k^{\tau}$ the map
\[
f^*: [{\mathcal Y},{\mathcal Z}]_{s,\tau} \longrightarrow [{\mathcal X},{\mathcal Z}]_{s,\tau}
\]
is a bijection.  If $\tau$ denotes the Nisnevich topology, we drop it from the notation.
\item an $\aone$-cofibration if it is a simplicial cofibration (i.e., a monomorphism), and
\item a $\tau$-$\aone$-fibration if it has the right lifting property with respect to $\aone$-acyclic cofibrations, i.e., those maps that are simultaneously $\aone$-cofibrations and $\tau$-$\aone$-weak equivalences.  Again, if $\tau$ denotes the Nisnevich topology, we drop it from notation.
\end{itemize}
A morphism $f$ in $\Spc_{k,\bullet}^{\tau}$ is said to be a $\tau$-$\aone$-weak equivalence (resp. $\aone$-cofibration, $\tau$-$\aone$-fibration) if the underlying morphism in $\Spc_k^{\tau}$ obtained by forgetting the base-point is a $\tau$-$\aone$-weak equivalence (resp. $\aone$-cofibration, $\tau$-$\aone$-fibration).  Again, if $\tau$ denotes the Nisnevich topology, we drop it from the notation.  We write ${\mathbf W}_{\aone}$ for the class of $\aone$-weak equivalences, ${\mathbf C}_{\aone}$ for the class of $\aone$-cofibrations, and ${\mathbf F}_{\aone}$ for the class of $\aone$-fibrations.
\end{defn}

\begin{lem}[{\cite[\S 2 Lemma 3.19]{MV}}]
\label{lem:aonelocalequivalentconditions}
If ${\mathcal X} \in \Spc_{k}^{\tau}$ is a simplicially fibrant space, then following conditions are equivalent.
\begin{itemize}
\item[i)] The space ${\mathcal X}$ is $\aone$-local.
\item[ii)] The space ${\mathcal X}$ is $\aone$-fibrant.
\item[iii)] For any smooth scheme $U$, the map of simplicial sets ${\mathcal X}(U) \to {\mathcal X}(U \times \aone)$ is a weak equivalence.
\end{itemize}
\end{lem}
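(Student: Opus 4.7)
The plan is to observe that the $\aone$-model structure on $\Spc_k^{\tau}$ is a left Bousfield localization of the Joyal--Jardine structure at the set $S = \{p_U : U \times \aone \to U\}_{U \in \Sm_k}$ of projections, and then to realize all three conditions as equivalent to a single statement about the canonical map ${\mathcal X} \to {\mathcal X}^{\aone}$, where ${\mathcal X}^{\aone}$ denotes the internal hom from $\aone$ to ${\mathcal X}$ (so $\operatorname{Hom}({\mathcal Y} \times \aone, {\mathcal X}) \cong \operatorname{Hom}({\mathcal Y}, {\mathcal X}^{\aone})$ by cartesian closedness).

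For (ii) $\Leftrightarrow$ (iii), I would invoke the standard characterization of fibrant objects in a left Bousfield localization (see \cite{Hirschhorn}): ${\mathcal X}$ is $\aone$-fibrant iff it is simplicially fibrant and \emph{homotopically $S$-local}, meaning that for each $p_U \in S$ the map of simplicial function complexes $p_U^* \colon \operatorname{Map}(U,{\mathcal X}) \to \operatorname{Map}(U \times \aone,{\mathcal X})$ is a weak equivalence of Kan complexes. Since ${\mathcal X}$ is simplicially fibrant and $U$, $U \times \aone$ are representable (hence cofibrant), the Yoneda lemma together with the simplicial enrichment identifies $\operatorname{Map}(U,{\mathcal X})$ with the section complex ${\mathcal X}(U)$, and the $S$-locality statement is then literally (iii).

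For (i) $\Leftrightarrow$ (iii), observe that by cartesian closedness (i) asserts that the canonical map $[{\mathcal Y},{\mathcal X}]_{s,\tau} \to [{\mathcal Y},{\mathcal X}^{\aone}]_{s,\tau}$ is a bijection for every ${\mathcal Y}$. By the Yoneda lemma applied in the homotopy category $\hstau$, this is equivalent to ${\mathcal X} \to {\mathcal X}^{\aone}$ being a simplicial weak equivalence (and ${\mathcal X}^{\aone}$ is itself simplicially fibrant because $\aone$ is cofibrant and the structure is simplicial). On the other hand, (iii) is exactly the statement that ${\mathcal X} \to {\mathcal X}^{\aone}$ is a weak equivalence on sections over every smooth scheme. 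For a morphism of simplicially fibrant sheaves, these two conditions agree: weak equivalences are detected stalkwise, and stalks at Nisnevich (respectively \'etale) points of smooth schemes are filtered colimits of sections over smooth schemes, so stalkwise equivalence follows from (iii) and conversely implies (iii) after unwinding.

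The main obstacle I anticipate is verifying cleanly that weak equivalences between simplicially fibrant sheaves on $\Sm_k$ can be tested on sections over the smooth schemes themselves. This is automatic for the Nisnevich and \'etale topologies thanks to the existence of enough points coming from Henselizations (respectively strict Henselizations) of smooth $k$-schemes at their points, but it is the only step in the argument that uses a property of the specific sites $(\Sm_k)_{\tau}$ beyond generic model-categorical formalism; elsewhere the proof is purely formal once the localization description is in hand.
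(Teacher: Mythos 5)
The paper cites this statement from \cite[\S 2 Lemma 3.19]{MV} and does not reproduce a proof, so there is no in-text argument to compare against; your proof is a correct, self-contained argument, phrased in the same left Bousfield localization language the paper sets up in Appendix \ref{s:homotopyoverview}. Two points deserve tightening. First, the fibrancy of ${\mathcal X}^{\aone}$ is not a consequence of the simplicial enrichment alone: being a simplicial model category controls the simplicial mapping \emph{set} $\operatorname{Map}(\aone,{\mathcal X})$, not the internal hom \emph{object} $\underline{\operatorname{Hom}}(\aone,{\mathcal X})$. What you actually need is that $- \times \aone$ is a left Quillen endofunctor of the injective local structure, which one can check directly at stalks (taking stalks commutes with finite products, and products of Kan weak equivalences with a discrete simplicial set are weak equivalences), so $- \times \aone$ preserves cofibrations and local weak equivalences, and hence its right adjoint $\underline{\operatorname{Hom}}(\aone,-)$ preserves fibrant objects. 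Second, in identifying condition (iii) with ``${\mathcal X} \to {\mathcal X}^{\aone}$ is a simplicial weak equivalence,'' the implication ``sectionwise implies stalkwise'' does follow from stalks being filtered colimits of sections, as you say; but the converse is not a matter of ``unwinding'' that colimit description. It requires the hypothesis that both ${\mathcal X}$ and ${\mathcal X}^{\aone}$ are injectively fibrant (and automatically cofibrant), so that a local weak equivalence between them is a simplicial homotopy equivalence and therefore a sectionwise weak equivalence over every $U$. You correctly invoke fibrancy, but the justification you offer (filtered colimits) only supplies one of the two directions. Both points are easily repaired and the overall structure of the argument is sound.
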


\begin{ex}
\label{ex:aonelocalsmoothschemes}
As a smooth $k$-scheme $X$ is simplicially fibrant, one can check that $X(U)$ is a fibrant simplicial set for any smooth $k$-scheme $U$.  From Lemma \ref{lem:aonelocalequivalentconditions} one deduces that a smooth $k$-scheme $X$ is $\aone$-local if and only for any smooth $k$-scheme $U$ the map
\[
Hom_{\Sm_k}(U,X) \to Hom_{\Sm_k}(U \times \aone,X)
\]
induced by $U \times \aone \to U$ is a bijection.
\end{ex}

The fundamental existence result from \cite{MV} we use is summarized in the following result.

\begin{thm}[{\cite[\S 2 Theorem 3.2]{MV}}]
The three classes of morphisms $({\mathbf W}_{\aone}$,${\mathbf C}_{\aone}$,${\mathbf F}_{\aone}$) equip the category $\Spc_k^{\tau}$ (resp. $\Spc_{k,\bullet}^{\tau}$) with the structure of a model category.  Moreover, the resulting model category is the left Bousfield localization of $\Spc_k^{\tau}$ (resp. $\Spc_{k,\bullet}^{\tau}$) equipped with injective local model structure with respect to the class of (pointed) maps $\{{\mathcal X} \times \aone \to {\mathcal X}\}$ (in the pointed case, we view $\aone$ as a space pointed by $0$).  In particular, the identity functor $\Spc_k^{\tau} \to \Spc_k^{\tau}$ is a left and right Quillen functor.
\end{thm}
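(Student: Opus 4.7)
The plan is to deduce the theorem as an instance of the general machinery of left Bousfield localization, applied to the injective local model structure of Theorem \ref{thm:injectivemodelstructure}. First I would verify the hypotheses needed to localize: the Joyal--Jardine (injective local) model structure on $\Spc_k^{\tau}$ is cofibrantly generated (in fact combinatorial, since every object is cofibrant and generators can be taken to be induced by bounded simplicial sets mapping into representables of bounded cardinality) and it is left proper because every object is cofibrant and pushouts of simplicial weak equivalences along cofibrations (monomorphisms) remain simplicial weak equivalences stalkwise, reducing to the analogous fact for simplicial sets. Standard Bousfield localization theorems, e.g.\ \cite[Theorem 4.1.1]{Hirschhorn}, then guarantee that the left Bousfield localization with respect to the set $S=\{{\mathcal X}\times\aone\to{\mathcal X}\}$ (where, on the pointed side, one adds a disjoint base-point if necessary to form the pointed projections) exists as a model structure on $\Spc_k^{\tau}$.

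Next I would identify the three classes in the localized model structure with $({\mathbf W}_{\aone},{\mathbf C}_{\aone},{\mathbf F}_{\aone})$. By construction of Bousfield localizations, the cofibrations are unchanged, so the class of cofibrations in the localized structure coincides with ${\mathbf C}_{\aone}$ (monomorphisms). The weak equivalences in the localized structure are by definition the $S$-local equivalences, i.e.\ maps $f\colon{\mathcal X}\to{\mathcal Y}$ such that $f^\ast\colon[{\mathcal Y},{\mathcal Z}]_{s,\tau}\to[{\mathcal X},{\mathcal Z}]_{s,\tau}$ is a bijection for every $S$-local (equivalently, $\aone$-local) ${\mathcal Z}$; this is exactly Definition \ref{defn:aonelocal}, so these coincide with ${\mathbf W}_{\aone}$. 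The fibrations of the localized model structure are then forced to be exactly those maps with the right lifting property against acyclic cofibrations, giving ${\mathbf F}_{\aone}$. Lemma \ref{lem:aonelocalequivalentconditions} is available to cross-check the fibrant objects.

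The ``in particular'' clause is then essentially tautological: the identity $\mathrm{id}\colon\Spc_k^{\tau}\to\Spc_k^{\tau}$ viewed as going from the simplicial to the $\aone$-local structure preserves cofibrations (both classes are monomorphisms) and preserves simplicial weak equivalences (every simplicial weak equivalence is an $\aone$-weak equivalence, again because every $\aone$-local object is in particular simplicially fibrant by Lemma \ref{lem:aonelocalequivalentconditions}), so it is a left Quillen functor; its right adjoint, again the identity, is therefore a right Quillen functor. The pointed case is handled either by an identical argument on $\Spc_{k,\bullet}^{\tau}$ or by transferring the localized structure along the forgetful/disjoint base-point Quillen adjunction.

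The main technical obstacle is checking that the injective local model structure satisfies the precise hypotheses (left properness and either cellularity or combinatoriality) required by a quotable general localization theorem, rather than reproving existence of the localization by hand; left properness in particular requires care since simplicial weak equivalences here are local (stalkwise) weak equivalences of simplicial presheaves, not global ones. Once left properness and cofibrant generation are in place, however, everything else reduces to Definition \ref{defn:aonelocal} and the universal property of Bousfield localization.
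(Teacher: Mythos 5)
The paper itself supplies no proof of this statement: it is quoted verbatim from \cite[\S 2 Theorem 3.2]{MV}, where the $\aone$-model structure is built by hand via an explicit transfinite $\aone$-localization functor and a direct verification of the model axioms, rather than by appealing to a packaged localization existence theorem. Your route through Hirschhorn's general left Bousfield localization theorem is therefore genuinely different; it is the approach most modern treatments would take, and it buys a much shorter argument at the cost of importing the left-properness and cellularity/combinatoriality hypotheses, which you correctly identify as the substantive checks. (Left properness is immediate once one notes every object is cofibrant, as you say.)

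There are, however, two points where the argument as written has real gaps. First, the collection $\{{\mathcal X}\times\aone\to{\mathcal X}\}$, indexed over \emph{all} spaces ${\mathcal X}$, is a proper class, not a set, and \cite[Theorem 4.1.1]{Hirschhorn} requires localization at a \emph{set} of maps. You need to replace this class by a generating set, most naturally $\{U\times\aone\to U : U\in\Sm_k\}$, and then argue that the resulting local objects coincide with the $\aone$-local objects of Definition \ref{defn:aonelocal}; this reduction is exactly the content of condition (iii) of Lemma \ref{lem:aonelocalequivalentconditions}, which you cite only parenthetically as a ``cross-check'' when it is in fact a required bridge. Second, you write that the weak equivalences in the localized structure ``are by definition'' the maps inducing bijections on $[-,{\mathcal Z}]_{s,\tau}$ for $\aone$-local ${\mathcal Z}$; this is the paper's Definition \ref{defn:aonelocal}, but it is \emph{not} Hirschhorn's definition, which is phrased in terms of weak equivalences of homotopy function complexes. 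In a simplicial model category in which every object is cofibrant, and for a generating set closed under the simplicial tensor (as $\{U\times\aone\to U\}$ effectively is), these two notions of $S$-locality do agree, and that agreement is again what Lemma \ref{lem:aonelocalequivalentconditions} is recording; but the identification needs to be stated rather than assumed. Once those two points are made explicit, the rest of your argument (cofibrations unchanged, fibrations determined by lifting, the identity being a Quillen adjunction, the pointed case) is sound.
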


A space ${\mathcal X} \in \Spc_k^{\tau}$ is {\em $\aone$-fibrant} if the map ${\mathcal X} \to \ast$ is an $\aone$-fibration.  More generally, the morphism ${\mathcal X} \to \ast$ can be factored functorially as an $\aone$-acyclic cofibration followed by an $\aone$-fibration.   Thus, we obtain an $\aone$-fibrant resolution functor, i.e., a pair $(Ex_{\tau,\aone},\theta_{\tau,\aone})$ consisting of an endo-functor $Ex_{\tau,\aone}: \Spc_k^{\tau} \to \Spc_k^{\tau}$ and a natural transformation $\theta_{\tau,\aone}: Id \to Ex_{\tau,\aone}$ such that for any ${\mathcal X} \in \Spc_k^{\tau}$, the map ${\mathcal X} \to Ex_{\tau,\aone}({\mathcal X})$ is an $\aone$-acyclic cofibration with $Ex_{\tau,\aone}({\mathcal X})$ an $\aone$-fibrant space.

\begin{defn}
\label{defn:aonehomotopycategories}
We write $\ho{k}$ for the category $\Spc_k[{\mathbf W}_{\aone}^{-1}]$, $\hop{k}$ for the category $\Spc_{k,\bullet}[{\mathbf W}_{\aone}^{-1}]$, $\het{k}$ for the category $\Spc_k^{\et}[{\mathbf W}_{\aone}^{-1}]$, and $\hpet{k}$ for the category $\Spc_{k,\bullet}^{\et}[{\mathbf W}_{\aone}^{-1}]$.  We write
\[
L_{\aone}: \Spc_k^{\tau} \longrightarrow \Spc_k^{\tau}
\]
for the left derived functor of $Id$ and call it the {\em $\aone$-localization functor}.  Given two (\'etale) spaces ${\mathcal X},{\mathcal Y} \in \simpnis$ (resp. $\simpet$), we write $[{\mathcal X},{\mathcal Y}]_{\aone}$ (resp. $[{\mathcal X},{\mathcal Y}]_{\aone,\et}$) for the set of morphisms computed in $\ho{k}$ (resp. $\het{k}$).  Similarly, given two pointed (\'etale) spaces $({\mathcal X},x)$ and $({\mathcal Y},y)$, we write $[({\mathcal X},x),({\mathcal Y},y)]_{\aone}$ (resp. $[({\mathcal X},x),({\mathcal Y},y)]_{\aone,\et}$) for the set of morphisms computed in $\hop{k}$ (resp. $\hpet{k}$).
\end{defn}

\begin{defn}
For a pointed space $({\mathcal X},x)$ we write $\pi_i^{\aone,\tau}({\mathcal X},x)$ for the $\tau$-sheaf on ${\mathcal Sm}_k$ associated with the presheaf on ${\mathcal Sm}_k$ defined by $U \mapsto [\Sigma^i_s \wedge U_+,({\mathcal X},x)]_{\aone,\tau} = [\Sigma^i_s \wedge U_+,(L_{\aone}({\mathcal X}),x)]_{s,\tau}$.
\end{defn}

The next result summarizes the functoriality results we use; the proof is immediate.

\begin{lem}
\label{lem:aonefunctoriality}
The functor $\alpha^*$ preserves $\aone$-weak equivalences, $\aone$-cofibrations and $\aone$-acyclic cofibrations.  The functor ${\bf R}\alpha_*$ preserves $\aone$-local objects.
\end{lem}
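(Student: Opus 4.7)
The plan is to reduce everything to Lemma \ref{lem:quillenfunctors}, adjointness of $(\alpha^*,\alpha_*)$, and the universal property characterizing the $\aone$-model structure as a left Bousfield localization with respect to the class of projections $\{{\mathcal X}\times\aone\to{\mathcal X}\}$.

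First I will handle $\alpha^*$. The $\aone$-cofibrations are by definition the simplicial cofibrations, i.e., monomorphisms, so the fact that $\alpha^*$ preserves $\aone$-cofibrations is immediate from the corresponding statement in Lemma \ref{lem:quillenfunctors} (or directly because $\alpha^*$ is exact). For $\aone$-weak equivalences, I will argue as follows. Because $\alpha^*$ is left adjoint to $\alpha_*$, preserves simplicial weak equivalences and simplicial cofibrations (Lemma \ref{lem:quillenfunctors}), and sends each projection ${\mathcal X}\times\aone\to{\mathcal X}$ to a projection of the same form in $\Spc_k^{\et}$ (using that $\alpha^*$ preserves finite products and that $\aone$ is representable hence $\alpha^*\aone=\aone$), the general machinery of left Bousfield localization (\cite[\S 3.3]{Hirschhorn}) shows $\alpha^*$ is a left Quillen functor for the $\aone$-model structures; in particular it preserves $\aone$-weak equivalences between cofibrant objects, and since every space is cofibrant, it preserves all $\aone$-weak equivalences. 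Preservation of $\aone$-acyclic cofibrations then follows formally by combining the two previous properties.

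For ${\mathbf R}\alpha_*$, the clean way is to invoke the Quillen adjunction $(\alpha^*,\alpha_*)$ on the $\aone$-model structures: since $\alpha^*$ is a left Quillen functor, $\alpha_*$ is a right Quillen functor, and so sends $\aone$-fibrant objects to $\aone$-fibrant objects. Concretely, given ${\mathcal X}\in\Spc_k^{\et}$ that is $\aone$-local, let ${\mathcal X}\to{\mathcal X}^f$ be an \'etale simplicially fibrant replacement; by Lemma \ref{lem:aonelocalequivalentconditions}, ${\mathcal X}^f$ is \'etale $\aone$-fibrant. Then ${\mathbf R}\alpha_*{\mathcal X}=\alpha_*{\mathcal X}^f$ is simplicially fibrant by Lemma \ref{lem:quillenfunctors}, and for every ${\mathcal Y}\in\Spc_k$ adjunction gives a commutative square
\[
\xymatrix{
[{\mathcal Y},{\mathbf R}\alpha_*{\mathcal X}]_s \ar[r]\ar[d]^{\wr} & [{\mathcal Y}\times\aone,{\mathbf R}\alpha_*{\mathcal X}]_s \ar[d]^{\wr} \\
[\alpha^*{\mathcal Y},{\mathcal X}]_{s,\et} \ar[r] & [\alpha^*{\mathcal Y}\times\aone,{\mathcal X}]_{s,\et}
}
\]
in which the bottom arrow is a bijection because ${\mathcal X}$ is $\aone$-local. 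Hence the top arrow is a bijection and ${\mathbf R}\alpha_*{\mathcal X}$ is $\aone$-local by Lemma \ref{lem:aonelocalequivalentconditions}.

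There is no serious obstacle here; the only subtle points are checking that $\alpha^*$ commutes with the formation of the localizing class (i.e., $\alpha^*\aone=\aone$ and $\alpha^*$ preserves finite products), and keeping the adjunction bookkeeping straight between the \'etale and Nisnevich sides. Both are routine since the relevant topologies are subcanonical and $\alpha^*$ is exact.
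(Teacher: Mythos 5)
Your proof is correct and is precisely the argument the paper has in mind when it declares the proof ``immediate'': combine Lemma \ref{lem:quillenfunctors}, the adjunction $(\alpha^*,\alpha_*)$, the Bousfield-localization description of the $\aone$-model structure, and the observations that $\alpha^*$ is exact (hence preserves monomorphisms and finite products) and fixes the representable sheaf $\aone$. One small remark: the two halves can also be obtained in the reverse order with less machinery --- once your adjunction square establishes that $\mathbf{R}\alpha_*$ preserves $\aone$-local objects, the fact that $\alpha^*$ preserves $\aone$-weak equivalences follows directly from Definition \ref{defn:aonelocal} and the same adjunction isomorphism $[\alpha^*{\mathcal Y},{\mathcal Z}]_{s,\et}\cong[{\mathcal Y},\mathbf{R}\alpha_*{\mathcal Z}]_s$, bypassing the appeal to the universal property of left Bousfield localization.
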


\section{Notational postscript}
\label{s:notationalpostscript}
We felt that introducing so many (somewhat subtilely) different notions of connectedness and rationality in the preceding sections warranted inclusion of a summary of the notation and various implications.

\subsubsection*{Notions of connectedness}
\noindent$\pi_0^{\aone}(\cdot)$ - the sheaf of $\aone$-connected components; see Definition \ref{defn:sheavesofaoneconnectedcomponents}.\newline
\noindent$\pi_0^{\aone,\et}(\cdot)$ - the sheaf of \'etale $\aone$-connected components; see Definition \ref{defn:sheavesofaoneconnectedcomponents}.\newline
\noindent$\pi_0^{ch}(\cdot)$ - the sheaf of $\aone$-chain connected components; see Definition \ref{defn:svsingularconnected}.\newline
\noindent $\pi_0^{\et,ch}(\cdot)$ -the sheaf of \'etale $\aone$-chain connected components; see Definition \ref{defn:svsingularconnected}.\newline
\noindent $a_{\et}\pi_0^{\aone}(\cdot)$ - the \'etale sheafification of the functor $U \mapsto [U,{\mathcal X}]_{\aone}$; see Equation \ref{eqn:etalecomparison}.\newline
\noindent $\pi_0^{b\aone}(\cdot)$ - the sheaf of birational $\aone$-connected components; see Definition \ref{defn:birationalconnectedcomponents}.
\vskip 5pt
Assume $X \in \Sm_k$.  Lemma \ref{lem:chainepimorphism} provides the first two epimorphisms below, and Lemma \ref{lem:etalecomparison} provides the third.
\begin{equation*}
\begin{split}
\pi_0^{ch}(X) &\longrightarrow \pi_0^{\aone}(X), \\
\pi_0^{\et,ch}(X) &\longrightarrow \pi_0^{\et,\aone}(X), \text{ and }\\
a_{\et}\pi_0^{\aone}(X) &\longrightarrow \pi_0^{\et,\aone}(X).
\end{split}
\end{equation*}
We also introduced two geometric notions of connectedness: $\aone$-chain connectedness and weak $\aone$-chain connectedness (see Definition \ref{defn:chainconnected}), together with the notion of a variety covered by affine spaces (see Definition \ref{defn:combinatorial}).  The implications of various connectivity properties induced by the above epimorphisms, together with Lemma \ref{lem:combinatorialconnected} are summarized in the following diagram.
\begin{equation*}
\xymatrix{
\text{covered by affine spaces} \ar@{=>}[d] & & \\
 \aone\text{-chain connected} \ar@{=>}[r] \ar@{=>}[dr]& \aone\text{-connected} \ar@{=>}[r] & \text{\'etale }\aone\text{-connected} \\
& \text{weakly }\aone\text{-chain connected} \ar@{=>}[r]& \text{weakly }\aone\text{-connected}
}
\end{equation*}


\subsubsection*{Notions of near rationality}
The interrelationships between the various rationality properties (for smooth proper varieties over a field $k$) we considered are summarized in the following diagram.
\begin{equation*}
\xymatrix{
k\text{-rational} \ar@{=>}[r] & \text{stably }k\text{-rational}  \ar@{=>}[r] & \text{factor }k\text{-rational} \ar@{=>}[d] \\
 \text{separably rationally connected}& \ar@{=>}[l]\text{separably }k\text{-unirational} & \ar@{=>}[l]\text{retract }k\text{-rational}
}
\end{equation*}
Definitions of all of these terms can be found in Definition \ref{defn:rationalitynotions}, though see also \cite[\S 1]{CTSRationalityFields} and \cite[IV.3.1-2]{Kollar} for more details.  Lemma \ref{lem:rationalitynotions} or straightforward consideration of the relevant definitions provide the implications in the above diagram.

\subsubsection*{Connections between $\aone$-connectivity and rationality properties}
Finally, we can connect the two diagrams above.  Again, let us restrict ourselves to considering only smooth {\em proper} varieties.  If $k$ is a perfect field, Corollary \ref{cor:separablyrationallyconnected} shows that
\begin{equation*}
\text{separably rationally connected} \Longrightarrow \text{weakly }\aone\text{-connected}.
\end{equation*}
Theorem \ref{thm:propercharacterization} (together with Proposition \ref{prop:aoneconnected}) shows that for arbitrary fields $k$,
\begin{equation*}
\aone\text{-chain connected} \Longleftrightarrow \aone\text{-connected},
\end{equation*}
and
\begin{equation*}
\text{weakly }\aone\text{-chain connected} \Longleftrightarrow \text{weakly }\aone\text{-connected}.
\end{equation*}
If furthermore, $k$ has characteristic $0$, Theorem \ref{thm:stablyrational} shows that
\begin{equation*}
\text{retract }k\text{-rational} \Longrightarrow \aone\text{-chain connected}.
\end{equation*}

\begin{footnotesize}
\bibliographystyle{alpha}
\bibliography{hcobordisms}
\end{footnotesize}
\end{document}